\setlist{itemsep=3pt}
\numberwithin{equation}{section}
\newtheorem{prop}{Proposition}
\newtheorem{theo}[prop]{Theorem}
\newtheorem{lemm}[prop]{Lemma}
\newtheorem{coro}[prop]{Corollary}
\newtheorem{conj}[prop]{Conjecture}
\newtheorem*{claim}{Claim}
\theoremstyle{definition}
\newtheorem{defi}[prop]{Definition}
\newtheorem{rema}[prop]{Remark}
\numberwithin{prop}{section}
\newcommand{\RR}{\mathbb{R}}
\renewcommand{\SS}{\mathbb{S}}
\newcommand{\cA}{\mathcal A}
\newcommand{\cB}{\mathcal B}
\renewcommand{\cH}{\mathcal H}
\newcommand{\cJ}{\mathcal J}
\renewcommand{\cL}{\mathcal L}
\newcommand{\sA}{\mathscr{A}}
\newcommand{\sF}{\mathscr{F}}
\newcommand{\sL}{\mathscr{L}}
\newcommand{\bangle}[1]{\left\langle #1 \right\rangle}
\DeclareMathOperator{\Ric}{Ric}
\let\oldmarginpar\marginpar
\renewcommand\marginpar[1]{\-\oldmarginpar[\raggedleft\footnotesize #1]%
{\raggedright\footnotesize #1}}
\DeclareMathOperator{\Div}{div}
\DeclareMathOperator{\genus}{genus}
\newcommand{\sff}{h}
\newcommand{\tfsff}{\accentset{\circ}{\sff}}
\newcommand{\mm}{\mathbf{m}}
\title[Large isoperimetric regions in AH manifolds]{Large isoperimetric regions in asymptotically hyperbolic manifolds}
\author{Otis Chodosh}
\address{Department of Mathematics, Stanford University, 450 Serra Mall, Bldg
380, Stanford, CA 94305}
\email{ochodosh@math.stanford.edu}
\thanks{I am very grateful to my advisor, Simon Brendle, for suggesting this problem, as well as for his invaluable assistance, support, and many suggestions on a preliminary version of this paper. Additionally, I would like to thank Michael Eichmair and Brian White for patiently answering numerous questions, as well as for their continued encouragement. Finally, I would like to acknowledge Richard Bamler, Justin Corvino, Gerhard Huisken, Alex Volkmann, and Yi Wang for illuminating conversations about topics related to this work. This work was supported in part by an NSF Graduate Research Fellowship DGE-1147470.}
\begin{document}
\begin{abstract}
We show the existence of isoperimetric regions of sufficiently large volumes in general asymptotically hyperbolic three manifolds. Furthermore, we show that large coordinate spheres in compact perturbations of Schwarzschild-anti-deSitter are uniquely isoperimetric. This is relevant in the context of the asymptotically hyperbolic Penrose inequality. 

Our results require that the scalar curvature of the metric satisfies $R_{g}\geq -6$, and we construct an example of a compact perturbation of Schwarzschild-anti-deSitter without $R_{g}\geq -6$ so that large centered coordinate spheres are not isoperimetric. The necessity of scalar curvature bounds is in contrast with the analogous uniqueness result proven by Bray for compact perturbations of Schwarzschild, where no such scalar curvature assumption is required. 

This demonstrates that from the point of view of the isoperimetric problem, mass behaves quite differently in the asymptotically hyperbolic setting compared to the asymptotically flat setting. In particular, in the asymptotically hyperbolic setting, there is an additional quantity, the ``renormalized volume,'' which has a strong effect on the large-scale geometry of volume. 
\end{abstract}
\maketitle

\section{Introduction}
Asymptotically hyperbolic metrics arise naturally in the context of general relativity as spacelike hypersurfaces asymptotic to null infinity in Lorentzian spacetimes that solve Einstein's equations. For asymptotically flat manifolds, the deep relationship between the mass and behavior of large isoperimetric regions is now quite well understood (as we discuss below in \S\ref{subsect:intro-related-works}). However, for asymptotically hyperbolic manifolds, the relationship between the large scale geometry of the metric and the behavior of isoperimetric regions is not well understood. For example, even the existence of large isoperimetric regions is not known (note that there are non-compact manifolds where isoperimetric regions do not exist for any volume, cf.\ \cite{Ritore:surfaces}). Moreover, the only asymptotically hyperbolic metrics in which large isoperimetric regions have been classified are the exact Schwarzschild-anti-deSitter metrics. 

In this work, we show that large isoperimetric regions exist in a very general class of asymptotically hyperbolic manifolds. Furthermore, we show that large coordinate spheres are uniquely isoperimetric for metrics that are compact perturbations of Schwarzschild-anti-deSitter. Compact perturbations of Schwarzschild-anti-deSitter is a natural class of asymptotically hyperbolic metrics; nontrivial examples may be constructed using the gluing result of P.\ Chru\'sciel and E.\ Delay \cite{ChruscielDelay}. In particular, it is a reasonable class of metrics in which to study the conjectured Penrose inequality for asymptotically hyperbolic metrics. As we discuss in the sequel, an important consequence of our work is that the Penrose inequality holds for such metrics, under the assumption that the manifold has connected isoperimetric regions for all volumes.

Our first main result is the existence of large isoperimetric regions in a very general class of metrics (see Definition \ref{defi:AH-metric} below---we emphasize that this definition includes the assumption that the boundary of $M$, if non-empty, has constant mean curvature $H_{g}\equiv 2$ and is the only such compact surface in the manifold).

\begin{theo}\label{theo:exist-iso-AH}
Suppose that $(M,g)$ is an asymptotically hyperbolic manifold with $R_{g}\geq -6$. Then, there is $V_{0}> 0$ sufficiently large so that isoperimetric regions containing volume $V$ exist for $V\geq V_{0}$. 
\end{theo}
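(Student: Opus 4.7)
The plan is a concentration-compactness argument anchored on the hyperbolic isoperimetric profile $I_{\HH^3}$. Fix $V$ large and let $\Omega_i$ be a minimizing sequence of finite-perimeter sets with $|\Omega_i|_g = V$ and $P_g(\Omega_i) \to I_g(V)$. Using BV compactness on an exhaustion of $M$ by compact sets, extract a subsequence converging in $L^1_{\mathrm{loc}}$ to some $\Omega_\infty$ of volume $V_\infty \leq V$, and let $V_{\mathrm{esc}} := V - V_\infty$ denote the mass lost to the asymptotic end. If $V_{\mathrm{esc}} = 0$, a standard lower semicontinuity argument shows $\Omega_\infty$ is isoperimetric. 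The whole point is therefore to rule out $V_{\mathrm{esc}} > 0$.

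I would set up the comparison as follows. Because $g$ is asymptotic to the hyperbolic metric $g_{\HH^3}$, a centered coordinate sphere of volume $V$ has area $I_{\HH^3}(V) + o(1)$, giving the upper bound $I_g(V) \leq I_{\HH^3}(V) + o(1)$ as $V \to \infty$. Slicing $\Omega_i$ by level sets of the asymptotic coordinate radius and applying the hyperbolic isoperimetric inequality annulus-by-annulus (valid up to $o(1)$ error in the asymptotic region), together with lower semicontinuity of perimeter on compacts, yields the bubble-decomposition lower bound
\[
I_g(V) \;\geq\; P_g(\Omega_\infty) + I_{\HH^3}(V_{\mathrm{esc}}) - o(1).
\]
The hyperbolic profile admits the expansion $I_{\HH^3}(V) = 2V + 2\pi\log V + O(1)$, which implies the strict subadditivity
\[
I_{\HH^3}(V_\infty) + I_{\HH^3}(V_{\mathrm{esc}}) - I_{\HH^3}(V) \;\geq\; 2\pi\log\!\Big(\tfrac{V_\infty V_{\mathrm{esc}}}{V}\Big) - C.
\]
Combining these three ingredients with a uniform bound $P_g(\Omega_\infty) \geq I_{\HH^3}(V_\infty) - C_0$ provided by the scalar curvature hypothesis $R_g \geq -6$ (discussed below) forces $V_\infty V_{\mathrm{esc}}/V$ to remain bounded as $V \to \infty$, so one of $V_\infty, V_{\mathrm{esc}}$ must stay a bounded constant.

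The remaining corner case, where (say) $V_\infty$ stays bounded while $V_{\mathrm{esc}} \sim V$, is handled by sharpening the bounds: the Taylor expansion $I_{\HH^3}(V) - I_{\HH^3}(V - V_\infty) = 2V_\infty + o_V(1)$ combined with a strict excess $P_g(\Omega_\infty) > 2V_\infty + \delta$ for bounded $\Omega_\infty$ of positive volume (again an artifact of $R_g \geq -6$) closes the gap against the upper bound. The dual case $V_\infty \to 0$, in which the whole mass escapes as a single bubble, is concluded by observing that the bubble is modelled on a hyperbolic geodesic ball; such a ball can be realized as an actual competitor by placing it sufficiently deep in the asymptotic end of $M$, producing a genuine isoperimetric region there.

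The hardest step will be the uniform scalar-curvature isoperimetric bound $P_g(\Omega) \geq I_{\HH^3}(|\Omega|) - C_0$ with $C_0$ independent of the position of $\Omega$. This is the AH analogue of Bray's sharp comparison for asymptotically flat $3$-manifolds of nonnegative scalar curvature, with $R_g + 6 \geq 0$ playing the role of the critical-sign hypothesis (scalar curvature above the hyperbolic model). Candidate technologies include a Kleiner-style conformal argument adapted to constant negative background curvature, or a Geroch-type monotonicity of a Hawking-like mass along weak inverse mean curvature flow. A secondary technicality is making the annular slicing rigorous: one must arrange that the perimeter lost on each cut sphere is negligible, which a coarea argument selecting good radii handles cleanly.
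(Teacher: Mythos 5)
Your overall architecture --- concentration compactness, the $\log$-term strict subadditivity of the hyperbolic profile, and a scalar-curvature-powered comparison $P_g(\Omega)\geq I_{\HH^3}(|\Omega|)-C_0$ obtained from Geroch monotonicity along weak inverse mean curvature flow --- is the same as the paper's, and you correctly flag that last comparison as the hardest ingredient (the paper proves it, for isoperimetric regions only, by combining Christodoulou--Yau's Hawking mass lower bound for stable CMC boundaries with the IMCF volume estimate, plus a version of the flow that jumps over the horizon; for arbitrary $\Omega$ the bound as you state it is essentially a full isoperimetric comparison theorem and is not what is needed). However, two of your three terminal cases have genuine gaps. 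First, the case $0<V_{\mathrm{esc}}=O(1)$ is not addressed at all: there the quantity $V_\infty V_{\mathrm{esc}}/V$ stays bounded, so the $\log$ gain vanishes and your subadditivity step yields nothing, while your ``strict excess'' device does not apply to the escaping piece (which is exactly a hyperbolic ball, with zero excess). The paper kills this case with the Lagrange multiplier condition built into the generalized existence theorem: the bubble at infinity and $\partial^{*}\Omega$ must have the same constant mean curvature, but a hyperbolic ball of bounded volume has $H\gg 2$ whereas the large isoperimetric boundary has $H\to 2$ by Christodoulou--Yau.

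Second, your resolution of the case where essentially all the volume escapes is wrong on two counts. Placing a near-hyperbolic ball ``sufficiently deep in the asymptotic end'' produces a competitor, not a minimizer; the infimum need not be attained by any far-out region, and ruling out exactly this failure mode is the content of the theorem. Moreover, since this case is decided at the level of additive constants (no $\log$ gain), your upper bound $I_g(V)\leq I_{\HH^3}(V)+o(1)$ is fatally imprecise: a centered coordinate ball of volume $V$ in $(M,g)$ has area $I_{\HH^3}(V)-2V(M,g)+o(1)$, and the sign of the resulting comparison between ``coordinate ball'' and ``horizon plus bubble at infinity'' is governed precisely by the renormalized volume inequality $V(M,g)+\tfrac12\cH^2_g(\partial M)>0$ (Proposition \ref{prop:vol-comp-H2}). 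That inequality is the genuinely global consequence of $R_g\geq-6$, proven by running weak IMCF from the horizon, and it is indispensable: Theorem \ref{theo:main-thm-is-sharp} exhibits a metric violating $R_g\geq-6$ for which the all-volume-escapes configuration strictly beats every coordinate ball. Any argument for this case that does not isolate such a quantitative global input --- your ``strict excess $P_g(\Omega_\infty)>2V_\infty+\delta$'' is a local statement that holds regardless of the scalar curvature hypothesis --- would prove a false theorem.
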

Interestingly, the corresponding statement for asymptotically flat manifolds is only partially resolved: M.\ Eichmair and J.\ Metzger have proven \cite[Theorem 1.2]{EichmairMetzger:high-d-iso} that in an arbitrary asymptotically flat manifold with non-negative scalar curvature, there exists a sequence of isoperimetric regions whose volumes which tend to infinity.

It is well known that a geodesic ball in hyperbolic space $(\overline M,\overline g)$ is isoperimetric. Moreover, J.\ Corvino, A.\ Gerek, M.\ Greenberg, and B.\ Krummel have proven in \cite{CovinoGerekGreenbergKrummel} that for $\mm>0$, the centered coordinate balls are the unique isoperimetric regions in Schwarzschild-anti-deSitter of mass $\mm>0$. Their proof is a modification of the technique developed by H.\ Bray in his thesis \cite{Bray:thesis} to prove that centered coordinate balls are uniquely isoperimetric in the Schwarzschild metric. We note that Bray's proof also works for compact perturbations of Schwarzschild showing that sufficiently large coordinate balls are uniquely isoperimetric in such manifolds.

Our second main result concerns uniqueness of large isoperimetric regions in a compact perturbation of Schwarzschild-anti-deSitter (see Definition \ref{def:cpt-pert-SADS} below).

 \begin{theo}\label{theo:main-theo}
 Let $(M,g)$ be a compact perturbation of Schwarzschild-anti-deSitter of mass $\mm>0$ with scalar curvature $R_{g}\geq -6$. Then, sufficiently large centered coordinate spheres are uniquely isoperimetric.
 \end{theo}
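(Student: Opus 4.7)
The plan is to combine Theorem \ref{theo:exist-iso-AH} (existence) with a uniqueness argument modeled on Bray's thesis \cite{Bray:thesis} and its adaptation to exact Schwarzschild--anti-deSitter by Corvino--Gerek--Greenberg--Krummel \cite{CovinoGerekGreenbergKrummel}. The excerpt already points out that Bray's approach extends from exact Schwarzschild to compact perturbations thereof, and our goal is to carry out the analogous extension in the asymptotically hyperbolic setting. Let $\Omega_{V} \subset M$ be an isoperimetric region of volume $V \geq V_{0}$, furnished by Theorem \ref{theo:exist-iso-AH}, and write $\Sigma_{V} := \partial \Omega_{V}$; by standard regularity, $\Sigma_{V}$ is a smooth closed CMC surface whose mean curvature tends to $2$ as $V \to \infty$.

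The first main step is to show that for $V$ sufficiently large, $\Sigma_{V}$ lies entirely in the region where $g \equiv g_{\text{SADS}}$. This amounts to a quantitative statement that the isoperimetric minimizer is not drawn inward by the compact perturbation. The strategy is to compare $\area_{g}(\Sigma_{V})$ with the area of a centered coordinate sphere of the same enclosed volume in exact SADS and to show, via a cutoff/replacement construction together with effective isoperimetric control in the asymptotic end, that any $\Sigma_{V}$ intersecting the perturbation region can have its $g$-area decreased by a definite amount, contradicting the isoperimetric property once $V$ is large.

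Once $\Sigma_{V}$ is confined to the exact SADS region, the theorem reduces to a statement about exact SADS alone: a CMC surface there which bounds an isoperimetric region of $(M,g)$ must be a centered coordinate sphere. For this I would appeal directly to Corvino--Gerek--Greenberg--Krummel; the mechanism is Geroch-style monotonicity of the hyperbolic Hawking mass along weak inverse mean curvature flow, which is valid precisely under $R_{g} \geq -6$, combined with the sharp area--volume relation for centered coordinate spheres in SADS. Rigidity in the Hawking-mass inequality then forces $\Sigma_{V}$ to be a centered coordinate sphere.

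I expect the first step to be the main obstacle, because the asymptotically hyperbolic end admits no exact translational symmetry and off-centered candidates cannot be ruled out by symmetry considerations alone. The counterexample alluded to in the abstract makes clear that the scalar curvature hypothesis $R_{g} \geq -6$ is indispensable: without it, even large centered coordinate spheres in a compact perturbation of SADS may fail to be isoperimetric. Consequently, the argument controlling the location of $\Sigma_{V}$ must genuinely invoke the curvature hypothesis, and not merely the SADS asymptotics, likely through an integral identity (such as a Pohozaev- or Minkowski-type identity, or the monotonicity of the hyperbolic Hawking mass) that explicitly couples area, volume, and $R_{g} + 6$.
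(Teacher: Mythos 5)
Your high-level skeleton (existence, then confinement to the unperturbed region, then classification in exact Schwarzschild--AdS) matches the paper's, but the two steps that carry all the difficulty are either left as black boxes or pointed at the wrong tools, and one essential case is missing entirely. First, the confinement step: you propose a ``cutoff/replacement construction'' showing that any $\Sigma_{V}$ meeting the perturbed region can have its area decreased by a definite amount. The paper explicitly explains why no such direct comparison is available here (the mass term in the isoperimetric profile expansion is \emph{decaying}, of order $V^{-1/2}$, so Bray-style replacement comparisons lose to lower-order effects), and its actual mechanism is quite different: it first derives a nearly sharp upper bound on $\sL^{3}_{g}(\Omega_{V})$ via Geroch monotonicity along weak inverse mean curvature flow combined with the finiteness of the renormalized volume (Proposition \ref{prop:imcf-bds-good-case}), compares with the volume of coordinate balls to extract the Hawking mass bound $m_{H}(\Sigma_{V})\leq 4\mm$, feeds this into the Christodoulou--Yau inequality to conclude $\int_{\Sigma_{V}}(R_{g}+6+|\tfsff|^{2})\to 0$ \emph{for genus zero connected boundaries}, and then invokes an effective rigidity statement (Lemma \ref{lemm:effective-no-unbil-R-6-surf}, resting on the nonexistence of properly embedded totally umbilical $H_{g}\equiv 2$ surfaces) to force $\underline{s}(\Sigma_{V})\to\infty$. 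This is where $R_{g}\geq -6$ genuinely enters, as you correctly anticipated, but the route is through Christodoulou--Yau and the renormalized volume, not a Pohozaev identity or a direct replacement.

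Second, and more seriously, your argument says nothing about isoperimetric regions that are disconnected or whose large component has positive genus; Christodoulou--Yau only gives the needed smallness of $\int(R_{g}+6+|\tfsff|^{2})$ in the genus zero case, so the confinement argument simply does not apply to them. The paper devotes all of Section \ref{sec:proof-main-theo} to excluding these: it shows such a region would force the differential inequality $-\frac{d}{dA}[V_{g}'(A)^{-2}]\geq 24\pi A^{-2}$ on the (volume form of the) isoperimetric profile, which upon integration contradicts the known expansion of $\sL^{3}_{g}(\cB_{g}(A))$. Without some version of this you cannot conclude. Finally, a smaller point: once $\Sigma_{V}$ is confined to the exact Schwarzschild--AdS region, the correct tool is Brendle's Alexandrov theorem (Theorem \ref{theo:brendle-IHES}), which classifies \emph{all} closed CMC hypersurfaces there; appealing to Corvino--Gerek--Greenberg--Krummel would require knowing $\Sigma_{V}$ is isoperimetric for the exact metric on all of $\overline M_{\mm}$, which does not follow from its being isoperimetric in $(M,g)$.
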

 
 Hence, the large isoperimetric regions are completely determined in such $(M,g)$. We refer the reader to \cite[Appendix H]{EichmairMetzger:high-d-iso} for a survey of manifolds in which some or all of the isoperimetric regions are known. See also the survey articles \cite{Osserman:isoSurvey,Ros:iso,Ritore:isoSurvey} for more information concerning the isoperimetric problem.
 
In Theorem \ref{theo:main-thm-is-sharp}, we show that the assumption on the scalar curvature cannot be dropped in Theorem \ref{theo:main-theo}. More precisely, we construct a metric $(M,g)$ that is a compact perturbation of Schwarzschild-anti-deSitter of mass $\mm>0$ (but without $R_{g}\geq-6$ in some parts of the compact region) so that sufficiently large centered coordinate spheres are \emph{not} isoperimetric. This is in sharp contrast to the situation for compact perturbations of Schwarzschild: Bray's proof \cite{Bray:thesis} that large centered coordinate spheres are isoperimetric in a compact perturbation of Schwarzschild does not require non-negativity of scalar curvature.

\begin{rema}
In Definitions \ref{defi:AH-metric} and \ref{def:cpt-pert-SADS}, we have assumed that the horizon (boundary) of $(M,g)$ is connected. However, this is not strictly necessary in our proof of Theorems \ref{theo:exist-iso-AH} and \ref{theo:main-theo}. We have included it because it simplifies considerably the notation and arguments involved in the portions using inverse mean curvature flow with jumps; cf.\ the comment after Proposition \ref{prop:vol-comp-H2}.
\end{rema}

\subsection{The renormalized volume} 
One interesting consequence of Theorem \ref{theo:main-theo} is that for $(M,g)$, a compact perturbation of Schwarzschild-anti-deSitter of mass $\mm>0$ with scalar curvature $R_{g}\geq -6$, the isoperimetric profile $A_{g}(V)$ may be computed for sufficiently large $V$. In particular, as a corollary to Theorem \ref{theo:main-theo}, we see (by inverting the series obtained in Lemma \ref{lemm:vol-large-coord-balls-g}) that
\begin{equation*}
A_{g}(V) = A_{\overline g}(V)- 2 V(M,g) + 8\sqrt{2}\pi^{\frac 32} \mm V^{-\frac 12} + o(V^{-\frac 12}),
\end{equation*}
as $V\to\infty$. Here, as one might expect, the first term is the isoperimetric profile of hyperbolic space, $A_{\overline g}(V)$ and the remaining two terms depend on the geometry of $(M,g)$. As in the asymptotically flat setting (cf.\ \cite[(3)]{EichmairMetzger:3d-iso}), the mass $\mm$ causes the isoperimetric profile to deviate from that of hyperbolic space. However, there are two features of this expansion that differ from the asymptotically flat setting: (1) there is a quantity $V(M,g)$ that makes a stronger contribution (i.e., it is of lower order in the expansion) and (2) the mass term is decaying for large $V$. 

We term the quantity $V(M,g)$ mentioned in (1) the \emph{renormalized volume} (see Definition \ref{defi:renorm-vol}). Because $V(M,g)$ appears before the mass term $\mm$ in the expansion of $A_{g}(V)$, it is natural to conclude the renormalized volume is the most natural notion of ``isoperimetric mass,'' in the sense of G.\ Huisken's work \cite{Huisken:iso-mass}, in the asymptotically hyperbolic setting. With S.\ Brendle, we have recently proven \cite{BrendleChodosh:VolCompAH} a result that can be seen as a Penrose inequality\footnote{We note that \cite{BrendleChodosh:VolCompAH} considers metrics with \emph{minimal} (i.e., $H_{g}\equiv 0$) boundaries, rather than $H_{g}\equiv 2$, but similar arguments work in our setting; see \S\ref{sec:renorm-vol} for an explanation of this point.} for asymptotically hyperbolic manifolds, with the renormalized volume replacing the mass. The fact mentioned in (2), that the mass term is decaying as $V\to \infty$, provides some insight as to why Bray's comparison argument cannot be modified in a simple way to prove Theorem \ref{theo:main-theo} (of course, the fact that $R_{g}\geq -6$ cannot be dropped as shown in Theorem \ref{theo:main-thm-is-sharp}, also implies that such an argument should not work). 
 
 \subsection{The asymptotically hyperbolic Penrose inequality}
 
Theorem \ref{theo:main-theo} is relevant in the context of the conjectured Penrose inequality for asymptotically hyperbolic manifolds. The following conjecture originally appeared (in a more general form) in \cite{Wang:AHmass}; see also the survey articles \cite{BrayChrusciel:Penrose} and \cite{Mars:Penrose}.
\begin{conj}[Asymptotically hyperbolic Penrose inequality]\label{conj:AH-penrose}
Suppose that $(M,g)$ is a compact perturbation of Schwarzschild-anti-deSitter of mass $\mm\geq 0$ and has scalar curvature $R_{g}\geq -6$. If $m_{\partial M} \geq 0$ satisfies $\cH^{2}_{g}(\partial M) \geq \cH^{2}_{\overline g_{m_{\partial M}}}(\partial\overline M_{m_{\partial M}})$, then $\mm \geq  m_{\partial M}$, with equality if and only if $(M,g)$ is isometric to Schwarzschild-AdS of mass $m$.
\end{conj}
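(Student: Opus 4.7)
The plan is to combine Theorem~\ref{theo:main-theo} with a Geroch-type monotonicity of the asymptotically hyperbolic Hawking mass along a foliation by isoperimetric surfaces, under the hypothesis the author flags in the introduction that isoperimetric regions exist and are connected for every volume $V > 0$.

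First, using Theorem~\ref{theo:exist-iso-AH} together with the connectedness hypothesis (and a standard minimizing-sequence argument with interior regularity), I would produce a one-parameter family $\{\Omega_V\}_{V>0}$ of connected isoperimetric regions, with boundaries $\Sigma_V := \partial \Omega_V$ smooth stable CMC surfaces. By Theorem~\ref{theo:main-theo} the $\Sigma_V$ are centered coordinate spheres for $V \geq V_0$, while as $V \to 0^+$ the regions degenerate onto $\partial M$ (using that, by Definition~\ref{defi:AH-metric}, $\partial M$ is the unique compact surface in $M$ with $H_g \equiv 2$). Since the isoperimetric profile $V \mapsto A_g(V)$ is concave on the complement of a countable set, the family $\{\Sigma_V\}$ forms a smooth CMC foliation off a countable set of jump volumes, with mean curvatures $H(V)$ tending to $2$ both as $V \to \infty$ (from the coordinate sphere asymptotics) and at the horizon.

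Second, I would establish monotonicity of the AH Hawking mass
\[
m_H(\Sigma) = \sqrt{\frac{|\Sigma|_g}{16\pi}} \cdot \frac{1}{16\pi}\left( 16\pi - \int_\Sigma H_g^2 \, dA + 4\, |\Sigma|_g \right)
\]
along $\{\Sigma_V\}$. On smooth pieces this is the AH analogue of Geroch's computation: differentiating $m_H$ along the foliation with lapse $\varphi > 0$, applying the Gauss equation, Gauss-Bonnet for the topological sphere $\Sigma_V$, stability of $\Sigma_V$ as an isoperimetric boundary, and the bound $R_g \geq -6$ yields $\frac{d}{dV} m_H(\Sigma_V) \geq 0$. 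Across jump volumes, monotonicity follows from a weak replacement inspired by the Huisken-Ilmanen outermost minimizing hull construction, adapted to the boundary condition $H_g \equiv 2$ on $\partial M$. At the two endpoints, the expansion underlying Lemma~\ref{lemm:vol-large-coord-balls-g} gives $\lim_{V \to \infty} m_H(\Sigma_V) = \mm$, and the horizon satisfies $m_H(\partial M) = \sqrt{|\partial M|_g / 16\pi} \geq m_{\partial M}$ by the area hypothesis comparing with Schwarzschild-AdS of mass $m_{\partial M}$. Chaining the inequalities gives $\mm \geq m_{\partial M}$. For rigidity, equality forces $m_H$ to be constant along $\{\Sigma_V\}$, so each leaf must be totally umbilic, $R_g \equiv -6$, and an ODE reconstruction of $g$ from the foliation identifies $(M,g)$ with Schwarzschild-AdS of mass $\mm = m_{\partial M}$.

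The main obstacle is the jump analysis: the connectedness hypothesis on isoperimetric regions does not by itself rule out discontinuities in $A_g(V)$, and controlling $m_H$ across such jumps requires an AH analog of the outermost minimizing hull construction that respects the non-minimal horizon condition $H_g \equiv 2$ rather than the familiar $H_g \equiv 0$. A secondary difficulty is ensuring that the topological-sphere property of $\Sigma_V$ persists throughout the foliation, so that the Gauss-Bonnet step in Geroch's calculation is available at every leaf, and that the degeneration $\Sigma_V \to \partial M$ as $V \to 0^+$ is strong enough to pass $m_H(\Sigma_V) \to m_H(\partial M)$ to the limit.
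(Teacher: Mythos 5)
The statement you are proving is labeled a \emph{conjecture} in the paper, and the paper does not prove it; what the paper establishes is only the Corollary immediately following it, namely the Penrose inequality \emph{under the additional hypothesis} that $(M,g)$ admits a connected isoperimetric region for every volume $V\geq 0$. Your proposal invokes exactly that hypothesis in its first sentence (``under the hypothesis the author flags in the introduction that isoperimetric regions exist and are connected for every volume''), so even if every step were carried out, you would have proven the Corollary, not the Conjecture. This is the essential gap: the existence of connected isoperimetric regions at \emph{all} volumes (not just large ones, which is Theorem \ref{theo:exist-iso-AH}) is precisely the missing ingredient that keeps the full statement conjectural, and nothing in your argument supplies it. For the Corollary itself, your route is the intended one: the paper obtains it by combining Theorem \ref{theo:main-theo} (large coordinate spheres are isoperimetric, giving the endpoint $m_{H}(\Sigma_{V})\to\mm$) with \cite[Proposition 6.3]{CovinoGerekGreenbergKrummel}, which is precisely Bray's monotonicity of the Hawking mass along connected isoperimetric surfaces, with the horizon endpoint $m_{H}(\partial M)=(16\pi)^{-1/2}\cH^{2}_{g}(\partial M)^{1/2}\geq m_{\partial M}$ computed exactly as you do (using $\cH^{2}_{\overline g_{m}}(\partial\overline M_{m})=16\pi m^{2}$ and $H_{g}\equiv 2$).

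Two remarks on the internals of your sketch. First, your ``main obstacle'' --- controlling $m_{H}$ across jumps via a Huisken--Ilmanen minimizing-hull replacement --- conflates the inverse-mean-curvature-flow approach with Bray's isoperimetric-profile approach. In the latter (which is what CGGK use, and what this paper uses in \S\ref{sec:proof-main-theo} for a different purpose), one never tracks $m_{H}$ across a discontinuity of the minimizers; instead one derives a second-order differential inequality for the profile that holds in the barrier sense at every volume and integrates it distributionally, exactly as in Proposition \ref{prop:some-balls-are-isop}. No hull construction or $H_{g}\equiv 2$ analogue of the jump is needed. Second, your concern about preserving the genus-zero property of the leaves is legitimate (the Gauss--Bonnet term $4\pi\chi(\Sigma)$ enters the second-variation identity, cf.\ Lemma \ref{lem:large-bad-iso-reg-bds-profile}, and connectedness of the boundary follows from Lemma \ref{lemm:iso-connected-bdry} but genus control does not), and your sketch does not resolve it; this is handled in \cite{CovinoGerekGreenbergKrummel} rather than in the present paper.
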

 Several partial results have been obtained towards the asymptotically hyperbolic Penrose inequality, as discussed in \S \ref{subsub:penrose}. In particular,  \cite[Proposition 6.3]{CovinoGerekGreenbergKrummel}, J.\ Corvino, A.\ Gerek, M.\ Greenberg and B.\ Krummel have modified isoperimetric profile techniques developed by H.\ Bray in his thesis \cite{Bray:thesis} to prove that compact perturbations of Schwarzschild--anti-deSitter with $R_{g}\geq -6$ satisfy the Penrose inequality provided (a) there exist connected isoperimetric regions for every volume $V>0$ and (b) large coordinate spheres are isoperimetric. Our result above shows that (b) is always satisfied, i.e.,
\begin{coro}
Let $(M,g)$ be a compact perturbation of Schwarzschild-anti-deSitter of mass $\mm\geq 0$, with scalar curvature $R_{g}\geq -6$, and so that there exists a connected isoperimetric region enclosing any volume $V\geq 0$. Then $(M,g)$ satisfies the Penrose inequality as described in Conjecture \ref{conj:AH-penrose}. 
\end{coro}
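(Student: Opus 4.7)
The plan is to deduce the corollary directly by combining Theorem \ref{theo:main-theo} with the argument of Corvino--Gerek--Greenberg--Krummel \cite[Proposition 6.3]{CovinoGerekGreenbergKrummel}. That proposition establishes the conclusion of Conjecture \ref{conj:AH-penrose} for a compact perturbation of Schwarzschild-anti-deSitter with $R_{g}\geq -6$ under the two auxiliary hypotheses: (a) there exists a connected isoperimetric region of every volume $V>0$, and (b) for all sufficiently large $V$, large (centered) coordinate spheres are isoperimetric. Hypothesis (a) is assumed in the statement of the corollary, so what is needed is to supply (b).

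The first step is to invoke Theorem \ref{theo:main-theo}: since $(M,g)$ is a compact perturbation of Schwarzschild-anti-deSitter with $R_{g}\geq -6$, for all $V$ exceeding some $V_{1}$ the unique isoperimetric region of volume $V$ is the centered coordinate ball of that volume. In particular, centered coordinate spheres realize the isoperimetric profile $A_{g}(V)$ for large $V$, giving exactly the assumption (b) needed by Corvino--Gerek--Greenberg--Krummel.

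The second step is to apply \cite[Proposition 6.3]{CovinoGerekGreenbergKrummel} verbatim: their argument compares the isoperimetric profile of $(M,g)$ against that of the Schwarzschild-anti-deSitter reference manifold $(\overline M_{m_{\partial M}},\overline g_{m_{\partial M}})$ via an isoperimetric-profile monotonicity along volume, using the $R_{g}\geq -6$ lower scalar curvature bound to control the second-order behavior of the profile, and then forces a comparison at large $V$ using (b). The horizon area hypothesis $\cH^{2}_{g}(\partial M)\geq \cH^{2}_{\overline g_{m_{\partial M}}}(\partial\overline M_{m_{\partial M}})$ sets the initial condition for the comparison, while connectedness from (a) is what lets the comparison propagate through all $V\geq 0$. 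Running their argument produces $\mm\geq m_{\partial M}$, together with the rigidity statement in the equality case.

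There is no genuine obstacle here beyond checking that the hypotheses of \cite[Proposition 6.3]{CovinoGerekGreenbergKrummel} are in exact alignment with our Definition \ref{def:cpt-pert-SADS}; in particular, one should verify that our normalization of the boundary ($H_{g}\equiv 2$) and our asymptotic decay rate are at least as strong as what they require, so that their comparison framework applies. Once this routine bookkeeping is done, the corollary follows immediately.
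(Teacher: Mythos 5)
Your proposal is correct and matches the paper's own (essentially one-line) justification: the corollary is obtained by noting that Theorem \ref{theo:main-theo} supplies hypothesis (b) of \cite[Proposition 6.3]{CovinoGerekGreenbergKrummel} (large centered coordinate spheres are isoperimetric), hypothesis (a) is assumed, and the conclusion then follows from that proposition. No further argument is given or needed in the paper.
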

 
\subsection{Related works}  \label{subsect:intro-related-works} In this section, we briefly discuss previous works on topics related to this paper. 

\subsubsection{CMC hypersurfaces in initial data sets} The study of the relationship between critical points of the isoperimetric problem and initial data sets in general relativity was initiated by  G.\ Huisken and S.-T.\ Yau in \cite{HuiskenYau} when they showed that for certain asymptotically flat metrics with positive mass, there is a foliation of the asymptotic region by CMC spheres which are stable with respect to variations preserving the enclosed volume (see also \cite{Ye:AF-CMC}). Moreover, they proved that any such volume-preserving stable CMC sphere that is sufficiently centered must be a leaf in the foliation. The class of surfaces to which the uniqueness result applies was subsequently extended to all volume-preserving stable CMC surfaces lying outside of a sufficiently large set by J.\ Qing and G.\ Tian \cite{QingTian}. See also \cite{Huang:centerofmass,Huang:CMC,Ma:CMC-RT,Ma:CMC-RT-2} for results along these lines for metrics with more general asymptotics.

M.\ Eichmair and J.\ Metzger have shown in \cite{EichmairMetzger:CMC} that large volume-preserving stable CMC surfaces cannot pass through a compact set of positive mean curvature, and S.\ Brendle and M.\ Eichmair have established \cite{BrendleEichmair:outlying} an intricate relationship between non-negative scalar curvature and the non-existence of outlying volume-preserving stable CMC spheres.   

For metrics which are asymptotic to Schwarzschild-anti-deSitter, R.\ Rigger \cite{Rigger:AHcmc} has shown that such metrics have are foliated near infinity by volume-preserving stable CMC spheres. A.\ Neves and G.\ Tian have shown that the spheres constructed by Rigger are unique, as long as their inner and outer radii are comparable in a certain sense \cite{NevesTian:AHcmc1} (see also \cite{NevesTian:AHcmc2}). R.\ Mazzeo and F.\ Pacard \cite{MazzeoPacard} have proven the existence of CMC foliations for a more general class of metrics.

S.\ Brendle has recently proven a beautiful Alexandrov-type theorem in a wide class of warped product spaces. In particular, a consequence of his result is the following characterization of CMC surfaces in Schwarzschild-anti-deSitter, which we will make use of in the proof of Theorem \ref{theo:main-theo}. 
\begin{theo}[S.\ Brendle {\cite{Brendle:warpedCMC}}]\label{theo:brendle-IHES}
For $\mm>0$, if $\Sigma \hookrightarrow (\overline M_{\mm},\overline g_{\mm})$ is a closed CMC hypersurface in Schwarzschild-anti-deSitter of mass $\mm$, then it is a centered coordinate sphere. 
\end{theo}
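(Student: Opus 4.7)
The plan is to follow Brendle's Alexandrov-type approach, which exploits the warped-product structure of Schwarzschild-anti-deSitter. Writing $\overline g_{\mm} = \frac{dr^2}{f(r)} + r^2 g_{S^2}$ with $f(r) = 1 + r^2 - 2\mm/r$, and passing to the arc-length variable $s$ (so $ds = dr/\sqrt{f}$), the metric takes the warped form $ds^2 + h(s)^2 g_{S^2}$ with $h(s) = r(s)$ and $h'(s) = \sqrt{f(r)}$. The crucial observation is that the radial vector field $X := h(s)\partial_s$ is a conformal gradient field with $\nabla_i X_j = h'(s)\, \overline g_{ij}$, so in particular $\Div X = 3 h'(s)$.

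The proof combines two integral identities applied to a closed CMC hypersurface $\Sigma$ bounding a precompact region $\Omega$ on the horizon side. First I would derive the Minkowski-type formula
\[
2\int_\Sigma h'(s)\, d\mu \;=\; \int_\Sigma H\, \langle X, \nu\rangle\, d\mu,
\]
valid for any closed hypersurface of $\overline M_{\mm}$; this follows by integrating the identity $\Div^\Sigma X^T = 2 h'(s) - H \langle X, \nu\rangle$ for the tangential projection $X^T$, which uses only the conformal property of $X$. Second, I would invoke Brendle's Heintze-Karcher-type inequality, which, under the structural hypotheses satisfied by Schwarzschild-anti-deSitter for $\mm > 0$ and with the horizon treated as an inner minimal boundary, asserts that for $\Sigma$ mean-convex
\[
2\int_\Sigma \frac{h'(s)}{H}\, d\mu \;\geq\; 3\int_\Omega h'(s)\, dV + c_0,
\]
where $c_0$ is the explicit nonnegative horizon boundary term, with equality precisely when $\Sigma$ is a slice $\{r = \text{const}\}$.

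Assuming $H \equiv H_0$ is constant, comparison with the foliation by coordinate spheres forces $H_0 > 0$, and then the Minkowski identity combined with the divergence theorem on $\Omega$ yields
\[
2\int_\Sigma \frac{h'(s)}{H_0}\, d\mu \;=\; \frac{1}{H_0}\int_\Sigma H_0\, \langle X, \nu\rangle\, d\mu \;=\; \int_\Sigma \langle X, \nu\rangle\, d\mu \;=\; 3\int_\Omega h'(s)\, dV + c_0,
\]
the last equality using $\Div X = 3 h'(s)$ together with the explicit value of $\langle X, \nu\rangle$ on the horizon. Hence equality holds in the Heintze-Karcher inequality, and the rigidity statement forces $\Sigma$ to be a centered coordinate sphere. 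The principal obstacle is the Heintze-Karcher inequality with its sharp rigidity: it is the analytic heart of Brendle's paper, proved via a delicate parabolic flow argument relying on precise monotonicity properties of the warping function $h$ for Schwarzschild-anti-deSitter; the rest of the argument is a clean integration-by-parts package that works for any warped product admitting a suitable conformal radial field.
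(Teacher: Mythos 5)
This theorem is not proved in the paper at all: it is imported verbatim from Brendle's work \cite{Brendle:warpedCMC} and used as a black box (e.g.\ in Proposition \ref{prop:behavior-large-iso} and Lemma \ref{lem:large-bad-iso-reg-bds-profile}). Your sketch faithfully reproduces the architecture of Brendle's actual argument --- the Minkowski-type identity for the conformal field $X=h(s)\partial_s$, the Heintze--Karcher-type inequality with the horizon boundary term, and the forced equality for a CMC surface --- so as an account of where the theorem comes from it is accurate. But as a \emph{proof} it is essentially a restatement of the citation: the Heintze--Karcher inequality together with its rigidity case is the entire content of Brendle's theorem, and you explicitly defer it. (Its proof is not a parabolic flow argument, incidentally; Brendle establishes it by a Montiel--Ros-type analysis of the normal exponential map with a Riccati comparison adapted to the warping function.)

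Two smaller points. First, equality in the Heintze--Karcher inequality yields only that $\Sigma$ is totally umbilic; one still needs the separate Codazzi-equation argument (the same one this paper adapts in the first lemma of \S\ref{sec:behav-large-iso}) showing that a totally umbilic CMC hypersurface in $(\overline M_{\mm},\overline g_{\mm})$ with $\mm>0$ must have radial normal, hence be a centered coordinate sphere; your phrase ``the rigidity statement forces $\Sigma$ to be a centered coordinate sphere'' elides this step. Second, in this paper's normalization the boundary of $\overline M_{\mm}$ is the $H_{g}\equiv 2$ sphere $\{s=2\mm\}$, not the minimal sphere; to run your argument with ``the horizon treated as an inner minimal boundary'' you should first extend $\Sigma$'s ambient manifold to Brendle's larger model $\{s\geq s_{\min}\}$ whose boundary is minimal, which is harmless but should be said. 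You should also dispose of the case where $\Sigma$ bounds a compact region disjoint from the horizon (there the boundary term vanishes and the same equality argument applies).
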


We note that S.\ Brendle, P.-K.\ Hung, and M.-T.\ Wang have proven a Minkowski-type inequality in Schwarzschild-anti-deSitter \cite{BrendleHungMTWang:minkowski-type-ineq-adsSchw}, using inverse mean curvature flow in combination with a Heintze--Karcher type inquality from \cite{Brendle:warpedCMC}. 
 
\subsubsection{Isoperimetric regions in initial data sets}

As mentioned above, H.\ Bray showed in his thesis \cite{Bray:thesis} that the coordinate spheres in Schwarzschild and compact perturbations of Schwarzschild are isoperimetric. Using an effective version of Bray's method, M.\ Eichmair and J.\ Metzger have shown that for an asymptotically flat metric that is asymptotically Schwarzschild, large Huisken--Yau spheres are uniquely isoperimetric \cite{EichmairMetzger:3d-iso}. They have also extended their results to all dimensions, showing that an asymptotically flat metric that is asymptotic to Schwarzschild must have a unique foliation near infinity by isoperimetric surfaces \cite{EichmairMetzger:high-d-iso}. An interesting feature of the results just mentioned concerning isoperimetric regions in asymptotically flat manifolds is that they do not require the manifold to have non-negative scalar curvature (this should be compared to Theorem \ref{theo:main-thm-is-sharp}). 

Morover, G.\ Huisken has established \cite{Huisken:iso-mass,Huisken:MM-iso-mass-video} a deep relationship between the mass of an asymptotically flat manifold and its isoperimetric profile. Our argument proving Theorem \ref{theo:main-theo} is inspired in part by Huisken's techniques. We also mention that M.\ Eichmair and S.\ Brendle have characterized the isoperimetric surfaces in the ``doubled Schwarzschild'' metric \cite{BrendleEichmair:doubledSchw}.

\subsubsection{The asymptotically hyperbolic Penrose inequality}\label{subsub:penrose} A.\ Neves has shown in \cite{Neves:IMCFonAH} that the inverse mean curvature flow proof used by G.\ Huisken and T.\ Ilmanen in \cite{HuiskenIlmanen:Penrose} to prove the asymptotically flat Penrose inequality (see also the proof of the asymptotically flat Penrose inequality by H.\ Bray, using a completely different method \cite{Bray:Penrose3d}) breaks down when evaluating the large time limit of the Hawking mass of the flow. 

On the other hand, with S.\ Brendle we have established \cite{BrendleChodosh:VolCompAH} a sharp renormalized volume comparison result for asymptotically hyperbolic manifolds, which is similar to the Penrose inequality, except where ``mass'' is replaced by the quantity ``renormalized volume.'' We note that X.\ Hu, D.\ Ji, and Y.\ Shi \cite{HuJiShi:renorm-vol} have recently proven that an appropriate scalar curvature lower bound implies positivity of the renormalized volume introduced in \cite{BrendleChodosh:VolCompAH} in higher dimensions, within a class of metrics having no boundary and which are a (globally) small perturbation of a model metric.

The asymptotically hyperbolic Penrose inequality has been proven in the special case of manifolds that may be embedded as graphs in hyperbolic space in works by M.\ Dahl, R.\ Gicquaud, and A.\ Sakovich \cite{DahlGicquaudSakovich:AHpenroseGraphs} as well as L.\ de Lima and F.\ Gir\~ao \cite{LimaGirao:AH-penrose}. Moreover, based on an observation of Bray \cite{Bray:thesis} that the Hawking mass is monotone along a foliation of volume-preserving stable CMC spheres, L.\ Ambrozio has recently shown \cite{Ambrozio:penrose} that a Penrose inequality for metrics which are sufficiently small perturbations of Schwarzschild-anti-deSitter. 

Finally, we note that A.\ Neves and D.\ Lee have shown \cite{LeeNeves} that a related class of metrics known as ``asymptotically locally hyperbolic metrics'' satisfy a Penrose inequality as long as the mass is non-positive.

\subsection{Outline of the proof of Theorems \ref{theo:exist-iso-AH} and \ref{theo:main-theo}}
The general strategy for the proof of Theorem \ref{theo:main-theo} is to show that large isoperimetric regions cannot pass through the perturbed region of $(M,g)$, which, in conjunction with Brendle's Alexandrov theorem (cited here as Theorem \ref{theo:brendle-IHES}), will allow us to conclude that if large isoperimetric regions exist, then they must be centered coordinate spheres. From this, it would not be hard to complete the proof: if large isoperimetric regions do not exist, then a minimizing sequence for the isoperimetric problem must split into a region diverging to infinity (so the background metric is approaching hyperbolic space) and a region converging to an isoperimetric region in $(M,g)$. Comparison of volume would allow us to rule this possibility out. The actual proof works somewhat differently, as we will only be able to show that large, connected, genus zero, isoperimetric regions cannot pass through the compact region. Hence, a large portion of the argument is devoted to obtaining sufficient control of large isoperimetric regions that do not have these properties, so as to rule them out.

We now give a somewhat more detailed outline of the argument. As we have just discussed, one crucial step is to show that large (connected, genus zero) isoperimetric regions cannot pass through the perturbed region. For asymptotically flat metrics, as is proven in \cite[Proposition 6.1]{EichmairMetzger:3d-iso}: first, taking the limit of isoperimetric sets passing through the compact region, one can find an area-minimizing boundary.

Next, using a modification of the mechanism discovered by R.\ Schoen and S.-T.\ Yau \cite{SchoenYau:PMT} in their proof of the positive mass theorem, the existence of such a boundary can be ruled out under appropriate assumptions on the scalar curvature, as long as one can understand the behavior of the limit at infinity, cf. \cite[Proposition 6.1(b)]{EichmairMetzger:3d-iso} (see also the related works concerning stable minimal surfaces in asymptotically flat manifolds: \cite{EichmairMetzger:CMC} and subsequently \cite{Carlotto:StabMinSurf}). 

In the asymptotically hyperbolic setting this argument proves difficult. We have been unable to obtain sufficient control of the behavior at infinity of such a limit in the asymptotically hyperbolic setting; a particularly difficult issue is the lack of ability to blow-down the metric in a way analogous to blowing-down an asymptotically flat metric, as well as the fact that such a surface is likely to exhibit exponential extrinsic area growth.

Because of the difficulty with carrying out the aforementioned argument in the asymptotically hyperbolic setting, we deal with the isoperimetric surfaces directly, before taking the limit. A crucial observation is that for a sequence of genus zero, connected isoperimetric regions, whose Hawking mass is uniformly bounded and whose surface area is becoming large, the well known result of D.\ Christodoulou and S.-T.\ Yau \cite{ChristodoulouYau} shows that $R+6+|\tfsff|^{2}$ becomes small in an integral sense. Hence, the limit of such a sequence will be totally geodesic (because $R_{g}\geq -6$), and is thus easily analyzed. 

To obtain uniform Hawking mass bounds on such surfaces, an obvious strategy is to make use of the monotonicity of the Hawking mass along the inverse mean curvature flow, as in G.\ Huisken and T.\ Ilmanen's proof of the Penrose inequality \cite{HuiskenIlmanen:Penrose} (a related strategy has been used by G. Huisken for his isoperimetric mass of asymptotically flat manifolds \cite{Huisken:iso-mass}). This strategy would work in an asymptotically flat manifold, but in our setting, it is not clear that we may bound the limit of the Hawking mass along the flow, by the examples constructed in \cite{Neves:IMCFonAH}. Our argument relies instead on a mechanism discovered by the author and S.\ Brendle \cite{BrendleChodosh:VolCompAH} in which a quantity we term the ``renormalized volume'' is shown to be bounded from below by combining the Geroch monotonicity for the inverse mean curvature flow with the isoperimetric inequality in the exact Schwarzschild-anti-deSitter metric. 

Because this allows us to bound the volume outside of an (outer-minimizing) region, the argument may also be combined with the fact that the renormalized volume is finite to bound the volume contained inside of an isoperimetric region from above; see Proposition \ref{prop:imcf-bds-good-case}. It turns out that this bound is nearly sharp. By comparing the volume contained inside of a large isoperimetric region with that contained inside of a coordinate sphere and expanding both expressions in a series depending on the surface area, the first term which differs contains the Hawking mass of the isoperimetric region and the mass of the background metric. This allows us to establish the desired Hawking mass bounds.

At this point, we are able to prove Theorem \ref{theo:exist-iso-AH}, which asserts the existence of large isoperimetric regions in general asymptotically hyperbolic manifolds in \S\ref{sec:pf-exist-in-AH}. To do so, we use similary bounds on the volume contained inside of a general isoperimetric region, which hold even for disconnected and/or higher genus regions, which follow from a argument similar to what we have just discussed, using inverse mean curvature flow with jumps, cf.\ Proposition \ref{prop:coarse-vol-bds} and Corollary \ref{coro:coarse-bds-general-large-iso}. A crucial step in the existence proof is the relationship between the renormalized volume and the area of the horizon obtained in Proposition \ref{prop:vol-comp-H2}.

In particular, it follows from the above results that in compact perturbations of Schwarzschild-anti-deSitter, large isoperimetric regions exist and if they are connected and genus zero, then they must be centered coordinate spheres. To rule out the other possibilities, e.g., higher genus and/or disconnected large isoperimetric regions, we combine the volume bounds from inverse mean curvature flow with jumps with an argument inspired by H.\ Bray's approach to the asymptotically flat Penrose inequality via the isoperimetric profile \cite{Bray:thesis}. We show (in Section \ref{sec:proof-main-theo}) that if the genus zero case does not occur, then the region must consist primarily of a higher genus component, which would give a bound on the isoperimetric inequality that is too strong to be satisfied for large volumes. This completes the proof of Theorem \ref{theo:main-theo}.

\subsection{Structure of the paper} We define the terms and notation we will use throughout this work in \S\ref{sec:defi} and collect several fundamental properties of isoperimetric regions that we will need later in the paper in \S\ref{sec:fund-iso-prop}. In \S\ref{sec:fund-prop-IMCF}, we recall properties of weak solutions to the inverse mean curvature flow and discuss a mean curvature flow with jumps over obstacles. Then in \S\ref{sec:renorm-vol}, we discuss the renormalized volume as it applies in our setting. In \S\ref{sec:vol-bd-large-iso}, we prove nearly sharp upper bounds on the volume contained inside of isoperimetric regions. This then allows us to prove Theorem \ref{theo:exist-iso-AH} in \S\ref{sec:pf-exist-in-AH}.

In \S\ref{sec:behav-large-iso} we show that large, connected, genus zero isoperimetric regions must leave any compact set in a compact perturbation of Schwarzschild-AdS and additionally study large isoperimetric regions of arbitrary genus and connectivity. Then, in \S\ref{sec:proof-main-theo} we put these properties together with an analysis of the isoperimetric profile to prove Theorem \ref{theo:main-theo}. Finally, \S\ref{sec:main-theo-is-sharp} contains an example showing that $R_{g}\geq -6$ cannot be removed in Theorem \ref{theo:main-theo}. 

In Appendix \ref{sec:vol-coord-balls}, we compute asymptotic expansions for the volume of large coordinate balls and in Appendix \ref{app:comp-bdry-bds}, we prove Proposition \ref{prop:comp-bdry-bds}, which gives an upper bound on the number of connected components in an isoperimetric region.

\section{Definitions and notation}\label{sec:defi} 
In this section we fix several definitions and notation which we will use throughout the work. 
\subsection{Asymptotically hyperbolic metrics} We define the hyperbolic metric
\begin{equation*}
\overline g = \frac{1}{1+s^{2}} ds\otimes ds + s^{2} g_{\SS^{2}}
\end{equation*}
on $\overline M : = \RR^{3}$. Let $\overline D$ denote the connection induced by the hyperbolic metric $\overline g$. In this work, we will be concerned with the following general class of metrics.
\begin{defi}\label{defi:AH-metric}
Suppose that $(M,g)$ is a metric on $\RR^{3}\setminus \overline K$ for some precompact open set $K$. We say that $(M,g)$ is \emph{asymptotically hyperbolic} if, for $k=0,1,2$, we have that 
\begin{equation*}
|\overline D^{k}(g-\overline g)|_{\overline g} = O(s^{-3})
\end{equation*}
as $s \to \infty$. We will also require that $\partial M$ is a connected, outermost,\footnote{Outermost means that there are no compact, constant mean curvature surfaces with $H_{g}\equiv 2$ in the interior of $(M,g)$.} constant mean curvature (CMC) surface with $H_{g}\equiv 2$.
\end{defi}
We will call $\partial K = \partial M$ the \emph{horizon}, and $K$ the \emph{horizon region}. The fundamental example of an asymptotically hyperbolic metric is:
\begin{defi}
We define \emph{Schwarzschild-anti-deSitter} (\emph{Schwarzschild-AdS}) of mass $\mm\geq0$ to be the metric
\begin{equation}\label{eq:schwAdS}
\overline g_{\mm} = \frac{1}{1+s^{2} - 2 \mm s^{-1}} ds \otimes ds + s^{2} g_{\SS^{2}}
\end{equation}
on $\overline M_{\mm} : = \{s \geq 2 \mm\}\times \SS^{2}\subset \RR^{3}$. 
\end{defi}
Schwarzschild-AdS may be realized as a totally umbilic, spacelike hypersurface in the exterior region of the Schwarzschild spacetime\footnote{Recall that the Schwarzschild spacetime is the unique, rotationally symmetric, Ricci flat Lorentzian metric in $3+1$ dimensions.}, which limits to null infinity (cf.\ \cite[\S 3.6]{BrendleWang:gibbons-penrose}). The metric has constant scalar curvature $R_{\overline g_{\mm}} \equiv -6$, and the horizon $\partial \overline M_{\mm}$ has constant mean curvature $H_{g} \equiv 2$ (indeed, it is the unique such rotationally symmetric manifold). Notice that setting $\mm=0$ in Schwarzschild-AdS yields hyperbolic space. 

We will also consider the following sub-class of asymptotically hyperbolic metrics:
\begin{defi}\label{def:cpt-pert-SADS}
Suppose that $K \subset \RR^{3}$ is a precompact open set with smooth boundary 
and let $M = \RR^{3}\setminus \overline K$. If $g$ is a Riemannian metric on $M$, we say that $(M,g)$ is a \emph{compact perturbation of Schwarzschild-AdS} of mass $\mm$ if there is some compact set $\tilde K$ containing $K$, so that $g = \overline g_{\mm}$ in $M \setminus \tilde K$. We will additionally require that $\partial M$ is a connected, outermost, CMC surface with mean curvature $H_{g} \equiv 2$. 
\end{defi}

We will use the following notation for centered coordinate balls: for $A$ large enough, we write $\cB_{\overline g_{m}}(A)$ for the centered coordinate ball in $(\overline M_{m},\overline g_{m})$ of surface area $A$. Regarded as a set in $\RR^{3}$ (using the coordinate system as in \eqref{eq:schwAdS}) we will always regard $\cB_{\overline g_{m}}(A)$ as containing the horizon, i.e., a set of the form $\{s\leq s(m,A)\}$ for some $s(m,A)$. If $(M,g)$ is a compact perturbation of Schwarzschild-AdS, then for $A$ sufficiently large we will still write $\cB_{g}(A)$ for centered $\overline g_{\mm}$-coordinate balls whose boundary lies completely in the unperturbed region and has surface area $A$ with respect to $g$ (or $\overline g_{\mm}$).

Finally, for a hypersurface $\Sigma$ in $\RR^{3}$, we define the \emph{inner radius} of $\Sigma$ by
\begin{equation*}
\underline s(\Sigma) := \inf \left\{ s(x) : x \in \Sigma \right\},
\end{equation*}
where the coordinate $s$ is the one used in the definition of Schwarzschild-AdS in \eqref{eq:schwAdS}.

\subsection{Isoperimetric regions} For $(M,g)$, an asymptotically hyperbolic manifold, we will always extend $g$ inside of the horizon region $K$ to some smooth metric $\hat g$ on all of $\RR^{3}$. We say that a Borel set $\Omega \subset \RR^{3}$ \emph{contains the horizon} if $K \subset \Omega$. For such a set $\Omega$, the \emph{reduced boundary} (cf.\ \cite[\S14]{Simon:GMT}) is denoted by $\partial^{*}\Omega$. It is clear that $\partial^{*}\Omega$ is supported in $M$ and $\cH^{2}_{g}(\partial^{*}\Omega) = \cH^{2}_{\hat g}(\partial^{*}\Omega)$. We will write $\sL^{3}_{g}(\Omega): = \sL^{3}_{\hat g} (\Omega\cap M)$. We define the \emph{isoperimetric profile} of $(M,g)$ by
\begin{equation*}
A_{g}(V) : = \inf\left\{ \ \cH^{2}_{g}(\partial^{*} \Omega ) \ \ :
\begin{array}{c}
 \text{$\Omega$ is a finite perimeter Borel set in $\RR^{3}$}\\
  \text{ containing the horizon with } \sL^{3}_{g}(\Omega) = V 
 \end{array}
 \right\}.
\end{equation*}
We say that $\Omega$, a Borel set of finite perimeter that contains the horizon is \emph{isoperimetric} if $\cH^{2}_{g}(\partial^{*}\Omega) = A_{g}(\sL^{3}_{g}(\Omega))$ and that it is \emph{uniquely isoperimetric} if any other isoperimetric region of the same volume differs only on a set of measure zero. We will occasionally abuse notation and say that $\partial^{*}\Omega$ is (uniquely) isoperimetric if $\Omega$ is. 

\subsection{Hawking mass and volume-preserving stability} Because the boundaries of isoperimetric regions are always embedded and two-sided, we will always require this of closed hypersurfaces under consideration. An important notion for a hypersurface of constant mean curvature (CMC) is:
\begin{defi}\label{def:vp-stab-CMC}
For $\Sigma\hookrightarrow (M,g)$ a CMC hypersurface, we say that $\Sigma$ is \emph{volume-preserving stable} if for all $u\in C^{1}(\Sigma)$ with $\int_{\Sigma} u \, d\cH^{2}_{g} = 0$, it holds that
\begin{equation*}
\int_{\Sigma}\left( \Ric(\nu,\nu) + |\sff|^{2}\right)d\cH^{2}_{g} \leq \int_{\Sigma} |\nabla u|^{2} d\cH^{2}_{g}.
\end{equation*}
\end{defi}
Note that volume-preserving stable CMC surfaces are stable critical points of area under a volume constraint. In particular, isoperimetric regions have volume-preserving stable boundaries. A closely related notion is:
\begin{defi}\label{defi:hawking-mass}
The \emph{Hawking mass} of a surface $\Sigma$ in $(M,g)$ is defined to be
\begin{equation*}
m_{H}(\Sigma,g) = \frac{\cH_{g}^{2}(\Sigma)^{\frac 12}}{(16\pi)^{\frac 32}} \left( 16\pi - \int_{\Sigma} \left(H^{2}_{g} - 4\right) \right).
\end{equation*}
\end{defi}
It is important to note that we have chosen the exact form of $\overline g_{\mm}$ and $m_{H}$ so that the Hawking mass of a centered coordinate sphere is $\mm$, i.e., $m_{H}(\partial \cB_{\overline g}(0;r),\overline g_{\mm}) = \mm$. We will drop the reference to the ambient metric when it is clear from context.

\section{Fundamental properties of isoperimetric regions} \label{sec:fund-iso-prop}
The results in this section will hold for general asymptotically hyperbolic manifolds, without any assumptions on the scalar curvature, unless otherwise noted. 

\begin{prop}\label{prop:reg-iso-surf}
An isoperimetric region $\Omega$ containing the horizon in an asymptotically hyperbolic manifold $(M,g)$ has smooth, compact boundary. If $\partial^{*}\Omega$ intersects the horizon, then they must coincide, i.e., $\partial^{*}\Omega = \partial M$. 
\end{prop}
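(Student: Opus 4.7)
My plan for the proposition has three pieces: interior regularity of $\partial^{*}\Omega$, compactness, and the horizon dichotomy.

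For interior regularity I would invoke standard geometric measure theory. Since $\Omega$ minimizes perimeter subject to the volume constraint $\sL^{3}_{g}(\Omega) = V$ and the one-sided obstacle $\Omega \supset K$, the first variation produces a finite Lagrange multiplier $H_{0}$, and $\Omega$ is a $(\Lambda, r_{0})$-perimeter-minimizer in the sense of Tamanini. Consequently $\partial^{*}\Omega \cap \mathrm{int}(M)$ is a smooth constant mean curvature hypersurface with mean curvature $H_{0}$; in ambient dimension three there are no codimension-one singular points.

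For compactness I would use a uniform density lower bound. The AH hypothesis bounds the ambient curvature uniformly and $|H_{0}|$ is bounded, so the monotonicity formula yields a fixed scale $\rho > 0$ and constant $c > 0$ with $\cH^{2}_{g}(\partial^{*}\Omega \cap B_{g}(x, \rho)) \geq c$ for every $x \in \partial^{*}\Omega$. Combined with $\cH^{2}_{g}(\partial^{*}\Omega) = A_{g}(V) < \infty$, a Vitali packing argument forces $\partial^{*}\Omega$ into a bounded subset of $\mathbb{R}^{3}$.

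For the horizon dichotomy, suppose $\Sigma_{c} := \partial^{*}\Omega \cap \partial M$ is non-empty. I would first show $\Sigma_{c} = \partial M$ and $H_{0} = 2$, then use the outermost assumption to rule out additional components. Pick $p \in \Sigma_{c}$ and represent both $\partial M$ and the free part $\Sigma_{f}$ of $\partial^{*}\Omega$ near $p$ as graphs $z = f(x, y)$ and $z = g(x, y)$ over $T_{p}\partial M$ in the outward-from-$K$ normal direction. The containment $\Omega \supset K$ together with the $C^{1,1}$ regularity at the free boundary of the obstacle problem gives $g \geq f$ near $p$ with tangential matching $g(p) = f(p) = 0$, $\nabla g(p) = \nabla f(p) = 0$. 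Setting $h := g - f \geq 0$, the function $h$ satisfies a quasilinear elliptic equation $\mathcal{L}h = H_{0} - 2$ on the free domain $\{h > 0\}$. Admissible outward perturbations on $\Sigma_{c}$ in the first variation force $H_{0} \leq 2$, and if $H_{0} < 2$ strictly, the Hopf boundary point lemma for $h$ at $p$ would contradict $\nabla h(p) = 0$. Hence $H_{0} = 2$, and the strong maximum principle then gives $h \equiv 0$ near $p$, placing $p$ in the interior of $\Sigma_{c}$; by connectedness of $\partial M$, $\Sigma_{c} = \partial M$.

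Finally, any additional free component of $\partial^{*}\Omega$ in $\mathrm{int}(M)$ would be a closed CMC hypersurface with $H \equiv H_{0} = 2$, forbidden by the outermost assumption of Definition \ref{defi:AH-metric}. Hence $\partial^{*}\Omega = \partial M$. The main delicate step in this plan is the Hopf argument at the contact boundary, which requires carefully setting up the quasilinear elliptic framework and invoking the free-boundary regularity of the obstacle problem.
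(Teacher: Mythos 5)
Your interior regularity and compactness steps are fine and match what the paper cites as standard. The genuine gap is in the horizon dichotomy, and it is a sign error at the crux. Write $u$ for the graph of the free part of $\partial^{*}\Omega$ and $v$ for the graph of $\partial M$ over $T_{p}\partial M$, with ``up'' the outward direction, so $h=u-v\geq 0$. With the convention that gives the horizon $H_{g}\equiv 2$, the mean curvature operator on graphs has \emph{negative} definite principal part, so on the free set $a^{ij}D_{ij}h+b^{i}D_{i}h=2-H_{0}$ with $(a^{ij})>0$. The Hopf boundary point lemma forces $\nabla h(p)\neq 0$ (contradicting tangency) only when $h$ is a \emph{supersolution}, i.e.\ when $2-H_{0}\leq 0$, i.e.\ when $H_{0}\geq 2$. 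When $H_{0}<2$, $h$ is a subsolution attaining its minimum, and the maximum principle says nothing; geometrically, a surface of mean curvature less than $2$ can perfectly well osculate the horizon tangentially from outside (two nested, internally tangent spheres: the outer one has the \emph{smaller} mean curvature). So your step ``if $H_{0}<2$ strictly, the Hopf boundary point lemma \dots would contradict $\nabla h(p)=0$'' is exactly backwards, and your argument never excludes the scenario $H_{0}<2$ with a thin contact set. (Your first-variation inequality $H_{0}\leq 2$ also only has content when the contact set has nonempty interior, but that is secondary.)

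What is missing is a proof that $H_{0}\geq 2$, and this cannot be extracted locally at the contact point: it is where the outermost hypothesis must enter globally. The paper's route is: assume $H_{0}<2$ and minimize the brane functional $\sF_{\Omega}(\hat\Omega)=\cH^{2}_{g}(\partial^{*}\hat\Omega)-2\sL^{3}_{g}(\hat\Omega\setminus\Omega)$ over sets $\hat\Omega\supset\Omega$; existence requires a calibration-type barrier (a vector field with $\Div_{g}X\geq 2$ outside a large coordinate ball) to truncate minimizing sequences, since the volume term is a priori unbounded below. The minimizer's boundary is an $H_{g}\equiv 2$ surface lying outside $\Omega$; it cannot be disjoint from both $\partial M$ and $\partial^{*}\Omega$ (outermost assumption), cannot touch $\partial M$ (Hopf, the two mean curvatures now being equal), and cannot touch $\partial^{*}\Omega$ (an $H_{g}\equiv 2$ surface cannot touch an $H_{g}<2$ surface from outside---this is the correctly signed version of your comparison). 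Hence $H_{0}\geq 2$, and only then does the Hopf argument at the contact point run and yield the dichotomy.
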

\begin{proof} We give a proof which is similar to the proof in \cite[Proposition 4.1]{EichmairMetzger:3d-iso} of a similar result in the asymptotically flat setting. The only major change needed is the use of the ``brane functional'' instead of the area functional. Additionally, in the final step of the proof, we use the Hopf boundary point lemma, rather than the weak Harnack inequality; the interested reader may verify that argument used in the end of \cite[Proposition 4.1]{EichmairMetzger:3d-iso} is also applicable. 

Suppose that $\Omega$ is an isoperimetric region containing the horizon in $(M,g)$. The regularity and behavior of $\partial^{*}\Omega$ away from the horizon is well known (see the proof of \cite[Proposition 4.1]{EichmairMetzger:3d-iso} and references therein): in particular, $\partial^{*}\Omega\setminus \partial M$ is smooth, bounded, and has constant mean curvature. Hence, if $\partial^{*}\Omega \cap\partial M=\emptyset$ (or if $\partial^{*}\Omega=\partial M$), then the claim follows. As such it remains to rule out the possibility that $\partial^{*}\Omega\cap \partial M\not=\emptyset$, but they do not coincide. 

First, recall that $\partial^{*}\Omega$ will be a $C^{1,\alpha}$ surface everywhere, including near $\partial M$, cf. \cite[Regularity Theorem 1.3]{HuiskenIlmanen:Penrose}. We claim that the  constant mean curvature $H_{g}$ of $\partial^{*}\Omega\setminus \partial M$ satisfies $H_{g}\geq 2$. If the mean curvature of $\partial^{*}\Omega\setminus \partial M$ satisfies $H_{g}<2$, we may find a (bounded) Borel set of finite perimeter, $\widetilde \Omega$ strictly containing $\Omega$, which minimizes the ``brane functional''
\begin{equation*}
\sF_{\Omega}(\hat\Omega) : = \cH^{2}_{g}(\partial^{*}\hat\Omega) -2 \sL^{3}_{g}(\hat\Omega\setminus\Omega)
\end{equation*}
among finite perimeter Borel sets $\hat \Omega$ containing $\Omega$. Unlike the area functional used in \cite[Proposition 4.1]{EichmairMetzger:3d-iso}, there could potentially be some issue with existence of a minimizer, due to the volume term (which, a priori, could allow for a sequence $\hat \Omega_{j}$ with $\sF_{\Omega}(\hat \Omega_{j})\to-\infty$). 

Hence, to justify this step, we must use a barrier argument (cf.\ \cite[\S 2.3]{ACG:AHpmt}): we define a vector field $X$ in the exterior region as the (outward pointing) unit normal vector field (with respect to $g$) to the foliation $ \{s\} \times \SS^{2}$. Let $\overline X$ denote the unit-normal with respect to $\overline g$. Note that $|X-\overline X|_{\overline g} = |D (X) -\overline D ( \overline X)|_{\overline g} = O(s^{-3})$. Furthermore, because 
\begin{equation*}
\Div_{\overline g}(\overline X) =  2 + 4s^{-2} + O(s^{-3}),
\end{equation*}
we see that if we fix $B$ a large enough (centered) coordinate sphere, then $\Div_{g}(X) \geq 2$ outside of $B$. 

Now, pick $\hat\Omega_{j}$ a minimizing sequence for $\sF_{\Omega}$ and define the truncated regions $\hat\Omega_{j}^{B}:= \hat\Omega_{j}\cap B$. We compute
\begin{align*}
\sF_{\Omega}(\hat\Omega_{j}^{B}) - \sF_{\Omega}(\hat\Omega_{j}) & =\cH^{2}_{g}(\partial B\cap \hat\Omega_{j}) -  \cH_{g}^{2}(\partial^{*}\hat\Omega_{j}\setminus B) + 2\sL^{3}_{g}(\hat\Omega_{j}\setminus B)\\
& \leq \cH^{2}_{g}(\partial B\cap \hat\Omega_{j}) -  \cH_{g}^{2}(\partial^{*}\hat\Omega_{j}\setminus B) + \int_{\hat\Omega_{j}\setminus B} \Div(X) d\sL^{3}_{g}\\
& = \cH^{2}_{g}(\partial B\cap \hat\Omega_{j}) -  \cH_{g}^{2}(\partial^{*}\hat\Omega_{j}\setminus B) \\
& \qquad + \int_{\partial^{*}\hat \Omega_{j}\setminus B}\bangle{X,\nu} d\cH^{2}_{g} - \int_{\partial B\cap \hat \Omega_{j}}\bangle{X,\nu} d\cH^{2}_{g}\\
& \leq \cH^{2}_{g}(\partial B\cap \hat\Omega_{j}) -  \cH_{g}^{2}(\partial^{*}\hat\Omega_{j}\setminus B)\\
& \qquad +  \cH^{2}_{g}(\partial^{*}\hat\Omega_{j}\setminus B) - \cH^{2}_{g}(\partial B\cap\hat\Omega_{j})\\
& = 0.
\end{align*}
Thus, $\sF_{\Omega}(\hat\Omega_{j}^{B})$ is also a minimizing sequence. Given the boundedness of $\hat\Omega_{j}^{B}$, we may take a subsequential limit and obtain a minimizer $\widetilde\Omega$. 

We know that $\partial^{*} \widetilde\Omega$ will be smooth, and of constant mean curvature $H_{g}\equiv 2$ away from $\partial^{*}\Omega$ and $\partial M$. Moreover, it will be a compact $C^{1,\alpha}$ surface everywhere. Hence, if $\partial^{*}\Omega$ is disjoint from both surfaces, then it will be a smooth, compact mean curvature $H_{g}\equiv 2$ surface. This would contradict the outermost property of $\partial M$. 

On the other hand, suppose that $\partial^{*}\widetilde\Omega$ touches $\partial M$. We may find $p \in \partial^{*}\widetilde\Omega\cap \partial M$ and a sufficiently small open ball $B \subset T_{p}\partial M$ so that $0 \in\partial B$, $\partial^{*}\widetilde\Omega$ and $\partial M$ are $C^{2}$-graphical over $B$, $C^{1}$-graphical over a larger ball $\hat B$, strictly containing $B$, and $\partial^{*}\widetilde\Omega$ lies strictly above $\partial M$ on $B$. It is well known that because both surfaces $\partial^{*}\widetilde\Omega$ and $\partial M$ are smooth over $B$ and both graphs have mean curvature $H_{g}\equiv 2$, the difference of the two graphs satisfies a (linear) elliptic second order PDE on $B$, so the Hopf boundary point lemma implies that the normal derivative of the difference is nonzero at $0\in T_{p}\partial M$. However, because both surfaces are $C^{1,\alpha}$ everywhere, and touch at $p$, their tangent planes must agree there. This contradicts the fact that the derivative of the graphs describing the two surfaces must be different at $0$. 

A nearly identical argument shows that $\partial^{*}\widetilde \Omega$ cannot touch $\partial^{*}\Omega$ away from $\partial M$. The only change is that $\partial^{*}\Omega$ has mean curvature $H_{g} <2$, by assumption. Recall that it is impossible for a smooth surface with mean curvature $H_{g} <2$ to touch a smooth surface of mean curvature $H_{g}\equiv 2$ from the inside. For essentially the same reason, the Hopf lemma proof just described works in this setting as well: the zero-order term in the linear PDE for the difference of the graphs will necessarily have the correct sign to apply the Hopf lemma. 

Hence, $\partial^{*}\Omega$ must have mean curvature $H_{g}\geq 2$. Now, we may repeat the Hopf lemma argument yet again to see that $\partial^{*}\Omega$ cannot touch $\partial M$ (unless, of course, they coincide). This shows that the two surfaces must be disjoint unless they coincide, completing the proof. 
\end{proof}

We further recall the standard ``concentration compactness'' picture for isoperimetric regions in non-compact manifolds, as applied to asymptotically hyperbolic manifolds. We will denote by $\cB_{\overline g}(S)$ a ball in hyperbolic space with area $\cH^{2}_{\overline g}(\partial \cB_{\overline g}(S)) = S$. The following proposition says that a minimizing sequence for the isoperimetric problem will either converge to an isoperimetric region, diverge to infinity (where it is more optimal to replace it with a hyperbolic ball) or some combination of the two possibilities. 
\begin{prop}\label{prop:conc-comp}
Fix an asymptotically hyperbolic manifold $(M,g)$. Then, for $V >0$, there exists an isoperimetric region $\Omega$ containing the horizon in $(M,g)$ and some number $S \geq 0$, so that $\sL^{3}_{g}(\Omega) + \sL^{3}_{\overline g}(\cB_{\overline g}(S)) = V$ and $\cH^{2}_{g}(\partial^{*}\Omega) + S = A_{g}(V)$. If $S> 0$ and $\Omega$ is not empty, then $\partial^{*}\Omega$ and $\partial\cB_{\overline g}(S)$ have the same mean curvature. 
\end{prop}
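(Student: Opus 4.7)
The plan is to apply the standard concentration--compactness framework for minimizing sequences in the isoperimetric problem, adapted to the asymptotically hyperbolic setting. I would start by taking a minimizing sequence $\Omega_{j}$ of Borel sets of finite perimeter containing the horizon, with $\sL^{3}_{g}(\Omega_{j}) = V$ and $\cH^{2}_{g}(\partial^{*}\Omega_{j}) \to A_{g}(V)$. Since the perimeters of the $\Omega_{j}$ are uniformly bounded, BV compactness yields a subsequential $L^{1}_{\mathrm{loc}}$ limit $\Omega$, which is a set of locally finite perimeter containing the horizon, and which has $V_{1}:=\sL^{3}_{g}(\Omega) \leq V$ by monotone convergence on bounded regions. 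Setting $S := A_{\overline g}(V - V_{1})$ (with $S = 0$ if $V_{1}=V$), the desired equalities will follow once the two matching identities for volume and perimeter are established.

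To capture the volume lost at infinity, I would employ a truncation argument. Using the coarea formula together with the uniform perimeter bound, I can choose radii $R_{j}\to\infty$ so that the slice areas $\cH^{2}_{g}(\{s=R_{j}\}\cap \Omega_{j})$ are $o(1)$. The inner pieces $\Omega_{j}\cap\{s\leq R_{j}\}$ converge to $\Omega$ with $\cH^{2}_{g}(\partial^{*}\Omega)\leq \liminf_{j}\cH^{2}_{g}(\partial^{*}\Omega_{j}\cap\{s\leq R_{j}\})$ by lower semicontinuity. For the outer pieces $\Omega_{j}\cap\{s>R_{j}\}$, which have volume converging to $V-V_{1}$, the asymptotic hyperbolicity $|\overline D^{k}(g-\overline g)|_{\overline g} = O(s^{-3})$ allows comparison with the hyperbolic isoperimetric inequality, yielding
\begin{equation*}
\cH^{2}_{g}\bigl(\partial^{*}\Omega_{j}\cap\{s>R_{j}\}\bigr) \;\geq\; A_{\overline g}(V-V_{1}) - o(1).
\end{equation*}
Combining these two estimates gives $A_{g}(V) \geq \cH^{2}_{g}(\partial^{*}\Omega) + A_{\overline g}(V-V_{1})$, and the reverse inequality follows by a competitor argument: place a hyperbolic ball of volume $V-V_{1}$ far out in the asymptotic region and take $\Omega$ together with it, using asymptotic hyperbolicity to control the error in perimeter. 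The two sides therefore agree, $\Omega$ is isoperimetric for volume $V_{1}$ in $(M,g)$, and the ball in hyperbolic space realizes the isoperimetric profile for volume $V-V_{1}$.

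Finally, assume $S>0$ and $\Omega$ is nonempty. The mean curvature matching then comes from a first variation argument. Choose a smooth compactly supported normal perturbation of $\partial^{*}\Omega$ producing an infinitesimal volume change $+\epsilon$, and simultaneously shrink the hyperbolic ball of radius corresponding to $S$ by an infinitesimal normal displacement producing volume change $-\epsilon$. The total volume is preserved, so this is an admissible competitor, and the first variation of total perimeter gives
\begin{equation*}
0 \;=\; \bigl(H_{g}(\partial^{*}\Omega) - H_{\overline g}(\partial\cB_{\overline g}(S))\bigr)\,\epsilon.
\end{equation*}
Since $\epsilon$ can be chosen of either sign, the two mean curvatures must agree.

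The main obstacle in this plan is the tail estimate for the outer pieces: one needs an effective isoperimetric inequality on the asymptotic end that matches the hyperbolic inequality to leading order, uniformly in $j$, with explicit control of the error in terms of the inner radius $R_{j}$. This uses the $C^{2}$ decay $g - \overline g = O(s^{-3})$ and is the place where asymptotic hyperbolicity, rather than a weaker hypothesis, is genuinely used. A secondary subtlety is ensuring that the slice area at $s=R_{j}$ does not accrue to the limit region's perimeter—this is why the coarea-type selection of $R_{j}$ is necessary.
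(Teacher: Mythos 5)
Your proposal is correct and follows the same standard concentration--compactness scheme that the paper invokes (it simply cites \cite[Theorem 2.1]{RitoreRosales} and \cite[Proposition 4.2]{EichmairMetzger:3d-iso} rather than writing out the argument): split a minimizing sequence by a coarea-selected truncation, use lower semicontinuity on the converging part and the hyperbolic isoperimetric inequality (via the $O(s^{-3})$ metric comparison) on the diverging part, close the loop with a far-out-ball competitor, and match mean curvatures by first variation. The only point worth a word of care is the degenerate case $\partial^{*}\Omega=\partial M$, where the inward variation of $\partial^{*}\Omega$ is not admissible; there the same one-sided variation forces $2\geq H_{\overline g}(\partial\cB_{\overline g}(S))>2$, so that case simply cannot occur when $S>0$, and your two-sided argument applies otherwise.
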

As in the asymptotically flat case (cf.\ \cite[Proposition 4.2]{EichmairMetzger:3d-iso}), this follows readily from the arguments in \cite[Theorem 2.1]{RitoreRosales}. See also \cite{Bray:thesis,DuzaarSteffen} for earlier related results and \cite{Nardulli:existence} for a more recent result along the lines of Proposition \ref{prop:conc-comp}, except for manifolds without boundary (it is clear from the proof that this is not an issue, as any difficulty occurs in the asymptotic regime). We will refer to the union of the regions $\Omega$ and $\cB_{\overline g}(S)$ as a \emph{generalized solution} to the isoperimetric problem. 

\begin{lemm}\label{lemm:iso-prof-strict-increase}
The isoperimetric profile $A_{g}(V)$ of an asymptotically hyperbolic manifold $(M,g)$ is strictly increasing. 
\end{lemm}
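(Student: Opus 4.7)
The plan is to combine the concentration-compactness picture (Proposition \ref{prop:conc-comp}) with first variation to establish the estimate
\begin{equation*}
A_g(V - \delta) \leq A_g(V) - 2\delta + O(\delta^2)
\end{equation*}
for $\delta > 0$ small, which implies both continuity of $A_g$ and a uniform lower bound $\underline D^+ A_g \geq 2$ on the right Dini derivative. A standard real-analysis fact then yields strict monotonicity $A_g(V_2) - A_g(V_1) \geq 2(V_2 - V_1) > 0$ for $V_1 < V_2$.

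For $V > 0$, I would take a generalized isoperimetric region $(\Omega, \cB_{\overline g}(S))$ at $V$, so $A_g(V) = \cH^2_g(\partial^*\Omega) + S$. By Proposition \ref{prop:reg-iso-surf}, either $\Omega$ coincides with the horizon region up to measure zero (in which case only the hyperbolic ball component is present and $S > 0$), or $\partial^*\Omega$ is a smooth closed CMC hypersurface lying compactly in the interior of $M$ with mean curvature $H \geq 2$. Moreover, when both components are present, Proposition \ref{prop:conc-comp} forces their mean curvatures to match, and the hyperbolic sphere has mean curvature strictly greater than $2$.

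To produce a competitor at volume $V - \delta$, I would first shrink the ball by a volume $\delta_1 \leq \sL^3_{\overline g}(\cB_{\overline g}(S))$: the derivative $A_{\overline g}'$ at a hyperbolic ball equals its mean curvature ($\geq 2$), so the area decreases by at least $2\delta_1$. If more reduction is needed, flow $\partial^*\Omega$ inward at unit normal speed by a small time, decreasing the volume by an additional $\delta_2$ and, by first variation together with $H \geq 2$, decreasing the area by at least $2\delta_2 + O(\delta_2^2)$. Summing gives a total perimeter decrease of at least $2\delta + O(\delta^2)$ where $\delta = \delta_1 + \delta_2$. Applied at $V_0 + h$ with $\delta = h$, this rearranges to $(A_g(V_0 + h) - A_g(V_0))/h \geq 2 - O(h)$, so $\underline D^+ A_g(V_0) \geq 2$ at every $V_0 \geq 0$.

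Continuity of $A_g$ follows in a standard way (lower semicontinuity from compactness of minimizing sequences in the concentration-compactness framework; upper semicontinuity from the right by attaching a small hyperbolic ball at infinity; upper semicontinuity from the left by the first-variation estimate above). The standard fact that a continuous function with right Dini derivative bounded uniformly below by $c > 0$ satisfies $f(b) - f(a) \geq c(b - a)$ then completes the proof. The main technical point I anticipate is uniform control of the $O(\delta^2)$ errors across the case analysis, which is handled by the uniform $H \geq 2$ bound from Proposition \ref{prop:reg-iso-surf} together with the compactness of $\partial^*\Omega$ and its definite separation from $\partial M$.
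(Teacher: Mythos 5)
Your strategy is the same as the paper's in its essential content: the geometric input is identical (the outermost $H_{g}\equiv 2$ horizon forces every isoperimetric boundary, and every hyperbolic ball, to have mean curvature at least $2$; first variation then converts this into a rate of change of perimeter with volume of at least $2$), and the remaining work is the real-analysis step of integrating a pointwise derivative bound. The paper discharges that step by citing the known regularity of the profile (absolute continuity and the existence of one-sided derivatives satisfying the $H_{V}$ bounds, after Bavard--Pansu, Bray, Ros, Nardulli), whereas you try to make it self-contained with a Dini-derivative lemma. That is a reasonable idea, but as implemented it has a gap.

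The gap is in the passage from the shrinking estimate to $\underline D^{+}A_{g}(V_{0})\geq 2$. Applying $A_{g}(V-\delta)\leq A_{g}(V)-2\delta+C_{V}\delta^{2}$ at $V=V_{0}+h$ with $\delta=h$ gives
\begin{equation*}
\frac{A_{g}(V_{0}+h)-A_{g}(V_{0})}{h}\;\geq\;2-C_{V_{0}+h}\,h,
\end{equation*}
and to conclude you need $C_{V_{0}+h}\,h\to 0$. But $C_{V_{0}+h}$ is the second-order error of the unit-speed inward flow of the minimizer \emph{at volume $V_{0}+h$}: it is controlled by $\int_{\partial^{*}\Omega}\bigl(|\sff|^{2}+\Ric(\nu,\nu)+H^{2}\bigr)d\cH^{2}_{g}$ along that (varying) boundary, not by the first-order bound $H\geq 2$, and compactness of each individual $\partial^{*}\Omega$ gives a finite constant for each $h$ but no uniformity as $h\to 0$ (nor does Proposition \ref{prop:reg-iso-surf} give a quantitative separation from $\partial M$ that would keep the inward flow well defined for a uniform time). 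The cleanest repair stays entirely within your framework: bound the \emph{left} lower Dini derivative instead. Shrinking the fixed generalized minimizer at $V$ gives $\liminf_{h\to 0^{+}}\bigl(A_{g}(V)-A_{g}(V-h)\bigr)/h\geq H_{V}\geq 2$ with a constant depending only on that one region, and the standard monotonicity lemma for continuous functions with one Dini derivative bounded below (applied to $A_{g}(V)-2V$) then yields $A_{g}(V_{2})-A_{g}(V_{1})\geq 2(V_{2}-V_{1})$. Alternatively, follow the paper and quote the absolute continuity and one-sided differentiability of the profile directly. With either repair your argument is correct; the treatment of the hyperbolic-ball component (where concavity of area in volume makes the estimate exact) and of the degenerate case $\partial^{*}\Omega=\partial M$ is fine as you have it.
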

\begin{proof}
First, note that $A_{g}(V)$ is absolutely continuous. This is standard (for the isoperimetric profile of a compact manifold, this and more was first proven by \cite{BavardPansu}) as long as the isoperimetric profile is achieved for each volume $V$, i.e., there exist isoperimetric regions of each volume $V \geq 0$. While we do not know that isoperimetric regions of each volume exist in $(M,g)$, the concentration compactness result stated above allows us to find generalized isoperimetric regions of each volume in the disjoint union of $(M,g)$ with hyperbolic space. From this, absolute continuity follows in the exact same way as in \cite{BavardPansu}. See also \cite[Corollary 1]{Nardulli:existence} and \cite[Remark 2.9]{MondinoNardulli}.

Now, suppose that $\Omega$ and $\cB_{\overline g}(S)$ are the generalized solution to the isoperimetric problem for some fixed volume $V\geq 0$. Denote by $H_{V}$ the mean curvature of their boundary. As in the previous paragraph, we may easily generalize from compact case (again, first proven by \cite{BavardPansu}, see also \cite[Theorem 18]{Ros:iso}) to show that $A_{g}(V)$ has left and right derivatives at $V$ (we will write them as $A'_{g}(V)_{-},A'_{g}(V)_{+}$) which satisfy
\begin{equation*}
A'_{g}(V)_{-} \leq H_{V} \leq A'_{g}(V)_{+}.
\end{equation*}
This is a consequence of the first variation formula, see \cite{Bray:thesis,Nardulli:existence} where this is proven in various noncompact settings. Notice that $\partial M$ is an outermost minimal surface of mean curvature $H_{g}=2$, so the boundary of any isoperimetric region in $(M,g)$ must have mean curvature greater than $2$; additionally, any ball in hyperbolic space has mean curvature greater than $2$. Thus, we see that $H_{V}\geq 2$, so $A'_{g}(V)_{+}\geq 2$. Combined with the absolute continuity of $A_{g}(V)$, this implies the claim. 
\end{proof}

\begin{lemm}\label{lemm:iso-are-outer-min}
If $\Omega$ is an isoperimetric region in an asymptotically hyperbolic manifold $(M,g)$, then each component of $\Omega$ is strictly outer-minimizing. 
\end{lemm}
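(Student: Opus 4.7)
The plan is to argue by contradiction, manufacturing an enlargement of $\Omega$ that violates the strict monotonicity of the isoperimetric profile just established in Lemma \ref{lemm:iso-prof-strict-increase}. By Proposition \ref{prop:reg-iso-surf}, the reduced boundary $\partial^*\Omega$ is smooth and compact, so $\Omega$ is bounded with only finitely many connected components, and the closures of distinct components are disjoint (hence separated by a positive distance). In particular, writing $\Omega^* := \Omega \setminus \Omega_0$ for the union of the remaining components, one has the additive decomposition $\cH^2_g(\partial^*\Omega) = \cH^2_g(\partial^*\Omega_0) + \cH^2_g(\partial^*\Omega^*)$.

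Suppose for contradiction that $\Omega_0$ is not strictly outer-minimizing; then there is a set of finite perimeter $\widetilde\Omega_0 \supset \Omega_0$ with $\sL^3_g(\widetilde\Omega_0 \setminus \Omega_0) > 0$ and $\cH^2_g(\partial^*\widetilde\Omega_0) \leq \cH^2_g(\partial^*\Omega_0)$. I would take as competitor
\[
\Omega' \;:=\; \widetilde\Omega_0 \cup \Omega^*,
\]
which still contains the horizon. The aim is to show that $\sL^3_g(\Omega') > \sL^3_g(\Omega)$ while $\cH^2_g(\partial^*\Omega') \leq \cH^2_g(\partial^*\Omega)$; combined with the strict increase of $A_g$ this gives
\[
A_g(\sL^3_g(\Omega')) > A_g(\sL^3_g(\Omega)) = \cH^2_g(\partial^*\Omega) \geq \cH^2_g(\partial^*\Omega') \geq A_g(\sL^3_g(\Omega')),
\]
a contradiction.

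To verify the two needed inequalities, I would split into two cases depending on how much of the enlargement $\widetilde\Omega_0 \setminus \Omega_0$ leaks into $\Omega^*$ versus the exterior of $\Omega$. In the case where the enlargement sits entirely inside $\Omega$ (i.e., $\widetilde\Omega_0 \setminus \Omega_0 \subset \Omega^*$ in measure), the separation of $\Omega_0$ from $\Omega^*$ forces $\widetilde\Omega_0$ to split as a disjoint union $\Omega_0 \sqcup (\widetilde\Omega_0 \cap \Omega^*)$ whose two pieces have disjoint reduced boundaries, so $\cH^2_g(\partial^*\widetilde\Omega_0) = \cH^2_g(\partial^*\Omega_0) + \cH^2_g(\partial^*(\widetilde\Omega_0 \cap \Omega^*))$. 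Since the set $\widetilde\Omega_0 \cap \Omega^*$ is bounded and of positive volume, a standard local isoperimetric inequality makes its perimeter strictly positive, directly contradicting $\cH^2_g(\partial^*\widetilde\Omega_0) \leq \cH^2_g(\partial^*\Omega_0)$. In the remaining case $\sL^3_g(\widetilde\Omega_0 \setminus \Omega) > 0$, the volume jump is explicit: $\sL^3_g(\Omega') - \sL^3_g(\Omega) = \sL^3_g(\widetilde\Omega_0 \setminus \Omega) > 0$, and the perimeter bound follows from the submodularity inequality $\cH^2_g(\partial^*(A\cup B)) \leq \cH^2_g(\partial^* A) + \cH^2_g(\partial^* B)$ applied to $A = \widetilde\Omega_0$, $B = \Omega^*$, together with the additivity of $\cH^2_g(\partial^*\Omega)$ noted above.

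The only real subtlety, and what I expect to be the main obstacle, is the interaction between the enlargement $\widetilde\Omega_0$ and the other components $\Omega^*$: one must ensure that enlarging $\Omega_0$ into a region possibly overlapping $\Omega^*$ does not shed enough shared boundary to escape the bound. Handling this is precisely the role of the case analysis above, which leans on the positive separation between components of $\Omega$ (valid because $\partial^*\Omega$ is a smooth compact hypersurface). Everything else is a routine application of standard GMT facts about perimeter together with the strict monotonicity of $A_g$ provided by Lemma \ref{lemm:iso-prof-strict-increase}.
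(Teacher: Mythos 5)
Your proof is correct and follows essentially the same route as the paper's: replace the offending component by a strictly larger enclosure of no greater perimeter, apply the submodularity of perimeter to the union with the remaining components, and contradict the strict monotonicity of $A_{g}$ from Lemma \ref{lemm:iso-prof-strict-increase}. Your case analysis ruling out the possibility that the enlargement is absorbed entirely by the other components is a point the paper's proof (which uses the outer-minimizing hull) passes over silently, so your version is if anything slightly more careful.
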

\begin{proof}
Write $\Omega = \Omega_{1}\cup\dots\cup\Omega_{k}$, where each $\Omega_{i}$ is connected and $\Omega_{1}$ is not strictly outer-minimizing. Then, the outer-minimizing hull of $\Omega_{1}$, which we denote by $\Omega_{1}'$, strictly contains $\Omega_{1}$ and has $\cH^{2}_{g}(\partial^{*}\Omega_{1}') \leq \cH^{2}_{g}(\partial^{*}\Omega_{1})$. Notice that
\begin{equation*}\begin{split}
 \cH^{2}_{g} &  (\partial^{*}(\Omega_{1}'\cup \Omega_{2}\cup\dots \cup \Omega_{k}))  \\
& = \cH^{2}_{g}(\partial^{*}\Omega_{1}' \setminus (\Omega_{2}\cup\dots\cup\Omega_{k})) + \cH^{2}_{g}(\partial^{*}(\Omega_{2}\cup\dots\cup\Omega_{k}) \setminus \Omega_{1}')\\
& \leq \cH^{2}_{g}(\partial^{*}\Omega_{1}') + \cH^{2}_{g}(\partial^{*}(\Omega_{2}\cup\dots\cup\Omega_{k}))\\
& \leq \cH^{2}_{g}(\partial^{*}\Omega_{1}) + \cH^{2}_{g}(\partial^{*}(\Omega_{2}\cup\dots\cup\Omega_{k}))\\
& = \cH^{2}(\partial^{*}\Omega).
\end{split}\end{equation*}
However,
\begin{equation*}
\sL^{3}_{g}(\Omega_{1}'\cup \Omega_{2}\cup\dots \cup \Omega_{k}) > \sL^{3}_{g}(\Omega).
\end{equation*}
This contradicts Lemma \ref{lemm:iso-prof-strict-increase}. 
\end{proof}
\begin{lemm}\label{lemm:iso-connected-bdry}
Each connected component of an isoperimetric region in an asymptotically hyperbolic manifold $(M,g)$ has a connected boundary.
\end{lemm}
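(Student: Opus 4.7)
The plan is to deduce the claim directly from the strict outer-minimizing property of each component of $\Omega$ established in Lemma~\ref{lemm:iso-are-outer-min}. If some component $\Omega_i$ of $\Omega$ had a disconnected reduced boundary, the strategy is to construct a proper enlargement of $\Omega_i$ with strictly smaller boundary area, contradicting that Lemma. In spirit this is the same ``filling in a hole'' mechanism that drives Lemma~\ref{lemm:iso-are-outer-min} itself; the present statement simply repackages it as a constraint on the topology of individual components.

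Explicitly, write $\Omega = \Omega_1 \cup \cdots \cup \Omega_k$ in its connected components and suppose for contradiction that $\partial^{*}\Omega_i$ has at least two connected components for some $i$. By Proposition~\ref{prop:reg-iso-surf} the set $\partial^{*}\Omega_i$ is a smooth compact embedded surface in $\RR^{3}$; moreover it is disjoint from the horizon $\partial M$, since the remaining possibility (that $\partial^{*}\Omega_i$ coincides with $\partial M$) is precluded by the connectedness of $\partial M$ built into Definition~\ref{defi:AH-metric}. Since $\Omega_i$ is bounded and connected (possibly containing $\overline K$) and each component of its boundary is a closed orientable surface embedded in $\RR^{3}$, a Jordan--Brouwer argument shows that $\RR^{3}\setminus\Omega_i$ has a unique unbounded component, and hence at least one bounded component $H$ --- a ``hole'' of $\Omega_i$ --- whose boundary $\Sigma = \partial H$ is one of the connected components of $\partial^{*}\Omega_i$.

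Now consider $\widetilde{\Omega}_i := \Omega_i \cup H$. Because $H$ is a nonempty open subset of $\RR^{3}\setminus\Omega_i$, we have $\widetilde{\Omega}_i \supsetneq \Omega_i$. Moreover $\partial^{*}\widetilde{\Omega}_i = \partial^{*}\Omega_i \setminus \Sigma$, because every point of $\Sigma$ now has a neighborhood contained in $\widetilde{\Omega}_i$, while no new boundary is created inside the open set $H$ (regardless of whether $H$ happens to contain other components $\Omega_\ell$ of $\Omega$, since the construction enlarges $\Omega_i$ up to all of $H$). Consequently
\begin{equation*}
\cH^{2}_{g}(\partial^{*}\widetilde{\Omega}_i) = \cH^{2}_{g}(\partial^{*}\Omega_i) - \cH^{2}_{g}(\Sigma) < \cH^{2}_{g}(\partial^{*}\Omega_i),
\end{equation*}
which contradicts the strict outer-minimizing property of $\Omega_i$ furnished by Lemma~\ref{lemm:iso-are-outer-min}.

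The main point requiring care is the topological step: verifying that a bounded connected open subset of $\RR^{3}$ with smooth boundary having $m \geq 2$ components must have at least $m-1$ bounded components in its complement. This follows from Alexander duality (or, more concretely, by identifying a distinguished ``outermost'' boundary component that encloses $\Omega_i$ from infinity and observing that each remaining component must then bound a bounded region disjoint from $\Omega_i$). Once this topological fact is in hand, the rest of the argument is essentially bookkeeping, and the presence of the horizon or of other components $\Omega_\ell \subset H$ does not interfere since we are applying outer-minimizing to $\Omega_i$ alone.
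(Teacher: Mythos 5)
Your proof is correct and follows essentially the same mechanism as the paper's: a disconnected boundary of a component forces a bounded ``hole'' in its complement, and filling it in strictly decreases the perimeter. The only (cosmetic) difference is that you enlarge the single component $\Omega_i$ and contradict the strict outer-minimizing property of Lemma~\ref{lemm:iso-are-outer-min}, whereas the paper adds the compact region of $M\setminus\Omega$ to all of $\Omega$ and contradicts the strict monotonicity of the isoperimetric profile (Lemma~\ref{lemm:iso-prof-strict-increase}), on which Lemma~\ref{lemm:iso-are-outer-min} itself rests.
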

\begin{proof}
Suppose that $\Omega$ is an isoperimetric region. If some component of $\Omega$ had a disconnected boundary, then at least one of the boundary components must bound a compact region in $M\setminus \Omega$. Adding this region to $\Omega$ increases volume and decreases area, contradicting Lemma \ref{lemm:iso-prof-strict-increase}.
\end{proof}
We will make use of the following celebrated result of Christodoulou--Yau concerning the Hawking mass (see Definition \ref{defi:hawking-mass}) of volume-preserving stable CMC surfaces (see Definition \ref{def:vp-stab-CMC}).
\begin{prop}[{\cite{ChristodoulouYau}}]\label{prop:christodoulou-yau}
Fix $\Sigma$, a connected, volume-preserving stable CMC surface in a manifold $(M,g)$. If $\genus(\Sigma) = 0$, then\footnote{We note that this inequality actually holds for all $\Sigma$ with even genus, by using Christodoulou--Yau's proof in combination with improved bounds on the degree of meromorphic functions on algebraic curves, cf.\ \cite[p.\ 261]{GriffithsHarris} or \cite{Yang:BrillNoetherSurvey}. We will not make use of this fact, as we would still lack desired control of odd genus regions and the argument we use to control odd genus regions (see \S \ref{sec:proof-main-theo}) applies equally well to rule out large regions with non-zero genus.}
\begin{equation*}
\int_{\Sigma} \left( R_{g} + 6 + |\tfsff|^{2} \right) d\cH^{2}_{g} \leq \frac 3 2 \cH^{2}_{g}(\Sigma)^{-\frac 12}(16\pi)^{\frac 32} m_{H}(\Sigma).
\end{equation*}
Without the genus zero assumption, we have the bound
\begin{equation*}
\int_{\Sigma} \left( R_{g} + 6 + |\tfsff|^{2} \right) d\cH^{2}_{g} \leq \frac 3 2 \cH^{2}_{g}(\Sigma)^{-\frac 12} (16\pi)^{\frac 32}m_{H}(\Sigma) + 8\pi.
\end{equation*}
Equivalently, we have the inequality
\begin{equation*}
\frac 2 3 \int_{\Sigma} \left( R_{g} + 6 + |\tfsff|^{2}\right) d\cH^{2}_{g} + \int_{\Sigma}(H^{2}-4) d\cH^{2}_{g} \leq \frac{64\pi}{3},
\end{equation*}
valid for any connected, volume-preserving stable CMC surface in $(M,g)$. 
\end{prop}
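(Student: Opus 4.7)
The plan is to follow the classical Hersch-type test-function argument underlying the Christodoulou--Yau inequality, with the algebra arranged so as to isolate the quantities $R_g + 6 + |\tfsff|^2$ and $H^2 - 4$ that appear in the stated form.

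First, I would rewrite the integrand appearing in the stability inequality. By the Gauss equation $R_\Sigma = R_g - 2\Ric(\nu,\nu) + H^2 - |\sff|^2$ together with the two-dimensional identity $|\sff|^2 = |\tfsff|^2 + \tfrac{1}{2}H^2$, one obtains
\begin{equation*}
\Ric(\nu,\nu) + |\sff|^2 = \tfrac{1}{2}R_g - \tfrac{1}{2}R_\Sigma + \tfrac{3}{4}H^2 + \tfrac{1}{2}|\tfsff|^2.
\end{equation*}
Integrating over $\Sigma$ and using Gauss--Bonnet, $\int_\Sigma R_\Sigma \, d\cH^2_g = 8\pi(1 - \genus(\Sigma))$, this yields an identity expressing $\int_\Sigma (\Ric(\nu,\nu) + |\sff|^2)$ as a linear combination of $\int R_g$, $\int H^2$, $\int |\tfsff|^2$, and $\genus(\Sigma)$.

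Second, I would produce an upper bound on $\int_\Sigma (\Ric(\nu,\nu) + |\sff|^2) \, d\cH^2_g$ via the volume-preserving stability inequality of Definition \ref{def:vp-stab-CMC} (read with the test function squared on the left, as is standard). By uniformization and the Brill--Noether theorem, there exists a non-constant conformal map $F = (F^1, F^2, F^3) : \Sigma \to \SS^2 \hookrightarrow \RR^3$ of degree $d \leq \lfloor (\genus(\Sigma) + 3)/2 \rfloor$. A standard topological-degree argument of Hersch, applied to the three-parameter family of Möbius transformations acting by post-composition on $\SS^2$, allows us to replace $F$ by $\Phi \circ F$ for some Möbius $\Phi$ so that each coordinate $F^i$ has zero mean on $\Sigma$. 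Inserting the $F^i$ into the stability inequality, summing in $i$, and using $\sum_i (F^i)^2 \equiv 1$ together with $\sum_i \int_\Sigma |\nabla F^i|^2 \, d\cH^2_g = 8\pi d$ (a consequence of the conformal invariance of the two-dimensional Dirichlet energy, which equates the sum to twice the algebraic area of the image) yields
\begin{equation*}
\int_\Sigma (\Ric(\nu,\nu) + |\sff|^2) \, d\cH^2_g \leq 8\pi d.
\end{equation*}
In particular the right-hand side is $8\pi$ when $\genus(\Sigma) = 0$ and is $\leq 8\pi \lfloor(\genus(\Sigma)+3)/2\rfloor$ in general.

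Third, I would combine the two ingredients. Using the definition of Hawking mass to write $\int H^2 = 16\pi + 4\cH^2_g(\Sigma) - (16\pi)^{3/2}\cH^2_g(\Sigma)^{-1/2} m_H(\Sigma)$, substituting into the identity from the first step, and applying the stability bound, the contribution $\tfrac{3}{4}\int H^2$ produces the $6 \cH^2_g(\Sigma)$ term that combines with $\int R_g$ and $\int |\tfsff|^2$ into $\int (R_g + 6 + |\tfsff|^2)$. The residual constants equal $16\pi(d - 1) - 8\pi\genus(\Sigma)$, which one checks is $0$ in the genus-zero case (where $d=1$) and is $\leq 8\pi$ in every other case; this produces the first two displayed inequalities. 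The third (equivalent) form follows by clearing the Hawking-mass prefactor, re-expanding $m_H(\Sigma)$, and collecting terms involving $H^2 - 4$.

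The main obstacle is the passage to arbitrary genus: one must invoke the Brill--Noether degree bound for meromorphic functions on algebraic curves, and verify carefully that Hersch's centering can be carried out simultaneously for all three coordinates of $F$. Fortunately the genus-zero case --- which is the one actually used downstream to analyze large connected isoperimetric surfaces --- requires only the bare uniformization theorem ($\Sigma \cong \SS^2$ conformally, so $d = 1$), and in that setting both Hersch's normalization and the conformal energy identity are classical.
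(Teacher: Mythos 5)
Your proposal is correct and is precisely the Christodoulou--Yau/Yang--Yau argument that the paper invokes: it only cites \cite{ChristodoulouYau} and remarks that the sole modification for the $R_g\geq -6$ normalization is replacing $\int H^2$ by $\int(H^2-4)$, which is exactly the algebraic rearrangement you carry out. Your bookkeeping checks out --- the residual $16\pi(d-1)-8\pi\genus(\Sigma)$ with the Brill--Noether bound $d\leq\lfloor(\genus(\Sigma)+3)/2\rfloor$ vanishes for even genus and equals $8\pi$ for odd genus, reproducing both displayed inequalities and the footnote's even-genus refinement.
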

The work \cite{ChristodoulouYau} is concerned with the setting when $R_{g}\geq 0$, but it is well known that to compensate for the fact that $R_{g}\geq -6$ one must modify the Hawking mass by changing the $\int H^{2}$ term to $\int (H^{2}-4)$ as we have done above. Granted this change, the proof of these inequalities proceeds in an identical manner to \cite{ChristodoulouYau}. 
\begin{coro}\label{coro:CY-mH-bd}
Assume that the manifold $(M,g)$ satisfies the scalar curvature bound $R_{g}\geq -6$. Suppose that $\Sigma$ is a connected, volume-preserving stable CMC surface in $(M,g)$. If $\Sigma$ has genus zero, then $m_{H}(\Sigma) \geq 0$. In general, $m_{H}(\Sigma) \geq -\frac 1 3 (16\pi)^{-\frac 12} \cH^{2}_{g}(\Sigma)^{\frac 12}$.  
\end{coro}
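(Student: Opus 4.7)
The plan is to deduce the claim directly from Proposition \ref{prop:christodoulou-yau}. The key observation is that the scalar curvature hypothesis $R_g \geq -6$ forces the integrand on the left-hand side of the Christodoulou--Yau inequalities to be nonnegative pointwise, so the whole integral drops out as a nonnegative quantity and what remains is a lower bound on $m_H(\Sigma)$.

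More concretely, first suppose $\Sigma$ has genus zero. Since $R_g + 6 \geq 0$ and $|\tfsff|^2 \geq 0$, the integral
\begin{equation*}
\int_{\Sigma} \left( R_{g} + 6 + |\tfsff|^{2} \right) d\cH^{2}_{g}
\end{equation*}
is nonnegative, so the first inequality in Proposition \ref{prop:christodoulou-yau} forces
\begin{equation*}
0 \leq \tfrac{3}{2}\cH^{2}_{g}(\Sigma)^{-\frac{1}{2}}(16\pi)^{\frac{3}{2}} m_{H}(\Sigma),
\end{equation*}
which immediately yields $m_H(\Sigma) \geq 0$.

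For the general case, I would apply the second inequality in Proposition \ref{prop:christodoulou-yau} in the same way. Nonnegativity of the left-hand side gives
\begin{equation*}
0 \leq \tfrac{3}{2}\cH^{2}_{g}(\Sigma)^{-\frac{1}{2}}(16\pi)^{\frac{3}{2}} m_{H}(\Sigma) + 8\pi,
\end{equation*}
and solving for $m_H(\Sigma)$ yields
\begin{equation*}
m_{H}(\Sigma) \geq -\tfrac{16\pi}{3}(16\pi)^{-\frac{3}{2}} \cH^{2}_{g}(\Sigma)^{\frac{1}{2}} = -\tfrac{1}{3}(16\pi)^{-\frac{1}{2}} \cH^{2}_{g}(\Sigma)^{\frac{1}{2}},
\end{equation*}
as claimed. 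There is no genuine obstacle here; the content is entirely in the Christodoulou--Yau bound recorded in Proposition \ref{prop:christodoulou-yau}, and the corollary is just the observation that $R_g + 6 \geq 0$ lets one discard the integral term.
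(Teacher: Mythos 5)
Your proof is correct and is exactly the argument the paper intends: the corollary follows immediately from Proposition \ref{prop:christodoulou-yau} by noting that $R_g \geq -6$ makes the integrand $R_g + 6 + |\tfsff|^2$ nonnegative, and the arithmetic in your rearrangement of the second inequality checks out.
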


Later, it will be important to know that there are no isoperimetric regions with arbitrarily many connected components. 

\begin{prop}\label{prop:comp-bdry-bds}
For an asymptotically hyperbolic manifold $(M,g)$ with $R_{g}\geq -6$, the number of boundary components of an isoperimetric region is bounded by some constant $n_{0}$ depending only on $(M,g)$. 
\end{prop}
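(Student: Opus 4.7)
The plan is to argue by contradiction: suppose $(M,g)$ admits a sequence of isoperimetric regions $\Omega_j$ with $n_j\to\infty$ boundary components. By Lemma \ref{lemm:iso-connected-bdry} I may enumerate the connected components $\Omega_{j,1},\ldots,\Omega_{j,n_j}$ whose boundaries $\Sigma_{j,k}=\partial^{*}\Omega_{j,k}$ are disjoint and share the common mean curvature $H_j\geq 2$ (the isoperimetric Lagrange multiplier). The outermost property of $\partial M$ together with Proposition \ref{prop:reg-iso-surf} forces $\Omega_j=K$ whenever $H_j=2$, giving $n_j=1$; hence I may assume $H_j>2$ throughout.

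The central input is Proposition \ref{prop:christodoulou-yau}: each $\Sigma_{j,k}$ is connected and volume-preserving stable, so combined with $R_g+6\geq 0$ and $|\tfsff|^2\geq 0$ we obtain
\[
(H_j^2-4)\,\cH^{2}_g(\Sigma_{j,k})\leq \tfrac{64\pi}{3}
\]
for every component. After passing to a subsequence with $H_j\to H_\infty\in[2,\infty]$, I would split into two regimes. For $H_\infty>2$ (including $H_\infty=\infty$), each component has area in a compact interval—upper bound from the display, lower bound from a standard maximum-principle comparison ruling out arbitrarily small CMC surfaces with bounded mean curvature in a smooth ambient manifold—and encloses a volume bounded below by an analogous comparison with geodesic balls. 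Disjointness together with the strict outer-minimizing property (Lemma \ref{lemm:iso-are-outer-min}) forces a definite separation between components, so a volume/packing argument in $(M,g)$ bounds the number of components remaining in any fixed compact set, while a compactness argument for volume-preserving stable CMC surfaces shows that any subsequential escape to the asymptotic region subconverges to a hyperbolic CMC sphere of mean curvature $H_\infty$, limiting the number of escaping components as well.

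The delicate case is $H_\infty=2$, where individual components may be arbitrarily large. Here I would exploit the strict concavity of the hyperbolic isoperimetric profile $A_{\overline g}$, readily verifiable from the explicit formulae $V=\pi(\sinh(2r)-2r)$ and $A=2\pi(\cosh(2r)-1)$, which yields a quantitative subadditivity gap $A_{\overline g}(v_1)+A_{\overline g}(v_2)-A_{\overline g}(v_1+v_2)>0$. Combined with an asymptotic estimate of the form $A_g(v)-A_{\overline g}(v)=O(1)$ as $v\to\infty$ (obtainable from the expansion machinery of $\S$\ref{sec:vol-bd-large-iso}), this says that any two components both lying sufficiently deep in the asymptotic region could be replaced by a single region of the same total volume while strictly reducing area, contradicting isoperimetricity. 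Hence at most one component can escape to infinity; the remaining components lie in a fixed compact set $\tilde K$ and, since each encloses a volume bounded below, their number is controlled by $\vol_g(\tilde K)$. The principal obstacle is precisely this $H_\infty=2$ case, where one must make the strict-concavity savings quantitative with sufficient precision to dominate the ambient perturbation—this is exactly what Proposition \ref{prop:coarse-vol-bds} and Corollary \ref{coro:coarse-bds-general-large-iso} are designed to supply.
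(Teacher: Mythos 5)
Your approach is genuinely different from the paper's, and it has two concrete gaps as outlined.

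The paper's argument is short and direct: it first establishes (Lemma \ref{lemm:H-bd-vp-stable}) a uniform upper bound on $H$ for volume-preserving stable CMC surfaces, then proves a result à la Eichmair--Metzger that there is a \emph{fixed} coordinate ball $B$, depending only on $(M,g)$, such that no closed volume-preserving stable CMC surface can have two components disjoint from $B$. The proof is a second-variation computation: plug in the test function that is constant on each of the two would-be far-out components (chosen to be volume-preserving), and combine $\Ric(\nu,\nu)+2=O(s^{-3})$, a Gauss--Bonnet/Willmore-type lower bound $\int_\Sigma(|\sff|^2-2)\ge 8\pi-o(1)$ for far-out surfaces, and the integral decay estimate $\int_\Sigma s^{-3}\,d\cH^2_g=o(1)$. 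Once at most one component lies outside $B$, the proof concludes by the monotonicity formula: the uniform $H$-bound forces each component of $\partial^*\Omega$ that meets $B$ to contribute a definite amount of area to $\cH^2_g(\partial^*\Omega\cap B)$, while $\cH^2_g(\partial^*\Omega\cap B)\le \cH^2_g(\partial B)$ by comparing $\Omega$ with $\Omega\cup B$ and invoking strict monotonicity of the isoperimetric profile (Lemma \ref{lemm:iso-prof-strict-increase}). No case split on $H$, no use of Christodoulou--Yau, no use of the isoperimetric profile asymptotics.

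The two gaps in your sketch are the following. First, you cannot appeal to Corollary \ref{coro:coarse-bds-general-large-iso}: its derivation explicitly invokes Proposition \ref{prop:comp-bdry-bds}, which is the statement you are trying to prove, so this is circular. (Proposition \ref{prop:coarse-vol-bds} by itself is fine and is what you would have to use instead, applied per component.) Second, the subadditivity argument in the $H_\infty=2$ case does not close. Applying Proposition \ref{prop:coarse-vol-bds} to each of the $n_j$ components produces an error of size $n_j\cdot C$, while from Lemma \ref{lemma:vol-large-balls} the hyperbolic subadditivity savings after summing over the components is $\pi\sum_k\log A_{j,k}-\pi\log A_j+O(n_j)$, which is of the same order $O(n_j)$ (not larger) whenever the component areas $A_{j,k}$ are bounded. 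In other words, the strict-concavity gap $A_{\overline g}(v_1)+A_{\overline g}(v_2)-A_{\overline g}(v_1+v_2)=2\pi\log\frac{v_1v_2}{v_1+v_2}+O(1)$ diverges only when $\min(v_1,v_2)\to\infty$; it is $O(1)$ when one component has bounded volume and hence cannot dominate the per-component $O(1)$ error. So many \emph{medium-sized} components escaping to infinity are not excluded by this mechanism, and neither is the $H_\infty>2$ subcase settled by packing alone, since a packing bound in a fixed compact set does not control components that accumulate in the (infinite-volume) end while keeping bounded size --- you would need precisely the ``at most one component outside a fixed ball'' input that the paper supplies independently via second variation.
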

This follows from an adaptation of the argument in \cite[\S 5]{EichmairMetzger:CMC}. Because several of the arguments must be modified, we give the proof in Appendix \ref{app:comp-bdry-bds}. 

Now, applying Propositions \ref{prop:christodoulou-yau} and \ref{prop:comp-bdry-bds} to each component individually we obtain the following corollary, which we will later use to show that large isoperimetric regions (which are not necessarily connected) have mean curvature very close to $2$. 
\begin{coro}\label{coro:coarse-CY-mult-bdry}
If $(M,g)$ is asymptotically hyperbolic and $R_{g}\geq-6$, then for an isoperimetric region, $\Omega$, defining $\Sigma: = \partial^{*}\Omega$ we have that $(H^{2}_{g}-4) \cH^{2}_{g}(\Sigma) \leq \frac{64\pi}{3} n_{0}$.
\end{coro}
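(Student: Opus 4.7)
The plan is to apply the inequality from Proposition \ref{prop:christodoulou-yau} componentwise, use the scalar curvature bound to discard a non-negative term, and then sum, absorbing the component count via Proposition \ref{prop:comp-bdry-bds}.

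First, I would decompose $\Sigma = \partial^{*}\Omega$ into its finitely many connected components $\Sigma_{1},\dots,\Sigma_{k}$. By Proposition \ref{prop:comp-bdry-bds}, $k \leq n_{0}$. Since $\Omega$ is isoperimetric, each $\Sigma_{i}$ is smooth and has the same constant mean curvature $H_{g}$ (the common Lagrange multiplier from the first variation formula). I would next observe that each individual component $\Sigma_{i}$ is itself volume-preserving stable in the sense of Definition \ref{def:vp-stab-CMC}: given $u \in C^{1}(\Sigma_{i})$ with $\int_{\Sigma_{i}} u \, d\cH^{2}_{g}=0$, extending $u$ by zero to the other components yields a $C^{1}$ function on $\Sigma$ (the components being disjoint closed surfaces) with vanishing total mean, so the volume-preserving stability of $\Sigma$ applied to this extension reduces to the desired inequality on $\Sigma_{i}$.

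Next I would invoke the third form of Proposition \ref{prop:christodoulou-yau} on each connected, volume-preserving stable CMC surface $\Sigma_{i}$, obtaining
\begin{equation*}
\tfrac{2}{3}\int_{\Sigma_{i}}\bigl(R_{g}+6+|\tfsff|^{2}\bigr)\,d\cH^{2}_{g} + \int_{\Sigma_{i}}(H_{g}^{2}-4)\,d\cH^{2}_{g} \leq \tfrac{64\pi}{3}.
\end{equation*}
The hypothesis $R_{g}\geq -6$ makes the integrand $R_{g}+6+|\tfsff|^{2}$ non-negative, so the first term may be discarded, yielding
\begin{equation*}
(H_{g}^{2}-4)\,\cH^{2}_{g}(\Sigma_{i}) \leq \tfrac{64\pi}{3}
\end{equation*}
for each $i$, using that $H_{g}$ is constant on $\Sigma_{i}$.

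Finally, I would recall from the proof of Lemma \ref{lemm:iso-prof-strict-increase} that the mean curvature of the boundary of any isoperimetric region satisfies $H_{g}\geq 2$, so $H_{g}^{2}-4 \geq 0$. Hence summing the preceding inequalities over $i=1,\dots,k$ gives
\begin{equation*}
(H_{g}^{2}-4)\,\cH^{2}_{g}(\Sigma) = \sum_{i=1}^{k}(H_{g}^{2}-4)\,\cH^{2}_{g}(\Sigma_{i}) \leq k\cdot \tfrac{64\pi}{3} \leq \tfrac{64\pi}{3} n_{0},
\end{equation*}
which is the desired bound. There is no substantive obstacle here; the only points requiring a line of justification are the passage from stability of $\Sigma$ to stability of each $\Sigma_{i}$ (handled by the zero-extension above) and the nonnegativity of $H_{g}^{2}-4$ (which prevents cancellations when summing over components).
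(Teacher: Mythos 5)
Your proof is correct and follows exactly the route the paper intends: the paper's entire proof of this corollary is the remark that one applies Propositions \ref{prop:christodoulou-yau} and \ref{prop:comp-bdry-bds} to each component individually, and your write-up simply supplies the routine details (componentwise stability via zero extension, discarding the non-negative $R_{g}+6+|\tfsff|^{2}$ term, and summing over at most $n_{0}$ components).
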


Finally, we have a convenient compactness property of isoperimetric regions in asymptotically hyperbolic manifolds. To state the result, we will say that a set $\Omega$ is \emph{locally isoperimetric} if for any Borel set of locally finite perimeter, $\widetilde\Omega$, such that $(\widetilde\Omega \setminus \Omega)\cup(\Omega\setminus\widetilde\Omega)$ is contained in a compact set $R$ and which has has zero relative volume with $\Omega$, i.e.
\begin{equation*}
\cL^{3}_{g}(\widetilde\Omega \setminus \Omega)=\cL^{3}_{g}(\Omega\setminus\widetilde\Omega),
\end{equation*}
we have that
\begin{equation*}
\cH^{2}_{g}(\partial^{*}\widetilde\Omega\cap R) \geq \cH^{2}_{g}(\partial^{*}\Omega\cap R).
\end{equation*}
\begin{prop}\label{prop:cptness-iso-regions}
Suppose that $(M,g)$ is asymptotically hyperbolic and $\Omega^{(l)}$ is a sequence of isoperimetric regions in $(M,g)$ where $\partial^{*}\Omega^{(l)}$ has constant mean curvature satisfying $H_{g}\to 2$ as $l\to\infty$. After extracting a subsequence, we may write $\Omega^{(l)}$ as the disjoint union of open sets $\Omega^{(l)}=\Omega^{(l)}_{h}\cup\Omega^{(l)}_{c}\cup\Omega^{(l)}_{d}$ and find a locally isoperimetric region $\Omega$ whose boundary is a properly embedded hypersurface with constant mean curvature $H_{g}\equiv 2$ so that 
\begin{itemize}
\item $\Omega_{h}^{(l)}$ converges to the horizon region, which is contained in $\Omega$
\item $\Omega_{c}^{(l)}$ converges to the other components of $\Omega$, and
\item $\Omega_{d}^{(l)}$ diverges, i.e., it is eventually disjoint from any compact set.
\end{itemize}
Here, the convergence statements are all in the sense of local convergence of sets of finite perimeter (i.e., in the BV sense) as well local smooth convergence of the boundary surfaces. In particular, the only compact component of $\Omega$ is the horizon region. Furthermore, $\sL^{3}_{g}(\Omega^{(l)}_{h}) = o(1)$ and $\cH^{2}_{g}(\partial^{*}\Omega^{(l)}_{h})= \cH^{2}_{g}(\partial M) + o(1)$ as $l\to\infty$. 
\end{prop}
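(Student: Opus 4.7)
The plan is to use the volume-preserving stability of $\partial^{*}\Omega^{(l)}$, together with the mean curvature control $H^{(l)}\to 2$, to produce a locally smooth subsequential limit of the boundaries, and then classify the components of $\Omega^{(l)}$ by their asymptotic behavior.

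By Proposition \ref{prop:comp-bdry-bds}, the number of connected components of $\partial^{*}\Omega^{(l)}$ is at most $n_{0}$, so after passing to a subsequence we may assume the count is constant in $l$. Each component is a volume-preserving stable CMC surface with mean curvature tending to $2$, so standard curvature estimates for such surfaces (via the stability inequality combined with the Simons identity, as in Schoen-type results) give locally uniform bounds on $|\sff|$ on any fixed compact subset of $M$. Combining these bounds with BV compactness for the characteristic functions $\chi_{\Omega^{(l)}}$, after a diagonal extraction we obtain a Borel set $\Omega$ of locally finite perimeter with $\chi_{\Omega^{(l)}}\to\chi_{\Omega}$ in $L^{1}_{\mathrm{loc}}$, and such that $\partial^{*}\Omega^{(l)}\to\partial^{*}\Omega$ smoothly on compact sets; multiplicity is ruled out since each $\partial^{*}\Omega^{(l)}$ is embedded and two-sided and the limit equation is simply $H_{g}\equiv 2$. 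The locally isoperimetric property of $\Omega$ follows by a cut-and-paste argument: any compactly supported, volume-preserving competitor for $\Omega$ could be spliced into $\Omega^{(l)}$ with negligible error, and lower semicontinuity of perimeter combined with the isoperimetric property of $\Omega^{(l)}$ would force the competitor to yield no strict improvement.

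To obtain the decomposition, label each connected component of $\Omega^{(l)}$ (using the bounded count) by one of three types according to its behavior: type $h$ if in the limit its reduced boundary coincides with $\partial M$, type $d$ if it is eventually disjoint from every fixed compact set, and type $c$ otherwise. Proposition \ref{prop:reg-iso-surf} shows that any component whose reduced boundary meets $\partial M$ must in fact have $\partial^{*}\Omega^{(l)}\cap\partial M=\partial M$, so the union of type $h$ components contains the horizon region. Moreover, any compact connected component of $\Omega$ would have boundary a closed CMC $H_{g}\equiv 2$ surface in the interior of $M$, which is excluded by the outermost hypothesis in Definition \ref{defi:AH-metric} unless it coincides with $\partial M$; hence the only compact component of $\Omega$ is the horizon region. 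The asymptotics $\sL^{3}_{g}(\Omega^{(l)}_{h})=o(1)$ and $\cH^{2}_{g}(\partial^{*}\Omega^{(l)}_{h})=\cH^{2}_{g}(\partial M)+o(1)$ then follow from the local smooth convergence of $\partial^{*}\Omega^{(l)}_{h}$ to $\partial M$.

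The main obstacle is the uniform second-fundamental-form estimate near $\partial M$, where a type $h$ component of $\partial^{*}\Omega^{(l)}$ may be approaching $\partial M$ from outside and one must prevent boundary-layer degenerations; here Proposition \ref{prop:reg-iso-surf} is crucial, as it enforces a dichotomy between remaining a definite distance from $\partial M$ and coinciding with $\partial M$. A secondary technical point is ensuring the smooth compactness in the asymptotic region, which is handled by the decay $|\overline D^{k}(g-\overline g)|_{\overline g}=O(s^{-3})$ in Definition \ref{defi:AH-metric}, so that any component not in class $d$ is uniformly confined within a compact set along the subsequence.
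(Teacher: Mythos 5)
Your overall architecture matches the paper's: bound the number of components via Proposition \ref{prop:comp-bdry-bds}, extract a BV limit, upgrade to smooth convergence, and classify components by limiting behavior. But there is a genuine gap in your trichotomy. Your classes are: boundary converging to $\partial M$, eventually disjoint from every compact set, and ``otherwise.'' This silently assumes that every component that stays in a compact region converges to a \emph{nonempty} component of $\Omega$. A fourth behavior must be excluded: a component that remains in a fixed compact set but whose enclosed volume tends to zero (a thin slab or collapsing region). Since $H_{g}\to 2$, the monotonicity formula gives such a component a definite amount of boundary area even as its volume vanishes, so it disappears in the BV limit while its boundary does not; this is precisely the scenario in which your claimed multiplicity-one smooth convergence fails (two sheets of an embedded boundary collapsing onto one limit surface). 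The paper rules this out by a direct comparison: delete the collapsing component and flow another component outward at unit speed to recover the lost volume with strictly less area, contradicting the isoperimetric property (and it separately notes the degenerate case with no horizon and total volume tending to zero is excluded because the monotonicity formula would force $H_{g}\to\infty$). Your write-up needs this step; ``multiplicity is ruled out since each $\partial^{*}\Omega^{(l)}$ is embedded and two-sided'' is not an argument, as embedded two-sided surfaces routinely converge with multiplicity two.

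A secondary issue: your appeal to ``Schoen-type'' curvature estimates via the stability inequality plus the Simons identity does not directly apply, because volume-preserving stability only permits test functions with zero mean, so one cannot plug in arbitrary compactly supported cutoffs. The correct route (and the one the paper takes) is the blow-up argument of Ros \cite[Proposition 5]{Ros:iso}: if $|\sff|$ blew up, rescaling would produce a complete stable minimal surface in $\RR^{3}$, hence a plane, a contradiction; the volume constraint becomes harmless in the rescaled limit. With that substitution and the collapsing-component argument added, the rest of your proof (the locally isoperimetric property by splicing, exclusion of compact components of $\Omega$ other than the horizon by the outermost hypothesis, and the area and volume asymptotics of $\Omega^{(l)}_{h}$ from smooth convergence) agrees with the paper.
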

\begin{proof}
Standard BV compactness results (cf. \cite[Theorem 6.3]{Simon:GMT}) guarantee that we may extract a subsequence of $\Omega^{(l)}$ which converges locally as sets of finite perimeter to $\Omega$, a locally isoperimetric region in $(M,g)$. By Proposition \ref{prop:comp-bdry-bds}, we may choose a further subsequence so that $\Omega^{(l)}$ has a fixed number of components. Each component will either converge locally as a set of finite perimeter to some component of $\Omega$ (and thus should be labeled as a member of $\Omega^{(l)}_{h}$ or $\Omega^{(l)}_{c}$ depending on whether or not the component is converging to the horizon region or not). 

For any other regions, we claim that they must be diverging, rather than shrinking away. If a component is shrinking away, then the monotonicity formula shows that it will have a definite amount of area while containing a tiny amount of volume. This cannot happen: flowing one of the other components outwards by constant speed allows us to find a region with less area and the same volume. Hence, any other region must diverge. Note that this argument works as long as we are not in the following case: $(M,g)$ has no horizon and $\sL^{3}_{g}(\Omega^{(l)})\to 0$ (in this case, there might be no other component to flow outwards with unit speed). By assumption, this case cannot occur: the monotonicity formula would imply that $\partial^{*}\Omega^{(l)}$ has constant mean curvature $H_{g}\to\infty$. 

By the blowup argument in \cite[Proposition 5]{Ros:iso}, we may extract a further subsequence so that the convergence occurs in the sense of local smooth convergence. Thus, $\Omega$ has constant mean curvature $H_{g}\equiv 2$, so by the outermost assumption for $\partial M$, the only compact component of $\Omega$ will be the horizon region. That $\partial^{*}\Omega$ is properly embedded follows from the ``cut and paste'' argument used in the proof of \cite[Theorem 18]{Ros:iso}. 

Finally, the convergence of the volume and area of $\Omega^{(l)}_{h}$ follows from the smooth convergence to the horizon.
\end{proof}

\section{Fundamental properties of the inverse mean curvature flow} \label{sec:fund-prop-IMCF}
Our fundamental tool for studying isoperimetric regions in an asymptotically hyperbolic manifold $(M,g)$ will be the inverse mean curvature flow. In particular, we will use the weak formulation of the inverse mean curvature flow developed in the foundational work by G. Huisken and T.\ Ilmanen in \cite{HuiskenIlmanen:Penrose}. 

While Huisken--Ilmanen developed the weak inverse mean curvature flow to prove the Penrose inequality for asymptotically flat manifolds, they conveniently established existence and other properties of the flow in much greater generality. Below, we have stated only the properties of the weak flow that we will make use of in the sequel and included references to the relevant sections in \cite{HuiskenIlmanen:Penrose} for the reader's convenience. We recall\footnote{The reason that we have stated the theorem in this way is that Huisken--Ilmanen have defined the flow only on manifolds without boundary. However, as in \cite{HuiskenIlmanen:Penrose}, we will never allow the flow to pass through $\RR^{3}\setminus M$, and will instead ``jump'' over this region in an appropriate manner if necessary.} that we have extended the metric $g$ inwards to $\hat g$, defined on all of $\RR^{3}$.

\begin{theo}\label{theo:HI-wIMCF}
Suppose that $\Omega$ is a connected, compact region in $\RR^{3}$ with smooth, connected boundary $\Sigma = \partial\Omega$ which is is contained entirely in $M$. We will always assume that $\Sigma$ is outer-minimizing with respect to $g$. Then, by \cite[Theorem 3.1]{HuiskenIlmanen:Penrose} there exists a proper, locally Lipschitz function $u \geq 0$ on $\RR^{3} \setminus \Omega$ with the following properties:
\begin{enumerate}
\item Initial conditions, \cite[Property 1.4(iv)]{HuiskenIlmanen:Penrose}\textup{:}  $\{u=0\} = \Sigma$.
\item  Gradient bounds, \cite[Theorem 3.1]{HuiskenIlmanen:Penrose}\textup{:} 
We have the gradient bound 
\begin{equation*}
|\nabla u(x)| \leq  \max\left\{0, \max_{p\in\Sigma} H_{g}(p)\right\} + C
\end{equation*}
for a.e.\ $x \in M \setminus \Omega$. Here, $C=C(M,g)$ is a constant which only depends on $(M,g)$ but not on $\Omega$. 
\item Regularity, \cite[Theorem 1.3]{HuiskenIlmanen:Penrose}\textup{:} The regions $\Sigma_{t} := \partial \{u > t\}$ form a increasing family of $C^{1,\alpha}$ surfaces. 
\item Minimizing hull property, \cite[Property 1.4]{HuiskenIlmanen:Penrose}\textup{:} For $t\geq0$, $\Sigma_{t}$ strictly minimizes area among homologous surfaces in $\{u \geq t\}$. 
\item Weak mean curvature, \cite[(1.12)]{HuiskenIlmanen:Penrose}\textup{:}  For a.e.\ $t>0$ and a.e.\ $x \in \Sigma_{t}$, the weak mean curvature of $\Sigma_{t}$ is defined, equal to $|\nabla u|$, and strictly positive.
\item Exponential area growth, \cite[Lemma 1.6]{HuiskenIlmanen:Penrose}\textup{:} We have $\cH^{2}_{g}(\Sigma_{t}) = e^{t} \cH^{2}_{g}(\Sigma)$ for $t \geq 0$. 
\item Connectedness, \cite[Lemma 4.2]{HuiskenIlmanen:Penrose}\textup{:} The surfaces $\Sigma_{t}$ remain connected for $t\geq 0$.
\item Geroch monotonicity,  and \cite[\S 5]{HuiskenIlmanen:Penrose}\textup{:} The Hawking mass $m_{H}(\Sigma_{t})$ is monotone non-decreasing for $t \geq 0$ as long as $\Sigma_{t}$ does not cross through the horizon \textup{(}recall that we have assumed that $R_{g}\geq -6$\textup{)}. 
\item Equality in Geroch monotonicity, \cite[\S 5]{HuiskenIlmanen:Penrose}\textup{:} Assuming the flow avoids the horizon in the time interval $(t,s)$, then we have that $m_{H}(\Sigma_{t}) = m_{H}(\Sigma_{s})$ if and only if the interior of $\{ t < u \leq s \}$ is isometric to an annulus in Schwarzschild-AdS of mass $m = m_{H}(\Sigma_{t})$. 
\item Avoidance principle, \cite[Theorem 2.2(ii)]{HuiskenIlmanen:Penrose}\textup{:} If $\widetilde \Omega\subseteq \Omega$ also satisfies the hypothesis above, then the weak inverse mean curvature flow starting at $\partial\widetilde\Omega$, $\widetilde \Sigma_{t}$, remains inside of $\Sigma_{t}$ for all $t$, as long as $\widetilde\Sigma_{t}$ continues to bound a compact region.
\end{enumerate}
We will say that $\Sigma_{t}$ is the solution to the weak inverse mean curvature flow starting at $\Sigma$. 
\end{theo}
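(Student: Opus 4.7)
The plan is to transcribe Huisken--Ilmanen's weak inverse mean curvature flow machinery \cite{HuiskenIlmanen:Penrose} to our setting, with only minor adaptations to handle the presence of the horizon and the asymptotically hyperbolic normalization of the Hawking mass. First, I would extend $g$ smoothly inward to a metric $\hat g$ on all of $\RR^3$ (as indicated in the footnote) and apply \cite[Theorem 3.1]{HuiskenIlmanen:Penrose} to the outer-minimizing surface $\Sigma \subset (\RR^3, \hat g)$ to obtain the proper locally Lipschitz solution $u$ on $\RR^3 \setminus \Omega$. Properties (1)--(7) and (10) are local on $\RR^3 \setminus \Omega \subset M$, do not involve the scalar curvature of $g$, and the cited results in \cite{HuiskenIlmanen:Penrose} apply verbatim; in particular, the outer-minimizing assumption on $\Sigma$ allows for the existence theorem, the minimizing-hull property and the exponential area growth, while the connectedness statement in (7) follows from the strict minimizing hull property together with the topological argument of \cite[Lemma 4.2]{HuiskenIlmanen:Penrose}.

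The substantive step is the Geroch monotonicity (8) and its rigidity (9). For a smooth flow, Geroch's calculation using the evolution equations for $d\cH^2_g$ and $H$, together with Gauss--Bonnet, yields a monotonicity integrand of the form $2\frac{|\nabla H|^2}{H^2} + |\tfsff|^2 + \frac{1}{2}(R_g + 6) + 8\pi(1 - \genus(\Sigma_t))$ after connectedness is invoked (item (7)). Here the shift $R_g \to R_g + 6$ is precisely the effect of replacing $\int H^2$ by $\int(H^2 - 4)$ in Definition \ref{defi:hawking-mass}, as is standard in adapting the Hawking mass to the asymptotically hyperbolic setting. Under the hypothesis $R_g \geq -6$ the integrand is pointwise nonnegative, which establishes (8); equality (9) forces $\tfsff \equiv 0$, $\nabla H \equiv 0$, and $R_g \equiv -6$ on the annulus $\{t < u \leq s\}$, and solving the resulting ODE for the warping profile identifies this annulus isometrically with a piece of Schwarzschild-AdS of mass $m_H(\Sigma_t)$.

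The main obstacle is justifying the above computation in the weak setting, where $u$ is merely $C^{1,\alpha}$ and the level sets need not be smooth. This is exactly the content of \cite[\S 5]{HuiskenIlmanen:Penrose}: one approximates $u$ by smooth solutions $u_\epsilon$ of an elliptically regularized equation, verifies monotonicity for $u_\epsilon$ via the smooth Geroch identity above, and passes to the limit using their convergence theorems for surfaces of bounded area. The argument is local away from the horizon, which is precisely why (8) and (9) are stated conditionally on the flow not crossing $\partial M$; once the flow leaves $M$ the unphysical extension $\hat g$ can have arbitrary scalar curvature and the monotonicity is no longer asserted. The adaptation to our modified Hawking mass amounts to carrying the constant shift through the regularization, which does not interact with any of the delicate estimates. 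Finally, the avoidance principle (10) is an elementary comparison for the regularized problem, passed to the limit as in \cite[Theorem 2.2]{HuiskenIlmanen:Penrose}.
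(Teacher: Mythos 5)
Your proposal follows the same route as the paper: Theorem \ref{theo:HI-wIMCF} is essentially a transcription of Huisken--Ilmanen's results, with the only genuinely new ingredients being the shift $R_g \to R_g+6$ (equivalently $\int H^2 \to \int (H^2-4)$) in the Geroch computation and the caveat about the flow crossing the horizon, both of which you handle correctly. Your treatment of (8)--(9) via the regularized Geroch identity is exactly what is intended.

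There is, however, one concrete omission. You write that \cite[Theorem 3.1]{HuiskenIlmanen:Penrose} applies ``verbatim'' to produce the proper solution $u$, but that existence theorem has as a \emph{hypothesis} the existence of a proper, locally Lipschitz weak subsolution in the asymptotic region; this is what guarantees both existence and properness of $u$ (i.e., compactness of the level sets $\Sigma_t$). Huisken--Ilmanen verify this hypothesis using the asymptotically flat structure at infinity, so it does not transfer for free to the asymptotically hyperbolic setting and must be checked anew. The paper does this by exhibiting large centered coordinate spheres flowing \emph{slower} than inverse mean curvature flow would dictate (concretely, the family $\{s \le e^{4t/9}\}$, whose area grows slower than $e^{t}$ while the mean curvature tends to $2$); see the remark following the theorem and the barrier argument in Proposition \ref{prop:imcf-plus-isoper-ineq}. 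Adding this verification closes the gap; the rest of your argument stands.
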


We note that in order to apply \cite[Theorem 3.1]{HuiskenIlmanen:Penrose} to obtain existence, one must find a subsolution in the asymptotic region. This is achieved by considering large coordinate balls flowing slightly slower than inverse mean curvature flow would dictate; see Proposition \ref{prop:imcf-plus-isoper-ineq}. 

We will also need to define a weak inverse mean curvature flow with jumps. In \cite[\S 6]{HuiskenIlmanen:Penrose}, Huisken--Ilmanen devised a method for jumping over regions whose boundaries are minimal (and outer-minimizing) and showed that the Hawking mass was still monotone along the flow with jumps. Here, we slightly modify Huisken--Ilmanen's definition of weak mean curvature flow with jumps (namely, we jump at the earliest possible time) and observe that the Hawking mass fails to be monotone over the jumps in a controllable way (cf.\ \cite[p.\ 412]{HuiskenIlmanen:Penrose} for a discussion concerning the freedom to choose the jump time). We remark that in order to jump over multiple components, one could apply the following proposition multiple times, restarting the flow between jumps.

\begin{prop}\label{prop:how-to-jump}
We assume that $(M,g)$ is a compact perturbation of Schwarzschild-AdS with $R_{g}\geq -6$. Recall that we have extended $g$ to a metric on all of $\RR^{3}$. Fix some $\delta > 0$ and suppose that $\Omega$, $\cJ$, $\Gamma$ are compact regions in $\RR^{3}$ so that 
\begin{enumerate}
\item Both surfaces $\partial\Omega$ and $\partial\cJ$ are smooth and contained entirely in $\RR^{3}\setminus K = M\cup\partial M$.
\item The region $\Omega\cup\cJ$ contains the horizon.
\item The surfaces $\partial\Omega$, $\partial\cJ$ and $\partial\Omega\cup\partial\cJ$ are all outer-minimizing in $(M,g)$.
\item The surfaces $\partial \Omega$ and $\partial\cJ$ are connected. 
\item We have that $\cH^{2}_{g}(\partial\Omega) \geq 1$.
\item At each point in $\partial\cJ$, we have that $\partial\cJ$ has mean curvature $H_{g} \geq 2$ . 
\item The surfaces $\partial\Omega$ and $\partial\cJ$ both have nonempty intersection with $\Gamma$. 
\item The regions $\Omega$ and $\cJ$ are disjoint.
\end{enumerate}
Then, under these assumptions we can construct a weak inverse mean curvature flow starting at $\partial \Omega$ which ``jumps over $\cJ$.'' 

More precisely, we can find a time $T > 0$ and an increasing family of connected, closed, $C^{1,\alpha}$ surfaces $\Sigma_{t}$ for $t \in [0,\infty)\setminus\{T\}$, so that:
\begin{enumerate}
\item On the intervals $[0,T)$ and $(T,\infty)$, $\Sigma_{t}$ is a solution to the weak inverse mean curvature flow, which is always disjoint from the interior of $\cJ$.
\item At time $t=0$ we have that $\Sigma_{0} = \partial \Omega$.
\item Denote the surface obtained by flowing $\partial\Omega$ for time $T$ by $\Sigma_{T,-}$, and $\Sigma_{T,+}$ by the minimizing hull of $\Sigma_{T_,-}\cup \partial\cJ$. Then, for $t > T$, the surfaces $\Sigma_{t}$ are a weak inverse mean curvature flow with initial condition at $t=T$ given by $\Sigma_{T,+}$.
\item $\Sigma_{T,+}$ is connected.
\item There exists a constant $\beta\geq0$ so that for $t > T$, we have that  
\begin{equation*}
\cH^{2}_{g}(\Sigma_{t}) = e^{t +\beta} \cH^{2}_{g}(\partial\Omega).
\end{equation*}
The constant $\beta$ satisfies the bound
\begin{equation*}
\beta \leq \log\left( 1 + \frac{\cH^{2}_{g}(\partial \cJ)}{\cH^{2}_{g}(\partial \Omega)} e^{-T} \right).
\end{equation*}
\end{enumerate}
Furthermore, there exists $C_{1} ,C_{2} > 0$ so that the Hawking mass of $\Sigma_{t}$ behaves as follows: If $m_{H}(\partial \Omega) \geq 0$, then for all $ t \not = T$
we have the bound
\begin{equation*}
m_{H}(\Sigma_{t}) \geq m_{H}(\partial\Omega) - \delta - C_{2} \cH^{2}_{g}(\partial\Omega)^{\frac 12} \int_{\partial\cJ} \left( H^{2}_{g} -4 \right)d\cH^{2}_{g}.
\end{equation*}
If $m_{H}(\partial \Omega) < 0$, then for all $t \not = T$
we have the bound
\begin{equation*}
m_{H}(\Sigma_{t}) \geq C_{1} m_{H}(\partial\Omega) - \delta - C_{2} \cH^{2}_{g}(\partial\Omega)^{\frac 12} \int_{\partial\cJ} \left( H^{2}_{g} -4 \right)d\cH^{2}_{g}.
\end{equation*}
The constant $C_{1}$ only depends on an upper bound for $\cH^{2}_{g}(\partial \cJ)$, while the constant $C_{2}$ depend on the metric $g$, the compact set $\Gamma$, as well as upper bounds for the quantities $\cH^{2}_{g}(\partial \cJ)$ and $\max_{p\in\partial\Omega} H_{g}(p)$.
 \end{prop}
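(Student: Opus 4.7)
The plan is to run the weak IMCF from $\partial\Omega$ up to a jump time $T$, define $\Sigma_{T,+}$ to be the outer-minimizing hull of $\Sigma_{T,-}\cup\partial\cJ$ in $\RR^{3}\setminus\mathrm{int}(\cJ)$, and then restart the weak IMCF from $\Sigma_{T,+}$. I would choose $T$ to be a slight perturbation of the first time the IMCF meets $\partial\cJ$; the freedom in the precise choice of $T$ is exploited to control the size of the ``free'' (strictly minimizing) portion of $\Sigma_{T,+}$ and is what produces the $\delta$ term in the statement (cf.\ \cite[p.~412]{HuiskenIlmanen:Penrose}). The crucial a priori bound on $T$ comes from Theorem \ref{theo:HI-wIMCF}(2): since $u$ is Lipschitz with constant at most $\max_{\partial\Omega}H_g+C(M,g)$ and both $\partial\Omega,\partial\cJ$ meet the compact set $\Gamma$, we obtain $T\leq(\max_{\partial\Omega}H_g+C)\diam_g(\Gamma)$. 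This bound on $T$ is what ultimately converts the $e^{T/2}$ factors in the mass estimate below into the constant $C_2$ depending only on the stated data.

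Properties (1)--(5) then follow fairly directly. The flow on $[0,T)$ and $(T,\infty)$ is produced by Theorem \ref{theo:HI-wIMCF} applied with initial surfaces $\partial\Omega$ and $\Sigma_{T,+}$; $\Sigma_{T,+}$ is outer-minimizing in $\RR^{3}\setminus\mathrm{int}(\cJ)$ by construction. For connectedness of $\Sigma_{T,+}$: $\Sigma_{T,-}$ is connected (Theorem \ref{theo:HI-wIMCF}(7)), $\partial\cJ$ is connected by hypothesis (4), they touch at time $T$ by choice of $T$, and so the minimizing hull of the connected set $\Sigma_{T,-}\cup\partial\cJ$ is connected. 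The area-growth bound follows from Theorem \ref{theo:HI-wIMCF}(6) applied on each interval together with $\cH^{2}_{g}(\Sigma_{T,+})\leq\cH^{2}_{g}(\Sigma_{T,-})+\cH^{2}_{g}(\partial\cJ)$, which is a consequence of the minimizing-hull property (since $\Sigma_{T,-}\cup\partial\cJ$ is a valid competitor for $\Sigma_{T,+}$).

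The Hawking mass estimate is the core of the proof. Geroch monotonicity (Theorem \ref{theo:HI-wIMCF}(8), using $R_{g}\geq-6$ and the fact that the flow avoids $K$) yields $m_{H}(\Sigma_{t})\geq m_{H}(\partial\Omega)$ on $[0,T)$ and $m_{H}(\Sigma_{t})\geq m_{H}(\Sigma_{T,+})$ on $(T,\infty)$, so everything reduces to bounding $m_{H}(\Sigma_{T,+})$ from below in terms of $m_{H}(\Sigma_{T,-})$. Writing $a^{2}=\cH^{2}_{g}(\Sigma_{T,-})$, $b^{2}=\cH^{2}_{g}(\Sigma_{T,+})$, $A_{\pm}=\int_{\Sigma_{T,\pm}}(H_{g}^{2}-4)$, and $B=\int_{\partial\cJ}(H_{g}^{2}-4)$, outer-minimality of $\Sigma_{T,-}$ gives $a\leq b$ and the minimizing-hull property gives $b^{2}\leq a^{2}+\cH^{2}_{g}(\partial\cJ)$. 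Decomposing $\Sigma_{T,+}$ into pieces inherited from $\Sigma_{T,-}$, pieces inherited from $\partial\cJ$, and free minimizing pieces on which $H_{g}=0$, and using $H_{g}^{2}\geq 0$, one obtains $\int_{\Sigma_{T,+}}H_{g}^{2}\leq\int_{\Sigma_{T,-}}H_{g}^{2}+\int_{\partial\cJ}H_{g}^{2}$, which rearranges to $A_{+}\leq A_{-}+B+4\sigma$ for $\sigma:=a^{2}+\cH^{2}_{g}(\partial\cJ)-b^{2}\geq 0$ the area defect of the hull. By choosing the jump time sufficiently close to the first-contact time, $\sigma$ is made small enough that the term $4(16\pi)^{-3/2}b\sigma$ is absorbed into $\delta$. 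Substituting into the Hawking mass formula yields
\begin{equation*}
m_{H}(\Sigma_{T,+})\geq (b/a)\,m_{H}(\Sigma_{T,-})-(16\pi)^{-3/2}\,b\,B-\delta.
\end{equation*}
When $m_{H}(\partial\Omega)\geq 0$ (so $m_{H}(\Sigma_{T,-})\geq 0$ by the first Geroch step) one drops the prefactor $b/a\geq 1$; when $m_{H}(\partial\Omega)<0$ one uses $b/a\leq(1+\cH^{2}_{g}(\partial\cJ))^{1/2}$ to produce $C_{1}$. The a priori bound $T\leq C(\mathrm{data})$ combined with $\cH^{2}_{g}(\partial\Omega)\geq 1$ bounds $b\leq C\cH^{2}_{g}(\partial\Omega)^{1/2}$, which yields $C_{2}$. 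The main obstacle is precisely this bookkeeping at the jump---in particular controlling the free minimizing portion of $\Sigma_{T,+}$ on which the surface is only $C^{1,\alpha}$---which ultimately forces one to invoke an approximation in the spirit of the elliptic regularization in \cite[\S 6]{HuiskenIlmanen:Penrose}, with $\delta$ absorbing the resulting approximation error.
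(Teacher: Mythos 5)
Your proposal follows essentially the same route as the paper: jump at (nearly) the first-contact time so that the area defect $\sigma$ of the minimizing hull is small enough to be absorbed into $\delta$, decompose $\int_{\Sigma_{T,+}}(H_{g}^{2}-4)$ into the pieces inherited from $\Sigma_{T,-}$ and $\partial\cJ$ plus the free minimal pieces, and use the gradient bound on $u$ together with the compact set $\Gamma$ to control $T$ and hence the prefactor $\cH^{2}_{g}(\Sigma_{T,+})^{\frac 12}$. The one imprecision is the connectedness step: in the paper $\Sigma_{T,-}$ is kept strictly disjoint from $\cJ$ (with $T$ defined via an infimum over times at which the hull is connected) and the hull is shown to be connected because a neck between the two nearly-touching surfaces strictly reduces area, rather than by letting the surfaces actually touch---but this is exactly the Huisken--Ilmanen \S 6 mechanism you cite.
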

\begin{proof}
In \cite[\S 6]{HuiskenIlmanen:Penrose}, Huisken--Ilmanen argue that if we consider the inverse mean curvature flow starting from $\partial\Omega$, then there exists some time $T$ so that $\Sigma_{t}$ is disjoint from the interior of $\cJ$ for all $t \leq T$ and that the minimizing hull of $\Sigma_{T}\cup\partial\cJ$ is connected. They do so by taking the largest $T$ so that $\Sigma_{t}$ is disjoint from the interior of $\cJ$ for all $t < T$ (such $T$ exists, by the gradient bounds for weak solutions to inverse mean curvature flow). Shrinking $T$ slightly if necessary, they then arrange that $\Sigma_{T}\cap\cJ =\emptyset$. On the other hand, because $\Sigma_{T}$ is about to touch $\cJ$, near some point, the two $\Sigma_{T}$ and $\partial\cJ$ look like close, nearly parallel planes. One may easily see that if the planes are close enough, by forming a neck between them one may strictly reduce the area. Thus, the minimizing hull of $\partial\Sigma_{T}\cup\partial\cJ$ is connected. Then, they redefine $\Sigma_{T}$ to be given by this minimizing hull, and restart the flow with initial conditions given by $\Sigma_{T}$.

One may see that in the asymptotically flat setting, the Hawking mass\footnote{Due to the different assumption on scalar curvature, i.e., $R_{g}\geq 0$, the appropriate quantity to consider in the asymptotically flat setting would be $m_{H}(\Sigma) := (16\pi)^{- \frac 32}  \cH^{2}_{g}(\Sigma)^{\frac 12} \left( 16\pi - \int_{\Sigma} H_{g}^{2} d\cH^{2}_{g}\right)$.} is actually monotone nondecreasing along this process, when jumping over outer-minimizing minimal surfaces. This is because the ``new part,'' of $\Sigma_{T}$ is minimal so the quantity $\int H^{2}$ decreases under the jump. Furthermore, it is clear from the minimizing hull property of the flow, that the area must strictly increase under the jump. In our setting, the minimizing hull property still holds, so the area does increase. However, the relevant mean curvature term is $\int \left( H^{2} - 4\right)$, which does not behave as nicely as in the asymptotically flat case, in particular because the integrand could be negative. In addition, we would like to jump over regions which are not minimal, which is an additional complication.

As such, we must modify the jump procedure, so as to jump at (nearly) the earliest possible time. In particular, this choice allows us to arrange that the area drops only a small amount over the jump. We have illustrated a jump in Figure \ref{fig:how-to-jump}.

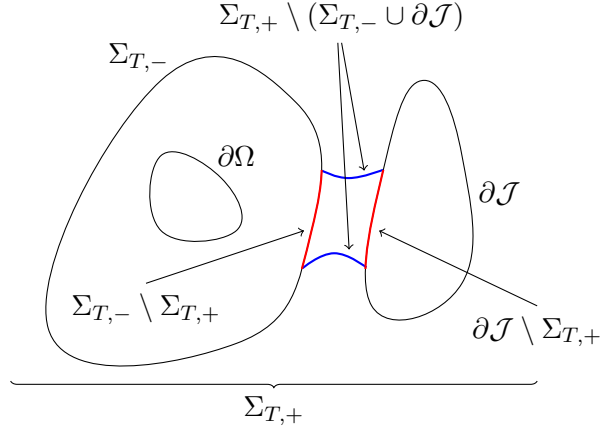
\begin{figure}
\begin{tikzpicture}

	\draw [tension = .8, blue, thick] plot [smooth] coordinates {(2.14,.8) (2.5, .7) (2.95,.81)};
	\draw [tension = .8, blue, thick] plot [smooth] coordinates {(1.88,-.5) (2.3,-.3) (2.73,-.49)};
	
	\draw [tension = .8] plot [smooth cycle] coordinates {(0,0) (1,0) (.8,.7) (0,1)};

	\begin{scope}
		\draw [tension = .8] plot [smooth cycle] coordinates {(-1.5,-1.3) (1,-1.5) (2,0) (1.8,1.7) (0,2)};
		\draw [tension = .8] plot [smooth cycle] coordinates {(2.8,-1) ( 4,-.7) (4,1) (3.5,2) (3,1)};
	\end{scope}
	
	\begin{scope}
		\clip (1.87,-.5) rectangle (3,.8);
		\draw [red, thick, tension = .8] plot [smooth cycle] coordinates {(-1.5,-1.3) (1,-1.5) (2,0) (1.8,1.7) (0,2)};
		\draw [red, thick, tension = .8] plot [smooth cycle] coordinates {(2.8,-1) ( 4,-.7) (4,1) (3.5,2) (3,1)};
	\end{scope}

	\node at (-.3,2.3) {$\Sigma_{T,-}$};
	\node at (1,1) {$\partial \Omega$};
	\node at (4.5,.5) {$\partial\cJ$};
	
	\draw [->] (2.4,2.5) node [above] {$\Sigma_{T,+}\setminus(\Sigma_{T,-}\cup\partial\cJ)$} -- (2.7,.8);
	\draw [->] (2.35,2.5) -- (2.5,-.25);
	
	\draw [->] (-.2,-.7) node [below] {$\Sigma_{T,-}\setminus \Sigma_{T,+}$} -- (1.9,0);
	
	\draw [->] (5,-1) node [below] {$\partial\cJ\setminus\Sigma_{T,+}$} -- (2.9,0);
	
	\draw [snake = brace] (5,-2) -- (-2,-2) node [pos = .5, below = 2] {$\Sigma_{T,+}$};
\end{tikzpicture}
\caption{A diagram of the inverse mean curvature flow with jumps as defined in Proposition \ref{prop:how-to-jump}. We replace the red region, $(\Sigma_{T,-}\cup \partial\cJ)\setminus\Sigma_{T,+}$ with the blue region, $\Sigma_{T,+}\setminus(\Sigma_{T,-}\cup\partial\cJ)$, by taking the minimizing hull of $\Sigma_{T,-} \cup\partial\cJ$. By choosing $T$ nearly as small as possible, we can ensure that the blue and red regions have almost the same area. }
\label{fig:how-to-jump}
\end{figure}

\begin{claim}
For any $\epsilon>0$, there exists $T$ so that $\Sigma_{t}$ is disjoint from $\cJ$ for all $t \leq T$, so that the outer-minimizing enclosure of $\Sigma_{T} \cup \partial\cJ$, which we will write as $(\Sigma_{T} \cup \partial\cJ)'$, is connected, and so that
\begin{equation*}
\cH^{2}_{g}((\Sigma_{T} \cup \partial\cJ)') \geq\cH^{2}_{g} (\Sigma_{T} \cup \partial\cJ) - \epsilon.
\end{equation*}
\end{claim}
\begin{proof}[Proof of the Claim]\renewcommand{\qedsymbol}{}
We define $\hat T : = \inf\{t : (\Sigma_{t} \cup \partial\cJ)' \text{ is connected} \}$ and choose sequences $s_{k}\nearrow \hat T$ and $t_{k}\searrow \hat T$. By definition, $(\Sigma_{t_{k}} \cup \partial\cJ)'$ is connected for each $k$ and $(\Sigma_{s_{k}}\cup\partial\cJ)'$ is disconnected for each $k$. Also, we may arrange that $\Sigma_{t_{k}}$ is disjoint from $\cJ$, for $k$ sufficiently large (this follows from the fact that $\hat T$ must be strictly before the first time of contact; see \cite[\S 6]{HuiskenIlmanen:Penrose}). Suppose that 
\begin{equation*}
\cH^{2}_{g}((\Sigma_{t_{k}} \cup \partial\cJ)') < \cH^{2}_{g} (\Sigma_{t_{k}} \cup \partial\cJ) - \epsilon.
\end{equation*}
for each $k$. Note that $\cH^{2}_{g}(\Sigma_{t}\cup\partial\cJ)$ is continuous in $t$ as long as $\Sigma_{t}$ remains disjoint from $\cJ$; this follows easily from the exponential area growth of $\Sigma_{t}$ (see (6) in Theorem \ref{theo:HI-wIMCF}). Hence, we may choose $k$ sufficiently large so that 
\begin{equation*}
\cH^{2}_{g} (\Sigma_{t_{k}} \cup \partial\cJ) - \epsilon < \cH^{2}_{g}(\Sigma_{s_{k}}\cup\partial \cJ).
\end{equation*}
Combining these two inequalities, we obtain
\begin{equation*}
\cH^{2}_{g}((\Sigma_{t_{k}} \cup \partial\cJ)') < \cH^{2}_{g}(\Sigma_{s_{k}}\cup\partial \cJ).
\end{equation*}
This is a contradiction: on one hand, $(\Sigma_{t_{k}} \cup \partial\cJ)'$ contains $\Sigma_{s_{k}}\cup\partial \cJ$, so this shows that $\Sigma_{s_{k}}\cup\partial \cJ$ is not outer-minimizing. On the other hand, it is not hard to see that the only way that this could happen is if $(\Sigma_{s_{k}}\cup\partial \cJ)'$ is connected, as $\Sigma_{s_{k}}$ and $\partial \cJ$ are both individually outer-minimizing. This contradicts our choice of $s_{k}$.
\end{proof}

We choose $T$ as in the claim and write $\Sigma_{t}$ for $t < T$ and $\Sigma_{T,-}$ for the flow continued until time $T$. We further define $\Sigma_{T,+} = (\Sigma_{T,-} \cup \partial\cJ)'$. Thus, if $\Sigma_{T,-}$ is smooth, we may compute as follows (if it is not smooth, we may approximate it in $C^{1}$ from the outside in by smooth surfaces as in \cite[\S 6]{HuiskenIlmanen:Penrose} and apply this argument to the approximating surfaces---that the inequality also holds for the limit then follows from lower semicontinuity of $\int H^{2}$ under $C^{1}$ convergence, cf.\ \cite[(1.14)]{HuiskenIlmanen:Penrose})
\begin{align*}
m_{H}(\Sigma_{T,+}) & =\frac{ \cH^{2}_{g}(\Sigma_{T,+})^{\frac 12}}{(16\pi)^{\frac 32}}\left(16\pi - \int_{\Sigma_{T,+}}\left(H^{2}_{g}-4\right) d\cH^{2}_{g}\right)\\
& = \frac{\cH^{2}_{g}(\Sigma_{T,+})^{\frac 12}}{(16\pi)^{\frac 32}}\left(16\pi - \int_{\Sigma_{T,-}}\left(H^{2}_{g}-4\right) d\cH^{2}_{g}\right) \\
& \qquad + \frac{ \cH^{2}_{g}(\Sigma_{T,+})^{\frac 12}}{(16\pi)^{\frac 32}} \int_{\Sigma_{T,-} \setminus \Sigma_{T,+}}\left(H^{2}_{g}-4\right) d\cH^{2}_{g}\\
&  \qquad+ 4\frac{ \cH^{2}_{g}(\Sigma_{T,+})^{\frac 12} }{(16\pi)^{\frac 32}} \cH^{2}_{g}(\Sigma_{T,+} \setminus (\Sigma_{T,-}\cup\partial\cJ)) \\
& \qquad-  \frac{\cH^{2}_{g}(\Sigma_{T,+})^{\frac 12}}{(16\pi)^{\frac 32}} \int_{\partial\cJ \cap \Sigma_{T,+}} \left(H^{2}_{g}-4\right)d\cH^{2}_{g}\\
& =  \frac{\cH^{2}_{g}(\Sigma_{T,+})^{\frac 12}}{\cH^{2}_{g}(\Sigma_{T,-})^{\frac 12}}m_{H}(\Sigma_{T,-})+  \frac{\cH^{2}_{g}(\Sigma_{T,+})^{\frac 12} }{(16\pi)^{\frac 32}} \int_{\Sigma_{T,-} \setminus \Sigma_{T,+}}\left(H^{2}_{g}-4\right) d\cH^{2}_{g}\\
&  \qquad+ 4\frac{ \cH^{2}_{g}(\Sigma_{T,+})^{\frac 12}}{(16\pi)^{\frac 32}} \cH^{2}_{g}(\Sigma_{T,+} \setminus (\Sigma_{T,-}\cup\partial\cJ)) \\
& \qquad -  \frac{\cH^{2}_{g}(\Sigma_{T,+})^{\frac 12} }{(16\pi)^{\frac 32}} \int_{\partial\cJ \cap \Sigma_{T,+}} \left(H^{2}_{g}-4\right)d\cH^{2}_{g}\\
& \geq \frac{\cH^{2}_{g}(\Sigma_{T,+})^{\frac 12}}{\cH^{2}_{g}(\Sigma_{T,-})^{\frac 12}}m_{H}(\Sigma_{0}) - \frac{ \cH^{2}_{g}(\Sigma_{T,+})^{\frac 12}}{(16\pi)^{\frac 32}} \int_{\partial\cJ} \left(H^{2}_{g}-4\right)d\cH^{2}_{g} \\
&  \qquad +  4 \frac{ \cH^{2}_{g}(\Sigma_{T,+})^{\frac 12}}{(16\pi)^{\frac 32}} \left(\cH^{2}_{g}(\Sigma_{T,+} \setminus (\Sigma_{T,-}\cup\partial\cJ)) -  \cH^{2}_{g}(\Sigma_{T,-} \setminus \Sigma_{T,+})\right).
\end{align*}
Now, the above claim implies that
\begin{align*}
 \cH^{2}_{g}(\Sigma_{T,+} \setminus &  (\Sigma_{T,-}\cup\partial\cJ))  -  \cH^{2}_{g}(\Sigma_{T,-} \setminus \Sigma_{T,+}) - \cH^{2}_{g}(\partial\cJ \setminus \Sigma_{T,+}) \\
& = \cH^{2}_{g}(\Sigma_{T,+}) - \cH^{2}_{g}(\Sigma_{T,-}) - \cH^{2}_{g}(\partial\cJ)  \geq -\epsilon.
\end{align*}
Thus, we have the inequality
\begin{align*}
m_{H}(\Sigma_{T,+}) &  \geq \frac{\cH^{2}_{g}(\Sigma_{T,+})^{\frac 12}}{\cH^{2}_{g}(\Sigma_{T,-})^{\frac 12}}m_{H}(\Sigma_{0})\\
&   - 4 \epsilon  \frac{\cH^{2}_{g}(\Sigma_{T,+})^{\frac 12}}{(16\pi)^{\frac 32}}   - \frac{ \cH^{2}_{g}(\Sigma_{T,+})^{\frac 12} }{(16\pi)^{\frac 32}} \int_{\partial\cJ} \left(H^{2}_{g}-4\right)d\cH^{2}_{g} .
\end{align*} 
Furthermore, by the exponential area growth of $\Sigma_{t}$ (cf. (6) in Theorem \ref{theo:HI-wIMCF}), we have that 
\begin{equation*}
\cH^{2}_{g}(\Sigma_{T,+}) \leq e^{T} \cH_{g}^{2}(\partial\Omega) + \cH^{2}_{g}(\partial\cJ) \leq \cH_{g}^{2}(\partial\Omega) \left( e^{T} + \cH^{2}_{g}(\partial\cJ) \right). 
\end{equation*}
We may bound $T$, the time to jump, by using the gradient bounds for weak inverse mean curvature flow, described in (2) in Theorem \ref{theo:HI-wIMCF}. From this, one may clearly bound the time $t$ after which $\cJ$ would be totally contained inside of $\Sigma_{t}$ (of course, the jump time $T$ must be before this time) in terms of $\max_{p\in\partial\Omega} H_{g}(p)$ and the compact set $\Gamma$. In particular, we have that 
\begin{equation*}
\frac{\cH^{2}_{g}(\Sigma_{T,+})^{\frac 12}}{(16\pi)^{\frac 3 2}} \leq C_{2} \cH^{2}_{g}(\partial\Omega)^{\frac 12}, 
\end{equation*}
where $C_{2}$ depends on the metric $g$, the compact set $\Gamma$, as well as upper bounds for the quantities $\cH^{2}_{g}(\partial \cJ)$ and $\max_{p\in\partial\Omega} H_{g}(p)$. Hence, if $m_{H}(\partial\Omega) \geq 0$, then because $\cH^{2}_{g}(\Sigma_{T,+})\geq \cH^{2}_{g}(\Sigma_{T,-})$ by the outer-minimizing property, then by choosing $\epsilon < \frac {1}{4C_{2}\cH^{2}_{g}(\partial\Omega)^{\frac 12}} \delta$, we obtain
\begin{equation*}
m_{H}(\Sigma_{T,+})  \geq m_{H}(\Sigma_{0}) - \delta - C_{2} \cH^{2}_{g}(\partial\Omega)^{\frac 12} \int_{\partial\cJ} \left(H^{2}_{g}-4\right)d\cH^{2}_{g} .
\end{equation*} 

On the other hand, when $m_{H}(\partial\Omega) < 0$, its coefficient could make the inequality worse. Hence, we must use the bound
\begin{equation*}
\frac{\cH^{2}_{g}(\Sigma_{T,+})}{\cH^{2}_{g}(\Sigma_{T,-})} \leq \frac{\cH^{2}_{g}(\Sigma_{T,-}) + \cH^{2}_{g}(\partial\cJ)}{\cH^{2}_{g}(\Sigma_{T,-})} \leq 1 + \cH^{2}_{g}(\partial\cJ),
\end{equation*}
which follows from the outermost property of $\Sigma_{T,-}$ and assumption (5) in the statement of the Proposition. Now, the asserted inequality for $m_{H}(\Sigma_{T,+})$ follows in this case as well by the same argument we have just used. 

Now, it follows that we may restart the flow at $\Sigma_{T,+}$ by the same argument as Huisken--Ilmanen, in particular using \cite[Lemma 6.2]{HuiskenIlmanen:Penrose} to approximate $\Sigma_{T,+}$ in $C^{1}$ by smooth surfaces. We will write the surface obtained by flowing $\Sigma_{T,+}$ for time $t - T$ by $\Sigma_{t}$. By the exponential area growth of the flow, we may define $\beta$ so that $\cH^{2}(\Sigma_{t}) = e^{t+\beta} \cH^{2}_{g}(\partial\Omega)$. On the other hand, by the outer-minimizing property of $\Sigma_{T,-}$, we see that
\begin{align*}
e^{T+\beta}\cH^{2}_{g}(\partial\Omega) & = \cH^{2}_{g}(\Sigma_{T,+}) \leq \cH^{2}_{g}(\Sigma_{T,-}) + \cH^{2}_{g}(\partial \cJ) = e^{T}\cH^{2}_{g}(\partial\Omega) + \cH^{2}_{g}(\partial \cJ).
\end{align*}
Thus, 
\begin{equation*}
\beta \leq \log\left( 1 + \frac{\cH^{2}_{g}(\partial\cJ)}{\cH^{2}_{g}(\partial\Omega)} e^{-T} \right).
\end{equation*}
This completes the proof.
\end{proof}

In the remainder of this section, we recall several results contained in \cite{BrendleChodosh:VolCompAH}. Because the mechanism in \cite{BrendleChodosh:VolCompAH} comprises an essential element of our argument (and there are minor modifications to our setting), we recall the proofs for the reader's convenience. We fix $(M,g)$ an asymptotically hyperbolic manifold with scalar curvature $R_{g}\geq -6$ and $\Omega$ a connected, outer-minimizing, smooth open set of finite perimeter (possibly not containing the horizon). We let $\Sigma_{t} = \partial\{u<t\}$ denote the (weak) inverse mean curvature flow starting at $\partial^{*}\Omega$, which exists by Theorem \ref{theo:HI-wIMCF}. We additionally let $\Omega_{t} : =\{u<t\}\setminus \Omega$ denote the region swept out by the flow (note that $\Omega_{t}$ does not contain $\Omega$). 

Note that the Hawking mass in \cite{BrendleChodosh:VolCompAH} differs from the present work by a constant multiple of $(16\pi)^{\frac 32}$. We emphasize that the convention used here is chosen so that the Hawking mass of a centered coordinate sphere in Schwarzschild-AdS of mass $m$ is equal to $m$.

\begin{prop}[cf. {\cite[Proposition 3]{BrendleChodosh:VolCompAH}}]\label{prop:vol-swept-IMCF-bd}
Suppose that for $\tau \in [0,T_{0})$, $\Sigma_{\tau}$ remains disjoint from the horizon and $m_{H}(\partial \Omega) \geq m$. Then, for $\tau \in [0,T_{0})$
\begin{equation*}
\sL^{3}_{g}(\Omega_{\tau}) \geq \int_{0}^{\tau} e^{\frac{3t}{2}} A^{\frac 32} \left( 4 e^{t} A + 16\pi - e^{-\frac t 2} A^{-\frac 12} (16\pi)^{\frac 32}  m \right)^{-\frac 12} dt,
\end{equation*}
where $A : = \cH^{2}_{g}(\partial\Omega)$. Equality holds for $\tau > 0$ if and only if $\Omega$ is a centered coordinate ball in exact Schwarzschild-AdS of mass $m=m_{H}(\partial\Omega)$.
\end{prop}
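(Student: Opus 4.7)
The plan is to bound the swept volume from below by the co-area formula, reduce the integrand to quantities controlled by the Hawking mass, and then apply Geroch monotonicity together with the exponential area growth from Theorem \ref{theo:HI-wIMCF}.

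First, since $|\nabla u| = H$ a.e.\ on $\Sigma_t$ (item (5) in Theorem \ref{theo:HI-wIMCF}), the co-area formula gives
\begin{equation*}
\sL^{3}_{g}(\Omega_{\tau}) = \int_{0}^{\tau} \int_{\Sigma_{t}} \frac{1}{H_{g}}\, d\cH^{2}_{g}\, dt.
\end{equation*}
Next I would apply Cauchy--Schwarz twice on each slice $\Sigma_t$: first
\begin{equation*}
\int_{\Sigma_{t}} \frac{1}{H_{g}}\, d\cH^{2}_{g} \;\geq\; \frac{\cH^{2}_{g}(\Sigma_{t})^{2}}{\int_{\Sigma_{t}} H_{g}\, d\cH^{2}_{g}},
\end{equation*}
and then
\begin{equation*}
\int_{\Sigma_{t}} H_{g}\, d\cH^{2}_{g} \;\leq\; \Big(\cH^{2}_{g}(\Sigma_{t}) \int_{\Sigma_{t}} H_{g}^{2}\, d\cH^{2}_{g}\Big)^{\frac 12}.
\end{equation*}
Both require only that $\Sigma_t$ is $C^1$ with positive weak mean curvature, which is provided by Theorem \ref{theo:HI-wIMCF}.

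Now I would rewrite the $H^2$ integral using Definition \ref{defi:hawking-mass}:
\begin{equation*}
\int_{\Sigma_{t}}\left(H_{g}^{2}-4\right) d\cH^{2}_{g} = 16\pi - (16\pi)^{\frac 32} \cH^{2}_{g}(\Sigma_{t})^{-\frac 12} m_{H}(\Sigma_{t}),
\end{equation*}
so that
\begin{equation*}
\int_{\Sigma_{t}} H_{g}^{2}\, d\cH^{2}_{g} = 16\pi + 4\cH^{2}_{g}(\Sigma_{t}) - (16\pi)^{\frac 32} \cH^{2}_{g}(\Sigma_{t})^{-\frac 12} m_{H}(\Sigma_{t}).
\end{equation*}
Because $\Sigma_t$ stays away from the horizon for $t\in[0,T_0)$ and $R_g\geq -6$, Geroch monotonicity (item (8) in Theorem \ref{theo:HI-wIMCF}) gives $m_H(\Sigma_t)\geq m_H(\partial\Omega)\geq m$, hence
\begin{equation*}
\int_{\Sigma_{t}} H_{g}^{2}\, d\cH^{2}_{g} \;\leq\; 16\pi + 4\cH^{2}_{g}(\Sigma_{t}) - (16\pi)^{\frac 32} \cH^{2}_{g}(\Sigma_{t})^{-\frac 12} m.
\end{equation*}
Substituting $\cH^{2}_{g}(\Sigma_{t}) = e^{t} A$ (item (6) in Theorem \ref{theo:HI-wIMCF}) and chaining the two Cauchy--Schwarz inequalities yields exactly
\begin{equation*}
\int_{\Sigma_{t}} \frac{1}{H_{g}}\, d\cH^{2}_{g} \;\geq\; e^{\frac{3t}{2}} A^{\frac 32}\left( 4 e^{t} A + 16\pi - e^{-\frac t 2} A^{-\frac 12} (16\pi)^{\frac 32} m\right)^{-\frac 12},
\end{equation*}
and integrating in $t$ completes the inequality. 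There are no serious obstacles; the only point to be careful about is that the $\Sigma_t$ are only $C^{1,\alpha}$ in general, but the weak mean curvature formulation in item (5) of Theorem \ref{theo:HI-wIMCF} is precisely what is needed to make the Cauchy--Schwarz steps rigorous.

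Finally, for the equality case, equality in either Cauchy--Schwarz inequality on $\Sigma_t$ forces $H_g$ to be constant on that slice, and equality in Geroch monotonicity gives $m_H(\Sigma_t) \equiv m$ for a.e.\ $t$, with $m=m_H(\partial\Omega)$. By item (9) of Theorem \ref{theo:HI-wIMCF}, the interior of $\{0<u<\tau\}$ is then isometric to an annulus in Schwarzschild-AdS of mass $m$. The inner boundary $\partial\Omega$ is a closed CMC surface in this Schwarzschild-AdS annulus, so Brendle's Alexandrov theorem (Theorem \ref{theo:brendle-IHES}) identifies it as a centered coordinate sphere.
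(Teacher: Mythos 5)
Your argument is essentially identical to the paper's: co-area formula, the Hölder/Cauchy--Schwarz bound $\int_{\Sigma_t} H^{-1} \geq \cH^2_g(\Sigma_t)^{3/2}(\int_{\Sigma_t}H^2)^{-1/2}$, the Hawking-mass rewrite of $\int H^2$, Geroch monotonicity, and exponential area growth, with the equality case reduced to rigidity in Geroch monotonicity. The only (harmless) imprecision is that the co-area step should be stated as an inequality $\sL^3_g(\Omega_\tau)\geq\int_0^\tau\int_{\Sigma_t}|\nabla u|^{-1}$ rather than an equality, since the weak flow may have jump regions where $u$ is constant whose volume is not captured by the level sets.
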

\begin{proof}
By Geroch monotonicity (cf.\ (8) in Theorem \ref{theo:HI-wIMCF}), we have that $m_{H}(\Sigma_{t}) \geq m$ for $t \in [0,T_{0})$ (we have not assumed that the extension $\hat g$ of the metric inside of the horizon has $R_{g}\geq -6$, so this monotonicity could fail if the flow passes through the origin---we will never allow this to happen). Furthermore, we have (cf.\ (6) in Theorem \ref{theo:HI-wIMCF}) that $\cH^{2}_{g}(\Sigma_{t}) = e^{t}\cH^{2}_{g}(\partial\Omega)$. Hence, for a.e., $t > 0$, we have that
\begin{align*}
& \int_{\Sigma_{t}} \frac{1}{|du|_{g}} d\cH^{2}_{g} \\
& = \int_{\Sigma_{t}}\frac{1}{H_{g}} d\cH^{2}_{g}\\
& \geq \cH^{2}_{g}(\Sigma_{t})^{\frac 32} \left( \int_{\Sigma_{t}} H_{g}^{2}d\cH^{2}_{g}\right)^{-\frac 12}\\
& = \cH^{2}_{g}(\Sigma_{t})^{\frac 32} \left( 4\cH^{2}_{g}(\Sigma_{t}) + 16\pi - e^{-\frac t 2} \cH^{2}_{g}(\Sigma_{t})^{-\frac 12} (16\pi)^{\frac 32}  m_{H}(\Sigma_{t}) \right)^{-\frac 12} \\
& =e^{\frac{3t}{2}} A^{\frac 32}  \left( 4 e^{t} A + 16\pi - e^{-\frac t 2} A^{-\frac 12} (16\pi)^{\frac 32}  m_{H}(\Sigma_{t}) \right)^{-\frac 12} \\
& \geq e^{\frac{3t}{2}} A^{\frac 32}  \left( 4 e^{t} A + 16\pi - e^{-\frac t 2} A^{-\frac 12} (16\pi)^{\frac 32}  m \right)^{-\frac 12}.
\end{align*}
Integrating this with respect to $t$, from $0$ to $\tau$ yields (using the co-area formula)
\begin{align*}
\sL^{3}_{g}(\Omega_{\tau}) & \geq \int_{0}^{\tau}\left(\int_{\Sigma_{t}} \frac{1}{|du|_{g}} d\cH^{2}_{g}\right)d\tau\\
& \geq \int_{0}^{\tau} e^{\frac{3t}{2}} A^{\frac 32} \left( 4 e^{t} A + 16\pi - e^{-\frac t 2} A^{-\frac 12} (16\pi)^{\frac 32}  m \right)^{-\frac 12} dt,
\end{align*}
as claimed. The equality case follows easily from the case of equality in Geroch monotonicity (cf.\ (9) in Theorem \ref{theo:HI-wIMCF}). 
\end{proof}
In the remainder of this section, we assume that $\Omega$ contains the horizon. We let $A = \cH^{2}_{g}(\partial^{*}\Omega)$. 
\begin{prop}[cf. {\cite[p.\ 5]{BrendleChodosh:VolCompAH}}]\label{prop:imcf-plus-isoper-ineq}
For a fixed $m \geq 0$ and $t\geq 0$, define $\cB_{\overline g_{m}}(e^{t}A)$ to be the centered coordinate sphere in $(\overline M_{m},\overline g_{m})$ satisfying $\cH^{2}_{\overline g_{m}}(\partial \cB_{\overline g_{m}}(e^{t}A)) =  e^{t}A$. Then,
\begin{equation*}
\sL^{3}_{\overline g_{m}}(\cB_{\overline g_{m}}(e^{t}A)) \geq \sL^{3}_{\overline g_{m}}(\Omega_{t} \cup\Omega) + o(1)
\end{equation*}
as $t\to\infty$.
\end{prop}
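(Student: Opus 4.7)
The strategy is to combine the isoperimetric inequality in exact Schwarzschild-AdS with the fact that $g$ and $\overline{g}_{m}$ agree up to $O(s^{-3})$ in the asymptotic region. The plan is to transplant $\Sigma_{t}$ to a surface $\tilde{\Sigma}_{t}$ in $\overline{M}_{m}$ via the coordinate identification (well-defined for $t$ large, since $\Sigma_{t}$ lies far out), bound the enclosed $\overline{g}_{m}$-volume by the isoperimetric inequality, and show the resulting upper bound differs from $\sL^{3}_{\overline{g}_{m}}(\cB_{\overline{g}_{m}}(e^{t}A))$ by $o(1)$.

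First, to control error terms, I would establish the inner-radius estimate $\underline{s}(\Sigma_{t}) \geq c\, e^{t/2}$ for $t$ sufficiently large. This follows from the avoidance principle (Theorem \ref{theo:HI-wIMCF}(10)): a centered coordinate ball contained in $\Omega$, flowed by IMCF in hyperbolic space, has inner coordinate radius growing precisely as $r_{0} e^{t/2}$, and after perturbing to absorb the $O(s^{-3})$ error this serves as a subsolution of weak IMCF on $(M,g)$. Using $|g - \overline{g}_{m}|_{\overline{g}} = O(s^{-3})$, the induced volume forms satisfy $d\mu_{\overline{g}_{m}} = (1 + O(s^{-3}))\, d\mu_{g}$ on $\Sigma_{t}$, so
\begin{equation*}
\left| \cH^{2}_{\overline{g}_{m}}(\tilde{\Sigma}_{t}) - \cH^{2}_{g}(\Sigma_{t}) \right| \leq C\, \underline{s}(\Sigma_{t})^{-3}\, \cH^{2}_{g}(\Sigma_{t}) = O(e^{t}\cdot e^{-3t/2}) = o(1).
\end{equation*}

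Next, by the isoperimetric inequality in Schwarzschild-AdS --- classical for $m = 0$, and proved in \cite{CovinoGerekGreenbergKrummel} for $m > 0$ --- the centered coordinate ball maximizes the enclosed $\overline{g}_{m}$-volume among regions containing the horizon with a given boundary area. Interpreting $\sL^{3}_{\overline{g}_{m}}(\Omega_{t} \cup \Omega)$ via the coordinate identification (with the horizon-region convention to be developed in \S\ref{sec:renorm-vol}) and applying this inequality to the region enclosed by $\tilde{\Sigma}_{t}$ in $\overline{M}_{m}$ gives
\begin{equation*}
\sL^{3}_{\overline{g}_{m}}(\Omega_{t} \cup \Omega) \leq \sL^{3}_{\overline{g}_{m}}\bigl(\cB_{\overline{g}_{m}}(\cH^{2}_{\overline{g}_{m}}(\tilde{\Sigma}_{t}))\bigr).
\end{equation*}
Since $S \mapsto \sL^{3}_{\overline{g}_{m}}(\cB_{\overline{g}_{m}}(S))$ is smooth with bounded derivative as $S \to \infty$ (read off from the explicit Schwarzschild-AdS formula), the previous area estimate and continuity yield
\begin{equation*}
\sL^{3}_{\overline{g}_{m}}\bigl(\cB_{\overline{g}_{m}}(\cH^{2}_{\overline{g}_{m}}(\tilde{\Sigma}_{t}))\bigr) = \sL^{3}_{\overline{g}_{m}}(\cB_{\overline{g}_{m}}(e^{t}A)) + o(1),
\end{equation*}
which is the claimed inequality.

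The main obstacle is the interplay between the exponential growth $\cH^{2}_{g}(\Sigma_{t}) = e^{t}A$ and the polynomial $O(s^{-3})$ decay of $g - \overline{g}_{m}$: without the sharp lower bound $\underline{s}(\Sigma_{t}) \gtrsim e^{t/2}$, the pointwise $O(s^{-3})$ error would accumulate to a leading-order (rather than $o(1)$) correction in both the area comparison and the subsequent volume comparison. The subsolution argument from the avoidance principle, together with the AH asymptotics, is what delivers this bound; the conventions for transplanting volumes across the (potentially mismatched) horizons of $(M,g)$ and $(\overline{M}_{m}, \overline{g}_{m})$ will be set up in \S\ref{sec:renorm-vol}.
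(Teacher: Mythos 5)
Your proposal follows essentially the same route as the paper: a coordinate-ball subsolution plus the avoidance principle to force $\underline s(\Sigma_t)$ to grow exponentially, the resulting $O(\underline s^{-3})$ metric comparison to show $\cH^2_{\overline g_m}(\Sigma_t)=e^tA+o(1)$, the isoperimetric inequality in exact Schwarzschild-AdS from \cite{CovinoGerekGreenbergKrummel}, and the bounded derivative of $S\mapsto\sL^3_{\overline g_m}(\cB_{\overline g_m}(S))$ to convert the area error into an $o(1)$ volume error. One small correction: the exact rate $e^{t/2}$ is the IMCF rate itself in hyperbolic space, so a genuine strict subsolution only yields $\underline s(\Sigma_t)\geq c\,e^{(1/2-\epsilon)t}$ (the paper takes $e^{4t/9}$), but any exponent exceeding $1/3$ still makes your error term $o(1)$, so the argument is unaffected.
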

\begin{proof}
It is clear that $\{s\leq e^{\frac{4t}{9}}\}$ is eventually a weak subsolution to the inverse mean curvature flow, in the sense that for $t \geq t_{0}$ sufficiently large, the surface $\{s= e^{\frac{4t}{9}}\}$ flows with speed less than $\frac {1}{H_{g}}$. By the gradient bound (2) in Theorem \ref{theo:HI-wIMCF}, $\Omega_{t_{1}}\cup \Omega$ will contain $\{s\leq  e^{\frac{4t_{0}}{9}}\}$, if we choose $t_{1}\geq t_{0}$ sufficiently large. Now, by the avoidance property, i.e., (10) in Theorem \ref{theo:HI-wIMCF}, for $t\geq t_{1}$, we have that $\{s\leq  e^{\frac{4(t+ t_{0}-t_{1})}{9}}\} \subset \Omega_{t}\cup\Omega$. This implies that on $\Sigma_{t} = \partial(\Omega_{t}\cup\Omega)$, we have that
\begin{equation*}
|g-\overline g_{m}|_{\overline g_{m}}\leq O(s^{-3}) \leq O(e^{-\frac{4t}{3}}).
\end{equation*}
As such,
\begin{equation*}
\cH^{2}_{\overline g_{m}}(\Sigma_{t}) = \cH^{2}_{ g}(\Sigma_{t}) (1+O(e^{-\frac{4t}{3}})) = e^{t} A +O(e^{-\frac t 3}). 
\end{equation*}
By \cite{CovinoGerekGreenbergKrummel}, centered coordinate balls in $(\overline M_{m},\overline g_{m})$ are isoperimetric (we are not using the fact that the coordinate balls are \emph{uniquely} isoperimetric for $m > 0$, so this holds for $m=0$ as well). Hence
\begin{equation*}
\sL^{3}_{\overline g_{m}}(\cB_{\overline g_{m}}(e^{t}A + O(e^{-\frac t 3})) \geq \sL_{\overline g_{m}}^{3}(\Omega_{t}\cup\Omega). 
\end{equation*}
Combined with Lemma \ref{lemma:vol-large-balls}, this finishes the proof. 
\end{proof}
The barrier argument we have just used also establishes 
\begin{prop}[cf. {\cite[Proposition 2]{BrendleChodosh:VolCompAH}}]\label{prop:imcf-forms-exhaustion}
For any sequence $t_{i}\to\infty$, the sets $\Omega_{t_{i}}\cup\Omega$ form an exhaustion of $\RR^{3}$. 
\end{prop}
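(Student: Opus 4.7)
The plan is to directly exploit the subsolution/barrier argument already established in the proof of Proposition~\ref{prop:imcf-plus-isoper-ineq}. I would first recall from that proof that for $t_{0}$ sufficiently large, the one-parameter family of coordinate spheres $\{s = e^{4t/9}\}_{t\geq t_{0}}$ flows strictly slower than $1/H_{g}$, and hence serves as a weak subsolution to IMCF in the asymptotic region. Combining this with the gradient bound for weak IMCF (property (2) of Theorem~\ref{theo:HI-wIMCF}), we can pick $t_{1}\geq t_{0}$ large enough that the initial barrier $\{s \leq e^{4t_{0}/9}\}$ is already contained in $\Omega_{t_{1}} \cup \Omega$.

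Next, I would apply the avoidance property (property (10) of Theorem~\ref{theo:HI-wIMCF}) to conclude that for every $t \geq t_{1}$,
\begin{equation*}
\{s \leq e^{4(t+t_{0}-t_{1})/9}\} \;\subset\; \Omega_{t} \cup \Omega.
\end{equation*}
Since the coordinate balls $\{s \leq R\}$ exhaust $\RR^{3}$ as $R \to \infty$ (note that the horizon region $K$ is contained in $\Omega$ by hypothesis, so no issue arises near the center), for any compact set $K' \subset \RR^{3}$ we may select $R_{0}$ with $K' \subset \{s \leq R_{0}\}$ and then choose $i_{0}$ large enough that $e^{4(t_{i}+t_{0}-t_{1})/9} \geq R_{0}$ for all $i\geq i_{0}$. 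This yields $K' \subset \Omega_{t_{i}} \cup \Omega$ for all $i \geq i_{0}$, which together with the obvious nesting $\Omega_{t} \cup \Omega \subset \Omega_{s} \cup \Omega$ for $t \leq s$, gives the desired exhaustion of $\RR^{3}$.

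I do not foresee a serious obstacle: the proposition is essentially a repackaging of the inclusion that was established, in passing, inside the proof of the preceding proposition. The only mild subtlety is that the weak flow $u$ is defined on $\RR^{3}\setminus\Omega$ rather than on all of $\RR^{3}$, but because the exhausting sets $\Omega_{t}\cup\Omega$ include $\Omega$ by construction and $\Omega \supset K$, the center of the coordinate system is covered from the outset and the argument above carries through without modification.
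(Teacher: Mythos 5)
Your proof is correct and is essentially the same as the paper's: the paper simply remarks that "the barrier argument we have just used also establishes" the proposition, meaning exactly the subsolution-plus-gradient-bound-plus-avoidance chain you spell out, so the inclusion $\{s \leq e^{4(t+t_{0}-t_{1})/9}\} \subset \Omega_{t}\cup\Omega$ for $t \geq t_{1}$ immediately yields the exhaustion.
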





\section{The renormalized volume} \label{sec:renorm-vol} 
In this section, we discuss the renormalized volume of asymptotically hyperbolic manifolds. In \cite{BrendleChodosh:VolCompAH} we have defined the renormalized volume of an asymptotically hyperbolic manifold as follows. 
\begin{defi}\label{defi:renorm-vol}
For $\Omega_{i}$ an exhaustion of $\RR^{3}$ by open sets, we define the \emph{renormalized volume} of $(M,g)$ by $V(M,g) : =  \lim_{i\to\infty} \left( \sL^{3}_{g}(\Omega_{i}) - \sL^{3}_{\overline g}(\Omega_{i})\right)$.
\end{defi}
We remind the reader that $\sL^{3}_{g}(\Omega_{i})$ is the $g$-volume of the region $\Omega_{i}\cap M = \Omega_{i}\setminus \overline K$. It is not hard to check that if $(M,g)$ is asymptotically hyperbolic, then $V(M,g)$ is finite and independent of the exhaustion $\Omega_{i}$. The main result in \cite{BrendleChodosh:VolCompAH} was a Penrose type inequality for asymptotically hyperbolic manifolds where the renormalized volume replaced the mass. In \cite{BrendleChodosh:VolCompAH}, we required\footnote{Here, we have studied the case where $\partial M$ has mean curvature $H_{g}\equiv 2$ because of the fact that large isoperimetric regions in $(M,g)$ have mean curvature $H_{g} \approx 2$. For example, we make use of this when proving the existence of large isoperimetric regions in \S \ref{sec:pf-exist-in-AH}.}  that $\partial M$ was an outermost connected \emph{minimal} surface, rather than having $H_{g}\equiv 2$. This distinction somewhat changes the behavior of the renormalized volume. In fact, it is easy to check that Schwarzschild-AdS (with boundary the $H_{g}\equiv 2$ coordinate sphere) of mass $\mm > 0$ has \emph{negative} renormalized volume. However, can modify the techniques used in \cite{BrendleChodosh:VolCompAH} in a straightforward manner to prove the following proposition. 

\begin{prop}
Suppose that $(M,g)$ is asymptotically hyperbolic in the sense of Definition \ref{defi:AH-metric}, and has $R_{g}\geq -6$. If $m\geq0$ is chosen so that $A_{\partial M} : = \cH^{2}_{g}(\partial M) = \cH^{2}_{\overline g_{m}}(\partial \overline M_{m}) : = A_{\partial \overline M_{m}}$ then $V(M,g) \geq V(\overline M_{m},\overline g_{m})$, with equality if and only if $(M,g)$ is isometric to $(\overline M_{m},\overline g_{m})$. 
\end{prop}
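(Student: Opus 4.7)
The plan is to adapt the argument from \cite{BrendleChodosh:VolCompAH}, which treats asymptotically hyperbolic manifolds with \emph{minimal} boundary, to the present setting where $\partial M$ has $H_g \equiv 2$. The key observation is that $H_g \equiv 2$ on $\partial M$ gives $m_H(\partial M) = \sqrt{A_{\partial M}/(16\pi)}$, and by the choice of $m$ (the horizon $\partial \overline M_m$ sits at $s = 2m$ and has area $16\pi m^2$) this equals $m$, matching the Hawking mass of the SAdS horizon.

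First I would verify that $\partial M$ is outer-minimizing---which follows from the outermost CMC assumption via the brane-functional argument of the proof of Proposition \ref{prop:reg-iso-surf}---and run the weak inverse mean curvature flow $\Sigma_t$ of Huisken--Ilmanen starting from $\partial M$. Let $\hat\Omega_t := K \cup \{u < t\}$ be the enclosed region together with the horizon region. Geroch monotonicity yields $m_H(\Sigma_t) \geq m$ for all $t \geq 0$. Applying Proposition \ref{prop:vol-swept-IMCF-bd}, and observing that its integral lower bound is saturated in the Schwarzschild-AdS model---so it equals the $\overline g_m$-volume of the IMCF region in $(\overline M_m, \overline g_m)$ starting from the horizon---we obtain
\begin{equation*}
\sL^3_g(\hat\Omega_t) \geq \sL^3_{\overline g_m}(\hat{\overline\Omega}_t),
\end{equation*}
where $\hat{\overline\Omega}_t$ is the centered coordinate ball in $(\overline M_m, \overline g_m)$ whose boundary has $\overline g_m$-area $e^t A_{\partial M}$.

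Next I would control the hyperbolic reference volume. The barrier argument used in the proof of Proposition \ref{prop:imcf-plus-isoper-ineq} shows $\Sigma_t \subset \{s \geq e^{4t/9}\}$ for $t$ sufficiently large, so $|g - \overline g|_{\overline g} = O(s^{-3})$ implies $\cH^2_{\overline g}(\Sigma_t) = e^t A_{\partial M} + o(1)$ as $t \to \infty$. Since $\partial \hat{\overline\Omega}_t$ is a hyperbolic sphere of hyperbolic area exactly $e^t A_{\partial M}$, the hyperbolic isoperimetric inequality applied to $\hat\Omega_t$ gives
\begin{equation*}
\sL^3_{\overline g}(\hat\Omega_t) \leq \sL^3_{\overline g}(\hat{\overline\Omega}_t) + o(1).
\end{equation*}
Subtracting the two displayed inequalities and passing to the limit $t \to \infty$---both families exhaust $\RR^3$ by Proposition \ref{prop:imcf-forms-exhaustion}---yields $V(M,g) \geq V(\overline M_m, \overline g_m)$.

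For the equality case, set $f(t) := \sL^3_g(\hat\Omega_t) - \sL^3_{\overline g_m}(\hat{\overline\Omega}_t) \geq 0$. Then $f(0) = 0$, and $f$ is non-decreasing in $t$ because the coarea formula identifies $f'(t)$ with the difference of the corresponding $\int H^{-1}\,d\cH^2$ terms, which is non-negative via the Cauchy--Schwarz chain in the proof of Proposition \ref{prop:vol-swept-IMCF-bd} combined with $m_H(\Sigma_t) \geq m$. If $V(M,g) = V(\overline M_m, \overline g_m)$, then $\lim_t f(t) = 0$, forcing $f \equiv 0$; equality in Cauchy--Schwarz forces $H_g$ constant on each $\Sigma_t$, and equality in Geroch monotonicity forces the swept annulus $\Omega_t$ to be isometric to the SAdS annulus $\overline\Omega_t$ for every $t$. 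Exhausting $M$ gives $(M,g) \cong (\overline M_m, \overline g_m)$. The main technical step is the asymptotic hyperbolic-volume comparison between two different exhaustions of $\RR^3$ (IMCF in $(M,g)$ versus coordinate balls in SAdS); this is resolved by the hyperbolic isoperimetric inequality once one verifies that the $g$-area and $\overline g$-area of $\Sigma_t$ agree to leading order.
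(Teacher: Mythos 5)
Your argument is correct and follows essentially the same route as the paper: weak IMCF starting from $\partial M$, the Geroch-monotonicity volume lower bound of Proposition \ref{prop:vol-swept-IMCF-bd} (with $m_{H}(\partial M)=m$ coming from $H_{g}\equiv 2$ and $A_{\partial M}=16\pi m^{2}$), a model-space isoperimetric inequality to control the reference volume, and the exhaustion property, with the same treatment of the equality case via rigidity in Geroch monotonicity. The only (harmless) difference is bookkeeping: the paper applies the isoperimetric inequality in $(\overline M_{m},\overline g_{m})$ (Proposition \ref{prop:imcf-plus-isoper-ineq}) to the single exhaustion $\Omega_{t}$, whereas you invoke the hyperbolic ($m=0$) case and compare two exhaustions, which, since the renormalized volume is exhaustion-independent, yields the same conclusion.
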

\begin{proof}
Let $\Sigma_{t} = \partial\Omega_{t}$ denote the weak solution to the inverse mean curvature flow starting at $\partial M$, as in Theorem \ref{theo:HI-wIMCF}. Recall that $\Omega_{t}$ as a set in $\RR^{3}$ contains the horizon. Note that $\cB_{\overline g_{m}}(e^{t}A_{\partial \overline M_{m}})$ is a solution to the inverse mean curvature flow in $(\overline M_{m},\overline g_{m})$. By Proposition \ref{prop:vol-swept-IMCF-bd} applied to both flows (using that equality holds in the model case), we have that
\begin{equation*}
\sL^{3}_{g}(\Omega_{t}) \geq \sL^{3}_{\overline g_{m}}(\cB_{\overline g_{m}}(e^{t}A_{\partial \overline M_{m}}))
\end{equation*}
for $t \geq 0$, where $\cB_{\overline g_{m}}(e^{t}A_{\partial \overline M_{m}})$ is the centered coordinate sphere in $(\overline M_{m},\overline g_{m})$ with $\cH^{2}_{\overline g_{m}}(\cB_{\overline g_{m}}(e^{t}A_{\partial \overline M_{m}})) = e^{t}A_{\partial \overline M_{m}}$. Then, by Proposition \ref{prop:imcf-plus-isoper-ineq}, we have that 
\begin{equation*}
\sL^{3}_{g}(\Omega_{t}) \geq \sL^{3}_{\overline g_{m}}(\Omega_{t}) +o(1),
\end{equation*}
or equivalently, 
\begin{equation*}
\sL^{3}_{g}(\Omega_{t}) - \sL^{3}_{\overline g}(\Omega_{t}) \geq \sL^{3}_{\overline g_{m}}(\Omega_{t}) - \sL^{3}_{g}(\Omega_{t}) +o(1),
\end{equation*}
as $t\to\infty$. Sending $t \to\infty$, we conclude that $V(M,g) \geq V(\overline M_{m},\overline g_{m})$. If equality holds, it is not hard to see that equality must hold in Geroch monotonicity, i.e., $m_{H}(\Sigma_{t}) = m$ for all $t \geq 0$, which implies by (9) in Theorem \ref{theo:HI-wIMCF} that $(M,g)$ is isometric to $(\overline M_{m},\overline g_{m})$.
\end{proof}

We may compute
\begin{equation*}
V(\overline M_{m},\overline g_{m}) = 4\pi \lim_{R\to\infty} \left[ \int_{2m}^{R} \frac{s^{2}}{\sqrt{1+s^{2}-2m s^{-1}}}  ds - \int_{0}^{R} \frac{s^{2}}{\sqrt{1+s^{2}}} ds \right].
\end{equation*}
Because the integrands are non-singular\footnote{In \cite{BrendleChodosh:VolCompAH} the corresponding integrand was singular at the lower limit of integration so considerable care needed to be applied in the next step, cf.\ \cite[Appendix A]{BrendleChodosh:VolCompAH}.} at the lower limit of integration, we see that
\begin{equation*}
\frac{d}{dm} V(\overline M_{m},\overline g_{m}) = - 16 \pi m + 4\pi \int_{2m}^{\infty} \frac{s}{(1+s^{2}-2ms^{-1})^{\frac 32}}ds
\end{equation*}
From $\cH^{2}_{\overline g_{m}}(\partial\overline M_{\overline g_{m}}) = A_{\partial \overline M_{m}}= 16 \pi m^{2}$, this implies
\begin{equation*}
\frac{d}{dm} \left( V(\overline M_{m},\overline g_{m}) + \frac 1 2 A_{\partial \overline M_{m}} \right) > 0
\end{equation*}
where $ A_{\partial \overline M_{m}} : = \cH^{2}_{\overline g_{m}}(\partial \overline M_{m})$. Combined with the previous proposition, we have thus proven: 
\begin{prop}\label{prop:vol-comp-H2}
Suppose that $(M,g)$ is an asymptotically hyperbolic manifold with $R_{g} \geq -6$. We emphasize that this includes the statement that $\partial M$, if non-empty, is a connected, outermost, CMC surface with $H_{g}\equiv 2$. Let $A_{\partial M} : = \cH^{2}_{g}(\partial M)$. Then, the renormalized volume of $(M,g)$ satisfies
\begin{equation*}
V(M,g) + \frac 12 A_{\partial M} \geq 0,
\end{equation*}
with equality if and only if $(M,g)$ is isometric to hyperbolic space. 
\end{prop}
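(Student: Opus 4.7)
The plan is to combine the previous proposition (which compares $(M,g)$ to Schwarzschild-AdS of matching horizon area) with the explicit monotonicity calculation for the model, then check the base case at $m=0$.

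First I would choose $m\geq 0$ so that $A_{\partial M} = A_{\partial\overline M_{m}} = 16\pi m^{2}$; this is possible since $m\mapsto 16\pi m^{2}$ is a bijection of $[0,\infty)$ onto itself. The preceding proposition then gives $V(M,g)\geq V(\overline M_{m},\overline g_{m})$, so it suffices to show
\begin{equation*}
V(\overline M_{m},\overline g_{m}) + \tfrac{1}{2} A_{\partial\overline M_{m}} \geq 0
\end{equation*}
for every $m\geq 0$, with equality only when $m=0$.

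Next, I would observe that at $m=0$ the manifold $(\overline M_{0},\overline g_{0})$ is hyperbolic space itself with empty horizon, so $A_{\partial\overline M_{0}}=0$ and $V(\overline M_{0},\overline g_{0})=0$, giving equality. To propagate to $m>0$, I would use the computation already performed just before the statement: differentiating the explicit integral expression for $V(\overline M_{m},\overline g_{m})$ in $m$ and combining with $A_{\partial\overline M_{m}} = 16\pi m^{2}$ shows
\begin{equation*}
\frac{d}{dm}\left( V(\overline M_{m},\overline g_{m}) + \tfrac{1}{2} A_{\partial\overline M_{m}} \right) = 4\pi \int_{2m}^{\infty} \frac{s}{(1+s^{2}-2ms^{-1})^{\frac{3}{2}}}\,ds > 0
\end{equation*}
for all $m\geq 0$. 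Integrating from $0$ to the chosen $m$ yields $V(\overline M_{m},\overline g_{m}) + \tfrac{1}{2} A_{\partial\overline M_{m}} \geq 0$, with strict inequality whenever $m>0$.

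Chaining the inequalities produces $V(M,g) + \tfrac{1}{2} A_{\partial M} \geq V(\overline M_{m},\overline g_{m}) + \tfrac{1}{2} A_{\partial\overline M_{m}} \geq 0$. For the equality case, if $V(M,g) + \tfrac{1}{2} A_{\partial M}=0$, then the strict positivity of the $m$-derivative forces $m=0$, so $A_{\partial M}=0$, i.e.\ $\partial M=\emptyset$; moreover equality in the preceding proposition then identifies $(M,g)$ isometrically with $(\overline M_{0},\overline g_{0})$, which is hyperbolic space. No step here is genuinely hard, since the preceding proposition does the geometric work via inverse mean curvature flow and Geroch monotonicity; the only points that require care are the matching of horizon areas (which needs $A\mapsto 16\pi m^{2}$ to be a bijection, true because we are matching only a single scalar) and verifying that the model integrand in the $m$-derivative is indeed non-singular at $s=2m$, so that differentiation under the integral is legitimate.
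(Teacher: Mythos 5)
Your proposal is correct and follows essentially the same route as the paper: invoke the preceding comparison proposition to reduce to the model $(\overline M_{m},\overline g_{m})$ with matched horizon area, then use the explicit computation showing $\frac{d}{dm}\bigl( V(\overline M_{m},\overline g_{m}) + \frac12 A_{\partial\overline M_{m}}\bigr) > 0$ together with the base case $m=0$. The only difference is that you spell out the base case, the integration in $m$, and the equality analysis, which the paper leaves implicit in the phrase ``combined with the previous proposition.''
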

We remark that it is possible to drop the assumption that $\partial M$ is connected in this case. Because we do not make use of this later, we only briefly describe the proof: we may use the inverse mean curvature flow with jumps starting at one component of the horizon and jumping over the other horizon regions (using Proposition \ref{prop:how-to-jump} repeatedly if necessary). Then, a computation similar to that done in the end of Proposition \ref{prop:coarse-vol-bds}, shows that we may bound the volume gained during a jump by the area of the component of the horizon being jumped over, obtaining the desired inequality.

\section{Volume bounds for large isoperimetric regions}\label{sec:vol-bd-large-iso}

In this section, we will assume that $(M,g)$ is asymptotically hyperbolic and has scalar curvature $R_{g}\geq -6$. 

\begin{prop}\label{prop:imcf-bds-good-case}
Suppose that $\Omega$ is a Borel set of finite perimeter in $(M,g)$ strictly containing the horizon with smooth, connected, outer-minimizing, CMC boundary $\Sigma:=\partial\Omega$. Suppose further that $0 \leq m\leq m_{H}(\Sigma)$. Then
\begin{equation*}
\sL^{3}_{g}(\Omega) \leq \sL^{3}_{\overline g_{m}}(\cB_{\overline g_{m}}(A)) + V(M,g) - V(\overline M_{m},\overline g_{m}),
\end{equation*}
where $A = \cH^{2}_{g}(\Sigma)$ is the $g$-area of the boundary of $\Omega$. 
\end{prop}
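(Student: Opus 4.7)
The plan is to use weak IMCF starting at $\Sigma$ as a ``rigidity ruler'' and to compare volumes gained against what would be gained in exact Schwarzschild-AdS of mass $m$, then close the argument by taking $t\to\infty$ and reading off the renormalized volumes.

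First, since $\Omega$ strictly contains the horizon, $\Sigma=\partial\Omega$ is connected and outer-minimizing, so by Theorem \ref{theo:HI-wIMCF} there is a weak IMCF $\Sigma_{t}=\partial\{u<t\}$ starting at $\Sigma$, and I let $\Omega_{t}=\{u<t\}\setminus\Omega$ denote the region swept out, so that $\Omega$ and $\Omega_{t}$ are disjoint. The flow never crosses back through the horizon, so Geroch monotonicity applies and gives $m_{H}(\Sigma_{t})\geq m_{H}(\Sigma)\geq m$ for all $t\geq 0$. Proposition \ref{prop:vol-swept-IMCF-bd} then yields
\begin{equation*}
\sL^{3}_{g}(\Omega_{t})\geq\int_{0}^{t} e^{\frac{3s}{2}}A^{\frac{3}{2}}\bigl(4e^{s}A+16\pi-e^{-\frac{s}{2}}A^{-\frac{1}{2}}(16\pi)^{\frac{3}{2}}m\bigr)^{-\frac{1}{2}}ds.
\end{equation*}

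Next, applying the same proposition to the IMCF in $(\overline M_{m},\overline g_{m})$ starting at the centered coordinate sphere $\partial\cB_{\overline g_{m}}(A)$, the rigidity statement (equality case) gives the exact identity
\begin{equation*}
\sL^{3}_{\overline g_{m}}(\cB_{\overline g_{m}}(e^{t}A))-\sL^{3}_{\overline g_{m}}(\cB_{\overline g_{m}}(A))=\int_{0}^{t} e^{\frac{3s}{2}}A^{\frac{3}{2}}\bigl(4e^{s}A+16\pi-e^{-\frac{s}{2}}A^{-\frac{1}{2}}(16\pi)^{\frac{3}{2}}m\bigr)^{-\frac{1}{2}}ds.
\end{equation*}
Subtracting these and using $\sL^{3}_{g}(\Omega\cup\Omega_{t})=\sL^{3}_{g}(\Omega)+\sL^{3}_{g}(\Omega_{t})$ yields
\begin{equation*}
\sL^{3}_{g}(\Omega)\leq\sL^{3}_{g}(\Omega\cup\Omega_{t})-\sL^{3}_{\overline g_{m}}(\cB_{\overline g_{m}}(e^{t}A))+\sL^{3}_{\overline g_{m}}(\cB_{\overline g_{m}}(A)).
\end{equation*}

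Now I invoke Proposition \ref{prop:imcf-plus-isoper-ineq} applied to $\Omega$ and its IMCF, which gives $\sL^{3}_{\overline g_{m}}(\cB_{\overline g_{m}}(e^{t}A))\geq\sL^{3}_{\overline g_{m}}(\Omega\cup\Omega_{t})+o(1)$ as $t\to\infty$. Substituting,
\begin{equation*}
\sL^{3}_{g}(\Omega)\leq\bigl(\sL^{3}_{g}(\Omega\cup\Omega_{t})-\sL^{3}_{\overline g_{m}}(\Omega\cup\Omega_{t})\bigr)+\sL^{3}_{\overline g_{m}}(\cB_{\overline g_{m}}(A))+o(1).
\end{equation*}
Finally, by Proposition \ref{prop:imcf-forms-exhaustion} the sets $\Omega\cup\Omega_{t}$ form an exhaustion of $\RR^{3}$, so I may use them in Definition \ref{defi:renorm-vol} to compute both renormalized volumes simultaneously:
\begin{equation*}
\sL^{3}_{g}(\Omega\cup\Omega_{t})-\sL^{3}_{\overline g_{m}}(\Omega\cup\Omega_{t})=\bigl(\sL^{3}_{g}-\sL^{3}_{\overline g}\bigr)(\Omega\cup\Omega_{t})-\bigl(\sL^{3}_{\overline g_{m}}-\sL^{3}_{\overline g}\bigr)(\Omega\cup\Omega_{t})\longrightarrow V(M,g)-V(\overline M_{m},\overline g_{m}).
\end{equation*}
The left side is independent of $t$, so sending $t\to\infty$ gives the claim.

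The routine points are Geroch monotonicity (valid because the flow stays outside the horizon) and the comparison identity above. The only real subtlety I anticipate is the simultaneous bookkeeping in the last step: $\Omega$ itself is a subset of $\RR^{3}$ that may meet the horizon region $K$ and the Schwarzschild-AdS horizon $\{s\leq 2m\}$ in different ways, so one must be mildly careful that the exhaustion $\Omega\cup\Omega_{t}$ is legitimately usable in both definitions of renormalized volume. This is resolved because for $t$ large enough $\Omega\cup\Omega_{t}$ contains both horizons, at which point the contributions from the horizon regions are fixed constants absorbed into the definitions, and the difference $\sL^{3}_{g}-\sL^{3}_{\overline g_{m}}$ makes sense on the common domain $M\cap\overline M_{m}\cap(\Omega\cup\Omega_{t})$.
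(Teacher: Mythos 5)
Your argument is correct and follows essentially the same route as the paper: Geroch monotonicity plus Proposition \ref{prop:vol-swept-IMCF-bd} applied to the flow from $\Sigma$, the equality case for the model flow in $(\overline M_{m},\overline g_{m})$, Proposition \ref{prop:imcf-plus-isoper-ineq}, and the exhaustion property to extract the renormalized volumes in the limit. The only small step you skip, which the paper verifies explicitly, is that $\cB_{\overline g_{m}}(A)$ actually exists outside the horizon of $(\overline M_{m},\overline g_{m})$; this follows because $H_{g}>2$ on $\Sigma$ forces $m\leq m_{H}(\Sigma)<(16\pi)^{-\frac 12}A^{\frac 12}$, so $\cH^{2}_{\overline g_{m}}(\partial\overline M_{m})=16\pi m^{2}<A$.
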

\begin{proof}
Let $\Sigma_{\tau}$ denote the weak solution to inverse mean curvature flow starting at $\Sigma$ and write $\Omega_{\tau}$ for the region bounded between $\Sigma$ and $\Sigma_{\tau}$. By Proposition \ref{prop:vol-swept-IMCF-bd}, we have that
\begin{equation}\label{eq:vol-cont-ICMF-region-lb}
\sL^{3}_{g}(\Omega_{\tau}) \geq \int_{0}^{\tau} e^{\frac{3t}{2}} A^{\frac 32} \left( 4 e^{t} A + 16\pi - e^{-\frac t 2} A^{-\frac 12} (16\pi)^{\frac 32}  m \right)^{-\frac 12} dt. 
\end{equation}
Notice that there exists a coordinate sphere (outside the horizon) of area $A$ in $(\overline M_{m},\overline g_{m})$. To see this, note\footnote{That $\Sigma$ satisfies $H_{g}>2$ is a consequence of the fact that $\partial M$ is an outermost $H_{g}\equiv 2$ surface---by definition of outermost, $\Sigma$ does not have $H_{g}\equiv 2$, and if it had $H_{g}<2$, we could minimize the brane functional to the outside of $\Sigma$ as in Proposition \ref{prop:reg-iso-surf} to obtain a compact $H_{g}\equiv 2$ surface outside of $\Sigma$, contradicting the outermost assumption on $\partial M$.} that the mean curvature of $\Sigma$ satisfies $H_{g} > 2$, so
\begin{align*}
(16\pi)^{-\frac 12} \cH^{2}_{\overline g_{m}}(\partial\overline M_{m})^{\frac 12} & =  m_{H}(\partial \overline M_{m},\overline g_{m}) \\
& = m\\
& \leq m_{H}(\Sigma,g) \\
& = (16\pi)^{- \frac 32} A^{\frac 12}\left( 16\pi - A(H^{2}_{g}-4) \right)\\
& < (16\pi)^{-\frac 12} A^{\frac 12}.
\end{align*}
From this, it is clear that there is a coordinate sphere $\cB_{\overline g_{m}}(A)$, in $(\overline M_{m},\overline g_{m})$ having area $A$. 

If we flow $\partial\cB_{\overline g_{m}}(A)$ by inverse mean curvature flow in $(\overline M_{m},\overline g_{m})$, it is easy to see that after time $\tau$ we obtain $\partial\cB_{\overline g_{m}}(e^{\tau}A)$. In this case, by Proposition \ref{prop:vol-swept-IMCF-bd} we must have equality in \eqref{eq:vol-cont-ICMF-region-lb} i.e.,
\begin{align*}
& \sL^{3}_{\overline g_{m}} (\cB_{\overline g_{m}}(e^{\tau}A) \setminus \cB_{\overline g_{m}}(A)) \\
& =  \int_{0}^{\tau} e^{\frac{3t}{2}} A^{\frac 32} \left( 4 e^{t} A + 16\pi - e^{-\frac t 2} A^{-\frac 12} (16\pi)^{\frac 32} m \right)^{-\frac 12} dt. 
\end{align*}
By Proposition \ref{prop:imcf-plus-isoper-ineq}, we have that
\begin{equation*}
\sL^{3}_{\overline g_{m}}(\cB_{\overline g_{m}}(e^{\tau}A)) \geq \sL^{3}_{\overline g_{m}}(\Omega_{\tau} \cup\Omega) + o(1)
\end{equation*}
as $\tau \to \infty$. As such, 
\begin{align*}
\sL^{3}_{g}(\Omega_{\tau} \cup\Omega) & \geq \sL^{3}_{g}(\Omega) + \int_{0}^{\tau} e^{\frac{3t}{2}} A^{\frac 32} \left( 4 e^{t} A + 16\pi - e^{-\frac t 2} A^{-\frac 12} (16\pi)^{\frac 32} m \right)^{-\frac 12} dt\\
& = \sL^{3}_{g}(\Omega) + \sL^{3}_{\overline g_{m}} (\cB_{\overline g_{m}}(e^{\tau}A) \setminus \cB_{\overline g_{m}}(A))\\
& = \sL^{3}_{g}(\Omega) - \sL^{3}_{\overline g_{m}}(\cB_{\overline g_{m}}(A))+ \sL^{3}_{\overline g_{m}} (\cB_{\overline g_{m}}(e^{\tau}A))\\
& \geq \sL^{3}_{g}(\Omega) - \sL^{3}_{\overline g_{m}}(\cB_{\overline g_{m}}(A))+ \sL^{3}_{\overline g_{m}} (\Omega_{\tau}\cup \Omega) +o(1).
\end{align*}
The conclusion follows upon letting $\tau \to \infty$, using the fact that $\Omega_{\tau}\cup\Omega$ forms an exhaustion of $(M,g)$, as proven in Proposition \ref{prop:imcf-forms-exhaustion}.
\end{proof}

We will also need bounds similar to the previous proposition when the boundary of $\Omega$ has negative Hawking mass and/or does not contain the horizon. 
\begin{prop}\label{prop:coarse-vol-bds}
Suppose that $\Omega$ is a Borel set of finite perimeter in $(M,g)$ with smooth, connected, outer-minimizing, CMC boundary $\Sigma:=\partial\Omega$. We will write $A : = \cH^{2}_{g}(\Sigma)$. 
If $A \geq 1$ and $m \leq m_{H}(\Sigma)$ satisfies $  -\frac{1}{3} A^{\frac 12}\leq (16\pi)^{\frac 12} m \leq  A^{\frac 12} $, then 
\begin{equation*}
\sL^{3}_{g}(\Omega) \leq \sL^{3}_{\overline g}(\cB_{\overline g}(A)) + C,
\end{equation*}
where $C$ only depends on on $(M,g)$. 
\end{prop}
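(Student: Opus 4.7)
The plan is to adapt the proof of Proposition~\ref{prop:imcf-bds-good-case} to allow (i) slightly negative Hawking mass, down to $m\geq -\tfrac{1}{3}(16\pi)^{-1/2}A^{1/2}$, and (ii) the possibility that $\Omega$ does not contain the horizon. The backbone remains the inverse mean curvature flow from $\Sigma$: combined with Geroch monotonicity, the definition of renormalized volume, and the hyperbolic isoperimetric inequality, it yields the bound up to a constant depending only on $(M,g)$. The essential new ingredients are a direct comparison of the IMCF volume integrand with its $m=0$ counterpart (so that the comparison is with hyperbolic balls rather than with Schwarzschild-AdS balls), and the use of Proposition~\ref{prop:how-to-jump} to absorb the horizon when $\Omega$ does not already contain it.

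Concretely, I would start the weak IMCF at $\Sigma$; if $\Omega$ already contains the horizon, the standard flow of Theorem~\ref{theo:HI-wIMCF} applies, while otherwise I would apply Proposition~\ref{prop:how-to-jump} with $\cJ=K$ to produce a flow with a single jump over $\partial M$ at some time $T$. The feature that keeps all jump corrections bounded is that $\partial M$ has $H_{g}\equiv 2$, so $\int_{\partial M}(H_{g}^{2}-4)\,d\cH^{2}_{g}=0$; together with the fact that $\cH^{2}_{g}(\partial M)$ is fixed, this renders both the Hawking-mass loss at the jump and the area-growth factor $e^{\beta}-1\leq e^{-T}\cH^{2}_{g}(\partial M)/A$ bounded by constants depending only on $(M,g)$. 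Applying Proposition~\ref{prop:vol-swept-IMCF-bd} to the pre- and post-jump pieces then yields
\begin{equation*}
\sL^{3}_{g}(\Omega_{\tau})\geq \int_{0}^{\tau} e^{3t/2}A^{3/2}\bigl(4e^{t}A+16\pi-e^{-t/2}A^{-1/2}(16\pi)^{3/2}m\bigr)^{-1/2}\,dt - C_{1}(M,g),
\end{equation*}
with the integrand well-defined thanks to $(16\pi)^{1/2}|m|\leq A^{1/2}$. Writing $I_{m}(t)$ for this integrand, the $m=0$ case integrates exactly to the volume swept by hyperbolic IMCF, namely $\sL^{3}_{\overline g}(\cB_{\overline g}(e^{\tau}A))-\sL^{3}_{\overline g}(\cB_{\overline g}(A))$. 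A mean-value-theorem expansion together with the pointwise estimate $|e^{-t/2}A^{-1/2}(16\pi)^{3/2}m|\leq 16\pi\,e^{-t/2}$ (an immediate consequence of the hypothesis on $m$) yields $|I_{0}(t)-I_{m}(t)|\leq C\,e^{-t/2}$ uniformly in $m$ and $A$ in the prescribed range, whence $\int_{0}^{\infty}|I_{0}-I_{m}|\,dt\leq C_{2}$. On the upper-bound side, arguing as in Proposition~\ref{prop:imcf-plus-isoper-ineq} shows that $\Omega\cup\Omega_{\tau}$ (together with the horizon, in the jump case) exhausts $\RR^{3}$ and satisfies $\cH^{2}_{\overline g}(\partial(\Omega\cup\Omega_{\tau}))=e^{\tau}A+o(1)$; the hyperbolic isoperimetric inequality together with the definition of renormalized volume then give $\sL^{3}_{g}(\Omega\cup\Omega_{\tau})\leq \sL^{3}_{\overline g}(\cB_{\overline g}(e^{\tau}A))+V(M,g)+o(1)$. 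Subtracting the two estimates and sending $\tau\to\infty$ yields $\sL^{3}_{g}(\Omega)\leq \sL^{3}_{\overline g}(\cB_{\overline g}(A))+V(M,g)+C_{1}+C_{2}$.

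I expect the main obstacle to be controlling the jump corrections uniformly across the allowed range of $(A,m)$. In particular, the area-growth factor $e^{\beta}$ introduces a discrepancy of the form $\sL^{3}_{\overline g}(\cB_{\overline g}(e^{\tau+\beta}A))-\sL^{3}_{\overline g}(\cB_{\overline g}(e^{\tau}A))$ between the two sides of the subtraction, but because $dV_{\overline g}/dA\leq \tfrac{1}{2}$ in hyperbolic space this is bounded by $\tfrac{1}{2}e^{\tau}A(e^{\beta}-1)\leq \tfrac{1}{2}\cH^{2}_{g}(\partial M)$, a fixed constant that can be absorbed into $C_{1}$. A secondary concern is the compact set $\Gamma$ appearing in Proposition~\ref{prop:how-to-jump}, which must depend only on $(M,g)$: one handles this by treating separately the regime in which $\partial\Omega$ is far from $\partial M$, where the pre-jump IMCF integral already provides a sharp bound and no jump is needed, so that $\Gamma$ may be fixed to a neighbourhood of $\partial M$ depending only on $(M,g)$.
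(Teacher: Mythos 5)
Your overall strategy is the same as the paper's: run the weak IMCF from $\Sigma$ (using Proposition \ref{prop:how-to-jump} with $\cJ$ the horizon region when $\Omega$ does not contain it, exploiting $H_{g}\equiv 2$ on $\partial M$ to kill the $\int_{\partial\cJ}(H_{g}^{2}-4)$ terms), lower-bound the swept volume via Proposition \ref{prop:vol-swept-IMCF-bd}, upper-bound it via the hyperbolic isoperimetric inequality and the renormalized volume, and control the difference of the two integrands. Your pointwise estimate $|I_{0}(t)-I_{m}(t)|\leq C e^{-t/2}$ via the mean value theorem is correct and uniform in the stated range (the denominator stays $\geq 4e^{t}A$ because $16\pi-|{\xi}|\geq 16\pi(1-e^{-t/2})\geq 0$), and is if anything cleaner than the paper's change-of-variables computation. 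The concern about $\Gamma$ is moot, since the constant $C_{2}$ it controls multiplies $\int_{\partial\cJ}(H_{g}^{2}-4)=0$; note, though, that your proposed dichotomy is off — when $\Omega$ does not contain the horizon a jump is always eventually required, no matter how far $\partial\Omega$ is from $\partial M$.

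There is, however, one concrete error in the jump bookkeeping. Writing the lower bound as $\int_{0}^{\tau}I_{m}\,dt-C_{1}$ while the upper bound compares against $\cB_{\overline g}(e^{\tau+\beta}A)$ leaves the mismatch $\sL^{3}_{\overline g}(\cB_{\overline g}(e^{\tau+\beta}A))-\sL^{3}_{\overline g}(\cB_{\overline g}(e^{\tau}A))\leq\tfrac12 e^{\tau}A(e^{\beta}-1)$, and your claim that this is $\leq\tfrac12\cH^{2}_{g}(\partial M)$ is false: the bound on $\beta$ gives $e^{\beta}-1\leq e^{-T}\cH^{2}_{g}(\partial M)/A$ with $T$ the \emph{jump} time, so $\tfrac12 e^{\tau}A(e^{\beta}-1)\leq\tfrac12 e^{\tau-T}\cH^{2}_{g}(\partial M)$, which diverges as $\tau\to\infty$. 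The extra area $\cH^{2}_{g}(\partial M)$ acquired at the jump is amplified by the subsequent exponential growth, so the correction must be charged at time $T$, not at time $\tau$. The fix is the paper's bookkeeping: applying Proposition \ref{prop:vol-swept-IMCF-bd} before and after the jump gives the swept volume as at least
\begin{equation*}
\int_{0}^{\tau+\beta}I_{m'}(t)\,dt-\int_{T}^{T+\beta}I_{m'}(t)\,dt,
\end{equation*}
which you compare with $\int_{0}^{\tau+\beta}I^{hyp}_{0}(t)\,dt$ over the \emph{same} interval $[0,\tau+\beta]$; the only jump correction is then $\int_{T}^{T+\beta}I_{m'}(t)\,dt\leq\tfrac{A}{2}\int_{T}^{T+\beta}e^{t}\,dt=\tfrac12 e^{T}A(e^{\beta}-1)\leq\tfrac12\cH^{2}_{g}(\partial M)$, which is the bounded quantity you were after. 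With this reorganization (and with the post-jump Hawking-mass floor $m'\geq C_{1}m-\delta\geq-(16\pi)^{-1/2}CA^{1/2}$ fed into your integrand comparison), the rest of your argument goes through.
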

\begin{proof}
Consider $\Omega$, a Borel set with finite perimeter with smooth, connected, outer-minimizing boundary $\Sigma$ with Hawking mass $  -\frac{1}{3} A^{\frac 12}\leq (16\pi)^{\frac 12} m \leq A^{\frac 12} $. First, we will assume that $\Omega$ contains the horizon. Because $m$ may be negative, we cannot necessarily use the isoperimetric inequality in $(\overline M_{m},\overline g_{m})$, so we will instead compare to a ball in hyperbolic space.

As in the previous proof, if we flow $\Sigma$ by weak inverse mean curvature flow, writing the resulting surface after time $\tau$ as $\Sigma_{\tau}$ and the region between $\Sigma$ and $\Sigma_{\tau}$ as $\Omega_{\tau}$, then Proposition \ref{prop:vol-swept-IMCF-bd} gives
\begin{equation*}
\sL^{3}_{g}(\Omega_{\tau}) \geq \int_{0}^{\tau} e^{\frac{3t}{2}} A^{\frac 32} \left( 4 e^{t} A + 16\pi - e^{-\frac t 2} A^{-\frac 12} (16\pi)^{\frac 32} m \right)^{-\frac 12} dt. 
\end{equation*}
Now, we consider the hyperbolic coordinate ball $\cB_{\overline g}(A)$ in $(\overline M,\overline g)$ and flow its boundary by inverse mean curvature flow, obtaining 
\begin{equation*}
\sL^{3}_{\overline g} (\cB_{\overline g}(e^{\tau}A) \setminus \cB_{\overline g}(A)) =  \int_{0}^{\tau} e^{\frac{3t}{2}} A^{\frac 32} \left( 4 e^{t} A + 16\pi \right)^{-\frac 12} dt. 
\end{equation*}
Proposition \ref{prop:imcf-plus-isoper-ineq} yields
\begin{equation*}
\sL^{3}_{\overline g}(\cB_{\overline g}(e^{\tau}A) ) \geq \sL^{3}_{\overline g}(\Omega_{\tau}\cup \Omega) + o(1)
\end{equation*}
as $\tau\to\infty$. We may combine these facts to obtain
\begin{align*}
& \sL^{3}_{g}(\Omega_{\tau}\cup\Omega) \\
& \geq \sL^{3}_{g}(\Omega) + \int_{0}^{\tau} e^{\frac{3t}{2}} A^{\frac 32} \left( 4 e^{t} A + 16\pi - e^{-\frac t 2} A^{-\frac 12} (16\pi)^{\frac 32}m \right)^{-\frac 12} dt\\
& =  \sL^{3}_{g}(\Omega) + \sL^{3}_{\overline g} (\cB_{\overline g}(e^{\tau}A) \setminus \cB_{\overline g}(A)) \\
& \qquad + \int_{0}^{\tau} e^{\frac{3t}{2}} A^{\frac 32} \left[ \left( 4 e^{t} A + 16\pi - e^{-\frac t 2} A^{-\frac 12} (16\pi)^{\frac 32}m \right)^{-\frac 12} - \left( 4 e^{t} A + 16\pi \right)^{-\frac 12} \right] dt\\
& =  \sL^{3}_{g}(\Omega) - \sL^{3}_{\overline g}( \cB_{\overline g}(A))+ \sL^{3}_{\overline g} (\cB_{\overline g}(e^{\tau}A))  \\
& \qquad + \int_{0}^{\tau} e^{\frac{3t}{2}} A^{\frac 32} \left[ \left( 4 e^{t} A + 16\pi - e^{-\frac t 2} A^{-\frac 12} (16\pi)^{\frac 32}m \right)^{-\frac 12} - \left( 4 e^{t} A + 16\pi \right)^{-\frac 12} \right] dt\\
& \geq  \sL^{3}_{g}(\Omega) - \sL^{3}_{\overline g}( \cB_{\overline g}(A))+ \sL^{3}_{\overline g} (\Omega_{\tau}\cup\Omega)  +o(1) \\
& \qquad + \int_{0}^{\tau} e^{\frac{3t}{2}} A^{\frac 32} \left[ \left( 4 e^{t} A + 16\pi - e^{-\frac t 2} A^{-\frac 12}(16\pi)^{\frac 32} m \right)^{-\frac 12} - \left( 4 e^{t} A + 16\pi \right)^{-\frac 12} \right] dt\\
& \geq  \sL^{3}_{g}(\Omega) - \sL^{3}_{\overline g}( \cB_{\overline g}(A))+ \sL^{3}_{\overline g} (\Omega_{\tau}\cup\Omega)   + o(1)\\
& \qquad + \int_{0}^{\tau} e^{\frac{3t}{2}} A^{\frac 32} \left[ \left( 4 e^{t} A + 16\pi\left( 1 + \frac 1 3 e^{-\frac t 2}\right)  \right)^{-\frac 12} - \left( 4 e^{t} A + 16\pi \right)^{-\frac 12} \right] dt.
\end{align*}
Thus, taking $\tau\to \infty$ yields 
\begin{align*}
& \sL^{3}(\Omega) \\
&  \leq \sL^{3}_{\overline g}(\cB_{\overline g}(A)) + V(M,g)\\
&  + \int_{0}^{\infty} e^{\frac{3t}{2}} A^{\frac 32} \left[  \left( 4 e^{t} A + 16\pi \right)^{-\frac 12} - \left( 4 e^{t} A + 16\pi \left(1 + \frac 1 3 e^{-\frac t 2} \right) \right)^{-\frac 12}\right] dt.
\end{align*}
Finally, it remains to check that the integral is bounded independently of $A$. Clearly, the only thing to check is that this remains bounded as $A$ becomes large. In this regime, we have that
\begin{align*}
& \int_{0}^{\infty} e^{\frac{3t}{2}} A^{\frac 32} \left[  \left( 4 e^{t} A + 16\pi \right)^{-\frac 12} - \left( 4 e^{t} A + 16\pi \left(1 + \frac 1 3 e^{-\frac t 2} \right) \right)^{-\frac 12}\right] dt \\
& = \frac 12 \int_{0}^{\infty} e^{t} A  \left[  \left( 1 + 4\pi e^{-t} A^{-1} \right)^{-\frac 12} - \left( 1 + 4 \pi e^{-t} A^{-1} \left(1 + \frac 1 3 e^{-\frac t 2} \right) \right)^{-\frac 12}\right] dt\\
& = \frac 12 \int_{\log A}^{\infty} e^{t}   \left[  \left( 1 + 4\pi e^{-t} \right)^{-\frac 12} - \left( 1 + 4 \pi e^{-t}  \left(1 + \frac 1 3 e^{-\frac t 2} A^{\frac 12} \right) \right)^{-\frac 12}\right] dt\\
& = \frac 12 \int_{\log A}^{\infty} e^{t}   \left( 1 + 4\pi e^{-t} \right)^{-\frac 12}   \left[ 1 - \left( 1 +  \frac 4 3 \pi e^{-\frac{3t}{2}}   A^{\frac 12} \left( 1 + 4\pi e^{-t} \right)^{-1}    \right)^{-\frac 12}\right] dt\\
& \leq C \int_{\log A}^{\infty} e^{t}   \left( 1 + 4\pi e^{-t} \right)^{-\frac 32} A ^{\frac 12} e^{-\frac{3t}{2}} dt\\
& \leq C A^{\frac 12} \int_{\log A}^{\infty} e^{-\frac t 2} dt \leq C.
\end{align*}
The second to last inequality follows from the fact that for $t \geq \log A$, we have that $e^{-\frac {3t}{2}}A^{\frac 12} \ll 1$. This establishes the claim in the case that $\Omega$ contains the horizon. 

Now, we suppose that $\Sigma$ has Hawking mass $m:= m_{H}(\Sigma)$ satisfying $  -\frac{1}{3} A^{\frac 12}\leq(16\pi)^{\frac 12}  m \leq A^{\frac 12} $ as before, but does not surround the horizon. We may apply Proposition \ref{prop:how-to-jump} to construct a flow $\Sigma_{\tau}$ starting from $\Sigma$ which jumps over the horizon (notice that the horizon has mean curvature $H_{g}\equiv 2$, so we may neglect the third term in the Hawking mass bounds derived there). By the Hawking mass bounds from Proposition \ref{prop:how-to-jump}, we have that $m_{H}(\Sigma_{\tau}) \geq m_{H}(\Sigma) -\delta$ or $m_{H}(\Sigma_{\tau}) \geq C_{1}m_{H}(\Sigma) -\delta$, depending on whether or not $m_{H}(\Sigma) \geq 0$ or not. In either case, we denote by $m'$, the lower bound for $m_{H}(\Sigma_{\tau})$ along the flow with jumps and note that our assumptions imply that there is a constant $C>0$ depending only on $(M,g)$ so that $m'\geq -(16\pi)^{-\frac 12} C A^{\frac 12}$. Clearly, we may assume that $m' \leq (16\pi)^{-\frac 12}A^{\frac 12}$, after shrinking $m'$ if necessarily. 

%
%
%
%

Suppose that the jump occurs at time $T$. For $\tau > T$, we will denote by $\Omega_{\tau}$ the union of $\Omega_{T,-}$ with the region between $\Sigma_{\tau}$ and $\Sigma_{T,+}$. Finally, we define the jump region $J$, to be the region between $\Sigma_{T,-}\cup \partial M$ and $\Sigma_{T,+}$. Thus, for $\tau > T$, the monotonicity of the Hawking mass through the jump combined with the reasoning used above to derive \eqref{eq:vol-cont-ICMF-region-lb} applied before and after the jump yields
\begin{align*}
\sL^{3}_{g}(\Omega_{\tau}) & \geq  \int_{0}^{T} e^{\frac{3t}{2}} A^{\frac 32} \left( 4 e^{t} A + 16\pi - e^{-\frac t 2} A^{-\frac 12} (16\pi)^{\frac 32}m '\right)^{-\frac 12} dt \\
& \qquad + \int_{T}^{\tau} e^{\frac{3}{2}(t+\beta)} A^{\frac 32} \left( 4 e^{t+\beta} A + 16\pi - e^{-\frac 1 2(t+\beta)} A^{-\frac 12} (16\pi)^{\frac 32}m' \right)^{-\frac 12} dt \\
& =   \int_{0}^{T} e^{\frac{3t}{2}} A^{\frac 32} \left( 4 e^{t} A + 16\pi - e^{-\frac t 2} A^{-\frac 12}(16\pi)^{\frac 32} m' \right)^{-\frac 12} dt \\
& \qquad + \int_{T+\beta}^{\tau+\beta} e^{\frac{3 t}{2}} A^{\frac 32} \left( 4 e^{t} A + 16\pi - e^{-\frac t 2} A^{-\frac 12} (16\pi)^{\frac 32}m '\right)^{-\frac 12} dt\\
& =  \int_{0}^{\tau + \beta} e^{\frac{3t}{2}} A^{\frac 32} \left( 4 e^{t} A + 16\pi - e^{-\frac t 2} A^{-\frac 12} (16\pi)^{\frac 32}m' \right)^{-\frac 12} dt \\
& \qquad -  \int_{T}^{T+\beta} e^{\frac{3 t}{2}} A^{\frac 32} \left( 4 e^{t} A + 16\pi - e^{-\frac t 2} A^{-\frac 12}(16\pi)^{\frac 32} m' \right)^{-\frac 12} dt.
\end{align*}
Thus, the argument used above yields (along with $m' \geq - (16\pi)^{-\frac 12} C A^{\frac 12}$)
\begin{align*}
& \sL^{3}_{g}(\Omega) + \sL^{3}_{g}(J) \\
& \leq \sL^{3}_{\overline g}(\cB_{\overline g}(A)) + V(M,g) \\
& \qquad + \int_{0}^{\infty} e^{\frac{3t}{2}} A^{\frac 32} \left[  \left( 4 e^{t} A + 16\pi \right)^{-\frac 12} - \left( 4 e^{t} A + 16\pi \left(1 +C  e^{-\frac t 2} \right) \right)^{-\frac 12}\right] dt\\
& \qquad + \int_{T}^{T+\beta} e^{\frac{3t}{2}} A^{\frac 32}\left( 4e^{t}A + 16\pi - e^{-\frac t 2} A^{-\frac 12} (16\pi)^{\frac 32}m' \right)^{-\frac 12} dt. 
\end{align*}
The same argument as above shows that the first integral is uniformly bounded independently of $A$, so it remains to consider the second integral. By the bound on $\beta$ in Proposition \ref{prop:how-to-jump} and the assumption that $m'\leq (16\pi)^{-\frac 12} A^{\frac 12}$, defining $A_{\partial M} = \cH^{2}_{g}(\partial M)$ we obtain 
\begin{align*}
& \int_{T}^{T+\beta} e^{\frac{3t}{2}} A^{\frac 32}\left( 4e^{t}A + 16\pi - e^{-\frac t 2} A^{-\frac 12} (16 \pi)^{\frac 32}m' \right)^{-\frac 12} dt \\
& \leq \int_{T}^{T+\log\left( 1 +\frac{A_{\partial M}}{A} e^{-T}\right)} e^{\frac {3t}{2}} A^{\frac 32} \left( 4e^{t} A +16\pi \left( 1- e^{-\frac t 2} \right)\right)^{-\frac 12} dt\\
& \leq \frac A2 \int_{T}^{T+\log\left( 1 +\frac{A_{\partial M}}{A} e^{-T}\right)} e^{t}  dt\\
& = \frac 12 A_{\partial M}.
\end{align*}
This is uniformly bounded independently of $A, m$ and $T$, as claimed. 
\end{proof}

Note that combining Corollary \ref{coro:CY-mH-bd} with Propositions \ref{prop:comp-bdry-bds} and \ref{prop:coarse-vol-bds}, yields
\begin{coro}\label{coro:coarse-bds-general-large-iso}
If $\Omega$ is a large, isoperimetric region with $A = \cH^{2}_{g}(\partial^{*}\Omega)$, then 
\begin{equation*}
\sL^{3}_{g}(\Omega) \leq \sL^{3}_{\overline g}(\cB_{\overline g}(A)) + C,
\end{equation*}
where $C$ depends only on $(M,g)$. 
\end{coro}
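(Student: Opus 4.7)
The plan is to decompose $\Omega$ into its connected components, apply Proposition \ref{prop:coarse-vol-bds} to each, and then aggregate using a super-additivity property of the hyperbolic ball volume. By Proposition \ref{prop:comp-bdry-bds} we may write $\Omega = \Omega_{1}\sqcup\cdots\sqcup\Omega_{k}$ with $k\leq n_{0}$. By Proposition \ref{prop:reg-iso-surf} each $\Omega_{i}$ is smooth; by Lemma \ref{lemm:iso-connected-bdry} each has connected boundary $\Sigma_{i}:=\partial^{*}\Omega_{i}$; by Lemma \ref{lemm:iso-are-outer-min} each is strictly outer-minimizing; and each $\Sigma_{i}$ inherits the common constant mean curvature of $\partial^{*}\Omega$. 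Writing $A_{i}:=\cH^{2}_{g}(\Sigma_{i})$, we have $A=\sum_{i}A_{i}$. Moreover, any zero-mean test function supported in a single $\Sigma_{i}$ extends by zero to an admissible volume-preserving variation of $\partial^{*}\Omega$, so each $\Sigma_{i}$ is itself a connected, volume-preserving stable CMC surface. Corollary \ref{coro:CY-mH-bd} then gives
\[
m_{H}(\Sigma_{i})\geq -\tfrac{1}{3}(16\pi)^{-1/2}A_{i}^{1/2}.
\]

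For each component with $A_{i}\geq 1$, set $m_{i}:=-\tfrac{1}{3}(16\pi)^{-1/2}A_{i}^{1/2}$. Then $m_{i}\leq m_{H}(\Sigma_{i})$ and $-\tfrac{1}{3}A_{i}^{1/2}\leq (16\pi)^{1/2}m_{i}\leq A_{i}^{1/2}$, so Proposition \ref{prop:coarse-vol-bds} applies and yields $\sL^{3}_{g}(\Omega_{i})\leq \sL^{3}_{\overline g}(\cB_{\overline g}(A_{i}))+C_{0}$, with $C_{0}$ depending only on $(M,g)$. Any component with $A_{i}<1$ either lies inside $M$ (in which case its volume is bounded by the small-scale isoperimetric inequality) or coincides with the horizon region (contributing zero $g$-volume in $M$); since $k\leq n_{0}$, the total contribution from such small components is bounded by an $(M,g)$-dependent constant.

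Write $V(A):=\sL^{3}_{\overline g}(\cB_{\overline g}(A))$. A direct computation in hyperbolic coordinates gives $dV/dA = \tfrac{1}{2}\tanh r(A)$, which is strictly increasing in $A$, so $V$ is convex, and $V(0)=0$. Any convex function on $[0,\infty)$ vanishing at $0$ is super-additive: $V(a)+V(b)\leq V(a+b)$ for $a,b\geq 0$. Iterating gives $\sum_{i}V(A_{i})\leq V(\sum_{i}A_{i})=V(A)$, which combined with the per-component bounds completes the proof. The only point needing verification beyond direct application of the three cited results is the individual volume-preserving stability of each $\Sigma_{i}$, which is immediate from extension by zero; the super-additivity of $V(A)$ is immediate from its convexity. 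Thus there is no substantial technical obstacle, and the corollary is a direct consequence of the structural decomposition of isoperimetric regions together with the per-component volume bound of Proposition \ref{prop:coarse-vol-bds}.
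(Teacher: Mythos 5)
Your proof is correct and follows essentially the same route as the paper, which derives this corollary precisely by combining Corollary \ref{coro:CY-mH-bd} with Propositions \ref{prop:comp-bdry-bds} and \ref{prop:coarse-vol-bds} applied componentwise; you have simply made explicit the details the paper leaves implicit (per-component volume-preserving stability via extension by zero, the choice of $m_{i}$, the small components, and the super-additivity of $A\mapsto\sL^{3}_{\overline g}(\cB_{\overline g}(A))$ used to aggregate).
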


In fact, we will require a more qualitative version of this result. 

\begin{prop}\label{prop:sequence-large-iso-one-large-rest-small}
For $k\geq 2$, suppose that $\Omega^{(l)}$ is a sequence of isoperimetric regions with exactly $k$ components 
\begin{equation*}
\Omega^{(l)} = \Omega^{(l)}_{1}\cup\dots\cup \Omega^{(l)}_{k},
\end{equation*} 
and so that $\sL^{3}_{g}(\Omega^{(l)})\to\infty$. Define $A^{(l)}_{j}:= \cH^{2}_{g}(\partial^{*}\Omega^{(l)}_{j})$ and choose the ordering of the components so that $A^{(l)}_{1}\geq A^{(l)}_{2}\geq \dots\geq A^{(l)}_{k}> 0$. Then, the regions other than $\Omega^{(l)}_{1}$ have uniformly bounded area, i.e., $A^{(l)}_{2} = O(1)$ as $l\to\infty$.
\end{prop}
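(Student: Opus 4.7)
I proceed by contradiction, passing to a subsequence along which $A_2^{(l)} \to \infty$; then $A_1^{(l)} \geq A_2^{(l)}$ diverges as well, and $A^{(l)} := \sum_j A_j^{(l)} \to \infty$. By Proposition \ref{prop:reg-iso-surf}, $\partial^{*}\Omega^{(l)}$ is smooth, and since the boundary components share the common CMC constant $H^{(l)}$ of $\partial^{*}\Omega^{(l)}$ (coming from the first variation), Corollary \ref{coro:coarse-CY-mult-bdry} forces $H^{(l)} \to 2$. Lemmas \ref{lemm:iso-are-outer-min} and \ref{lemm:iso-connected-bdry} guarantee each $\Omega_j^{(l)}$ has connected, strictly outer-minimizing, smooth CMC boundary, so that Proposition \ref{prop:coarse-vol-bds} may be applied componentwise.

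For each $j$ with $A_j^{(l)} \geq 1$, Corollary \ref{coro:CY-mH-bd} provides the lower Hawking mass bound needed to choose a valid $m$ in Proposition \ref{prop:coarse-vol-bds}, yielding
\begin{equation*}
\sL^3_g(\Omega_j^{(l)}) \leq \sL^3_{\overline g}(\cB_{\overline g}(A_j^{(l)})) + C.
\end{equation*}
Any components whose area remains bounded contribute only an $O(1)$ term to the total volume: after extracting a further subsequence, Proposition \ref{prop:cptness-iso-regions} gives smooth convergence to either a horizon component or a closed limiting component in $(M,g)$, both of finite volume.

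The asymptotic expansion for large hyperbolic balls from Appendix \ref{sec:vol-coord-balls} reads
\begin{equation*}
\sL^3_{\overline g}(\cB_{\overline g}(A)) = \tfrac{1}{2} A - \pi \log A + O(1) \qquad \text{as } A \to \infty.
\end{equation*}
Summing the componentwise bound and using $\sum_{j : A_j^{(l)}\geq 1} \log A_j^{(l)} \geq \log A_1^{(l)} + \log A_2^{(l)}$ together with $\log A^{(l)} \leq \log A_1^{(l)} + \log k$, one gets
\begin{equation*}
V^{(l)} \leq \sL^3_{\overline g}(\cB_{\overline g}(A^{(l)})) - \pi \log A_2^{(l)} + O(1).
\end{equation*}
For the reverse bound, a large centered coordinate ball in $(M,g)$ containing the horizon serves as a competitor; since $|g - \overline g|_{\overline g} = O(s^{-3})$ and the exhaustion in Definition \ref{defi:renorm-vol} is finite, its area and volume differ from the corresponding hyperbolic quantities by $O(1)$, giving $A_g(V) \leq A_{\overline g}(V) + O(1)$, which inverts to
\begin{equation*}
V^{(l)} \geq \sL^3_{\overline g}(\cB_{\overline g}(A^{(l)})) - O(1).
\end{equation*}
Combining the last two displays forces $\pi \log A_2^{(l)} \leq O(1)$, contradicting $A_2^{(l)} \to \infty$.

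The main technical point is isolating the $-\pi \log A$ correction in the expansion of $\sL^3_{\overline g}(\cB_{\overline g}(A))$, since this is precisely the ``cost'' incurred by splitting one large region into two in hyperbolic space; once that asymptotic is in hand, both the componentwise volume bound and the coordinate-ball competitor for $A_g(V)$ follow directly from results already developed earlier in the paper.
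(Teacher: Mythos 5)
Your proof is correct and follows essentially the same route as the paper's: apply Proposition \ref{prop:coarse-vol-bds} componentwise to bound $\sL^{3}_{g}(\Omega^{(l)})$ from above, compare against a centered coordinate ball of the same total area to bound it from below, and extract the contradiction from the $-\pi\log A$ term in the volume expansion of hyperbolic balls. The only differences are cosmetic (phrasing the lower bound via inverting the isoperimetric profile rather than by direct comparison, and a slightly different bookkeeping of which components diverge).
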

\begin{proof}
Suppose otherwise. As such, after extracting a subsequence we may assume that for some $J \in \{2,\dots, k\}$, $A^{(l)}_{j}\to \infty$ for $j \leq J$ and $A^{(l)}_{j}= O(1)$ for $j> J$. Applying Proposition \ref{prop:coarse-vol-bds} to each component of $\Omega^{(l)}$ yields
\begin{equation*}
\sL^{3}_{g}(\Omega^{(l)}) \leq \sum_{j=1}^{k}\sL^{3}_{\overline g}(\cB_{\overline g}(A_{j}^{(l)})) + O(1),
\end{equation*}
as $l \to \infty$. Note that we have used that the number of components, $k$, is fixed, so the $O(1)$ error terms in Proposition \ref{prop:coarse-vol-bds} remain uniformly bounded after summing over $j$. Comparison against a region of the form $\Gamma^{(l)}:=\{s\leq s_{l}\}$ in $(M,g)$ of area $A^{(l)}$ yields
\begin{equation*}
\sL^{3}_{g}(\Gamma^{(l)}) \leq \sL^{3}_{g}(\Omega^{(l)})  \leq \sum_{j=1}^{k}\sL^{3}_{\overline g}(\cB_{\overline g}(A_{j}^{(l)})) + O(1).
\end{equation*}
As such, Lemmas \ref{lemma:vol-large-balls} and \ref{lemm:vol-large-coord-balls-g} combined with our assumptions concerning the behavior of the $A^{(l)}_{j}$, yield
\begin{align*}
& \frac 12 \left( A^{(l)}_{1}+\dots + A^{(l)}_{J}\right) - \pi \log \left( A^{(l)}_{1}+\dots + A^{(l)}_{k}\right)\\
&  =\sL^{3}_{g}(\Gamma^{(l)}) + O(1)\\
&  \leq \sum_{j=1}^{k}\sL^{3}_{\overline g}(\cB_{\overline g}(A_{j}^{(l)})) + O(1)\\
&   =\frac 12 \left( A^{(l)}_{1}+\dots + A^{(l)}_{J}\right)  - \pi \log \left( A^{(l)}_{1}\cdots A^{(l)}_{J}\right) + O(1).
\end{align*}
Rearranging this yields
\begin{equation*}
\log\left( \frac{A^{(l)}_{1}+\dots + A^{(l)}_{k}}{A^{(l)}_{1}\cdots A^{(l)}_{J}} \right) \geq O(1).
\end{equation*}
This is a contradiction because $J\geq 2$, so the quotient is tending to $0$. 
\end{proof}

\section{Proof of Theorem \ref{theo:exist-iso-AH}}\label{sec:pf-exist-in-AH}
 In this section, we prove Theorem \ref{theo:exist-iso-AH}. Throughout this section, $(M,g)$ will be an asymptotically hyperbolic manifold with $R_{g}\geq -6$. We remark that in the case of compact perturbations of Schwarzschild-AdS, case 3 below simplifies slightly, as Theorem \ref{theo:brendle-IHES} prevents small components of a sequence of isoperimetric regions from sliding off to infinity. 
 
 Suppose that the Theorem \ref{theo:exist-iso-AH} is false, i.e., there exists $V^{(l)}\to\infty$ so that applying the generalized existence result in Proposition \ref{prop:conc-comp}, we obtain a non-empty component ``at infinity'' in hyperbolic space. More precisely, we have $S^{(l)}>0$ and $\Omega^{(l)}$ so that 
\begin{itemize}
\item $\Omega^{(l)}$ is an isoperimetric region in $(M,g)$,
\item $\sL_{g}^{3}(\Omega^{(l)}) + \sL^{3}_{\overline g}(\cB_{\overline g}(S^{(l)})) = V^{(l)}$,
\item $\cH^{2}_{g}(\partial^{*}\Omega^{(l)}) + S^{(l)} = A_{g}(V^{(l)})$, and
\item $\Omega^{(l)}$ and $\cB_{\overline g}(S^{(l)})$ have the same mean curvature. 
\end{itemize}
We define $\Sigma^{(l)}:=\partial^{*}\Omega^{(l)}$ and $A^{(l)} : = \cH^{2}_{g}(\Sigma^{(l)})$. We will consider three cases, based on the behavior of $A^{(l)}$ and $S^{(l)}$ as $l\to\infty$. It is not hard to see that we may find a subsequence such that one such case holds for all $l$. \\

\noindent\emph{Case 1, $S^{(l)} = O(1)$ as $l\to\infty$:} In this case, $\sL^{3}_{g}(\Omega^{(l)})\to \infty$, and by Corollary \ref{coro:coarse-bds-general-large-iso}, $\cH^{2}_{g}(\Sigma^{(l)}) \to \infty$ as well. Thus, by Corollary \ref{coro:coarse-CY-mult-bdry}, $H_{g} \to 2$. This cannot happen, because spheres of bounded size in hyperbolic space have mean curvatures much larger than $2$. \\

\noindent\emph{Case 2, $S^{(l)}\to\infty$ and $A^{(l)} \to\infty$ as $l\to\infty$:} Define $\Gamma^{(l)} : = \{s\leq s_{l}\}$ where $s_{l}$ is chosen so that $\cH^{2}_{g}(\partial\Gamma^{(l)}) = A^{(l)}+S^{(l)}$. Now, using Corollary \ref{coro:coarse-bds-general-large-iso} to bound $\sL^{3}_{g}(\Omega^{(l)})$, we obtain 
\begin{align*}
& \frac 12 (A^{(l)} + S^{(l)}) - \pi \log (A^{(l)} + S^{(l)})  \\
& = \sL^{3}_{\overline g}(\Gamma^{(l)}) + O(1)\\
& = \sL^{3}_{g}(\Gamma^{(l)})  + O(1)\\
& \leq \sL^{3}_{g}(\Omega^{(l)}) + \sL^{3}_{\overline g}(\cB_{\overline g}(S^{(l)})) + O(1)\\
& \leq \sL^{3}_{\overline g}(\cB_{\overline g}(A^{(l)})) + \sL^{3}_{\overline g}(\cB_{\overline g}(S^{(l)})) + O(1)\\
& =  \frac 12 (A^{(l)} + S^{(l)}) - \pi \log (A^{(l)}S^{(l)}) + O(1).
\end{align*}
Rearranging yields the following equation:
\begin{equation*}
\log \left( \frac{A^{(l)}+S^{(l)}}{A^{(l)}S^{(l)}}\right) \geq O(1).
\end{equation*}
Because both areas are diverging, this cannot hold. \\

\noindent\emph{Case 3, $A^{(l)} = O(1)$ as $l\to\infty$:} By Proposition \ref{prop:cptness-iso-regions} (and the assumption that the area $A^{(l)}$ is uniformly bounded), after extracting a subsequence, each component of $\Omega^{(l)}$ is either smoothly converging to the horizon region or sliding off to infinity. Write $\Omega^{(l)} = \Omega^{(l)}_{h}\cup\Omega^{(l)}_{d}$, where $\Omega^{(l)}_{h}$ is converging to the horizon and $\Omega^{(l)}_{d}$ is diverging (we allow for the possibility that one or both of these sets are empty\footnote{Note that if $(M,g)$ is a compact perturbation of Schwarzschild-AdS, then $\Omega^{(l)}_{d}$ must necessarily be empty for $l$ sufficiently large, by Theorem \ref{theo:brendle-IHES}}). In particular, we have that $\sL^{3}(\Omega^{(l)}_{h}) = o(1)$ and $A^{(l)} = \cH^{2}_{g}(\partial^{*}\Omega^{(l)}_{h}) = A_{\partial M} + o(1)$. 

Furthermore, because $\Omega^{(l)}_{d}$ is diverging and $A^{(l)}_{d}:=\cH^{2}_{g}(\partial^{*}\Omega_{d}^{(l)})$ is uniformly bounded by assumption (which implies that $\sL^{3}_{g}(\Omega^{(l)}_{d})$ is also uniformly bounded, because $\Omega^{(l)}$ is isoperimetric), we have that
\begin{equation*}
\cH^{2}_{g}(\partial^{*}\Omega_{d}^{(l)}) = \cH^{2}_{\overline g}(\partial^{*}\Omega_{d}^{(l)}) + o(1) \qquad \text{and} \qquad \sL^{3}_{g}(\Omega^{(l)}_{d}) = \sL^{3}_{\overline g}(\Omega^{(l)}_{d}) + o(1),
\end{equation*}
as $l\to\infty$. Hence, we may apply the isoperimetric inequality in hyperbolic space to conclude
\begin{equation*}
\sL^{3}_{g}(\Omega^{(l)}_{d}) \leq \sL^{3}_{\overline g}(\cB_{\overline g}(A^{(l)}_{d})) + o(1),
\end{equation*}
as $l\to\infty$. 

As in case 2, define $\Gamma^{(l)} : = \{s\leq s_{l}\}$ where $s_{l}$ is chosen so that $\cH^{2}_{g}(\partial\Gamma^{(l)}) = A^{(l)}+S^{(l)}$. It is not hard to check that
\begin{equation*}
\cH^{2}_{\overline g}(\Gamma^{(l)}) = A^{(l)} + S^{(l)} + o(1),
\end{equation*}
so 
\begin{equation*}
\sL^{3}_{\overline g}(\Gamma^{(l)}) = \sL^{3}_{\overline g}(\cB_{\overline g}( A^{(l)} + S^{(l)})) + o(1),
\end{equation*}
as $l\to\infty$. Now, comparing the generalized isoperimetric region consisting of $\Omega^{(l)}$ and $\cB_{\overline g}(S^{(l)})$ with $\Gamma^{(l)}$, we obtain
\begin{equation*}
\sL^{3}_{g}(\Gamma^{(l)}) \leq \sL^{3}_{g}(\Omega^{(l)}) + \sL^{3}_{\overline g}(\cB_{\overline g}(S^{(l)})). 
\end{equation*}
This implies that
\begin{align*}
&  \sL^{3}_{\overline g}(\cB_{\overline g}( A^{(l)}  + S^{(l)})) + V(M,g)\\
& = \sL^{3}_{\overline g}(\cB_{\overline g}( A^{(l)}  + S^{(l)})) + \sL^{3}_{g}(\Gamma^{(l)}) - \sL^{3}_{\overline g}(\Gamma^{(l)}) + o(1)\\
 & = \sL^{3}_{\overline g}(\Gamma^{(l)}) + \sL^{3}_{g}(\Gamma^{(l)}) - \sL^{3}_{\overline g}(\Gamma^{(l)}) \\
& \leq \sL^{3}_{g}(\Omega^{(l)}) + \sL^{3}_{\overline g}(\cB_{\overline g}(S^{(l)}))\\
& = \sL^{3}_{g}(\Omega^{(l)}_{d}) + \sL^{3}_{\overline g}(\cB_{\overline g}(S^{(l)}))+o(1)\\
& = \sL^{3}_{\overline g}(\cB_{\overline g}(A^{(l)}_{d})) + \sL^{3}_{\overline g}(\cB_{\overline g}(S^{(l)}))+o(1)\\
& \leq \sL^{3}_{\overline g}(\cB_{\overline g}(A^{(l)}_{d}+S^{(l)}))+o(1).
\end{align*}
In the last line we used the isoperimetric inequality in hyperbolic space. Using Lemma \ref{lemma:vol-large-balls}, we have that
\begin{align*}
& \frac 12 (A^{(l)}+S^{(l)}) - \pi \log(A^{(l)}+S^{(l)}) +V(M,g)\\
&  \leq \frac 12 (A^{(l)}_{d}+S^{(l)}) - \pi \log(A^{(l)}_{d}+S^{(l)}) +o(1),
\end{align*}
as $l\to\infty$. Because $A^{(l)}= A^{(l)}_{d} +A^{(l)}_{h}=A^{(l)}_{d}+A_{\partial M} + o(1)$, we obtain
\begin{equation*}
V(M,g) + \frac 12 A_{\partial M}  \leq o(1),
\end{equation*}
as $l\to\infty$. However, Proposition \ref{prop:vol-comp-H2} implies that $V(M,g) + \frac 12 A_{\partial M} > 0$ (and this quantity does not depend on $l$), so this is a contradiction.

\section{Behavior of large isoperimetric regions}\label{sec:behav-large-iso}

In this section, we will always assume that $(M,g)$ is a compact perturbation of Schwarzschild-AdS of mass $\mm > 0$ and satisfies $R_{g}\geq -6$.

\begin{lemm}
There do not exist properly embedded, totally umbilical, CMC, $H_{g}\equiv 2$ hypersurfaces $\Sigma$ in $(M,g)$.
\end{lemm}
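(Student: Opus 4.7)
The plan is to reduce to the asymptotic region where $g$ agrees with $\overline{g}_{\mm}$ exactly, and then to exploit the Ricci eigenstructure of Schwarzschild-AdS. Suppose $\Sigma$ were such a hypersurface; we may take it to be connected. If $\Sigma$ is compact, then, being properly embedded and without boundary, it sits entirely in the interior of $M$, directly contradicting the outermost assumption in Definition~\ref{def:cpt-pert-SADS}. Thus $\Sigma$ must be non-compact, and properness implies that it has an unbounded component $\Sigma_{0}$ contained in the exact Schwarzschild-AdS region $M\setminus \widetilde K$.

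On $\Sigma_0$, the totally umbilic, $H_g\equiv 2$ assumption means that $\sff = g|_{\Sigma_0}$, and in particular the induced covariant derivative of $\sff$ vanishes identically. The Codazzi equation then yields that the ambient Riemann tensor satisfies $\Riem(X,Y,Z,\nu) = 0$ for every triple of tangent vectors $X, Y, Z$ on $\Sigma_0$. Since $\dim M = 3$ the Weyl tensor vanishes, so $\Riem(X,Y,Z,\nu)$ can be expressed algebraically in terms of Ricci and the metric; substituting and using $X,Y,Z \perp \nu$ gives $\Ric(\nu, X) = 0$ for every $X \in T\Sigma_0$. Equivalently, $\nu$ is a Ricci eigenvector of $\overline{g}_{\mm}$ at every point of $\Sigma_0$.

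Let $N$ denote the unit outward radial vector field for $\overline{g}_{\mm}$, so that $\overline{g}_{\mm} = dr\otimes dr + s(r)^{2} g_{\SS^2}$ with $N = \partial_{r}$. A direct computation shows that the Ricci tensor of Schwarzschild-AdS is diagonalized by the orthogonal splitting $TM = \Span(N) \oplus T\SS^{2}$, with eigenvalues $-2-2\mm s^{-3}$ along $N$ and $-2+\mm s^{-3}$ on $T\SS^{2}$. These two eigenvalues are distinct precisely because $\mm>0$, and combined with the continuity of $\nu$ on the connected set $\Sigma_0$ this forces one of two global alternatives: either $\nu = \pm N$ throughout $\Sigma_0$, or $\nu \perp N$ throughout $\Sigma_0$.

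In the first alternative, $ds|_{\Sigma_0} \equiv 0$, so $\Sigma_0$ is contained in a coordinate sphere $\{s = s_0\}$; but such a sphere in Schwarzschild-AdS has mean curvature $2$ only at the horizon $s_0 = 2\mm$, which lies outside the region $\{s > 2\mm\}$ where $\Sigma_0$ sits, and $\Sigma_0$ would in any case be bounded, contradicting its unboundedness. In the second alternative, $N$ is tangent to $\Sigma_0$, and because its integral curves are unit geodesics of $\overline{g}_{\mm}$ one computes $\sff(N,N) = 0$, whereas the umbilic condition forces $\sff(N,N) = g(N,N) = 1$. The one delicate step is the passage from Codazzi to the statement that $\nu$ is a Ricci eigenvector, which relies on the vanishing of the Weyl tensor in three dimensions; everything else is a short warped-product calculation in which $\mm > 0$ is used only to separate the Ricci eigenvalues.
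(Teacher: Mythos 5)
Your proof is correct and follows essentially the same route as the paper's: reduce to the exact Schwarzschild--AdS region, use Codazzi to show $\nu$ is an eigenvector of $\Ric_{\overline g_{\mm}}$, note that $\mm>0$ separates the radial eigenvalue from the tangential one, and rule out both resulting alternatives. The only (cosmetic) difference is in the orthogonal case, where you contradict umbilicity directly via $\sff(N,N)=0\neq 1$ along the radial geodesics, whereas the paper argues the surface would have to be minimal; your variant is arguably cleaner, and your explicit Codazzi-plus-Weyl justification fills in a step the paper only cites from \cite{Brendle:warpedCMC}.
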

\begin{proof}
We may adapt an argument from \cite[\S 4]{Brendle:warpedCMC}: Suppose that $\Sigma\hookrightarrow (M,g)$ is a properly embedded, totally umbilical CMC, $H_{g}\equiv 2$ hypersurface. First note that the assumption that $\partial M$ is outermost forces $\Sigma$ to be non-compact. Hence, $\Sigma$ must extend into the exterior region, where $g = \overline g_{\mm}$. We will consider $\hat \Sigma : = \Sigma \setminus B$, where $B$ is a sufficiently large centered coordinate ball so that $g=\overline g_{\mm}$ outside of $B$.

The Codazzi equations combined with the CMC and totally umbilic hypotheses imply that $\nu$ is an eigenvector for $\Ric_{\overline g_{\mm}}(\cdot)$ at each point in $\hat \Sigma$ (we are considering $\Ric_{\overline g_{\mm}}(\cdot)$ as a $(1,1)$-tensor). However, one may check (cf.\ \cite[\S 4]{Brendle:warpedCMC}) that the radial direction is a one dimensional eigenspace for $\Ric_{\overline g_{\mm}}(\cdot)$. From this, we see that at each point $\nu$ must be either radial or orthogonal to $\frac{\partial}{\partial s}$. If there is some point on $\hat \Sigma$ so that $\nu$ is radial, then this would continue to hold at all points on the connected component of $\hat \Sigma$ containing that point, so clearly $\hat \Sigma$ would have to be a centered coordinate sphere. This cannot happen, as $\hat \Sigma$ is unbounded. 

On the other hand, if $\nu$ is orthogonal to $\frac{\partial}{\partial s}$ at each point on $\hat \Sigma$, it is easy to check that each component of $\hat \Sigma$ must lie in a plane $P$ in $\RR^{3}$. However, in this case, $\hat \Sigma$ would necessarily have zero mean curvature, a contradiction.
\end{proof}

For a hypersurface $\Sigma$ in $\RR^{3}$, recall that we have defined the \emph{inner radius} of $\Sigma$ by
\begin{equation*}
\underline s(\Sigma) := \inf \left\{ s(x) : x \in \Sigma \right\},
\end{equation*}
where the coordinate $s$ is the one used in the definition of Schwarzschild-AdS in \eqref{eq:schwAdS}. We may turn the previous lemma into an effective inequality for large isoperimetric regions, somewhat in the spirit of the usual philosophy that ``a Bernstein-type theorem implies a curvature bound,'' cf.\ \cite[p. 27]{White:PCMI}. 
\begin{lemm}\label{lemm:effective-no-unbil-R-6-surf}
For all $S_{0} > 0$, there exists $\lambda = \lambda(S_{0}) > 0$ so that if $\Omega_{1}$ is a connected component of some compact isoperimetric region $\Omega$ and $\partial^{*}\Omega_{1}$ satisfies $\underline s(\partial^{*}\Omega_{1}) \leq S_{0}$ and $\cH^{2}_{g}(\partial^{*}\Omega_{1}) \geq \lambda^{-1}$, then
\begin{equation*}
\int_{\partial^{*}\Omega_{1}} \left( R_{g} + 6 + |\tfsff|^{2} \right)d\cH^{2}_{g} \geq \lambda > 0 .
\end{equation*}
\end{lemm}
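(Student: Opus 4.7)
The plan is a contradiction argument built on the smooth compactness theorem for isoperimetric regions (Proposition \ref{prop:cptness-iso-regions}), reducing to the preceding lemma that rules out properly embedded, totally umbilical, $H_{g}\equiv 2$ hypersurfaces in $(M,g)$. Assume no such $\lambda$ exists. Then, extracting a sequence, there are compact isoperimetric regions $\Omega^{(l)}$ with distinguished connected components $\Omega^{(l)}_{1}$ satisfying $\underline s(\partial^{*}\Omega^{(l)}_{1}) \leq S_{0}$, $\cH^{2}_{g}(\partial^{*}\Omega^{(l)}_{1}) \to \infty$, and $\int_{\partial^{*}\Omega^{(l)}_{1}}(R_{g}+6+|\tfsff|^{2})\,d\cH^{2}_{g} \to 0$.

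First I would show that the constant mean curvature $H^{(l)}_{g}$ of $\partial^{*}\Omega^{(l)}$ tends to $2$. Indeed, Corollary \ref{coro:coarse-CY-mult-bdry} yields $((H^{(l)}_{g})^{2}-4)\,\cH^{2}_{g}(\partial^{*}\Omega^{(l)}) \leq \tfrac{64\pi}{3}n_{0}$, while $\cH^{2}_{g}(\partial^{*}\Omega^{(l)}) \geq \cH^{2}_{g}(\partial^{*}\Omega^{(l)}_{1}) \to \infty$. This allows me to apply Proposition \ref{prop:cptness-iso-regions} and extract a subsequence for which the decomposition $\Omega^{(l)} = \Omega^{(l)}_{h}\cup\Omega^{(l)}_{c}\cup\Omega^{(l)}_{d}$ sorts the connected components of $\Omega^{(l)}$ consistently. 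The hypothesis $\underline s(\partial^{*}\Omega^{(l)}_{1})\leq S_{0}$ prevents $\Omega^{(l)}_{1}$ from being eventually disjoint from any compact set, so it does not lie in the divergent piece. Since $\cH^{2}_{g}(\partial^{*}\Omega^{(l)}_{h}) = \cH^{2}_{g}(\partial M) + o(1)$ remains bounded while $\cH^{2}_{g}(\partial^{*}\Omega^{(l)}_{1})\to\infty$, the component cannot belong to the horizon piece either. Hence $\Omega^{(l)}_{1}$ converges locally smoothly to a non-horizon component $\Omega^{\infty}_{1}$ of the limiting locally isoperimetric set $\Omega$, whose boundary is a properly embedded, non-compact hypersurface with $H_{g}\equiv 2$.

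Next I would exploit the $L^{1}$-smallness of the Christodoulou--Yau integrand. Because $R_{g}\geq -6$ makes both terms nonnegative, local smooth convergence of $\partial^{*}\Omega^{(l)}_{1}$ to $\partial^{*}\Omega^{\infty}_{1}$ gives, for any precompact open $K \subset \partial^{*}\Omega^{\infty}_{1}$,
\begin{equation*}
\int_{K}|\tfsff_{\infty}|^{2}\,d\cH^{2}_{g} = \lim_{l\to\infty}\int_{K\cap\partial^{*}\Omega^{(l)}_{1}}|\tfsff|^{2}\,d\cH^{2}_{g} \leq \lim_{l\to\infty}\int_{\partial^{*}\Omega^{(l)}_{1}}(R_{g}+6+|\tfsff|^{2})\,d\cH^{2}_{g} = 0.
\end{equation*}
Thus $|\tfsff_{\infty}|\equiv 0$, so $\partial^{*}\Omega^{\infty}_{1}$ is a properly embedded, totally umbilical CMC $H_{g}\equiv 2$ hypersurface in $(M,g)$, contradicting the previous lemma.

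The main obstacle I anticipate is tracking the single connected component $\Omega^{(l)}_{1}$ through the compactness theorem and verifying that its limit is nontrivial---neither the horizon nor vacuous. The two quantitative hypotheses are essential and complementary: without $\underline s \leq S_{0}$ the component could escape to infinity, and without $\cH^{2}_{g}(\partial^{*}\Omega^{(l)}_{1})\to\infty$ it could collapse onto the horizon, in either case yielding no limit to which the preceding lemma can be applied.
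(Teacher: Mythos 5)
Your proof is correct and follows essentially the same strategy as the paper: argue by contradiction, pass to a limit via the smooth compactness result for isoperimetric regions (Proposition~\ref{prop:cptness-iso-regions}), use nonnegativity of $R_g+6+|\tfsff|^2$ together with local smooth convergence to force the limit to be totally umbilical with $H_g\equiv 2$, and contradict the preceding rigidity lemma.

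The one place you deviate slightly is in ruling out the degenerate limits for the distinguished component $\Omega^{(l)}_1$. You invoke the component-wise trichotomy of Proposition~\ref{prop:cptness-iso-regions} directly: the hypothesis $\underline s\leq S_0$ bars $\Omega^{(l)}_1$ from the divergent piece, and $\cH^2_g(\partial^*\Omega^{(l)}_1)\to\infty$ bars it from the horizon piece because $\cH^2_g(\partial^*\Omega^{(l)}_h)=\cH^2_g(\partial M)+o(1)$. The paper instead re-derives the fact that $\Omega^{(l)}_1$ cannot vanish in the limit by an explicit co-area/competitor construction---choosing radii $r^{(l)}\to\infty$ with small volume and slice-area, then getting a contradiction with the isoperimetric property by discarding the inner chunk and restoring volume with a faraway coordinate ball. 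Both are legitimate; the paper's route is more self-contained (it does not lean on the quantitative area conclusion in Proposition~\ref{prop:cptness-iso-regions}), while yours is shorter. You also make explicit the step that $H^{(l)}_g\to 2$ via Corollary~\ref{coro:coarse-CY-mult-bdry}, which the paper needs to invoke Proposition~\ref{prop:cptness-iso-regions} but only notes implicitly in passing; stating it is a good practice.
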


\begin{proof}
Suppose that for some $S_{0}$, we could find a sequence of isoperimetric regions $\Omega^{(l)}$ so that some connected component $\Omega_{1}^{(l)}\subset \Omega^{(l)}$ satisfies $\underline s(\Omega_{1}^{(l)})\leq S_{0}$, 
\begin{equation*}
\int_{\partial^{*}\Omega_{1}^{(l)}} \left( R_{g} + 6 + |\tfsff|^{2} \right)d\cH^{2}_{g} \to 0,
\end{equation*}
and $\cH^{2}_{g}(\partial^{*}\Omega_{1}^{(l)}) \to \infty$. By Proposition \ref{prop:cptness-iso-regions}, we may take the limit of $\Omega^{(l)}$ and $\Omega_{1}^{(l)}$ as sets of finite perimeter, to obtain $\Omega_{1}$, a (possibly disconnected) subset of a locally isoperimetric region $\Omega$ in $(M,g)$. 

Because $\underline s(\Omega_{1}^{(l)})\leq S_{0}$ and $\cH^{2}_{g}(\partial^{*}\Omega_{1}^{(l)}) \to \infty$, we claim that it must hold that $\partial^{*}\Omega_{1}$ is non-empty and contains at least one non-compact component. If this were false, then $\Omega_{1}$ would necessarily be either equal to the horizon region $K$ or empty, by Proposition \ref{prop:cptness-iso-regions}. Either possibility would contradict the isoperimetric property of $\Omega^{(l)}$ as follows: By the co-area formula (cf.\ the proof of \cite[Theorem 2.1]{RitoreRosales}) we may find a sequence of radii $r^{(l)}\to\infty$ so that 
\begin{align*}
\sL^{3}_{g}(\Omega^{(l)}_{1} \cap B_{\overline g}(0;r^{(l)})) & \to 0,\\
\cH^{2}_{g}(\Omega^{(l)}_{1} \cap \partial B_{\overline g}(0;r^{(l)})) &  \to 0.
\end{align*}
On the other hand, because $r^{(l)}\to\infty$, the mean curvature of $\partial^{*}\Omega^{(l)}$ is close to $2$, and $\underline s(\partial^{*}\Omega^{(l)}_{1}) \leq S_{0}$, we may apply the monotonicity formula inside of a sequence of small balls to see that 
\begin{equation*}
\cH^{2}_{g}(\partial^{*}\Omega^{(l)}_{1}\cap B_{\overline g}(0;r^{(l)}))  \to \infty. 
\end{equation*}
Putting these facts together, we see that the following region contains the same volume with less area as compared to $\Omega^{(l)}_{1}$
\begin{equation*}
(\Omega^{(l)}_{1} \setminus B_{\overline g}(0;r^{(l)}))  \cup B^{(l)} \cup K
\end{equation*}
(if $\Omega^{(l)}_{1}$ does not contain the horizon, then $K$ should be omitted from this expression). Here, $B^{(l)}$ is a small coordinate ball near infinity which is chosen to replace the lost volume, i.e., $ \sL^{3}_{g} (B^{(l)}) = \sL^{3}_{g}(\Omega^{(l)}_{1} \cap B_{\overline g}(0;r^{(l)}))$. Hence, $\Omega_{1}^{(l)}$ cannot disappear in the limit. 

By Proposition \ref{prop:cptness-iso-regions}, $\partial^{*}\Omega^{(l)}_{1}$ actually tends to $\partial^{*}\Omega_{1}$ locally smoothly and $\partial^{*}\Omega$ is properly embedded. Because the integrand $R_{g} + 6 + |\tfsff|^{2}$ is non-negative, we may conclude from the smooth convergence that
\begin{equation*}
\int_{\partial^{*}\Omega} \left( R_{g} + 6 + |\tfsff|^{2} \right)d\cH^{2}_{g} = 0.
\end{equation*}
Because $R_{g}\geq -6$, we see that $\partial^{*}\Omega$ is a properly embedded, totally umbilical $H_{g} \equiv 2$ surface, 
contradicting the previous lemma. 
\end{proof}

The following proposition is the crucial step in our understanding of large isoperimetric regions. 
\begin{prop}\label{prop:behavior-large-iso}
There exists $A_{0}>0$ and $C_{0}>0$ so that if $\Omega$ is an isoperimetric region in $(M,g)$ with $\cH^{2}(\partial^{*}\Omega) \geq A_{0}$ then either 
\begin{enumerate}
\item The region $\Omega$ is a centered coordinate ball $\cB_{g}(A)$, or
\item we may write $\Omega = \Omega_{1}\cup \Omega_{2}$, where each $\Omega_{1},\Omega_{2}$ is connected and $\Omega_{2}$ is possibly empty. The boundary of the first region, $\partial^{*}\Omega_{1}$, has non-zero genus, and bounded Hawking mass $m_{H}(\partial^{*}\Omega_{1}) \leq 4\mm$. Moreover, the second region satisfies
\begin{equation*}
\cH^{2}_{g}(\partial^{*}\Omega_{2}) \leq C_{0}.
\end{equation*}
\end{enumerate}
\end{prop}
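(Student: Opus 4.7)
I argue by contradiction: suppose that for every $A_{0}$ and $C_{0}$ the conclusion fails, producing a sequence $\Omega^{(l)}$ of isoperimetric regions with $A^{(l)} := \cH^{2}_{g}(\partial^{*}\Omega^{(l)})\to\infty$, none of which falls under either (1) or (2). By Propositions \ref{prop:comp-bdry-bds} and \ref{prop:sequence-large-iso-one-large-rest-small}, after extracting a subsequence exactly one component $\Omega_{1}^{(l)}$ has $A_{1}^{(l)}:=\cH^{2}_{g}(\partial^{*}\Omega_{1}^{(l)})\to\infty$, while the union $\Omega_{2}^{(l)}$ of the remaining components has uniformly bounded area. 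Since all boundary components share the same constant mean curvature $H^{(l)}$, Corollary \ref{coro:coarse-CY-mult-bdry} applied to $\Omega_{1}^{(l)}$ gives $(H^{(l)})^{2}-4 = O((A_{1}^{(l)})^{-1})$. By Proposition \ref{prop:cptness-iso-regions}, each component of $\Omega_{2}^{(l)}$ either converges smoothly to the horizon region or diverges to infinity; Brendle's Alexandrov theorem (Theorem \ref{theo:brendle-IHES}) rules out divergence, since a closed CMC surface in $(\overline{M}_{\mm},\overline{g}_{\mm})$ with $H_{g}\equiv 2$ must coincide with the horizon. Connectedness of $\partial M$ therefore forces $\Omega_{2}^{(l)}$ to be either empty or a single component converging smoothly to the horizon, with $\sL^{3}_{g}(\Omega_{2}^{(l)})=o(1)$ and $\cH^{2}_{g}(\partial^{*}\Omega_{2}^{(l)}) \to A_{\partial M}$; in particular its area is uniformly bounded, establishing the required constant $C_{0}$.

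The key step is a Hawking mass bound for $m^{(l)} := m_{H}(\partial^{*}\Omega_{1}^{(l)})$. I run weak IMCF starting from $\partial^{*}\Omega_{1}^{(l)}$, with a single jump over $\Omega_{2}^{(l)}$ via Proposition \ref{prop:how-to-jump} in case $\Omega_{1}^{(l)}$ does not contain the horizon. The mean curvature estimate above together with $\cH^{2}_{g}(\partial^{*}\Omega_{2}^{(l)})=O(1)$ shows that the correction term $C_{2}(A_{1}^{(l)})^{1/2}\int_{\partial^{*}\Omega_{2}^{(l)}}(H^{2}-4)\,d\cH^{2}_{g}$ appearing in Proposition \ref{prop:how-to-jump} is $O((A_{1}^{(l)})^{-1/2})=o(1)$, so the Hawking mass is essentially monotone across the jump. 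Adapting the proof of Proposition \ref{prop:imcf-bds-good-case} (with the jump contribution controlled as in Proposition \ref{prop:coarse-vol-bds}) yields
\begin{equation*}
\sL^{3}_{g}(\Omega_{1}^{(l)}\cup\Omega_{2}^{(l)}) \leq \sL^{3}_{\overline{g}_{m^{(l)}}}(\cB_{\overline{g}_{m^{(l)}}}(A_{1}^{(l)})) + V(M,g) - V(\overline{M}_{m^{(l)}},\overline{g}_{m^{(l)}}) + O(1).
\end{equation*}
On the other hand, the isoperimetric property of $\Omega^{(l)}$ combined with $g\equiv\overline{g}_{\mm}$ outside a compact set gives
\begin{equation*}
\sL^{3}_{g}(\Omega^{(l)}) \geq \sL^{3}_{g}(\cB_{g}(A^{(l)})) = \sL^{3}_{\overline{g}_{\mm}}(\cB_{\overline{g}_{\mm}}(A^{(l)})) + V(M,g) - V(\overline{M}_{\mm},\overline{g}_{\mm}).
\end{equation*}
Chaining these two inequalities and expanding both sides via Lemma \ref{lemm:vol-large-coord-balls-g}, which schematically says
\begin{equation*}
\sL^{3}_{\overline{g}_{m}}(\cB_{\overline{g}_{m}}(A)) = \sL^{3}_{\overline{g}}(\cB_{\overline{g}}(A)) + V(\overline{M}_{m},\overline{g}_{m}) - 8\pi^{3/2}m\,A^{-1/2} + o(A^{-1/2}),
\end{equation*}
the leading terms balance and comparison of the $A^{-1/2}$ coefficients produces a bound of the form $m^{(l)} \leq C\mm$ for $l$ large; in particular $m^{(l)}\leq 4\mm$. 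Carefully tracking the error terms when $\Omega_{2}^{(l)}\neq\emptyset$---where the $O(1)$ jump penalty competes with the $A_{2}^{(l)}/2$ contribution arising from $A^{(l)} = A_{1}^{(l)}+A_{2}^{(l)}$---is the principal technical obstacle.

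Finally, I split into two subcases according to the genus of $\partial^{*}\Omega_{1}^{(l)}$. If $\genus(\partial^{*}\Omega_{1}^{(l)})=0$, Corollary \ref{coro:CY-mH-bd} gives $m^{(l)}\geq 0$, and Proposition \ref{prop:christodoulou-yau} combined with the upper Hawking mass bound yields
\begin{equation*}
\int_{\partial^{*}\Omega_{1}^{(l)}}(R_{g}+6+|\tfsff|^{2})\,d\cH^{2}_{g} \leq \tfrac{3}{2}(16\pi)^{3/2}\,m^{(l)}\,(A_{1}^{(l)})^{-1/2}\longrightarrow 0.
\end{equation*}
Lemma \ref{lemm:effective-no-unbil-R-6-surf} then forces $\underline{s}(\partial^{*}\Omega_{1}^{(l)})\to\infty$, so for $l$ large $\partial^{*}\Omega_{1}^{(l)}$ lies entirely in the unperturbed Schwarzschild-AdS region; by Brendle's Alexandrov theorem (Theorem \ref{theo:brendle-IHES}) it is then a centered coordinate sphere. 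Since every centered coordinate ball contains the horizon, disjointness of components forces $\Omega_{2}^{(l)}=\emptyset$, placing $\Omega^{(l)}$ in conclusion (1). If $\genus(\partial^{*}\Omega_{1}^{(l)})\geq 1$, the Hawking mass bound from the previous paragraph combined with the area bound on $\Omega_{2}^{(l)}$ places $\Omega^{(l)}$ in conclusion (2). Either case contradicts the choice of $\Omega^{(l)}$.
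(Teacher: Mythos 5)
Your overall strategy is the same as the paper's: split off one large component from a bounded remainder via Propositions \ref{prop:comp-bdry-bds} and \ref{prop:sequence-large-iso-one-large-rest-small}, identify the remainder with the horizon via Proposition \ref{prop:cptness-iso-regions} and Theorem \ref{theo:brendle-IHES}, establish the Hawking mass bound by comparing the volume bound from inverse mean curvature flow (with a jump) against the volume of $\cB_{g}(A^{(l)})$, and then dispose of the genus-zero case via Proposition \ref{prop:christodoulou-yau}, Lemma \ref{lemm:effective-no-unbil-R-6-surf}, and Brendle's theorem. That skeleton is right, and the peripheral estimates (the $O((A_{1}^{(l)})^{-1/2})$ control of the jump correction to the Hawking mass, the identification of $\Omega_{2}^{(l)}$ with the horizon) are carried out correctly.

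The gap is exactly at the step you yourself flag as ``the principal technical obstacle,'' and flagging it is not the same as closing it. You state the volume upper bound with a ``$+O(1)$'' error and then claim to compare coefficients at order $A^{-1/2}$. With a genuine $O(1)$ error this is logically invalid: after cancelling the leading terms (note $\tfrac 12 A^{(l)} - \tfrac 12 A_{1}^{(l)} = \tfrac 12 A_{2}^{(l)} = O(1)$ already), the inequality only yields $8\pi^{3/2}\bigl(m^{(l)}-\mm\bigr)(A_{1}^{(l)})^{-1/2} \leq O(1)$, i.e.\ $m^{(l)} \leq \mm + O((A_{1}^{(l)})^{1/2})$, which is vacuous. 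What makes the argument work in the paper is that the error is not an unstructured $O(1)$: the volume gained over the jump is bounded by \emph{exactly} $\tfrac 12 A_{2}^{(l)}$ (via the bound on $\beta$ from Proposition \ref{prop:how-to-jump} and the explicit integral estimate in the proof of Proposition \ref{prop:coarse-vol-bds}), and this cancels precisely against the extra $\tfrac 12 A_{2}^{(l)}$ in the leading term of $\sL^{3}_{g}(\cB_{g}(A^{(l)}))$ coming from $A^{(l)} = A_{1}^{(l)} + A_{2}^{(l)}$. The remaining discrepancies --- $\pi\log(A^{(l)}/A_{1}^{(l)})$ and the difference $(A^{(l)})^{-1/2} - (A_{1}^{(l)})^{-1/2}$ --- are $O((A_{1}^{(l)})^{-1})$ because $A_{2}^{(l)}$ is bounded, so they are genuinely lower order than $A^{-1/2}$ and the mass coefficients can be compared. (A related, smaller imprecision: Proposition \ref{prop:vol-swept-IMCF-bd} needs a lower bound for the Hawking mass valid along the \emph{entire} flow, including after the jump, so one cannot insert $m^{(l)}$ itself; the paper works under the contradiction hypothesis $m^{(l)} > 4\mm$ and uses the post-jump lower bound $2\mm$ in the volume estimate.) To complete your proof you must replace the $O(1)$ by $\tfrac 12 A_{2}^{(l)} + O((A_{1}^{(l)})^{-1})$ and exhibit the cancellation explicitly.
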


We will split the proof into two cases: In case 1, we consider large connected isoperimetric regions. Then, in case 2, we discuss isoperimetric regions with multiple components.

\begin{proof}[Proof of Proposition \ref{prop:behavior-large-iso} in Case 1] 
We assume that $\Omega^{(l)}$ is a sequence of connected isoperimetric regions with $\cH^{2}_{g}(\partial^{*}\Omega^{(l)})\to \infty$. We remark that by definition, $\Omega^{(l)}$ contains the horizon (if the horizon region is non-empty). Denote $\Sigma^{(l)} : = \partial^{*}\Omega^{(l)}$. We claim that $m_{H}(\Sigma^{(l)})\leq 4\mm$ for $l$ sufficiently large, so we may assume that $m^{(l)}: = m_{H}(\Sigma^{(l)}) > 4\mm$.

Letting $A^{(l)} = \cH^{2}_{g}(\Sigma^{(l)}$), Proposition \ref{prop:imcf-bds-good-case} implies that
\begin{equation*}
\sL^{3}_{g}(\Omega^{(l)}) \leq \sL^{3}_{\overline g_{m^{(l)}}}(\cB_{\overline g_{m}}(A^{(l)})) + V(M,g) - V(\overline M_{m^{(l)}},\overline g_{m^{(l)}}).
\end{equation*}
Because $\Omega^{(l)}$ is isoperimetric, it must contain more volume than $\cB_{g}(A^{(l)})$. Thus, using Lemma \ref{lemm:vol-large-coord-balls-g} we see that (using that $m^{(l)}\leq (A^{(l)})^{\frac 12}$, by the definition of the Hawking mass and the outermost assumption on $\partial M$)
\begin{align*}
&  \frac 12  A^{(l)}  -  \pi \log A^{(l)} + \left( V( M,g) + \pi(1+\log \pi) \right)  - 8\pi^{\frac 32} \mm (A^{(l)})^{-\frac 12} \\
 & = \sL^{3}_{g}(\cB_{g}(A^{(l)}))  + O (A^{(l)})^{-1})\\
 & \leq \sL^{3}_{g}(\Omega^{(l)}) + O (A^{(l)})^{-1}) \\
& \leq  \frac 12 A^{(l)} - \pi \log A^{(l)} + \left( V( M,g) + \pi(1+\log \pi) \right) \\
& \qquad - 8 \pi^{\frac 32}  m^{(l)} (A^{(l)})^{-\frac 12} + E(m^{(l)},A^{(l)}) + O (A^{(l)})^{-1})\\
& \leq  \frac 12 A^{(l)} - \pi \log A^{(l)} + \left( V( M,g) + \pi(1+\log \pi) \right) \\
& \qquad - 8 \pi^{\frac 32}  m^{(l)} (A^{(l)})^{-\frac 12} + C((A^{(l)})^{-1}).
\end{align*}
Because the leading order terms agree and $C$ does not depend on $m^{(l)}$ or $A^{(l)}$, we conclude that $m^{(l)}\leq \mm + O((A^{(l)})^{-\frac 1 2}) \leq 4\mm$, for $l$ sufficiently large. This is a contradiction, so we thus obtain the claimed Hawking mass bounds.

Now, we claim that if $\Sigma^{(l)}$ has genus zero, then for sufficiently large $l$, it must be a centered coordinate sphere. In the genus zero case, Proposition \ref{prop:christodoulou-yau} implies that
\begin{align*}
\int_{\Sigma^{(l)}} \left( R_{g} + 6 + |\tfsff|^{2}\right) d\cH^{2}_{g} & \leq \frac 3 2(16\pi)^{\frac 32} (A^{(l)})^{-\frac 12} m^{(l)}\\
& \leq 6 (16\pi)^{\frac 32} (A^{(l)})^{-\frac 12} \mm\\
& \leq  O((A^{(l)})^{-\frac 12}).
\end{align*}
This contradicts Lemma \ref{lemm:effective-no-unbil-R-6-surf} unless $\underline s(\Sigma^{(l)}) \to \infty$ as $l\to\infty$. If this happens, then Theorem \ref{theo:brendle-IHES} would imply that $\Sigma^{(l)}$ must necessarily be a coordinate sphere. 

To sum up, in the case that $\Omega^{(l)}$ is connected for all $l$, we have shown that for sufficiently large $l$: 
\begin{itemize}
\item If $\Sigma^{(l)}$ has genus zero then it must be a centered coordinate sphere.
\item In general, we have the Hawking mass bound $m_{H}(\Sigma^{(l)}) \leq 2\mm$.
\end{itemize}
This finishes the proof of case (1) of the proposition.
\end{proof}

\begin{proof}[Proof of Proposition \ref{prop:behavior-large-iso} in Case 2] 
Suppose $\Omega^{(l)}$ is a sequence of isoperimetric regions with  $\cH^{2}_{g}(\partial^{*}\Omega^{(l)}) \to \infty$ as $l\to\infty$ and so that $\Omega^{(l)}$ has more than one component. We will show that for $l$ sufficiently large, $\Omega^{(l)}$ consists of two regions: one large region whose boundary has non-zero genus and bounded Hawking mass, and one small region which is converging to the horizon.

By Proposition \ref{prop:comp-bdry-bds} (which says that the number of components of an isoperimetric region is uniformly bounded by some number $n_{0}$), we may extract a subsequence (still labeled by $l$) so that each $\Omega^{(l)}$ has exactly $k$ boundary components, where $1< k\leq n_{0}$. Define $\Sigma_{j}^{(l)} : = \partial^{*}\Omega^{(l)}_{j}$ and $A^{(l)}_{j}:= \cH^{2}_{g}(\Sigma_{j}^{(l)})$. We will always choose the ordering of the components so that $A^{(l)}_{1}\geq A^{(l)}_{2}\geq \dots\geq A^{(l)}_{k}> 0$. We will denote $A^{(l)} := A^{(l)}_{1}+\dots+A^{(l)}_{k}$. By Proposition \ref{prop:sequence-large-iso-one-large-rest-small}, we have that $A^{(l)}_{2}=O(1)$ as $l\to\infty$. 
 
From this, we see that as $l\to\infty$, each of $\Omega_{2}^{(l)},\dots,\Omega_{k}^{(l)}$ must either slide off to infinity or converge to the horizon region as sets of finite perimeter (and thus smoothly). This is because they cannot disappear (by the monotonicity formula, they will always have a definite amount of boundary area, and thus if their volume shrinks away to zero, it would be more optimal to enlarge one of the other components slightly). They also cannot converge to some other Borel set of finite perimeter, because Corollary \ref{coro:coarse-CY-mult-bdry} implies that this region would have a closed hypersurface of constant mean curvature $H_{g}=2$ as its boundary, contradicting the outermost assumption of $\partial M$. If any region slides off to infinity, Theorem \ref{theo:brendle-IHES} implies that it is a slice (and thus there can only be one component of $\Omega$), a contradiction. Thus, for $l$ sufficiently large, it must hold that $k=2$ and $\Omega^{(l)}$ is composed a large region $\Omega^{(l)}_{1}$ and a region $\Omega^{(l)}_{2}$ converging to the horizon. 

As such, $\cH^{2}_{g}(\Sigma^{(l)}_{2}) = A_{\partial M} + o(1)$ and $\sL^{3}_{g}(\Omega^{(l)}_{2}) = o(1)$ as $l\to\infty$. We claim that $ m_{H}(\Sigma_{1}^{(l)}) \leq 4\mm$ for $l$ sufficiently large. If this fails, then we may extract a subsequence with $m_{H}(\Sigma_{1}^{(l)}) > 4\mm$ for all $l$. We claim that this yields a contradiction, via an argument along similar lines to Case (1) above. However, there is an additional complication because $\Omega^{(l)}_{1}$ might not contain the horizon, so we must use the inverse mean curvature flow with jumps. Furthermore, we must be careful to avoid errors in the resulting volume bound which are worse that $o(A^{-\frac 12})$, because we are interested in the $A^{-\frac 12}$ order term in the expansion (which is where the mass terms arise). As such, we give the argument below.

Using Proposition \ref{prop:how-to-jump} we construct $(\Sigma^{(l)}_{1})_{\tau}$, an inverse mean curvature flow with a jump over $\Omega^{(l)}_{2}$, starting at $\Sigma_{1}^{(l)}$. We may arrange that the Hawking mass bound $m_{H}((\Sigma_{1}^{(l)})_{\tau}) \geq 2 \mm$ holds for all $\tau \geq 0$. This is a consequence of the fact that we have the following bound for the final term in the Hawking mass bounds from Proposition \ref{prop:how-to-jump} (note that $\partial\cJ$ in Proposition \ref{prop:how-to-jump} is now $\Sigma_{2}^{(l)}$, which is converging to the horizon):
\begin{align*}
& C_{2}\cH^{2}_{g}(\Sigma_{1}^{(l)})^{\frac 12} \int_{\partial\cJ} (H_{g}^{2}-4)d\cH^{2} \\
& = C_{2} \cH^{2}_{g}(\Sigma_{1}^{(l)})^{\frac 12}\cH^{2}_{g}(\partial \cJ)(H_{g}^{2}-4)\\
& \leq C_{2}(A_{\partial M} + o(1)) \cH^{2}_{g}(\Sigma_{1}^{(l)})^{-\frac 12}\cH^{2}_{g}(\Sigma_{1}^{(l)})(H^{2}_{g}-4)\\
& \leq C_{2} (A_{\partial M} + o(1)) \cH^{2}_{g}(\Sigma_{1}^{(l)})^{-\frac 12} 16\pi \\
& \leq o(1).
\end{align*}
Here, we have used that $C_{2}$ from Proposition \ref{prop:how-to-jump} is uniformly bounded: $H_{g}$ and $\cH^{2}_{g}(\Sigma_{2}^{(l)})$ are uniformly bounded, and that $\Sigma_{1}^{(l)}$ cannot be disjoint from the perturbed region, by Theorem \ref{theo:brendle-IHES}. Furthermore, we have used the assumed positivity of $m_{H}(\Sigma_{1}^{(l)})$.

Now, we repeat the argument used in Proposition \ref{prop:coarse-vol-bds} (in particular, keeping track of the volume change over the jump). Suppose that the flow we have just constructed jumps over $\Sigma_{2}^{(l)}$ at time $T^{(l)}$. Write the surface before the jump as $\Sigma_{T^{(l)},-}^{(l)} = \partial^{*}\Omega_{T^{(l)},-}^{(l)}$ and the surface after the jump as $\Sigma_{T^{(l)},+}^{(l)}$. For $\tau > T^{(l)}$ denote $\Omega_{\tau}^{(l)}$ by the union of $\Omega_{T^{(l)},-}^{(l)}$ and the region between $\Sigma_{\tau}^{(l)}$ and $\Sigma_{T^{(l)},+}^{(l)}$. Furthermore, we define the jump region $J^{(l)}$ to be the region between $\Sigma_{T^{(l)},-}^{(l)} \cup \Sigma_{2}^{(l)}$ and $\Sigma_{T^{(l)},+}^{(l)}$. By the Hawking mass bound $m_{H}((\Sigma_{1}^{(l)})_{\tau}) \geq 2\mm$ and Proposition \ref{prop:vol-swept-IMCF-bd}, we have the following inequality for $\tau > T^{(l)}$,
\begin{align*}
& \sL^{3}_{g}(\Omega_{\tau}^{(l)}) \\
& \geq \int_{0}^{\tau+\beta^{(l)}} e^{\frac {3t}{2}}(A^{(l)}_{1})^{\frac 32}\left( 4 e^{t}A_{1}^{(l)} +16\pi - e^{-\frac t 2}(A_{1}^{(l)})^{-\frac 12}(16\pi)^{\frac 3 2}2\mm\right)^{-\frac 12 }dt\\
& \qquad - \int_{T^{(l)}}^{T^{(l)}+\beta^{(l)}} e^{\frac {3t}{2}}(A^{(l)}_{1})^{\frac 32}\left( 4 e^{t}A_{1}^{(l)} +16\pi -e^{-\frac t 2}(A_{1}^{(l)})^{-\frac 12}(16\pi)^{\frac 3 2}2\mm\right)^{-\frac 12 }dt.
\end{align*}
Recall that $\beta^{(l)} \geq 0$ is chosen so that $\cH^{2}_{g}((\Sigma_{1}^{(l)})_{\tau}) = e^{\tau + \beta^{(l)}}\cH^{2}_{g}(\Sigma_{1}^{(l)})$. Rearranging this and letting $\tau \to \infty$ as in Proposition \ref{prop:coarse-vol-bds}, we obtain
\begin{align*}
& \sL^{3}_{g}(\Omega^{(l)})   + \sL^{3}_{g}(J^{(l)}) \\
& \leq \sL^{3}_{\overline g_{2\mm}}(\cB_{\overline g_{2\mm}}(A_{1}^{(l)})) + V(M,g) - V(\overline M_{2\mm},\overline g_{2\mm})\\
& \qquad + \int_{T^{(l)}}^{T^{(l)}+\beta^{(l)}} e^{\frac {3t}{2}}(A^{(l)}_{1})^{\frac 32}\left( 4 e^{t}A_{1}^{(l)} +16\pi -e^{-\frac t 2}(A_{1}^{(l)})^{-\frac 12}(16\pi)^{\frac 3 2}2\mm\right)^{-\frac 12 }dt.
\end{align*}
For large $l$, we have that $2\mm \leq (16\pi)^{-\frac 12} (A^{(l)}_{1})^{\frac 12}$, and Proposition \ref{prop:how-to-jump} yields the bound
\begin{equation*}
\beta^{(l)}\leq \log \left( 1 + \frac{A_{2}^{(l)}}{A_{1}^{(l)}} e^{-T^{(l)}}\right).
\end{equation*}
Hence, we may bound the integral in the preceding expression as follows
\begin{align*}
& \int_{T^{(l)}}^{T^{(l)}+\beta^{(l)}} e^{\frac {3t}{2}}(A^{(l)}_{1})^{\frac 32}\left( 4 e^{t}A^{(l)} +16\pi -e^{-\frac t 2}(A_{1}^{(l)})^{-\frac 12}(16\pi)^{\frac 3 2}2\mm\right)^{-\frac 12 }dt\\
& \leq \int_{T^{(l)}}^{T^{(l)}+\beta^{(l)}} e^{\frac {3t}{2}}(A^{(l)}_{1})^{\frac 32}\left( 4 e^{t}A^{(l)} +16\pi (1-e^{-\frac t 2})\right)^{-\frac 12 }dt\\
& \leq \frac{A_{1}^{(l)}}{2}\int_{T^{(l)}}^{T^{(l)}+\beta^{(l)}}e^{t} dt\\
& \leq \frac 12 A_{2}^{(l)}.
\end{align*}
Thus, we have shown that
\begin{equation*}
\sL^{3}_{g}(\Omega^{(l)}) \leq \sL^{3}_{\overline g_{2\mm}}(\cB_{\overline g_{2\mm}}(A^{(l)}_{1})) + V(M,g) - V(\overline M_{2\mm}, {\overline g_{2\mm}}) + \frac 12 A^{(l)}_{2}.
\end{equation*}
Comparison with $\cB_{g}(A^{(l)})$ yields
\begin{align*}
& \frac 12 A^{(l)} - \pi \log A^{(l)} + V(M,g) - 8 \pi^{\frac 32}\mm (A^{(l)})^{-\frac 12} + O((A^{(l)})^{-1})\\
&  \leq \frac 12 A^{(l)} - \pi \log A^{(l)}_{1} + V(M,g) - 8\pi^{\frac 32} (2\mm) (A^{(l)}_{1})^{-\frac 12} + O((A^{(l)}_{1})^{-1}).
\end{align*}
Note that $\log \frac{A^{(l)}}{A^{(l)}_{1}} = O((A^{(l)}_{1})^{-1})$ and $(A^{(l})^{-\frac 12} = (A^{(l)}_{1})^{-\frac12} + O((A_{1}^{(l)})^{-\frac 32})$. Thus, comparing the coefficients of the order $-\frac 12$ in this expression yields a contradiction. Thus, we have shown that $m_{H}(\Sigma_{1}^{(l)})\leq 4\mm$ for $l$ sufficiently large.

To conclude that $\genus(\Sigma_{1}^{(l)}) > 0$, we may argue exactly as in case (1): in the genus zero case, Proposition \ref{prop:christodoulou-yau} would combine with these Hawking mass bounds to contradict Lemma \ref{lemm:effective-no-unbil-R-6-surf} (we know that $\underline s(\Sigma_{1}^{(l)})$ is uniformly bounded, as if it becomes large, then $\Sigma_{1}^{(l)}$ must be a coordinate sphere, and there cannot be any other components outside of it, by Theorem \ref{theo:brendle-IHES}, contradicting our assumption that there are two components).
\end{proof}

\section[Proof of the main theorem]{Proof of Theorem \ref{theo:main-theo}}\label{sec:proof-main-theo}
In this section, we give the proof of Theorem \ref{theo:main-theo}, namely we will assume that $(M,g)$ is a compact perturbation of Schwarzschild-AdS with $R_{g}\geq -6$ and show that large isoperimetric regions must agree with $\cB_{g}(A)$. 

By Proposition \ref{prop:behavior-large-iso}, it is sufficient to rule out the possibility of large isoperimetric regions with a component having large volume and nonzero genus (possibly with several other components of uniformly bounded volume). 

It is convenient to work with the following version of the isoperimetric profile
\begin{equation*}
V_{g}(A) : = \sup \left\{ \sL^{3}_{g}(\Omega) \ \ :
\begin{array}{ c }
 \text{$\Omega$ is a finite perimeter Borel set in $\RR^{3}$}\\
  \text{ containing the horizon with }  \cH^{2}_{g}(\partial^{*} \Omega ) = A
 \end{array}
 \right\}.
\end{equation*}
Using Lemma \ref{lemm:iso-prof-strict-increase}, is not hard to show that $V_{g}(A)$ is absolutely continuous and strictly increasing. Furthermore, if $\Omega_{A}$ is an isoperimetric region with $\partial^{*}\Omega$ having area $A$ and mean curvature $H_{A}$, then $V_{g}(A)$ has one sided derivatives at $A$ in both directions and
\begin{equation*}
V_{g}'(A)_{-} \leq H_{A}^{-1} \leq V_{g}'(A)_{+}.
\end{equation*}
This is proven in an identical manner to the same fact for $A_{g}(V)$, cf. \cite[Theorem 3]{Bray:thesis}.

\begin{lemm}\label{lem:large-bad-iso-reg-bds-profile}
For sufficiently large $A$, if $\Omega$ is an isoperimetric region of area $A$ which is not $\cB_{g}(A)$, then we have that
\begin{equation*}
-\frac{d}{dA} \left[ V_{g}'(A)^{-2} \right] \geq 24\pi A^{-2}
\end{equation*}
in the barrier sense at $A$.
\end{lemm}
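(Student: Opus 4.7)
My plan is to construct a $C^2$ lower barrier $\tilde V$ for $V_g$ at $A$ by performing a one-parameter normal variation of $\Omega$, and then verify the differential inequality for $\tilde V'(A)^{-2}$ directly. The key step is choosing the right variation and exploiting the non-zero genus of the large component.

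By Proposition~\ref{prop:behavior-large-iso}, since $\Omega \neq \cB_g(A)$ and $A$ is large, we may write $\Omega = \Omega_1 \cup \Omega_2$ where $\Sigma_1 := \partial^*\Omega_1$ is connected of non-zero genus (so $\chi(\Sigma_1) \leq 0$), with $m_H(\Sigma_1) \leq 4\mm$, and $\cH^2_g(\Sigma_2) \leq C_0$ for $\Sigma_2 := \partial^*\Omega_2$. Both $\Sigma_1$ and $\Sigma_2$ have the same constant mean curvature $H = H_A$. The variation I will use is to flow $\Sigma_1$ outward with unit normal speed, holding $\Sigma_2$ fixed. A standard computation (exploiting that $H$ is constant on $\Sigma_1$) gives
\begin{align*}
V(t) &= V_0 + t A_1 + \tfrac{t^2}{2} H A_1 + O(t^3), \\
A(t) &= A + t H A_1 + \tfrac{t^2}{2}\bigl(H^2 A_1 - Q_1\bigr) + O(t^3),
\end{align*}
where $A_1 = \cH^2_g(\Sigma_1)$ and $Q_1 := \int_{\Sigma_1}(|\sff|^2 + \Ric(\nu,\nu))$. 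Inverting the first relation to express area as a function of volume yields an upper barrier $\tilde A(V)$ for $A_g(V)$ at $V_0$; equivalently, its inverse $\tilde V(A)$ is a lower barrier for $V_g$ near $A$, with $\tilde V'(A) = 1/H$ (compatible with the one-sided derivative range $V_g'(A)_\pm$) and $\tilde V''(A) = Q_1/(H^3 A_1^2)$. Comparing the one-sided difference quotients of $V_g'(\cdot)^{-2}$ with those of $\tilde V'(\cdot)^{-2}$ on each side of $A$, one obtains $-\frac{d}{dA}[V_g'(A)^{-2}] \geq 2Q_1/A_1^2$ in the barrier sense.

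To estimate $Q_1/A_1^2$, I would use the Gauss equation together with $|\sff|^2 = H^2/2 + |\tfsff|^2$ and the Gauss--Bonnet theorem on $\Sigma_1$ to write
\begin{equation*}
Q_1 = \tfrac{1}{2}\int_{\Sigma_1}\bigl(R_g + |\tfsff|^2\bigr) + \tfrac{3 H^2 A_1}{4} - 2\pi \chi(\Sigma_1).
\end{equation*}
Applying $R_g \geq -6$, $|\tfsff|^2 \geq 0$, and crucially $\chi(\Sigma_1) \leq 0$ (where sphericity is what fails for non-coordinate regions in case~2 of Proposition~\ref{prop:behavior-large-iso}), one obtains
\begin{equation*}
Q_1 \geq \tfrac{3 A_1 (H^2 - 4)}{4}.
\end{equation*}
The sharpened Hawking mass bound $m_H(\Sigma_1) \leq \mm + O(A^{-1/2})$ from the proof of Proposition~\ref{prop:behavior-large-iso} translates directly into $A_1(H^2 - 4) \geq 16\pi - O(A^{-1/2})$, which then gives
\begin{equation*}
\tfrac{2 Q_1}{A_1^2} \geq \tfrac{24\pi}{A_1^2} - O(A^{-5/2}).
\end{equation*}
Here the constant $24\pi$ arises precisely from $\tfrac{3}{2} \cdot 16\pi$ and it is the non-negativity of $-2\pi\chi(\Sigma_1)$ that removes the $-8\pi/A_1^2$ term which would otherwise appear (and which gives the coordinate-sphere value $16\pi/A^2$ corresponding to $\chi=2$). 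Since $A - A_1 = \cH^2_g(\Sigma_2) \leq C_0$, for $A$ sufficiently large one concludes $-\frac{d}{dA}[V_g'(A)^{-2}] \geq 24\pi/A^2$ in the barrier sense.

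The main obstacle is the last inequality: the $24\pi/A_1^2$ needs to dominate both the change from $1/A_1^2$ to $1/A^2$ and the $O(A^{-5/2})$ correction coming from the Hawking mass bound. The favorable direction $A_1 \leq A$ gives $24\pi/A_1^2 \geq 24\pi/A^2$, but making the argument clean requires carefully tracking constants and exploiting that the genus contribution strictly separates the torus (or higher-genus) case from the spherical one by a fixed amount $8\pi/A_1^2$, which comfortably absorbs the subleading $O(A^{-5/2})$ corrections for $A$ large.
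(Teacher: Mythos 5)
Your setup---the decomposition from Proposition \ref{prop:behavior-large-iso}, the unit-speed outward variation of $\Sigma_1$ producing a lower barrier with $-\frac{d}{dA}\bigl[\tilde V'(A)^{-2}\bigr] = 2Q_1/A_1^2$ where $Q_1:=\int_{\Sigma_1}(|\sff|^2+\Ric(\nu,\nu))\,d\cH^2_g$ and $A_1=\cH^2_g(\Sigma_1)$, and the identity rewriting $2Q_1$ via the Gauss equation, Gauss--Bonnet, and the Hawking mass---is exactly the paper's. The gap is in the final absorption of errors. After using $\chi(\Sigma_1)\le 0$ and the Hawking mass bound, what you actually have is
\begin{equation*}
2Q_1 \;\geq\; \int_{\Sigma_1}\left(R_g+6+|\tfsff|^2\right)d\cH^2_g \;+\; 24\pi \;-\; c\,\mm\, A_1^{-1/2}
\end{equation*}
with $c>0$ and $\mm>0$, so after dividing by $A_1^2$ there is a genuinely negative error of order $A^{-5/2}$. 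Neither source of slack you invoke can absorb it: the gain from $A_1\le A$ is $24\pi(A_1^{-2}-A^{-2})=O(A^{-3})$ because $A-A_1\le C_0$, which is smaller than $A^{-5/2}$; and the ``fixed amount $8\pi/A_1^2$'' of genus slack is the separation from the \emph{sphere} value $16\pi$, not from the claimed constant $24\pi$ itself---for a torus $\chi(\Sigma_1)=0$ exactly, so the genus term contributes nothing beyond what you already spent to reach $24\pi$. As written, your argument only yields $24\pi A^{-2}-O(A^{-5/2})$, which is strictly weaker than the statement.

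The paper closes this precisely by \emph{not} discarding $\int_{\Sigma_1}(R_g+6+|\tfsff|^2)$. Since $\Sigma_1$ has non-zero genus, Theorem \ref{theo:brendle-IHES} forces $\underline s(\Sigma_1)$ to be bounded independently of $A$ (otherwise $\Sigma_1$ would lie entirely in exact Schwarzschild-AdS and hence be a centered coordinate sphere), and then Lemma \ref{lemm:effective-no-unbil-R-6-surf} supplies a uniform constant $\lambda>0$ with $\int_{\Sigma_1}(R_g+6+|\tfsff|^2)\,d\cH^2_g\ge\lambda$. This fixed $\lambda$ contributes $\lambda/A_1^2=O(A^{-2})$ after dividing, which dominates the $O(A^{-5/2})$ Hawking-mass error for $A$ large and is what actually produces the clean bound $24\pi A^{-2}$. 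You need to add this ingredient to your last step; the rest of your argument is sound.
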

\begin{proof}
By Proposition \ref{prop:behavior-large-iso}, there exists $c>0$ with the following property: For $A$ sufficiently large, if $\Omega$ is an isoperimetric region of area $A$ which is not $\cB_{g}(A)$, then writing $\Omega$ as the disjoint union of connected components, either $\Omega = \Omega_{1}$ or $\Omega = \Omega_{1}\cup \Omega_{2}$ and $\Sigma_{j} :=\partial^{*}\Omega_{j}$, we have that
\begin{enumerate}
\item $\cH^{2}_{g}(\Sigma_{1}) \geq A - c$,
\item $\genus(\Sigma_{1}) > 0$,
\item and $m_{H}(\Sigma_{1}) \leq 4\mm$.
\end{enumerate}
Considering a variation of $\Omega$, which flows $\Sigma_{1}$ outward at unit speed, we have the inequality
\begin{align*}
& 2 V''_{g}(A) V'(A)^{-3} (A-c)^{2} \\
& \geq 2 \int_{\Sigma_{1}} \left( |\sff|^{2} + \Ric(\nu,\nu) \right) d\cH^{2}_{g}\\
& = \int_{\Sigma_{1}} \left( R_{g} + 6 + |\tfsff|^{2}\right)d\cH^{2}_{g} + 24\pi - 4\pi \chi(\Sigma_{1}) - \frac 32 \cH^{2}_{g}(\Sigma_{1})^{-\frac 12} (16\pi)^{\frac 32} m_{H}(\Sigma_{1})\\
& \geq 24\pi + \int_{\Sigma_{1}} \left( R_{g} + 6 + |\tfsff|^{2}\right)d\cH^{2}_{g} - 6 (A-c)^{-\frac 12} (16\pi)^{\frac 32} \mm
\end{align*}
in the barrier sense at $A$. By Brendle's Alexandrov Theorem (Theorem \ref{theo:brendle-IHES}), $\underline s(\Sigma_{1})$ must be uniformly (independently of $A$) bounded from above. Thus, we may use Lemma \ref{lemm:effective-no-unbil-R-6-surf} to find $\lambda >0$ so that 
\begin{equation*}
 \int_{\Sigma_{1}} \left( R_{g} + 6 + |\tfsff|^{2}\right)d\cH^{2}_{g} \geq \lambda > 0 .
\end{equation*}
Taking $A$ even larger if necessary, we may absorb the error terms (which are all $o(1)$ as $A \to\infty$) into the good term $\lambda$ to establish the claim. 
\end{proof}

\begin{rema}\label{rema:V-cvx}
A similar argument shows that $V_{g}(A)$ is convex for $A$ sufficiently large. We will use this observation below. 
\end{rema}

\begin{prop}\label{prop:some-balls-are-isop}
There exists a sequence of areas $A_{k}\to \infty$ so that $\cB_{g}(A_{k})$ is uniquely isoperimetric. 
\end{prop}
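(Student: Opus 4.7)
My plan is to argue by contradiction: suppose no such sequence exists, so that for every $A$ greater than some threshold $A_0$, the coordinate ball $\cB_g(A)$ fails to be uniquely isoperimetric. In either failure mode (either $\cB_g(A)$ is isoperimetric but not unique, or $\cB_g(A)$ is not isoperimetric at all) there exists an isoperimetric region of area $A$ which does not coincide with $\cB_g(A)$ as a set of finite perimeter. Lemma~\ref{lem:large-bad-iso-reg-bds-profile} then applies at every such $A$, giving the barrier-sense differential inequality
\begin{equation*}
-\frac{d}{dA}\!\left[V_g'(A)^{-2}\right] \;\geq\; \frac{24\pi}{A^2}, \qquad A \geq A_0.
\end{equation*}
The overall strategy is to integrate this inequality twice and derive a contradiction against the lower bound $V_g(A) \geq \sL^3_g(\cB_g(A))$ coming from the fact that $\cB_g(A)$ is always an admissible competitor.

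Using the convexity of $V_g$ on $[A_0,\infty)$ (Remark~\ref{rema:V-cvx}) to justify integrating a barrier-sense inequality, I would first integrate from $A_0$ to any $A > A_0$ to obtain $V_g'(A)^{-2} \leq V_g'(A_0)^{-2} - 24\pi(A_0^{-1} - A^{-1})$. The next step is to show that $V_g'(A) \to \tfrac{1}{2}$ as $A \to \infty$: by Corollary~\ref{coro:coarse-CY-mult-bdry} the mean curvature $H_A$ of the boundary of any isoperimetric region of area $A$ satisfies $H_A^2 \leq 4 + O(1/A)$, and combined with $H_A \geq 2$ from the outermost property of $\partial M$ (cf.\ Lemma~\ref{lemm:iso-prof-strict-increase}), the sandwich $V_g'(A)_- \leq H_A^{-1} \leq V_g'(A)_+$ together with the monotonicity of $V_g'$ forces $V_g'(A) \to \tfrac{1}{2}$. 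Letting $A \to \infty$ in the integrated inequality then yields $V_g'(A_0)^{-2} \geq 4 + 24\pi/A_0$ for every sufficiently large $A_0$; equivalently,
\begin{equation*}
V_g'(s) \;\leq\; \tfrac{1}{2} - \tfrac{3\pi}{2s} + O(s^{-2})
\end{equation*}
for all $s$ sufficiently large. A second integration then produces $V_g(A) \leq \tfrac{A}{2} - \tfrac{3\pi}{2}\log A + C'$ for some constant $C'$ depending on the starting point of integration.

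To conclude, I would use Lemma~\ref{lemm:vol-large-coord-balls-g} to expand $\sL^3_g(\cB_g(A)) = \tfrac{A}{2} - \pi \log A + C + O(A^{-1/2})$; subtracting the two estimates yields $\tfrac{\pi}{2}\log A \leq C' - C + O(A^{-1/2})$ as $A \to \infty$, which is absurd. The engine of the contradiction is the strict gap between the coefficient $24\pi$ in Lemma~\ref{lem:large-bad-iso-reg-bds-profile} and the smaller coefficient $16\pi$ one would compute for $-\tfrac{d}{dA}[(B'(A))^{-2}]$ with $B(A) := \sL^3_g(\cB_g(A))$; after two integrations this gap becomes the $\log A$ discrepancy above. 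The main technical point will be the justification of integrating a barrier-sense inequality twice rather than a classical one, which is exactly what the convexity from Remark~\ref{rema:V-cvx} handles: it makes $V_g'$ monotone and defined everywhere up to a countable set, reducing both integration steps to standard convex-analysis manipulations.
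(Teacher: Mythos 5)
Your proposal is correct and follows essentially the same route as the paper: integrate the differential inequality of Lemma \ref{lem:large-bad-iso-reg-bds-profile} twice, use $H_{A}\to 2$ (hence $V_{g}'\to \frac 12$) to fix the constant from the first integration, and contrast the resulting bound $V_{g}(A)\leq \frac 12 A - \frac{3\pi}{2}\log A + O(1)$ with the $-\pi\log A$ term in Lemma \ref{lemm:vol-large-coord-balls-g}. The only difference is how the barrier-sense inequality is upgraded to something integrable: the paper runs a finite-difference/distributional argument following Bray's thesis, whereas you appeal to convexity and Alexandrov-type regularity of $V_{g}$ --- an alternative the paper's own footnote explicitly acknowledges as equivalent.
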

\begin{proof}
Suppose otherwise. By Lemma \ref{lem:large-bad-iso-reg-bds-profile}, for some $A_{0} >0$, if $A > A_{0}$ then
\begin{equation}\label{eq:some-balls-iso-main-ineq}
-\frac{d}{dA} \left[ V_{g}'(A)^{-2} \right] \geq 24\pi A^{-2}
\end{equation}
in the barrier sense. First, let us assume that this holds in the classical sense. Then, we may integrate this from $A$ to $\infty$. Using the fact that the mean curvature of large isoperimetric regions tends to $2$, we see that 
\begin{equation*}
V'_{g}(A)^{-2} - 4 \geq 24 \pi A^{-1} .
\end{equation*}
We may rearrange this to yield
\begin{equation*}
V'_{g}(A) \leq \frac 12 - \frac 32 \pi A^{-1} + O(A^{-2}).
\end{equation*}
Integrating this, we obtain
\begin{equation*}
V_{g}(A) \leq \frac 12 A - \frac 32 \pi \log A +O(1).
\end{equation*}
This contradicts Lemma \ref{lemm:vol-large-coord-balls-g}, because for large enough $A$, the region $\cB_{g}(A)$ contains more volume than this would allow. 

In general, the inequality will only hold in the barrier sense, so we need to justify the previous computation. We will follow\footnote{We remark that an alternative method to justify the argument would use the Alexandrov theorem for convex functions, relating the Alexandrov second derivative with the distributional derivative, see \cite[\S 6]{EvansGariepy}. In some sense, this amounts to replacing the finite difference operators with mollifiers.} the argument used in \cite[Lemma 1]{Bray:thesis}. First, we rearrange \eqref{eq:some-balls-iso-main-ineq} to see that
\begin{equation*}
\frac{d}{dA} \left[ V_{g}'(A)^{-2} - 4 - 24\pi A^{-1} \right] \leq 0
\end{equation*}
which still only holds in the barrier sense for $A > A_{0}$. We claim that this holds in the distributional sense for $A> A_{0}$, i.e., for $\varphi \in C^{\infty}_{c}((A_{0},\infty))$ an arbitrary non-negative test function then
\begin{equation*}
\int_{A_{0}}^{\infty}  \left[ V_{g}'(A)^{-2} - 4 - 24\pi A^{-1} \right] \varphi'(A)dA \geq 0. 
\end{equation*}
We remark that $V_{g}'(A)$ is well defined for a.e.\ $A$, so this expression makes sense. Let us define the finite difference operator $D_{\delta}$ by 
\begin{equation*}
D_{\delta} f(x) : = \frac 1 \delta \left( f(x+\delta) - f(x) \right).
\end{equation*}
Then,
\begin{align*}
& \int_{A_{0}}^{\infty}  \left[ V_{g}'(A)^{-2} - 4 - 24\pi A^{-1} \right] \varphi'(A)dA \\
& = \lim_{\delta\to 0} \int_{A_{0}}^{\infty}  \left[ ((D_{\delta} V_{g})(A))^{-2} - 4 - 24\pi A^{-1} \right] (D_{\delta}\varphi(A))dA \\
& = \lim_{\delta\to 0} \int_{A_{0}}^{\infty} D_{-\delta} \left[ (D_{\delta} V_{g}(A))^{-2} - 4 - 24\pi A^{-1} \right] \varphi(A)dA .
\end{align*}
The final step follows from ``integration by parts'' for the $D_{\delta}$ operator which is actually just a change of variables. Now, for any $ \hat A \in (A_{0},\infty)$, we shown that there exists a comparison function $f_{\hat A}(A)$ satisfying $f_{\hat A}(\hat A + \delta) \leq V_{g}(\hat A + \delta)$ for $|\delta|$ small and so that $f_{\hat A}(\hat A) = V_{g}(\hat A)$. Using this and the fact that $V_{g}(A)$ and $f_{\hat A}(V)$ are increasing, it follows that 
\begin{equation*}
D_{-\delta} ((D_{\delta} V_{g})(\hat A))^{-2} \geq  D_{-\delta}  ((D_{\delta} f_{\hat A}) (A) )^{-2}  |_{A = \hat A}.
\end{equation*}
Thus, applying this inequality in the above integral (changing the variable of integration to $\hat A$) yields
\begin{align*}
 \int_{A_{0}}^{\infty} &  \left[ V_{g}'(A)^{-2} - 4 - 24\pi A^{-1} \right] \varphi'(A)dA \\
& \geq \lim_{\delta\to 0} \int_{A_{0}}^{\infty}   \left[D_{-\delta} ((D_{\delta} f_{\hat A})(A))^{-2}|_{A = \hat A} + 24\pi \hat A^{-2} \right] \varphi(\hat A)d\hat A\\
& =  \int_{A_{0}}^{\infty}   \left[ \frac{d}{d A} \left[ ( f_{\hat A}'(A))^{-2} \right] \Big |_{A = \hat A} + 24\pi \hat A^{-2} \right] \varphi(\hat A)d \hat A\\
& \geq 0.
\end{align*}
In the last line, we used that inequality holds in the barrier sense. Thus, the above inequality holds also in the distributional sense. Now, a simple approximation argument shows that we may plug in
\begin{equation*}
\varphi_{\epsilon}(x) : = \begin{cases}
0 & x \leq A\\
\frac 1 \epsilon (x-A) & A < x < A + \epsilon\\
1 & x > A +\epsilon
\end{cases}
\end{equation*}
as a test function into the distributional inequality (the non-smooth points are easily approximated, while the lack of compact support is not an issue, because the following essential limit holds: $V_{g}'(A)^{-2} - 4 - 24\pi A^{-1} \to 0$ as $A \to \infty$, by the observation that $H_{g}\to 2$ for large isoperimetric regions). From this, we have that
\begin{equation*}
\frac 1 \epsilon \int_{A}^{A+\epsilon} \left[ V_{g}'(\hat A)^{-2} - 4 - 24\pi \hat A^{-1} \right] d\hat A \geq 0.
\end{equation*}
Thus, if $A$ is a point of differentiability and a Lebesgue point of $V_{g}'(A)^{-2}$ (this holds for a.e.\ $A$ because $V_{g}(A)$ is convex for large enough $A$, by a second variation argument argument as in Lemma \ref{lem:large-bad-iso-reg-bds-profile}, and $V'_{g}(A)^{-2}$ is easily seen to be in $L^{1}_{loc}$), we may pass to the limit as $\epsilon \to 0$. Thus we have shown that 
\begin{equation*}
V_{g}'( A)^{-2} - 4 - 24\pi A^{-1} \geq 0
\end{equation*}
for a.e.\ $A > A_{0}$. We may rearrange this as above to obtain an upper bound on $V'_{g}(A)$ for a.e. $A > A_{0}$. By absolute continuity of $V_{g}(A)$, we may now complete the argument as above. 
\end{proof}

Now, we may finish the proof of the main theorem. Define 
\begin{equation*}
\sA : = \{ A > A_{0} : \cB_{g}(A) \text{ is not isoperimetric}\}.
\end{equation*}
Here, $A_{0}$ is chosen large enough so that Proposition \ref{prop:behavior-large-iso} and Corollary \ref{coro:coarse-bds-general-large-iso} apply. First of all, note that $\sA$ is an open subset of $\RR$, because the isoperimetric profile and $\sL^{3}_{g}(\cB_{g}(A))$ are both continuous functions. Furthermore, Proposition \ref{prop:some-balls-are-isop} shows that there exists an unbounded sequence in $\sA^{c}$, i.e., a divergent sequence of areas $A$ so that $\cB_{g}(A)$ is isoperimetric. 

Thus, $\sA$ is the union of a sequence of bounded open intervals. We claim that $\sA$ is empty, as long as we increase $A_{0}$ if necessary. If $\cA$ is not empty, there is some interval $(A_{1},A_{2}) \subset \sA$. We may assume that $A_{1},A_{2}\not \in \sA$ and $A_{1} > A_{0}$. Geometrically, what this means is that:
\begin{enumerate}
\item the regions $\cB_{g}(A_{1})$ and $\cB_{g}(A_{2})$ are isoperimetric, and
\item for $A \in (A_{1},A_{2})$, we have the strict inequality $V_{g}(A) > \sL_{g}^{3}(\cB_{g}(A))$ (with equality at the endpoints).
\end{enumerate}
As a consequence of this, we see that 
\begin{equation*}
\frac{d}{dt} \Big|_{+}V_{g}(A_{1}) = \frac{d}{d A}  \sL_{g}^{3}(\cB_{g}(A))\Big|_{A_{1}}
\end{equation*}
and
\begin{equation*}
\frac{d}{dt} \Big|_{-} V_{g}(A_{2}) = \frac{d}{d A}  \sL_{g}^{3}(\cB_{g}(A))\Big|_{A_{2}}.
\end{equation*}
It is important to obtain a good estimate for the quantity on the right hand side of these equations.
\begin{lemm}\label{lemm:deriv-NT-area}
For $A$ large enough so that the coordinate sphere $\cB_{g}(A)$ lies entirely in the unperturbed region,
\begin{equation*}
\left(\frac{d}{dA} \sL^{3}_{g}(\cB_{g}(A))\right)^{-2} = 4 + 16 \pi A^{-1}  - 64 \pi^{\frac32} \mm A^{-\frac 32} 
\end{equation*}
\end{lemm}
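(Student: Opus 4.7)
The plan is to exploit the fact that for $A$ sufficiently large, the coordinate sphere $\partial \cB_{g}(A)$ lies in the region where $g = \overline{g}_{\mm}$, so the derivative of the enclosed $g$-volume with respect to $A$ depends only on the exact Schwarzschild-AdS metric: any contribution from the perturbed compact region is a fixed constant in $A$ and vanishes under differentiation.

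Concretely, I would parametrize the centered coordinate spheres by the radial coordinate $s$ from \eqref{eq:schwAdS}. The area of $\{s = \text{const}\}$ is $A(s) = 4\pi s^{2}$, so $\frac{dA}{ds} = 8\pi s$. The enclosed volume satisfies
\begin{equation*}
\frac{d}{ds}\sL^{3}_{g}(\cB_{g}(A(s))) \;=\; \frac{4\pi s^{2}}{\sqrt{1+s^{2}-2\mm s^{-1}}},
\end{equation*}
since the volume element of $\overline{g}_{\mm}$ on the slab $\{s\le \text{const}\}$ factors as $(1+s^{2}-2\mm s^{-1})^{-1/2} s^{2}\, ds \wedge d\sigma_{\SS^{2}}$. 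Applying the chain rule then gives
\begin{equation*}
\frac{d}{dA}\sL^{3}_{g}(\cB_{g}(A)) \;=\; \frac{s}{2\sqrt{1+s^{2}-2\mm s^{-1}}}.
\end{equation*}

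Squaring, inverting, and substituting $s^{-2} = 4\pi A^{-1}$ and $s^{-3} = 8\pi^{3/2} A^{-3/2}$ yields
\begin{equation*}
\left(\frac{d}{dA}\sL^{3}_{g}(\cB_{g}(A))\right)^{-2} \;=\; \frac{4(1+s^{2}-2\mm s^{-1})}{s^{2}} \;=\; 4 + \frac{4}{s^{2}} - \frac{8\mm}{s^{3}} \;=\; 4 + 16\pi A^{-1} - 64\pi^{3/2}\mm A^{-3/2},
\end{equation*}
which is the claimed identity. There is no real obstacle here: the only point to mention is that $\sL^{3}_{g}(\cB_{g}(A))$ and $\sL^{3}_{\overline{g}_{\mm}}(\cB_{\overline{g}_{\mm}}(A))$ differ by a constant (namely the difference in the volume of the perturbed region compared with the corresponding piece of Schwarzschild-AdS), so their $A$-derivatives agree once $A$ is large enough that $\partial\cB_{g}(A)$ lies entirely in the unperturbed region; this is precisely the hypothesis of the lemma.
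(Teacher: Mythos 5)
Your computation is correct, and it reaches the right answer, but it follows a different route from the paper. You compute $\frac{d}{dA}\sL^{3}_{g}(\cB_{g}(A))$ explicitly from the coordinate form of $\overline g_{\mm}$: parametrizing by $s$ with $A = 4\pi s^{2}$ and using the radial volume element, you get $\frac{d}{dA}\sL^{3}_{g}(\cB_{g}(A)) = \tfrac{s}{2}\left(1+s^{2}-2\mm s^{-1}\right)^{-\frac 12}$, and the substitutions $s^{-2}=4\pi A^{-1}$, $s^{-3}=8\pi^{\frac 32}A^{-\frac 32}$ give exactly the stated identity. The paper instead argues via the lapse function $\rho_{A}$ of the foliation by coordinate spheres: the normalization $1=\int_{\partial\cB_{g}(A)}H_{A}\rho_{A}\,d\cH^{2}_{g}$ together with constancy of $H_{A}$ gives $\bigl(\frac{d}{dA}\sL^{3}_{g}(\cB_{g}(A))\bigr)^{-2}=H_{A}^{2}$, and then the definition of the Hawking mass, with $m_{H}(\partial\cB_{g}(A))=\mm$, yields $H_{A}^{2}=4+16\pi A^{-1}-(16\pi)^{\frac 32}\mm A^{-\frac 32}$. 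Your version is more elementary and self-contained; the paper's version makes transparent that the lemma is really the identity $(V_{g}')^{-2}=H^{2}$ specialized to coordinate spheres, which is the form in which it is compared against the differential inequality for $V_{g}'(A)^{-2}$ in the surrounding argument. Your closing remark that the perturbed region contributes only an additive constant to $\sL^{3}_{g}(\cB_{g}(A))$, hence nothing to its $A$-derivative, is exactly the right justification for replacing $g$ by $\overline g_{\mm}$ in the computation.
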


\begin{proof}
Let $\rho_{A}$ denote the lapse function of the foliation $\partial \cB_{g}(A)$. In particular, we have that 
\begin{equation*}
1 = \int_{\partial \cB_{g}(A)} H_{A} \rho_{A} d\cH^{2}_{g} = H_{A} \int_{\partial \cB_{g}(A) } \rho_{A} d\cH^{2}_{g}.
\end{equation*}
Thus,
\begin{align*}
\left(\frac{d}{dA} \sL^{3}_{g}(\cB_{g}(A)) \right)^{-2}& = \left( \int_{\partial \cB_{g}(A) } \rho_{A} d\cH^{2}_{g} \right)^{-2}\\
& = H_{A}^{2} \\
& = 4  +16\pi A^{-1}-  (16\pi)^{\frac 32}m_{H}(\partial \cB_{g}(A))A^{-\frac 32}\\
& = 4  +16\pi A^{-1}-  (16\pi)^{\frac 32}\mm A^{-\frac 32}.\qedhere
\end{align*}
\end{proof}

Now, we ``integrate'' the differential inequality in Lemma \ref{lem:large-bad-iso-reg-bds-profile} from $A_{1}$ to $A_{2}$. To justify this, we may use the argument in Proposition \ref{prop:some-balls-are-isop} to show that the differential inequality holds in the distributional sense in the region $(A_{1},A_{2})$. Suppose that $\epsilon > 0$ is chosen so that $A_{1}+\epsilon,A_{2}-\epsilon$ are points of differentiability of $V_{g}(A)$ and Lebesuge points of $V'_{g}(A)$ (note that for $\epsilon_{0}>0$ small enough, a.e.\ $\epsilon \in (0,\epsilon_{0})$ will have this property). Then by taking a test function similar to before, we may conclude that
\begin{equation*}
- V'_{g}(A_{2}-\epsilon)^{-2} + V'_{g}(A_{1}+\epsilon)^{-2} \geq 24\pi \left( (A_{1}+\epsilon)^{-1} - (A_{2}-\epsilon)^{-1} \right).
\end{equation*}
By convexity\footnote{The fact that the (left and right) derivatives of a convex function are increasing (with no regularity assumptions) is classical fact (due to O.\ Stolz), cf.\ \cite[Theorem 1.3.3]{NiculescuPersson:cvxfunct}.} (cf.\ Remark \ref{rema:V-cvx}), $V'_{g}(A_{1}+\epsilon) \geq \frac{d}{dt}|_{+}V_{g}(A_{1})$ and $V'_{g}(A_{2}-\epsilon) \leq  \frac{d}{dt}|_{-}V_{g}(A_{1})$. Choosing a sequence of $\epsilon$ tending to zero and so that the previous argument applies, we may conclude that
\begin{equation*}
- \left(\frac{d}{dt} \Big|_{-} V_{g}(A_{2}) \right)^{-2} + \left( \frac{d}{dt} \Big|_{+}V_{g}(A_{1}) \right)^{-2} \geq 24\pi \left( A_{1}^{-1} - A_{2}^{-1} \right).
\end{equation*}
Combined with the above formula, this yields
\begin{equation*}
\left(\frac{d}{dA} \sL_{g}^{3}(\cB_{g}(A))\Big|_{A_{1}}\right)^{-2} - \left(\frac{d}{dA} \sL_{g}^{3}(\cB_{g}(A))\Big|_{A_{2}}\right)^{-2}\geq 24\pi \left( A_{1}^{-1} - A_{2}^{-1} \right).
\end{equation*}
We may use Lemma \ref{lemm:deriv-NT-area} to evaluate the left hand side of this expression as
\begin{align*}
& \left(\frac{d}{dA} \sL_{g}^{3}(\cB_{g}(A))\Big|_{A_{1}}\right)^{-2} - \left(\frac{d}{dA} \sL_{g}^{3}(\cB_{g}(A))\Big|_{A_{2}}\right)^{-2} \\
& =  16\pi \left( A_{1}^{-1} - A_{2}^{-1} \right) - 64 \pi^{\frac 32}\mm \left( A_{1}^{-\frac 32} - A_{2}^{-\frac 32} \right) .
\end{align*}
Thus, we see that 
\begin{equation*}
- 64 \pi^{\frac 32} \mm \left( A_{1}^{-\frac 32} - A_{2}^{-\frac 32} \right)  \geq 8\pi \left( A_{1}^{-1} - A_{2}^{-1} \right) .
\end{equation*}
Equivalently, we may rewrite this as
\begin{equation*}
- 64 \pi^{\frac 32}\mm (A_{1}+A_{1}^{\frac 12}A_{2}^{\frac 12}+A_{2})   \geq 8\pi (A_{1}A_{2}^{\frac 12}+A_{2}A_{1}^{\frac 12}  ).
\end{equation*}
This is a contradiction. Thus, we have proven that for large $A$, the regions $\cB_{g}(A)$ are isoperimetric.

Finally we claim that the regions $\cB_{g}(A)$ are uniquely isoperimetric for large enough $A$. The fact that $\cB_{g}(A)$ is isoperimetric implies that 
\begin{equation*}
2 V''_{g}(A)V'_{g}(A) ^{-3} A^{2}= 16 \pi -\frac 32 (16\pi)^{\frac 32}A^{-\frac 12} \mm.
\end{equation*}
It is clear that holds in the classical sense (not just in a barrier sense) because $\partial \cB_{g}(A)$ forms a $C^{\infty}$ foliation of the exterior region. On the other hand, if there was another isoperimetric region, then by the argument in Proposition \ref{prop:some-balls-are-isop} we would also have 
\begin{equation*}
2 V''_{g}(A)V'_{g}(A) ^{-3} A^{2}\geq 24 \pi -\frac 32 (16\pi)^{\frac 32} A^{-\frac 12} \mm + o(1),
\end{equation*}
in the barrier sense at $A$. Clearly, these two equations cannot both hold. This completes the proof of Theorem \ref{theo:main-theo}.

\section[On the necessity of the scalar curvature lower bounds]{On the assumption $R_{g}\geq -6$ in Theorem \ref{theo:main-theo}}\label{sec:main-theo-is-sharp}

In this section, we show that the assumption $R_{g}\geq -6$ in Theorem \ref{theo:main-theo} may not be removed. More precisely, we show that
\begin{theo}\label{theo:main-thm-is-sharp}
For $\mm>0$, there exists a function $\varphi(r)$ so that the metric $g:= dr\otimes dr + \varphi(r)^{2}g_{\SS^{2}}$ defined on $M:=(r_{0},\infty)\times \SS^{2}$ has the following properties: 
\begin{enumerate}
\item $(M,g)$ is a compact perturbation of Schwarzschild-AdS of mass $\mm>0$. In particular $\{r_{0}\}\times \SS^{2}$ is an outermost $H_{g}\equiv 2$, CMC surface. 
\item $(M,g)$ does not have $R_{g}\geq -6$ everywhere.
\item For sufficiently large $A$, the ball $\cB_{g}(A)$ is not isoperimetric in $(M,g)$. 
\end{enumerate}
\end{theo}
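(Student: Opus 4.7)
The plan is to construct $\varphi$ explicitly as a smooth compactly-supported modification of the Schwarzschild-AdS profile $s_\mm$. Outside a compact interval $(r_1, r_2)$ I would set $\varphi = s_\mm$, yielding property (1). Inside, the perturbation must preserve the outermost condition at $\partial M$: since the coordinate sphere at $r$ has mean curvature $2\varphi'(r)/\varphi(r)$, I must arrange $\varphi'/\varphi \neq 1$ for all $r > r_0$, and matching to the asymptotic Schwarzschild values forces $\varphi'/\varphi > 1$ throughout the modification. Non-coordinate compact $H_g \equiv 2$ surfaces are ruled out by the Hopf boundary-point argument from Proposition \ref{prop:reg-iso-surf}, with the coordinate sphere foliation (whose mean curvature stays bounded away from $2$) serving as a barrier. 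For (2), a first-order computation with $\varphi = s_\mm + \epsilon f$ yields
\begin{equation*}
R_g = -6 + \frac{4\epsilon}{s_\mm}\Bigl[f'' + \frac{s_\mm'}{s_\mm}\,f' - \Bigl(2 - \frac{\mm}{s_\mm^3}\Bigr)\,f\Bigr] + O(\epsilon^2),
\end{equation*}
and for any sufficiently narrow bump $f > 0$, the second-derivative term dominates near the peak, giving $R_g < -6$ there for all small $\epsilon > 0$.

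For (3), the approach is to leverage the asymptotic expansion from Lemma \ref{lemm:vol-large-coord-balls-g},
\begin{equation*}
\sL^3_g(\cB_g(A)) = \tfrac{1}{2}A - \pi\log A + V(M,g) + \pi(1+\log\pi) - 8\pi^{3/2}\mm\,A^{-1/2} + O(A^{-1}),
\end{equation*}
and exhibit a competitor $\Omega^*$ that beats this for large $A$. The sharp upper bound in Proposition \ref{prop:imcf-bds-good-case}, $\sL^3_g(\Omega) \leq \sL^3_{\overline g_m}(\cB_{\overline g_m}(A)) + V(M,g) - V(\overline M_m, \overline g_m)$, has the same leading form but with coefficient $-8\pi^{3/2}m$ on $A^{-1/2}$, which is strictly larger (less negative) when $m < \mm$. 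The key observation is that this upper bound is obtained via Geroch monotonicity, which fails precisely where $R_g < -6$. My plan is therefore to engineer the perturbation so that an explicit competitor $\Omega^*$ whose boundary passes through (or is influenced by) the $R_g < -6$ region effectively corresponds to a Hawking mass parameter strictly smaller than $\mm$, yielding $\sL^3_g(\Omega^*) > \sL^3_g(\cB_g(A))$.

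The main obstacle I expect is the explicit realization of $\Omega^*$ and the sharp tracking of the volume comparison down to the $O(A^{-1/2})$ order, where the $\mm$-dependence first appears. Because for large $A$ the boundary of $\cB_g(A)$ lies in the unperturbed Schwarzschild region past the bulge, the competitor cannot simply be a coordinate ball contained in the perturbation (those have bounded area); it must extend into the asymptotic region, plausibly as an annular region of the form $\{r \leq r_0\} \cup \{R_1 \leq r \leq R_3\}$ straddling the perturbation, or as an explicit non-rotationally-symmetric region. Choosing the profile of $f$ so that the effective mass deficit produced is large enough to overcome the cost of this less-efficient shape, while preserving $\varphi'/\varphi > 1$ and the compact support of the perturbation, is the delicate calculational step. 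It will likely require tuning both the height and spatial extent of the bump carefully, possibly exploiting the fact that $\psi_\mm - 1$ is largest for intermediate $r$ so that the modification has maximal room to produce a sizable effective mass deficit; if a direct explicit construction proves elusive, a perturbative/continuity argument starting from a configuration where the inequality is strictly satisfied (ignoring one of the side conditions) and regularizing should work.
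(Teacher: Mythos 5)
Your construction for parts (1) and (2) is plausible in outline, but your strategy for part (3) has a genuine gap, and it is tied to the size of the perturbation you propose. You want to beat $\cB_{g}(A)$ at the order $A^{-1/2}$ by arranging an ``effective Hawking mass'' $m<\mm$ for a competitor influenced by the $R_{g}<-6$ region. This cannot work as stated: the coefficient $-8\pi^{3/2}\mm A^{-1/2}$ in the expansion of $\sL^{3}_{g}(\cB_{g}(A))$ is determined entirely by the exact Schwarzschild-AdS metric outside the compact set, and the same is true for \emph{any} competitor whose boundary lies in the unperturbed region and which contains the perturbed region --- such a competitor differs from a coordinate ball only by a problem posed in exact $\overline g_{\mm}$, where coordinate spheres are optimal. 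The failure of Geroch monotonicity in the bump does not manufacture a smaller mass parameter visible at order $A^{-1/2}$; the only place the perturbation can enter the volume comparison is the \emph{constant-order} term, namely the renormalized volume $V(M,g)$. Moreover, your annular competitor $\{r\le r_{0}\}\cup\{R_{1}\le r\le R_{3}\}$ pays an inner-boundary area cost of $4\pi\varphi(R_{1})^{2}$, and it beats the coordinate ball only if the omitted region $\{r_{0}<r<R_{1}\}$ contains substantially \emph{less} volume than half its boundary area --- i.e.\ only if the perturbed region is drastically volume-deficient. A perturbation $\varphi=\varphi_{\mm}+\epsilon f$ with $\epsilon$ small changes $V(M,g)$ only by $O(\epsilon)$, and since $V(\overline M_{\mm},\overline g_{\mm})+\tfrac12 A_{\partial\overline M_{\mm}}>0$ strictly (Proposition \ref{prop:vol-comp-H2}), no first-order perturbation can produce the required deficit. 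So your construction for (1)--(2) is too small to yield (3).

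The paper's proof makes the perturbation large in exactly this sense: it prescribes the mean curvature profile $H_{g}(r)$ with $H_{g}(r_{0})=2$ and $H_{g}>2$ afterwards (which gives the outermost property by the maximum principle), takes the horizon to have tiny area $4\pi\epsilon^{2}$, and lets $\varphi$ climb from $\epsilon$ to $\varphi_{\mm}(r_{0}+1)$ over a unit interval with $r_{0}$ large. The shell $\{r_{0}<r<r_{0}+1\}$ then has volume $O(1)$ smaller than $\tfrac12$ of its outer boundary area requires, forcing $V(M,g)+\tfrac12 A_{\partial M}<-C$ for any prescribed $C$ --- the conclusion of Proposition \ref{prop:vol-comp-H2} fails, which is only possible because $R_{g}\ge-6$ fails. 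The competitor is then the horizon region together with a far-away hyperbolic ball of area $A-A_{\partial M}$ (a generalized isoperimetric region, approximated by genuine regions), which contains \emph{more} volume than $\cB_{g}(A)$ by a definite constant. If you want to salvage your approach, you must abandon the $A^{-1/2}$-order mechanism and instead engineer $V(M,g)+\tfrac12 A_{\partial M}<0$, which requires a non-perturbative modification of the profile.
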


\begin{proof}
We fix constants $r_{0}>0$ and $\epsilon>0$ to be specified subsequently. Let $\varphi_{\mm}(r)$ denote the function so that $\overline g_{\mm} = dr\otimes dr + \varphi_{\mm}(r)^{2}g_{\SS^{2}}$. We will define $g:=dr\otimes dr + \varphi(r)^{2}g_{\SS^{2}}$ to be a rotationally symmetric metric on $[r_{0},\infty)\times \SS^{2}$. Note that the mean curvature of $\{r\}\times \SS^{2}$ with respect to $g$ is given by
\begin{equation*}
H_{g}(r) = \frac{2\varphi'(r)}{\varphi(r)},
\end{equation*}
and similarly for $H_{\overline g_{\mm}}(r)$. If we have specified $H_{g}(r)$, then observe that we may integrate the ODE to obtain
\begin{equation*}
\varphi(r) = \varphi(r_{0}) e^{\frac 12 \int_{r_{0}}^{r}H_{g}(\tau) d\tau}.
\end{equation*}
We may find a smooth function $H_{g}(r)$ with the property that $H_{g}(r_{0}) = 2$, $H_{g}(r) > 2$ for $r>r_{0}$, 
\begin{equation*}
\epsilon e^{\frac 12 \int_{r_{0}}^{r} H_{g}(\tau) d\tau } = \varphi_{\mm}(r),
\end{equation*}
for $r > r_{0}+1$,
and
\begin{equation*}
\epsilon e^{\frac 12 \int_{r_{0}}^{r} H_{g}(\tau) d\tau } \leq \varphi_{\mm}(r)
\end{equation*}
for $r \in (r_{0},r_{0}+1)$. Such an $H_{g}(r)$ will start at $2$ when $r=r_{0}$ and then grow to be very large, and then decrease back to agree with $H_{\overline g_{\mm}}(r)$ near $r_{0}+1$. The large bump will allow it $\varphi(r)$ to grow rapidly so that it agrees with $\varphi_{\mm}(r)$ by $r_{0}+1$. 

As such, we set
\begin{equation*}
\varphi(r) := \epsilon e^{\frac 12 \int_{r_{0}}^{r} H_{g}(\tau) d\tau }
\end{equation*}
for all $r \geq r_{0}$, and claim that the metric $g$ satisfies the properties asserted in the theorem. First, note that because the mean curvature of $\{r\}\times \SS^{2}$ is larger than $2$, the maximum principle forbids any compact surfaces with $H_{g}\equiv 2$ in $(M,g)$. Hence, $(M,g)$ is a compact perturbation of Schwarzschild-AdS, as in Definition \ref{def:cpt-pert-SADS}. For $C>0$, by choosing $r_{0} >0$ large and $\epsilon>0$ small, we may ensure that $(M,g)$ does not satisfy the conclusion of Proposition \ref{prop:vol-comp-H2}, namely 
\begin{equation}\label{eq:vol-comp-fail-no-R}
V(M,g) + \frac 12 A_{\partial M} < -C.
\end{equation}
To check this, note that as $r_{0}$ becomes large, the contribution to $V(\overline M_{\mm},\overline g_{\mm})$ outside of radius $r_{0}+1$ becomes negligible (this follows from $V(\overline M_{\mm},\overline g_{\mm})<\infty$). Let us denote this contribution by $V(\overline M_{\mm},\overline g_{\mm})_{out}$. Then, we have that
\begin{equation*}
V(M,g) =V(\overline M_{\mm},\overline g_{\mm})_{out} +  \sL^{3}_{g}((r_{0},r_{0}+1)\times \SS^{2}) - \sL^{3}_{\overline g}(\cB_{\overline g}(4\pi \sinh^{2}(r_{0}+1))).
\end{equation*}
Recall that $\cB_{\overline g}(4\pi \sinh^{2}(r_{0}+1))$ is the ball in hyperbolic space of surface area $4\pi \sinh^{2}(r_{0}+1)$. 

Because we have arranged that $\varphi(r)\leq \varphi_{\mm}(r)$, the easily checked fact that $\varphi_{\mm}(r)^{2} \leq \sinh^{2} r + o(1)$ shows that we may bound
\begin{align*}
\sL_{g}^{3}((r_{0},r_{0}+1)\times \SS^{2}) & \leq \sL_{\overline g_{\mm}}^{3}((r_{0},r_{0}+1)\times \SS^{2})\\
& \leq \int_{r_{0}}^{r_{0}+1} \left( 4\pi \sinh^{2}\tau + o(1) \right) d\tau\\
& \leq \pi \sinh(2 r_{0} + 2) - \pi \sinh(2 r_{0}) +O(1)\\
& \leq \frac \pi 2 e^{2r_{0}+2}-\frac \pi 2 e^{2r_{0}} + O(1),
\end{align*}
as $r_{0}$ becomes large. 
On the other hand, Lemma \ref{lemma:vol-large-balls} implies that
\begin{align*}
\sL^{3}_{\overline g}(\cB_{\overline g}(4\pi \sinh^{2}(r_{0}+1))) & = 2\pi \sinh^{2}(r_{0}+1) - \pi \log (2\pi(r_{0}+1)) + O(1)\\
& = \frac \pi 2 e^{2r_{0}+ 2} - \pi \log (2\pi (r_{0}+1)) + O(1).
\end{align*}
Putting this together, we have that $V(M,g)$ becomes very negative as $r_{0}$ becomes large. Taking $\epsilon$ small and $r_{0}$ large, \eqref{eq:vol-comp-fail-no-R} follows. 

The condition \eqref{eq:vol-comp-fail-no-R} implies the theorem. To see this, choose a sequence of $A_{i}\to \infty$ and consider the centered balls $\cB_{g}(A_{i})$ (recall that $\cH^{2}_{g}(\partial\cB_{g}(A_{i}))=A_{i}$). Then \eqref{eq:vol-comp-fail-no-R} implies that for all large $i$,
\begin{equation*}
\sL_{g}(\cB_{g}(A_{i})) + \frac 12 A_{\partial M} + C < \sL_{ \overline g}(\cB_{\overline g}(A_{i})) \leq \frac 12 A_{i} - \pi \log A_{i} + \pi(1+\log\pi) + o(1). 
\end{equation*}
as $i\to \infty$. Here, we have used the expression derived in Lemma \ref{lemma:vol-large-balls}. 
As such,
\begin{equation*}
\sL_{g}(\cB_{g}(A_{i})) < \frac 12 (A_{i}-A_{\partial M}) - \pi \log (A_{i}- A_{\partial M}) + \pi(1+\log\pi) + o(1)
\end{equation*}
as $i\to\infty$. 

This shows that $\cB_{g}(A_{i})$ is not isoperimetric, as it contains \emph{less} volume than the generalized isoperimetric region consisting of $\Omega$ which is equal to the horizon region (and hence has zero $g$-volume, and $g$-area $A_{\partial M}$) along with a ball in hyperbolic space of surface area $A_{i}-A_{\partial M}$. 
\end{proof} 

\appendix

\section{Volume contained in coordinate balls} \label{sec:vol-coord-balls} For $A$ large enough, we write $\cB_{\overline g_{m}}(A)$ for the centered coordinate ball in $(\overline M_{m},\overline g_{m})$ of surface area $A$. Regarded as a set in $\RR^{3}$ (using the coordinate system as in \eqref{eq:schwAdS}) we will always regard $\cB_{\overline g_{m}}(A)$ as containing the horizon, i.e., a set of the form $\{s\leq s(m,A)\}$ for some $s(m,A)$. 
\begin{lemm}\label{lemma:vol-large-balls}
For $m\geq 0$ and for all $A$ large enough so that $\cB_{\overline g_{m}}(A)$ is defined, we have that
\begin{equation*}
\sL^{3}_{\overline g_{m}}(\cB_{\overline g_{m}}(A)) =  \sL_{\overline g}^{3}(\cB_{\overline g}(A)) + V(\overline M_{m},\overline g_{m}) + O(A^{-\frac 12})
\end{equation*}
as $A \to \infty$. More precisely, we have the expansion
\begin{align*}
\sL^{3}_{\overline g_{m}}(\cB_{\overline g_{m}}(A)) = \frac 12 A & - \pi \log A +  \left( V(\overline M_{m},\overline g_{m}) + \pi(1+\log \pi) \right) \\
& - 8 \pi^{\frac 32} m A^{-\frac 12} - 3\pi^{2} A^{-1} + 16 \pi^{\frac 5 2} m A^{-\frac 32} + O(A^{-2})
\end{align*}
where $V(\overline M_{m},\overline g_{m})$ is the renormalized volume of $(\overline M_{m},\overline g_{m})$. This expression holds as $A \to \infty$ for $m$ fixed. 

We additionally have
\begin{align*}
\sL^{3}_{\overline g_{m}}(\cB_{\overline g_{m}}(A)) = \frac 12 A - \pi \log A + & \left( V(\overline M_{m},\overline g_{m}) + \pi(1+\log \pi) \right) \\
& - 8 \pi^{\frac 32} m A^{-\frac 12} + E(m,A),
\end{align*}
where, if $0 \leq m \leq \alpha A^{\frac 12}$, the error $E(m,A)$ satisfies
\begin{equation*}
|E(m,A)| \leq C A^{-1},
\end{equation*}
for $C=C(\alpha)$ independent of $m$ or $A$. 

 Finally, for all such $A$, we have the inequality
\begin{equation*}
\sL^{3}_{\overline g_{m}}(\cB_{\overline g_{m}}(A)) \leq  \sL_{\overline g}^{3}(\cB_{\overline g}(A)) + V(\overline M_{m},\overline g_{m}) .
\end{equation*}
\end{lemm}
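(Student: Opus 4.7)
Both coordinate balls have the same radial upper limit: the angular sphere $\{s=S\}$ has $\overline{g}_{m}$-area $4\pi S^{2}$ independent of $m$, so $\cB_{\overline{g}_{m}}(A)$ is the shell $\{2m \leq s \leq S\}$ with $S = \sqrt{A/(4\pi)}$. Using the volume form $s^{2}(1+s^{2}-2ms^{-1})^{-1/2}\,ds\wedge d\omega_{\SS^{2}}$ and recognizing $V(\overline{M}_{m},\overline{g}_{m})$ as the $T\to\infty$ limit of $4\pi\int_{2m}^{T} s^{2}(1+s^{2}-2ms^{-1})^{-1/2}\,ds - 4\pi\int_{0}^{T}s^{2}(1+s^{2})^{-1/2}\,ds$, I would first record the identity
\begin{equation*}
\sL^{3}_{\overline{g}_{m}}(\cB_{\overline{g}_{m}}(A)) = \sL^{3}_{\overline{g}}(\cB_{\overline{g}}(A)) + V(\overline{M}_{m},\overline{g}_{m}) - 4\pi\int_{S}^{\infty}\!\!\left[\frac{s^{2}}{\sqrt{1+s^{2}-2ms^{-1}}} - \frac{s^{2}}{\sqrt{1+s^{2}}}\right]ds.
\end{equation*}
The final inequality of the lemma is then immediate: for $m \geq 0$ and $s>2m$ the bracketed integrand is pointwise nonnegative, so the tail integral is nonnegative and the subtracted quantity is nonnegative.

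For the asymptotic expansions, the plan is to Taylor expand the tail integrand in $x := 2m/(s(1+s^{2}))$ via $(1-x)^{-1/2} = 1 + x/2 + 3x^{2}/8 + O(x^{3})$, giving
\begin{equation*}
\frac{s^{2}}{\sqrt{1+s^{2}-2ms^{-1}}} - \frac{s^{2}}{\sqrt{1+s^{2}}} = \frac{ms}{(1+s^{2})^{3/2}} + \frac{3m^{2}}{2(1+s^{2})^{5/2}} + O\!\left(\frac{m^{3}}{s(1+s^{2})^{5/2}}\right).
\end{equation*}
The first term integrates in closed form to $m/\sqrt{1+S^{2}}$, which expands as $m/S - m/(2S^{3}) + O(S^{-5})$; the remaining terms contribute $O(m^{2}/S^{4}) + O(m^{3}/S^{7})$. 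Converting to $A$ via $S = \sqrt{A/(4\pi)}$ produces the $-8\pi^{3/2}mA^{-1/2}$ and $+16\pi^{5/2}mA^{-3/2}$ pieces, with error $O(A^{-2})$ at fixed $m$. For the hyperbolic ball I would use the substitution $s=\sinh u$ to obtain the closed form $\sL^{3}_{\overline{g}}(\cB_{\overline{g}}(A)) = 2\pi S\sqrt{1+S^{2}} - 2\pi\sinh^{-1}S$, and combine the elementary expansions $\sqrt{1+S^{2}} = S + \frac{1}{2S} - \frac{1}{8S^{3}} + O(S^{-5})$ and $\sinh^{-1}S = \log(2S) + \frac{1}{4S^{2}} + O(S^{-4})$; after substituting $S = \sqrt{A/(4\pi)}$ this yields $\frac{A}{2} - \pi\log A + \pi(1+\log\pi) - 3\pi^{2}A^{-1} + O(A^{-2})$. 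Adding the two pieces produces exactly the stated expansion.

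For the uniform version on $0 \leq m \leq \alpha A^{1/2}$, note that $m \leq 2\alpha\sqrt{\pi}\,S$, so $x = 2m/(s(1+s^{2})) \leq c(\alpha)/S^{2}$ for all $s \geq S$. The geometric bound on the Taylor remainder in $(1-x)^{-1/2}$ then controls every term beyond the $m$-linear piece by constants depending only on $\alpha$: the $m^{2}$ contribution integrates to $O(m^{2}/S^{4}) = O(\alpha^{2}A^{-1})$, the $m^{3}$ remainder to $O(m^{3}/S^{7}) = O(\alpha^{3}A^{-2})$, and the subleading $m A^{-3/2}$ correction lies within the $A^{-1}$ window, so the overall error satisfies $|E(m,A)| \leq C(\alpha)A^{-1}$. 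The main obstacle here is not analytic but purely bookkeeping — correctly assembling the logarithms and constants so that the coefficient $-8\pi^{3/2}mA^{-1/2}$ emerges exactly and the two renormalized-volume contributions cancel against $V(\overline{M}_{m},\overline{g}_{m})$ — and this is handled by working term-by-term with the closed forms above.
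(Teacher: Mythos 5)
Your proposal is correct and follows essentially the same route as the paper: the same decomposition into the hyperbolic ball volume plus $V(\overline M_m,\overline g_m)$ minus a nonnegative tail integral, the same closed form $2\pi S\sqrt{1+S^2}-2\pi\sinh^{-1}S$ for the hyperbolic piece, and the same Taylor expansion of the tail integrand (you organize it via $x=2m/(s(1+s^2))$ where the paper expands directly in powers of $s^{-1}$, but this is cosmetic). The only blemish is a harmless bookkeeping slip in the cubic remainder, where your stated integrand $O\bigl(m^3 s^{-1}(1+s^2)^{-5/2}\bigr)$ integrates to $O(m^3S^{-5})=O(\alpha^3 A^{-1})$ rather than the $O(m^3S^{-7})$ you quote; either bound sits inside the claimed $|E(m,A)|\leq C(\alpha)A^{-1}$ window, so the conclusion is unaffected.
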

\begin{proof}
Choose $R$ so that the sphere $\{s=R\}$ has area $A$, i.e., $4\pi R^{2} = A$. Then, we have that
\begin{align*}
\sL_{\overline g_{m}}^{3}(\cB_{\overline g_{m}}(A)) & = 4\pi \int_{2m}^{R} \frac{s^{2}}{\sqrt{1+s^{2}-2ms^{-1}}} ds \\
& = 4\pi \int_{0}^{R} \frac{s^{2}}{\sqrt{1+s^{2}}} ds \\
&\qquad  + 4\pi \int_{2m}^{R} \frac{s^{2}}{\sqrt{1+s^{2}-2ms^{-1}}} ds - 4\pi \int_{0}^{R} \frac{s^{2}}{\sqrt{1+s^{2}}}ds \\
& = \sL_{\overline g}^{3}(\cB_{\overline g}(A)) + V(\overline M_{m},\overline g_{m})\\
& \qquad  -  4\pi \int_{R}^{\infty} s^{2} \left( \frac{1}{\sqrt{1+s^{2}-2ms^{-1}}} -\frac{1}{\sqrt{1+s^{2}}} \right) ds.
\end{align*}
The inequality claimed in the end of the lemma follows immediately from this, because $m\geq 0$. To verify the asymptotic expansion, we evaluate
\begin{align*}
\sL^{3}_{\overline g}(\cB_{\overline g}(A)) & = 4\pi \int_{0}^{R} \frac{s^{2}}{\sqrt{1+s^{2}}} ds\\
& = 2\pi R^{2} \sqrt{1+R^{-2}} - 2\pi \sinh^{-1}(R) \\
& = 2\pi R^{2} -2\pi \log R +\pi(1-\log 4) - \frac{3\pi}{4} R^{-2} + O(R^{-4}),
\end{align*}
and 
\begin{align*}
 & 4\pi \int_{R}^{\infty} s^{2} \left( \frac{1}{\sqrt{1+s^{2}-2ms^{-1}}} -  \frac{1}{\sqrt{1+s^{2}}} \right) ds \\
 & = 4\pi \int_{R}^{\infty} \left( m s^{-2} - \frac{3m}{2} s^{-4} + O(s^{-5})\right) ds\\
 & = 4\pi m R^{-1} - 2 \pi m R^{-3} + O(R^{-4}).
\end{align*}
From this, the first series follows by combining these expansions with the relation $A = 4\pi R^{2}$. 

To analyze the possibility that $m$ is growing large with $A$, but satisfies $0\leq m\leq \alpha A^{\frac 12}$, note that
\begin{align*}
 m s^{-2} & + \frac{s^{2}}{\sqrt{1+s^{2}}} - \frac{s^{2}}{\sqrt{1+s^{2}-2ms^{-1}}} \\
  & = ms^{-2} + \frac{s^{2}}{\sqrt{1+s^{2}}}\left( \frac{ \sqrt{1-\frac{2m}{s(1+s^{2})}} - 1}{ \sqrt{1-\frac{2m}{s(1+s^{2})} }} \right)\\
  & = ms^{-2} - \frac{2ms^{-2}}{(1+s^{-2})^{\frac 32}} \left( 1-\frac{2m}{s(1+s^{2})} \right)^{-\frac 12}\left( 1 + \sqrt{1-\frac{2m}{s(1+s^{2})}} \right)^{-1}.
\end{align*}
Because we are going to integrate this expression (in $s$) from $R$ to $\infty$, we are only concerned for $s$ which satisfy $m \leq \alpha (4\pi)^{\frac 12} s$. In this range, we have that
\begin{equation*}
\frac 12  \leq \left( 1 + \sqrt{1-\frac{2m}{s(1+s^{2})}}\right)^{-1}  \leq \left( 1 + \sqrt{1-\frac{2\alpha(4\pi)^{\frac 12}}{1+s^{2}}}\right)^{-1} \leq \frac 12 + C s^{-2},
\end{equation*}
where $C=C(\alpha)$ is independent of $m$, $R$ and $s$. Similarly, taking $C$ larger if necessary (but still not letting it depend on $m$, $R$ or $s$) we have 
\begin{equation*}
1 \leq  \left( 1-\frac{2m}{s(1+s^{2})} \right)^{-\frac 12} \leq 1 + C s^{-2}.
\end{equation*}
Putting this together, we see that for $m \leq \alpha (4\pi)^{\frac 12} s$, there is a constant $C=C(\alpha)$ independent of $m,R,s$ so that
\begin{equation*}
\left| m s^{-2}  + \frac{s^{2}}{\sqrt{1+s^{2}}} - \frac{s^{2}}{\sqrt{1+s^{2}-2ms^{-1}}}\right| \leq C s^{-3}. 
\end{equation*}
Because the left hand side is integrated with respect to $s$ from $R$ to $\infty$ to obtain $E(m,A)$, we obtain the desired bound. 
\end{proof}

Similarly, we may compute the volume of large, centered coordinate balls in a compact perturbation of Schwarzschild-AdS $(M,g)$ as follows
\begin{lemm}\label{lemm:vol-large-coord-balls-g}
Let $(M,g)$ be a compact perturbation of Schwarzschild-AdS of mass $\mm \geq 0$. For $A>0$ sufficiently large, the coordinate sphere $\cB_{g}(A)$ of area $A$ completely contains the perturbed region $\tilde K$, and we have 
\begin{align*}
\sL^{3}_{g}(\cB_{g}(A)) = \frac 12 A - \pi \log A + & \left( V( M,g) + \pi(1+\log \pi) \right) \\
& - 8 \pi^{\frac 32}  \mm A^{-\frac 12} - 3 \pi^{2} A^{-1}+ O(A^{-\frac 32}),
\end{align*}
as $A \to \infty$. 
\end{lemm}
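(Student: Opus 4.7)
The plan is to reduce the claim to Lemma \ref{lemma:vol-large-balls} by comparing the $g$-volume of $\cB_{g}(A)$ to the $\overline g_{\mm}$-volume of the same set, exploiting that $g = \overline g_{\mm}$ outside the compact perturbation region $\tilde K$. First I would verify the elementary geometric fact that for $A$ large enough, $\cB_{g}(A) = \{s \leq s(A)\}$ strictly contains $\tilde K$; moreover, since $\partial \cB_{g}(A)$ lies in the unperturbed region, its $g$-area and $\overline g_{\mm}$-area coincide, so $\cB_{g}(A)$ and $\cB_{\overline g_{\mm}}(A)$ are the same subset of $\RR^{3}$.

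Next, split each volume into a contribution from inside $\tilde K$ and one from outside:
\begin{equation*}
\sL^{3}_{g}(\cB_{g}(A)) = \sL^{3}_{g}(\tilde K) + \sL^{3}_{g}(\cB_{g}(A) \setminus \tilde K),
\end{equation*}
and likewise for $\sL^{3}_{\overline g_{\mm}}(\cB_{\overline g_{\mm}}(A))$. Because $g = \overline g_{\mm}$ on $\cB_{g}(A) \setminus \tilde K = \cB_{\overline g_{\mm}}(A) \setminus \tilde K$, the two outer contributions are equal, and so
\begin{equation*}
\sL^{3}_{g}(\cB_{g}(A)) - \sL^{3}_{\overline g_{\mm}}(\cB_{\overline g_{\mm}}(A)) = C
\end{equation*}
is a constant independent of $A$ (for all $A$ large).

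To evaluate $C$, I would plug the exhaustion $\Omega_{i} := \cB_{g}(A_{i})$, with $A_{i} \to \infty$, into Definition \ref{defi:renorm-vol}, writing $\sL^{3}_{\overline g}(\cB_{g}(A_{i})) = \sL^{3}_{\overline g}(\cB_{\overline g}(A_{i}))$ since the relevant coordinate spheres coincide in the radial coordinate system. Inserting $\pm \sL^{3}_{\overline g_{\mm}}(\cB_{\overline g_{\mm}}(A_{i}))$, the first difference equals the constant $C$ and the second converges to $V(\overline M_{\mm}, \overline g_{\mm})$ by Definition \ref{defi:renorm-vol} applied to Schwarzschild-AdS. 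Hence $C = V(M,g) - V(\overline M_{\mm}, \overline g_{\mm})$.

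Finally, substituting the expansion of $\sL^{3}_{\overline g_{\mm}}(\cB_{\overline g_{\mm}}(A))$ from Lemma \ref{lemma:vol-large-balls}, the $V(\overline M_{\mm}, \overline g_{\mm})$ terms cancel and we obtain the claimed expansion with error $O(A^{-3/2})$. The argument is essentially bookkeeping; the only subtlety is making sure the set identification $\cB_{g}(A) = \cB_{\overline g_{\mm}}(A)$ and the coincidence of the outer volume contributions are justified, both of which follow from the fact that $\partial \cB_{g}(A)$ is contained in the unperturbed region for $A$ sufficiently large.
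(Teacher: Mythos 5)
Your argument is correct and is essentially the computation the paper leaves implicit (the lemma is stated with only the remark ``similarly, we may compute''): since $g=\overline g_{\mm}$ outside $\tilde K$ and $\partial\cB_{g}(A)$ lies in the unperturbed region, the difference $\sL^{3}_{g}(\cB_{g}(A))-\sL^{3}_{\overline g_{\mm}}(\cB_{\overline g_{\mm}}(A))$ is eventually constant in $A$, and testing Definition \ref{defi:renorm-vol} on the exhaustion by these coordinate balls identifies the constant as $V(M,g)-V(\overline M_{\mm},\overline g_{\mm})$, after which Lemma \ref{lemma:vol-large-balls} yields the stated expansion with the $V(\overline M_{\mm},\overline g_{\mm})$ terms cancelling.
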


\section{Proof of Proposition \ref{prop:comp-bdry-bds}}\label{app:comp-bdry-bds}

Here we prove Proposition \ref{prop:comp-bdry-bds} which gives an upper bound for the number of components of an isoperimetric region in $(M,g)$ an asymptotically hyperbolic manifold with $R_{g}\geq -6$. We will first prove several preliminary results. 

We note that the reader who is only interested in the statement of Proposition \ref{prop:comp-bdry-bds} for compact perturbations of Schwarzschild-AdS may observe that in this case, only Lemma \ref{lemm:H-bd-vp-stable} is necessary for the proof---the rest of the preliminary results needed in the proof of Proposition \ref{prop:comp-bdry-bds} may be replaced by  a straightforward application of Theorem \ref{theo:brendle-IHES}.

\begin{lemm}[cf. {\cite[Proposition 5.1]{EichmairMetzger:CMC}}]\label{lemm:H-bd-vp-stable}
For $(M,g)$ a Riemannian three manifold with $R_{g}\geq -6$ and $\Sigma$ a closed volume-preserving stable CMC surface, which is not necessarily connected, the mean curvature of $\Sigma$ satisfies
\begin{equation*}
H_{g}^{2}\leq \max\left\{ -2\inf_{\Sigma} \Ric(\nu,\nu), \frac{64\pi}{3}\cH_{g}^{2}(\Sigma)^{-1} +4 \right\} .
\end{equation*}
\end{lemm}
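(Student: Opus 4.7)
The plan is to split into two cases based on whether $\Sigma$ is connected. When $\Sigma$ is connected, the third (equivalent) form of the Christodoulou--Yau inequality in Proposition \ref{prop:christodoulou-yau}, which is valid with no hypothesis on the genus, gives
\begin{equation*}
\tfrac{2}{3}\int_\Sigma (R_g + 6 + |\tfsff|^2) \, d\cH^2_g + \int_\Sigma (H_g^2 - 4) \, d\cH^2_g \leq \tfrac{64\pi}{3}.
\end{equation*}
Dropping the first integral (which is non-negative by $R_g \geq -6$) and pulling the constant $H_g^2 - 4$ out immediately yields $(H_g^2 - 4)\cH^2_g(\Sigma) \leq \tfrac{64\pi}{3}$, which rearranges to the second quantity in the stated maximum. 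So the connected case follows directly from a cited result.

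For the disconnected case $\Sigma = \Sigma_1 \sqcup \dots \sqcup \Sigma_k$ with $k \geq 2$, the plan is to exploit volume-preserving stability (Definition \ref{def:vp-stab-CMC}) with a test function that is locally constant on $\Sigma$, with the constants chosen to enforce the mean-zero condition. Writing $A_i := \cH^2_g(\Sigma_i)$ and taking $u \equiv A_2$ on $\Sigma_1$, $u \equiv -A_1$ on $\Sigma_2$, and $u \equiv 0$ elsewhere gives $\int_\Sigma u \, d\cH^2_g = 0$ and $\nabla u \equiv 0$, so that the stability inequality collapses to
\begin{equation*}
A_2^2 \int_{\Sigma_1} \left(\Ric(\nu,\nu) + |\sff|^2\right) d\cH^2_g + A_1^2 \int_{\Sigma_2} \left(\Ric(\nu,\nu) + |\sff|^2\right) d\cH^2_g \leq 0.
\end{equation*}
Thus at least one of the two bracketed integrals is non-positive; say it is the one over $\Sigma_i$. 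I would then invoke the pointwise estimate $|\sff|^2 \geq \tfrac{1}{2} H_g^2$ (immediate from Cauchy--Schwarz on the two principal curvatures of a surface in a three-manifold), divide through by $A_i$, and bound the Ricci integral below by $A_i \inf_\Sigma \Ric(\nu,\nu)$. This produces $\tfrac{1}{2} H_g^2 \leq -\inf_\Sigma \Ric(\nu,\nu)$, which is exactly the first quantity in the stated maximum.

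There is no genuine obstacle here beyond organizing the two cases, since Christodoulou--Yau does all the work when $\Sigma$ is connected and locally constant mean-zero test functions are automatically admissible when it is not. The one conceptual subtlety worth flagging is that in the disconnected case one \emph{cannot} simply apply Christodoulou--Yau to an individual component, because a single component of a volume-preserving stable surface need not itself be volume-preserving stable (stability uses test functions with vanishing mean over the entire surface $\Sigma$, not over a component). This is precisely why the two-component mean-zero trick above is needed, and why the Ricci term appears in the first of the two upper bounds.
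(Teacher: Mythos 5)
Your proposal is correct and is essentially the paper's argument: both rest on the two-component locally constant mean-zero test function, the pointwise bound $|\sff|^{2}\geq \tfrac12 H_{g}^{2}$ (yielding the Ricci term), and Christodoulou--Yau for the connected case. The only difference is cosmetic — the paper organizes the dichotomy around whether $|\sff|^{2}+\Ric(\nu,\nu)>0$ holds everywhere (which forces connectedness), whereas you split directly on connectedness and run the test-function argument in integrated form; both routes yield the stated maximum.
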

\begin{proof}
If $0< |\sff|^{2} +\Ric(\nu,\nu)$ along $\Sigma$, then $\Sigma$ is connected. If it were not, then taking a volume-preserving variation which is a positive constant on one component and a corresponding negative constant on another component would yield a contradiction. If $\Sigma$ is connected, we may rearrange Proposition \ref{prop:christodoulou-yau} to bound the mean curvature as claimed.
\end{proof}

Recall that for a hypersurface $\Sigma$ in $\RR^{3}$, we have defined the \emph{inner radius} of $\Sigma$ by
\begin{equation*}
\underline s(\Sigma) := \inf \left\{ s(x) : x \in \Sigma \right\},
\end{equation*}
where the coordinate $s$ is the one used in \eqref{eq:schwAdS}. The next lemma follows from a straightforward computation.
\begin{lemm}\label{lemm:sff-g-gbar}
If $(M,g)$ is asymptotically hyperbolic, then there is some constant $s_{0}>0$ depending only on $(M,g)$ with the following property: suppose that $\Sigma$ is a hypersurface in $(M,g)$ with $\underline s(\Sigma) \geq s_{0}$. Then, the second fundamental form of $\Sigma$ when measured with respect to $g$, $\sff_{g}$, and measured with respect to $\overline g$, $h_{\overline g}$ satisfy
\begin{equation*}
|\sff_{g} - \sff_{\overline g}|_{\overline g} \leq O(s^{-3})\left( |\sff_{\overline g}|_{\overline g}+ 1\right).
\end{equation*}
Furthermore, the mean curvatures also satisfy
\begin{equation*}
|H_{g} - H_{\overline g}| \leq  O(s^{-3})\left( |\sff_{\overline g}|_{\overline g}+ 1\right).
\end{equation*}
\end{lemm}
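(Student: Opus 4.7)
The plan is a direct tensorial comparison: at any point of $\Sigma$, split $\sff_g - \sff_{\overline{g}}$ into contributions coming from (i) the algebraic difference $g - \overline{g}$, (ii) the difference in Levi-Civita connections $T := D^g - \overline{D}$, and (iii) the difference of unit normals $\nu_g - \nu_{\overline{g}}$. The asymptotically hyperbolic hypothesis $|\overline{D}^{k}(g-\overline{g})|_{\overline{g}} = O(s^{-3})$ for $k \leq 2$ is tailored to control each piece: the inverse-metric difference is $O(s^{-3})$; the contorsion $T$, built from first derivatives of $g-\overline{g}$ via the standard formula for the difference of two Levi-Civita connections, is $O(s^{-3})$; and comparing the two linear normalization conditions fibrewise in $T_{x}M$ gives $|\nu_g - \nu_{\overline{g}}|_{\overline{g}} = O(s^{-3})$. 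All of this is valid once $\underline{s}(\Sigma)$ is large enough that $g$ and $\overline{g}$ are uniformly comparable.

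For the main computation, at any $p \in \Sigma$ I would pick tangent vectors $X_{1}, X_{2}$ that are $\overline{g}|_{\Sigma}$-orthonormal at $p$ and extended along $\Sigma$ via $\overline{g}|_{\Sigma}$-geodesic normal coordinates, so that the intrinsic covariant derivative $D^{\Sigma,\overline{g}}_{X_i} X_j$ vanishes at $p$. This choice makes $\overline{D}_{X_i} X_j$ purely $\overline{g}$-normal at $p$, with magnitude exactly $|\sff_{\overline{g}}(X_i, X_j)|$. Using the Gauss formula together with $D^g = \overline{D} + T$, the difference $\sff_g(X_i, X_j) - \sff_{\overline{g}}(X_i, X_j)$ decomposes at $p$ into a term $-g(T(X_i, X_j), \nu_g)$ bounded by $O(s^{-3})$; a term $-(g-\overline{g})(\overline{D}_{X_i} X_j, \nu_g)$ bounded by $O(s^{-3})\,|\sff_{\overline{g}}|_{\overline{g}}$; and a term $\overline{g}(\overline{D}_{X_i} X_j, \nu_{\overline{g}} - \nu_g)$ of the same order. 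Summing the three pieces and noting that $p$ was arbitrary gives the claimed estimate $|\sff_g - \sff_{\overline{g}}|_{\overline{g}} \leq O(s^{-3})(|\sff_{\overline{g}}|_{\overline{g}} + 1)$.

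The mean curvature bound then follows at once by writing, in the same frame,
\[
H_g - H_{\overline{g}} = \left(g^{ij}_{\Sigma} - \overline{g}^{ij}_{\Sigma}\right)(\sff_g)_{ij} + \overline{g}^{ij}_{\Sigma}\left((\sff_g)_{ij} - (\sff_{\overline{g}})_{ij}\right),
\]
controlling the first term with the algebraic estimate $|g^{-1}_{\Sigma} - \overline{g}^{-1}_{\Sigma}|_{\overline{g}} = O(s^{-3})$ together with $|\sff_g|_{\overline{g}} \leq |\sff_{\overline{g}}|_{\overline{g}} + O(s^{-3})(|\sff_{\overline{g}}|_{\overline{g}} + 1)$ from the previous paragraph, and the second term directly from the $\sff$-estimate just established. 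I do not anticipate any serious obstacle in executing this plan; the argument is purely perturbative, and the crucial structural observation is simply that the intrinsic geodesic-coordinate choice on $\Sigma$ reduces the pointwise $\overline{g}$-norm of $\overline{D}_{X_i} X_j$ to exactly $|\sff_{\overline{g}}(X_i,X_j)|$, which is precisely what produces the characteristic $(|\sff_{\overline{g}}|_{\overline{g}} + 1)$ factor on the right-hand side.
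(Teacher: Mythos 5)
Your proposal is correct: the decomposition into the contributions of $g-\overline g$, the contorsion tensor, and the normal-vector difference, evaluated in a $\overline g|_{\Sigma}$-geodesic frame so that $|\overline D_{X_i}X_j|_{\overline g}=|\sff_{\overline g}(X_i,X_j)|$ at the base point, is precisely the ``straightforward computation'' the paper alludes to (it gives no proof of this lemma). All three error terms are controlled exactly as you indicate by the hypothesis $|\overline D^{k}(g-\overline g)|_{\overline g}=O(s^{-3})$, and the mean curvature estimate follows from the trace decomposition as you write it.
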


Furthermore, we have the following integral decay estimate.
\begin{lemm}[cf. {\cite[Proposition 4.2]{NevesTian:AHcmc1}}]\label{lemm:int-sminus3-bd}
If $(M,g)$ is asymptotically hyperbolic with $R_{g}\geq -6$ and $\Sigma$ is a closed, connected, volume-preserving stable CMC surface in $(M,g)$, we have that 
\begin{equation*}
\int_{\Sigma} s^{-3}d\cH^{2}_{g} = o(1)
\end{equation*}
as $\underline s(\Sigma) \to \infty$. 
\end{lemm}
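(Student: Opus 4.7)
The plan is to derive the estimate from the volume-preserving stability inequality, using test functions built from the asymptotic geometry. In the reference hyperbolic metric $\overline g$, the function $V := \sqrt{1+s^{2}} = \cosh r$ satisfies the Hessian identity $\overline\nabla^{2}V = V \overline g$, which I will exploit throughout. Since $\underline s(\Sigma)\to\infty$, by Lemma \ref{lemm:sff-g-gbar} the second fundamental form and mean curvature of $\Sigma$ with respect to $g$ and $\overline g$ differ by $O(s^{-3})$, so I may compute in $\overline g$ up to errors that are themselves controlled (via a bootstrap) by the very quantity $\int_{\Sigma}s^{-3}$ being estimated. By Lemma \ref{lemm:H-bd-vp-stable}, $H_{g}$ is uniformly bounded; if in addition $\cH^{2}_{g}(\Sigma)$ stays bounded, the conclusion is immediate from the trivial estimate $\int_{\Sigma}s^{-3}\, d\cH^{2}_{g} \leq \underline s(\Sigma)^{-3}\cH^{2}_{g}(\Sigma)$. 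Hence I may assume $\cH^{2}_{g}(\Sigma)\to\infty$ and work near the hyperbolic model.

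A direct computation using $\overline\nabla^{2}V = V\overline g$ on $\Sigma$ gives (with $\overline H$ the mean curvature in $\overline g$)
\begin{equation*}
\Delta_{\Sigma} V^{-1} = \overline H\, V^{-2}\, \nu V \;-\; 2V^{-3}\bigl(1 + (\nu V)^{2}\bigr),
\end{equation*}
so integration over the closed surface produces the key identity
\begin{equation*}
\int_{\Sigma} V^{-3}\bigl(1 + (\nu V)^{2}\bigr)\, d\cH^{2}_{\overline g} \;=\; \tfrac{1}{2}\overline H\int_{\Sigma} V^{-2}\, \nu V\, d\cH^{2}_{\overline g}.
\end{equation*}
A Cauchy--Schwarz estimate of the right-hand side yields a preliminary bound of the form $\int_{\Sigma} V^{-3} \leq C \int_{\Sigma} V^{-1}$, which is insufficient on its own. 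To close the argument I apply the volume-preserving stability inequality to the test function $u = V^{-1} - \overline{V^{-1}}$, expanding $\int|\nabla_{\Sigma}u|^{2}$ and $\int u^{2}$ using the same Hessian identity. The Christodoulou--Yau estimate (Proposition \ref{prop:christodoulou-yau}) then controls the $|\tfsff|^{2}$ term and forces the Hawking mass $m_{H}(\Sigma)$ to appear with a definite sign that cooperates with the stability inequality. Combining these yields a reinforced bound strong enough to deduce $\int_{\Sigma}V^{-3}\to 0$; since $V^{-3} \geq \tfrac{1}{2}s^{-3}$ once $s$ is large, the desired conclusion follows.

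The main obstacle is algebraic: among the family of integrals $\int_{\Sigma} V^{-k}(\nu V)^{j}$ that arise, one must identify the precise linear combination, dictated by the choice $u=V^{-1}$, in which the positive term $2V^{-3}(1+(\nu V)^{2})$ coming from $\Delta_{\Sigma}V^{-1}$ survives while the remaining contributions are absorbed either by Cauchy--Schwarz or by the Christodoulou--Yau bound. This is the technical core of the cited argument of Neves--Tian, which the present lemma mirrors.
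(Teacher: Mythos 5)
Your proposal has a genuine gap: the step you yourself flag as ``the technical core of the cited argument of Neves--Tian'' is precisely the part that is missing, and the mechanism you propose for closing it is doubtful. Your preliminary identity
\begin{equation*}
\int_{\Sigma} V^{-3}\bigl(1 + (\nu V)^{2}\bigr)\, d\cH^{2}_{\overline g} = \tfrac{1}{2}\overline H\int_{\Sigma} V^{-2}\, \nu V\, d\cH^{2}_{\overline g}
\end{equation*}
is correct, but as you note, Cauchy--Schwarz only yields $\int_{\Sigma}V^{-3}\leq C\int_{\Sigma}V^{-1}$, and the right-hand side does not decay. To extract decay one must exploit that $H_{g}-2=O(\cH^{2}_{g}(\Sigma)^{-1})$ together with control on how far $\nu$ is from radial, and your plan to do this via the stability inequality with $u=V^{-1}-\overline{V^{-1}}$ is not carried out. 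Moreover, the claim that Christodoulou--Yau ``forces the Hawking mass to appear with a definite sign'' is unjustified here: the lemma does not assume $\Sigma$ has genus zero, so only the weaker bound $m_{H}(\Sigma)\geq -\tfrac13(16\pi)^{-1/2}\cH^{2}_{g}(\Sigma)^{1/2}$ is available; what Proposition \ref{prop:christodoulou-yau} actually supplies, and what the proof needs, is the genus-independent estimate $H_{g}^{2}\leq 4+\tfrac{64\pi}{3}\cH^{2}_{g}(\Sigma)^{-1}$.

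The paper's argument is both different and much shorter. Writing $s=\sinh r$, one applies the first variation formula (the tangential divergence theorem) to the radial field $\partial_{r}$, whose tangential divergence is $(1+u^{2})\coth r+O(e^{-3r})$ with $u=g(\nu,\partial_{r})$. Since $(1+u^{2})\geq 2u$ pointwise, the resulting identity $\int_{\Sigma}\Div_{\Sigma,g}(\partial_{r})=\int_{\Sigma}H_{g}u$ rearranges to
\begin{equation*}
\int_{\Sigma}(1-u)^{2}\,d\cH^{2}_{g}+2\int_{\Sigma}e^{-2r}\,d\cH^{2}_{g}\leq \cH^{2}_{g}(\Sigma)\,(H_{g}-2),
\end{equation*}
and the right-hand side is uniformly bounded by Christodoulou--Yau. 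The conclusion then follows because $s^{-3}=O(e^{-3r})=O(e^{-\underline r(\Sigma)})\,e^{-2r}$, i.e.\ the uniformly bounded quantity $\int_{\Sigma}e^{-2r}$ gains one extra decaying factor. The key structural point your route obscures is that the unweighted identity produces the clean pointwise inequality $(1+u^{2})\geq 2u$ with no leftover weight; inserting the weight $V^{-1}$ destroys this cancellation and is why your completion of the square stalls at $\int V^{-3}\leq C\int V^{-1}$. If you want to salvage your approach, you would first need the paper's bound on $\int_{\Sigma}(1-u)^{2}$ anyway, at which point the weighted identity is superfluous.
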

\begin{proof}
We define a function $r$ on $(\overline M,\overline g)$ by $s=\sinh r$, where $s$ is the coordinate in \eqref{eq:schwAdS}. Notice that in these coordinates, the hyperbolic metric becomes
\begin{equation*}
\overline g = dr\otimes dr + \sinh^{2}r g_{\SS^{2}}
\end{equation*}
and the asymptotically hyperbolic condition on $g$ means that $g$ (and two covariant derivatives) differs from $\overline g$ by terms of order $O(e^{-3r})$. 
\begin{align*}
\Div_{\Sigma,g}(\partial_{r}) & = (1 + g(\nu,\partial_{r})^{2}) \frac{\cosh r}{\sinh r} + O(e^{-3r})\\
& = (1 + g(\nu,\partial_{r})^{2})(1 + 2 e^{-2r})+ O(e^{-3r}).
\end{align*}
Integrating this yields, via the first variation formula
\begin{align*}
 \int_{\Sigma } & (1 + g(\partial_{r},\nu)^{2})(1 + 2 e^{-2r}) d\cH^{2}_{g} + \int_{\Sigma} O(e^{-3r}) d\cH^{2}_{g}\\
  & =  \int_{\Sigma} \Div_{\Sigma,g}(\partial_{r})d\cH^{2}_{g}\\
 & = \int_{\Sigma} H_{g} g(\partial_{r},\nu) d\cH^{2}_{g} \\
  & = \int_{\Sigma} (H_{g}-2) d\cH^{2}_{g} - \int_{\Sigma} (H_{g}-2)(1-g(\partial_{r},\nu)) d\cH^{2}_{g} + 2 \int_{\Sigma} g(\partial_{r},\nu) d\cH^{2}_{g}.
\end{align*}
We may rearrange this for $\underline s(\Sigma)$ sufficiently large (using the outermost assumption to see that $H_{g} > 2$) to yield
\begin{align*}
\int_{\Sigma} (1-g(\partial_{r},\nu))^{2}d\cH^{2}_{g} + 2\int_{\Sigma}e^{-2r} d\cH^{2}_{g} & \leq \cH^{2}_{g}(\Sigma)(H_{g}-2).
\end{align*}
By Proposition \ref{prop:christodoulou-yau}, we have that
\begin{equation*}
H_{g}^{2}  \leq 4 + \frac{64\pi}{3\cH^{2}_{g}(\Sigma)},
\end{equation*}
and it is easy to see that $\cH^{2}_{g}(\Sigma)\to\infty$ as $\underline s(\Sigma) \to \infty$. From this, we may conclude that for $\underline s(\Sigma)$ sufficiently large, we have the bound
\begin{equation*}
\int_{\Sigma} e^{-2r} d\cH_{g}^{2} \leq \frac{32\pi}{3},
\end{equation*}
from which the claim follows.
\end{proof}

\begin{lemm}[cf. {\cite[Proposition 5.2]{EichmairMetzger:CMC}}]
For $(M,g)$ an asymptotically hyperbolic manifold, if $\Sigma$ is a closed surface in $(M,g)$, then $\int_{\Sigma} (|h|^{2}-2)d\cH^{2}_{g}\geq 8\pi -o(1)$ as $\underline s(\Sigma) \to \infty$.
\end{lemm}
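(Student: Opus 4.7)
My plan is to reduce the bound to a Willmore-type inequality via the Gauss equation, then transfer the Euclidean Willmore inequality to the asymptotically hyperbolic setting using the conformal invariance of $\int|\tfsff|^2 \, d\cH^2$. Combining the Gauss equation $2K_\Sigma = 2K_M + H_g^2 - |h_g|^2$ (with $K_M$ denoting the sectional curvature of $g$ along the tangent plane to $\Sigma$) with the pointwise decomposition $|h_g|^2 = |\tfsff_g|^2 + H_g^2/2$ and the Gauss--Bonnet theorem yields the identity
\begin{equation*}
\int_\Sigma (|h_g|^2 - 2) \, d\cH^2_g = 2\int_\Sigma |\tfsff_g|^2 \, d\cH^2_g + 4\pi \chi(\Sigma) - 2\int_\Sigma (K_M + 1) \, d\cH^2_g.
\end{equation*}
Asymptotic hyperbolicity gives $|K_M + 1| = O(s^{-3})$, so the final term is bounded by $C \int_\Sigma s^{-3} d\cH^2_g$, which is $o(1)$ as $\underline s(\Sigma) \to \infty$ by Lemma~\ref{lemm:int-sminus3-bd}.

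Next, I would bound $\int_\Sigma |\tfsff_g|^2 d\cH^2_g$ from below. The pointwise quantity $|\tfsff|^2 \, d\cH^2$ on a $2$-surface is invariant under conformal changes of the ambient metric; since the hyperbolic metric $\overline g$ is conformally Euclidean (via the Poincar\'e ball model), $\int_\Sigma |\tfsff_{\overline g}|^2 d\cH^2_{\overline g}$ equals the same quantity computed with the Euclidean metric. Willmore's inequality $\int H^2 \geq 16\pi$ in $\RR^3$, combined with the Euclidean identity $\int |\tfsff|^2 = \tfrac{1}{2}\int H^2 - 4\pi\chi(\Sigma)$, then yields $\int_\Sigma |\tfsff_{\overline g}|^2 d\cH^2_{\overline g} \geq 4\pi(2 - \chi(\Sigma))$. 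Approximating $g$ by $\overline g$ on $\Sigma$ via Lemma~\ref{lemm:sff-g-gbar} gives $\int_\Sigma |\tfsff_g|^2 d\cH^2_g \geq 4\pi(2 - \chi(\Sigma)) - o(1)$, and substituting into the identity above produces
\begin{equation*}
\int_\Sigma (|h_g|^2 - 2) \, d\cH^2_g \geq 8\pi(2 - \chi(\Sigma)) + 4\pi \chi(\Sigma) - o(1) = 16\pi - 4\pi \chi(\Sigma) - o(1) \geq 8\pi - o(1),
\end{equation*}
where the last inequality uses $\chi(\Sigma) \leq 2$.

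The main obstacle lies in justifying the step $\int_\Sigma |\tfsff_g|^2 d\cH^2_g \geq \int_\Sigma |\tfsff_{\overline g}|^2 d\cH^2_{\overline g} - o(1)$. Lemma~\ref{lemm:sff-g-gbar} yields pointwise errors of order $O(s^{-3})(|h|^2 + 1)$, so a weighted integral estimate of the form $\int_\Sigma s^{-3} |h|^2 d\cH^2_g = o(1)$ is needed. For the intended application (to the individual components of an isoperimetric boundary, which are volume-preserving stable CMC with $R_g \geq -6$), this can be arranged by combining Proposition~\ref{prop:christodoulou-yau} (an $L^1$ bound on $|\tfsff|^2$), Lemma~\ref{lemm:H-bd-vp-stable} (a pointwise bound on $H$), and Lemma~\ref{lemm:int-sminus3-bd}.
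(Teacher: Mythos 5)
Your proposal is correct and follows essentially the same route as the paper: the Gauss equation plus conformal invariance of $\int_{\Sigma}|\tfsff|^{2}\,d\cH^{2}$ reduces the bound to the Euclidean Willmore inequality, and the passage from $\overline g$ to $g$ is controlled by Lemma \ref{lemm:sff-g-gbar} together with the integral decay of Lemma \ref{lemm:int-sminus3-bd}. The obstacle you flag at the end --- the need for a weighted estimate of the form $\int_{\Sigma} s^{-3}|\sff|^{2}\,d\cH^{2}_{g} = o(1)$ --- is genuine, but it is equally present in the paper's own argument and is resolved there in the same implicit way you describe, namely by invoking the uniform bounds available for the volume-preserving stable CMC surfaces to which the lemma is actually applied.
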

\begin{proof}
The Gau\ss\ equations yield
\begin{equation*}
\int_{ \Sigma}  |\tfsff_{\overline g}|_{\overline g}^{2} d\cH^{2}_{\overline g} - 4\pi \chi(\Sigma)  = \int_{ \Sigma} \left( \frac 12 H_{\overline g}^{2} + R_{\overline g} - 2 \Ric(\nu,\nu) \right) d\cH^{2}_{\overline g} 
\end{equation*}
This implies that the left hand side is conformally invariant. Because hyperbolic space is conformally Euclidean, we may thus apply \cite[(16.32)]{GT} to see that
\begin{equation*}
 \frac 12 \int_{ \Sigma} \left( H_{\overline g}^{2} - 4\right) d\cH^{2}_{\overline g}= \frac 12 \int_{ \Sigma} H^{2}_{\delta} d\cH^{2}_{\delta} \geq 8\pi.
\end{equation*}
As such, we see that 
\begin{equation*}
\int_{ \Sigma} \left( |\sff_{\overline g}|_{\overline g}^{2} - 2\right)d\cH^{2}_{\overline g} \geq 8\pi.
\end{equation*}
We compute
\begin{align*}
 & \int_{\Sigma} |\sff_{g}|_{g}^{2} (1+O(s^{-3}))d\cH^{2}_{g}\\
  & \geq \int_{\Sigma} |\sff_{g}|_{\overline g}^{2} d\cH^{2}_{\overline g}\\
 & \geq \int_{\Sigma} |\sff_{\overline g}|_{\overline g}^{2} d\cH^{2}_{\overline g} + \int_{\Sigma}(|\sff_{g}|_{\overline g}-|\sff_{\overline g}|_{\overline g})(|\sff_{g}|_{\overline g}+|\sff_{\overline g}|_{\overline g})d\cH^{2}_{\overline g}\\
  & \geq \int_{\Sigma} |\sff_{\overline g}|_{\overline g}^{2}d\cH^{2}_{\overline g} - C \int_{\Sigma} s^{-3} (|\sff_{\overline g}|_{\overline g} + 1) (|\sff_{\overline g}|_{\overline g} + C s^{-3})d\cH^{2}_{\overline g}\\
    & \geq  \int_{\Sigma} |\sff_{\overline g}|_{\overline g}^{2} (1-O(s^{-3})) d\cH^{2}_{\overline g} - o(1),
\end{align*}
as $\underline s(\Sigma)\to\infty$. As such
\begin{align*}
& \int_{\Sigma}( |\sff_{g}|_{g}^{2} -2) (1+O(s^{-3}))d\cH^{2}_{g}  + 2\int_{\Sigma}(1+O(s^{-3})) d\cH^{2}_{\overline g}\\
& \geq    \int_{\Sigma} (|\sff_{\overline g}|_{\overline g}^{2}-2) (1-O(s^{-3})) d\cH^{2}_{\overline g} + 2 \int_{\Sigma} (1-O(s^{-3}))d\cH^{2}_{\overline g} - o(1),
\end{align*}
which allows us to finish the proof using the previous lemma. 
\end{proof}

\begin{lemm}[cf. {\cite[Proposition 5.3]{EichmairMetzger:CMC}}]
For $(M,g)$ an asymptotically hyperbolic manifold with $R_{g}\geq -6$, there exists a coordinate ball $B$ so that any closed, volume-preserving stable CMC surface $\Sigma\hookrightarrow (M,g)$ has at most one component $\Sigma'$ with $\Sigma'\cap B =\emptyset$. 
\end{lemm}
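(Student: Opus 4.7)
The plan is to argue by contradiction using the volume-preserving stability inequality for $\Sigma$ with a test function that is locally constant on two far-out components. Suppose no such $B$ exists: then there is a sequence of radii $s_{l}\to\infty$ and closed volume-preserving stable CMC surfaces $\Sigma^{(l)}\hookrightarrow(M,g)$, each having two distinct connected components $\Sigma_{1}^{(l)},\Sigma_{2}^{(l)}$ with $\underline s(\Sigma_{i}^{(l)})\geq s_{l}$ for $i=1,2$.

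The key observation is that each component $\Sigma_{i}^{(l)}$, regarded as a standalone surface, is itself a closed, connected, volume-preserving stable CMC surface in $(M,g)$. Indeed, given $u\in C^{1}(\Sigma_{i}^{(l)})$ with $\int_{\Sigma_{i}^{(l)}}u\,d\cH^{2}_{g}=0$, extending $u$ by zero across the other (disjoint) components produces a $C^{1}$ test function on $\Sigma^{(l)}$ with vanishing total integral and identical Dirichlet energy, so the volume-preserving stability of $\Sigma^{(l)}$ specializes to the analogous inequality on $\Sigma_{i}^{(l)}$ alone. Consequently, Lemma~\ref{lemm:int-sminus3-bd} gives $\int_{\Sigma_{i}^{(l)}}s^{-3}\,d\cH^{2}_{g}=o(1)$, the previous lemma gives $\int_{\Sigma_{i}^{(l)}}(|\sff|^{2}-2)\,d\cH^{2}_{g}\geq 8\pi-o(1)$, and asymptotic hyperbolicity provides the pointwise bound $\Ric_{g}(\nu,\nu)+2=O(s^{-3})$ on $\Sigma_{i}^{(l)}$. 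Combining these three ingredients,
\[
\int_{\Sigma_{i}^{(l)}}\bigl(\Ric(\nu,\nu)+|\sff|^{2}\bigr)\,d\cH^{2}_{g}\geq 8\pi-o(1)>4\pi
\]
for $l$ sufficiently large.

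To derive the contradiction, plug the locally constant test function
\[
u := \cH^{2}_{g}(\Sigma_{2}^{(l)})\cdot\mathbf{1}_{\Sigma_{1}^{(l)}} - \cH^{2}_{g}(\Sigma_{1}^{(l)})\cdot\mathbf{1}_{\Sigma_{2}^{(l)}}
\]
(extended by zero on the remaining components) into the stability inequality for $\Sigma^{(l)}$. Then $\int_{\Sigma^{(l)}}u\,d\cH^{2}_{g}=0$ and $\nabla u\equiv 0$, so stability forces
\[
\cH^{2}_{g}(\Sigma_{2}^{(l)})^{2}\int_{\Sigma_{1}^{(l)}}\bigl(\Ric(\nu,\nu)+|\sff|^{2}\bigr)\,d\cH^{2}_{g} + \cH^{2}_{g}(\Sigma_{1}^{(l)})^{2}\int_{\Sigma_{2}^{(l)}}\bigl(\Ric(\nu,\nu)+|\sff|^{2}\bigr)\,d\cH^{2}_{g} \leq 0.
\]
But by the previous paragraph each factor integral on the left is strictly positive, and the coefficients $\cH^{2}_{g}(\Sigma_{i}^{(l)})^{2}$ cannot both vanish, so the left-hand side is strictly positive---a contradiction.

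The only point requiring genuine care is the transfer of stability from $\Sigma^{(l)}$ to an individual component $\Sigma_{i}^{(l)}$ in the standalone sense; the rest is straightforward bookkeeping atop the previous lemmas. The choice of test function is essentially forced: requiring $|\nabla u|^{2}$ to vanish identically (so that the right-hand side of the stability inequality drops out) while preserving the mean-zero condition on $\Sigma^{(l)}$ leaves only piecewise constant functions with appropriately chosen values on the two far-out components.
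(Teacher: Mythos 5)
Your proof is correct and follows essentially the same route as the paper: the locally constant volume-preserving test function on the two far-out components, combined with the Willmore-type bound $\int_{\Sigma_i}(|\sff|^2-2)\,d\cH^2_g\geq 8\pi-o(1)$, the decay $\Ric(\nu,\nu)+2=O(s^{-3})$, and the integral estimate of Lemma \ref{lemm:int-sminus3-bd}. Your explicit justification that each component inherits volume-preserving stability (extend mean-zero test functions by zero) is a point the paper uses only implicitly when invoking Lemma \ref{lemm:int-sminus3-bd} on individual components, so it is a welcome clarification rather than a deviation.
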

\begin{proof}
Assume that $\Sigma'$, $\Sigma''$ two components of a closed, volume-preserving stable CMC surface which are both disjoint from some large coordinate ball $B$ (to be chosen below). We assume that $\cH^{2}_{g}(\Sigma') \leq \cH^{2}_{g}(\Sigma'')$. Then, choose the function $u$ which is $\cH^{2}_{g}(\Sigma'')$ on $\Sigma'$ and $-\cH^{2}_{g}(\Sigma')$ on $\Sigma''$ is volume-preserving. Hence, we have that, using $\Ric(\nu,\nu) + 2 = O(s^{-3})$ and Lemma \ref{lemm:int-sminus3-bd}
\begin{align*}
0 & \geq \int_{\Sigma'}(|\sff|^{2}+\Ric(\nu,\nu)) d\cH^{2}_{g} +  \frac{\cH^{2}_{g}(\Sigma')}{\cH^{2}_{g}(\Sigma'')}\int_{\Sigma''}(|\sff|^{2}+\Ric(\nu,\nu)) d\cH^{2}_{g}\\
& \geq \int_{\Sigma'}(|\sff|^{2}-2)d\cH^{2}_{g} - \int_{\Sigma'\cup\Sigma''}O(s^{-3})d\cH^{2}_{g}\\
& \geq 8\pi - o(1),
\end{align*}
as $\underline s(\Sigma'\cup\Sigma'') \to \infty$. Choosing $B$ large enough, we may ensure that this is a contradiction.
\end{proof}

 Now, we may prove the main result of this appendix, namely that an isoperimetric region in $(M,g)$, an asymptotically hyperbolic manifold with $R_{g}\geq -6$, has a uniformly bounded number of components.
\begin{proof}[Proof of Proposition \ref{prop:comp-bdry-bds}]
First, choose a coordinate ball $B$ large enough so that the previous lemma applies. 

By Lemma \ref{lemm:H-bd-vp-stable}, we may assume that the mean curvature of $\Sigma$ is uniformly bounded. Thus, by the monotonicity formula, the number of components of $\Sigma$ which intersect $B$ is bounded in terms of $\cH^{2}_{g}(\Sigma\cap B)$ (each component of $\Sigma \cap B$ contributes a guaranteed amount to $\cH^{2}_{g}(\Sigma\cap B)$ by combining the monotonicity  formula with the upper bound on the mean curvature). As such, it is sufficient to uniformly bound $\cH^{2}_{g}(\Sigma\cap B)$. Note that $\sL_{g}^{3}(\Omega) \leq \sL^{3}_{g}(\Omega \cup B)$, so by the isoperimetric property of $\Omega$ and Lemma \ref{lemm:iso-prof-strict-increase} 
\begin{align*}
\cH^{2}_{g}(\partial^{*}\Omega \cap B) + \cH^{2}_{g}(\partial^{*}\Omega \backslash B) & = \cH^{2}_{g}(\partial^{*}\Omega) \\ 
& \leq \cH^{2}_{g}(\partial^{*}( \Omega\cup B)) \\
& = \cH^{2}_{g}(\partial B \backslash \Omega)  + \cH^{2}_{g}(\partial^{*}\Omega\backslash B).
\end{align*}
As such, we have the uniform bound $\cH^{2}_{g}(\partial^{*}\Omega \cap B) \leq \cH^{2}_{g}(\partial B)$. From this, the assertion follows.
\end{proof}

\bibliography{bib} 
\bibliographystyle{amsalpha}

\end{document}